\newtheorem{theorem}{Theorem}
\newtheorem{corollary}[theorem]{Corollary}
\newtheorem{proposition}[theorem]{Proposition}
\newtheorem{lemma}[theorem]{Lemma}
\newtheorem{definition}[theorem]{Definition}
\newtheorem{remark}[theorem]{Remark}
\newtheorem{assumption}{Assumption}
\numberwithin{theorem}{section}
\numberwithin{equation}{section}
\numberwithin{figure}{section}
\newcommand{\ve}{\varepsilon}
\newcommand{\ind}{\mathbbm{1}}
\newcommand{\ul}{\underline}
\newcommand{\ol}{\overline}
\DeclareMathOperator{\rad}{rad}
\newcommand{\calE}{\mathcal{E}}
\newcommand{\calH}{\mathcal{H}}
\newcommand{\calW}{\mathcal{W}}
\newcommand{\calN}{\mathcal{N}}
\newcommand{\calP}{\mathcal{P}}
\newcommand{\calD}{\mathcal{D}}
\newcommand{\calT}{\mathcal{T}}
\newcommand{\ZZ}{\mathbb{Z}}
\newcommand{\RR}{\mathbb{R}}
\newcommand{\CC}{\mathbb{C}}
\newcommand{\PP}{\mathbb{P}}
\newcommand{\TT}{\mathbb{T}} 
\newcommand{\Ball}{B} 
\newcommand{\Ann}{A} 
\newcommand{\din}{\partial^{\textrm{in}}} 
\newcommand{\dout}{\partial^{\textrm{out}}} 
\newcommand{\cluster}{\mathcal{C}}
\newcommand{\lclus}{\cluster^{\textrm{max}}} 
\newcommand{\Ch}{\mathcal{C}_H} 
\newcommand{\Cv}{\mathcal{C}_V} 
\newcommand{\circuitevent}{\mathcal{O}} 
\newcommand{\colorseq}{\mathfrak{S}} 
\newcommand{\arm}{\mathcal{A}} 
\newcommand{\net}{\mathcal{N}} 
\newcommand{\Cov}{\textrm{Cov}}
\newcommand{\Var}{\textrm{Var}}
\newcommand{\next}{\psi}
\newcommand{\PPh}{\overline{\mathbb{P}}}
\newcommand{\EEh}{\overline{\mathbb{E}}}
\newcommand{\NET}{\textrm{NET}}
\newcommand{\NETB}{\textrm{NETB}}
\newcommand{\OCP}{\textrm{OCP}}
\newcommand{\VC}{\textrm{VC}}
\newcommand{\I}{\textrm{I}}
\newcommand{\NI}{\textrm{NI}}
\newcommand{\OP}{\textrm{OP}}
\newcommand{\lra}{\leftrightarrow}
\begin{document}

\title{Near-critical percolation with heavy-tailed impurities,\\ forest fires and frozen percolation}

\author{Jacob van den Berg\footnote{CWI, VU University Amsterdam, and NYU Abu Dhabi; E-mail: \texttt{J.van.den.Berg@cwi.nl}.}, Pierre Nolin\footnote{City University of Hong Kong; E-mail: \texttt{bpmnolin@cityu.edu.hk}. Partially supported by a GRF grant from the Research Grants Council of the Hong Kong SAR (project CityU11304718).}}

\date{}

\maketitle

\begin{abstract}
Consider critical site percolation on a ``nice'' planar lattice: each vertex is occupied with probability $p = p_c$, and vacant with probability $1 - p_c$. Now, suppose that additional vacancies (``holes'', or ``impurities'') are created, independently, with some small probability, i.e. the parameter $p_c$ is replaced by $p_c - \ve$, for some small $\ve > 0$. A celebrated result by Kesten \cite{Ke1987} says, informally speaking, that on scales below the characteristic length $L(p_c - \ve)$, the connection probabilities remain of the same order as before. We prove a substantial and subtle generalization to the case where the impurities are not only microscopic, but allowed to be ``mesoscopic''.

This generalization, which is also interesting in itself, was motivated by our study of models of forest fires (or epidemics). In these models, all vertices are initially vacant, and then become occupied at rate $1$. If an occupied vertex is hit by lightning, which occurs at a (typically very small) rate $\zeta$, its entire occupied cluster burns immediately, so that all its vertices become vacant.

Our results for percolation with impurities turn out to be crucial for analyzing the behavior of these forest fire models near and beyond the critical time (i.e. the time after which, in a forest without fires, an infinite cluster of trees emerges). In particular, we prove (so far, for the case when burnt trees do not recover) the existence of a sequence of ``exceptional scales'' (functions of $\zeta$). For forests on boxes with such side lengths, the impact of fires does not vanish in the limit as $\zeta \searrow 0$.

\bigskip

\textit{Key words and phrases: near-critical percolation, forest fires, frozen percolation, self-organized criticality.}
\end{abstract}

\tableofcontents

\section{Introduction and main results} \label{sec:intro}

Self-organized criticality is a fascinating phenomenon that may be used to explain the emergence of ``complexity'' (in particular, fractal shapes) in nature. It refers, roughly speaking, to the spontaneous (approximate) arising of a critical regime without any fine-tuning of a parameter. Numerous works have been devoted to it, mostly in statistical physics (see e.g. \cite{Ba1996, Je1998}, and the references therein), but also on the mathematical side.

In various models where this phenomenon occurs, the (near-) critical regime of independent percolation seems to play a crucial role, even though this is not obvious at all from the rules (dynamics) of the process. An example is a model for the displacement of oil by water in a random medium \cite{WW1983, DSV2009}. Another paradigmatic example, much less understood than the previous one, is a mathematical model of forest fires, or, more generally, of excitable media which also include certain epidemics (where infections from outside the population are rare, but spread out very fast) and neuronal or sensor / communication networks; such models were introduced by Drossel and Schwabl \cite{DrSc1992} in 1992. In the present paper we study versions of such processes, where we focus on a model where burnt trees cannot be ``replaced'' by new trees (or, in a sensor / communication network context, each node, i.e. sensor-transmitter in the network, can only once send a signal to neighboring nodes). We will refer to this version as ``forest fires without recovery'' (abbreviated as \textbf{FFWoR}).

Even though forest fire processes attracted a lot of attention, very little is known about their long-time behavior. They are notoriously difficult to study, due to the existence of competing effects on the connectivity of the forest: since the rate of lightning is tiny, large connected components of trees can arise, and when such components eventually burn, they create lasting ``scars'' on the lattice which seem to function as ``fire lanes'', hindering the appearance of new large components. It turns out that, apart from exceptional cases, these scars are essentially only formed near the so-called critical percolation time (but still, in some sense, at many different ``time scales''). Due to this non-monotonicity, standard tools from statistical mechanics for models on lattices cannot be used. Hence, new techniques and ideas are required to understand rigorously the effect of large-scale connections, which play a central role in the spread of fires.

\subsection{Frozen percolation and forest fire processes}

We now describe in more detail the processes studied in, or relevant for, this paper. First, for the study of forest fire models, it appears to be very convenient to compare (couple) them with the classical percolation model, introduced by Broadbent and Hammersley \cite{BH1957} in 1957. More precisely, we consider Bernoulli site percolation with parameter $p \in [0,1]$ on a connected, countably infinite, graph $G = (V, E)$. In this model, each vertex $v \in V$ is occupied (or open, denoted by $1$), with probability $p$, and vacant (or closed, denoted by $0$), with probability $1-p$, independently of the other vertices. There is a critical value $p_c = p_c^{\textrm{site}}(G) \in [0,1]$ for the parameter $p$, below which (almost surely) all occupied clusters are finite, and above which there may be an infinite occupied cluster. This model (and variations, such as bond percolation) has been widely studied, especially on ``nice'' planar lattices like the square and the triangular lattices, and on the hypercubic lattices $\ZZ^d$, $d \geq 3$, with nearest-neighbor edges (the vertices of this lattice are the points with integer coordinates, and two such points $v$, $v'$ are connected by an edge \emph{iff} they differ along exactly one coordinate, by $\pm 1$).

The following model, which we will call the \textbf{$N$-volume-frozen percolation} model, or, sometimes, simply parameter-$N$ model, was studied in \cite{BN2015, BKN2015}, motivated by work by Aldous \cite{Al2000} (who in turn was inspired by phenomena concerning sol-gel transitions \cite{St1943}). It has a (typically very large) parameter $N \geq 1$, and it is defined in terms of i.i.d. random variables $(\tau_v)_{v \in V}$ uniformly distributed on $[0,1]$. Each vertex $v$ is vacant at time $0$, and becomes occupied at time $\tau_v$ \emph{unless} some neighbor of $v$ already belongs to an occupied cluster with size (i.e. number of vertices) at least $N$ (in which case $v$ remains vacant). In other words, a cluster stops growing as soon as it has size $\geq N$: such a large cluster, together with its boundary, is said to be frozen, or ``giant''.

What can we say about the probability that a given vertex eventually (i.e. at time $1$) belongs to a giant cluster? Of course, this is a function of $N$, and we are interested in what happens as $N \to \infty$. Does the above-mentioned probability go to $0$? Or is it bounded away from $0$? What is, typically, the final size (at time $1$) of the cluster of a given vertex? Of course, it cannot be larger than $d (N-1) + 1$, where $d$ is the maximal degree of the graph, but is it typically smaller than $N$, and even of smaller order than $N$?

In the case where the graph is a binary tree, it was shown in \cite{BKN2012} (by extending ideas from \cite{Al2000}) that with high probability as $N \to \infty$, the final cluster of a given vertex is either giant (i.e. has size $\geq N$) or ``microscopic'' (of order $1$). For the square lattice (and other ``nice'' planar lattices), it was shown in \cite{BN2015} that there is a sequence of functions ($\sqrt{N} \asymp$)~$f_1(N) \ll f_2(N) \ll \ldots$ called exceptional scales such that the following holds: for each $i$, for the model in the box with side length $f_i(N)$, the probability that $0$ is eventually in a giant cluster is bounded away from $0$ as $N \to \infty$; however, for every function $f(N)$ with $f_i(N) \ll f(N) \ll f_{i+1}(N)$, the above-mentioned probability goes to $0$ as $N \to \infty$. In \cite{BKN2015}, it was shown (in the particular case of the triangular lattice) that this probability also tends to $0$ if $f(N) \gg f_i(N)$ for every $i \geq 1$, as well as for the process on the entire lattice.

As suggested above, the $N$-volume-frozen percolation model above can be interpreted as a simple model for gelation (sol-gel transition). It could (but see remarks below) also be interpreted as a model of forest fires (or epidemics) without recovery, where the ignitions (infections) are very rare ($N$ being very large), but once an ignition takes place, the fire spreads very fast, in effect wiping out instantaneously the entire occupied cluster. Initially, at each vertex there is one seed; once this seed has become a plant and this plant is burnt by fire, no other plant will grow at its location, and neither at neighboring locations. The role of the parameter $N$, i.e. that a cluster with size $\leq N$ cannot burn, is not very realistic for this interpretation: it makes the dynamic quite rigid. Also, the rule that nothing can grow any more on the sites along the external boundary of a burnt cluster of plants looks a bit artificial.

More realistic, as a model of forest fires without recovery, is the following, where time is now indexed by $[0, +\infty)$, and which has a (typically very small) parameter $\zeta > 0$. Again, at time $0$, all vertices are vacant (or, better, contain a seed). Independently of each other, they become occupied (the seeds become plants) at rate $1$. Each occupied vertex (plant) is ignited at rate $\zeta$, in which case its entire occupied cluster is instantaneously burnt (and remains so). Note that in this process, there are three possible states for a vertex: $0$ (vacant, or ``seed'': initially, all vertices are in this state), $1$ (occupied, or ``plant''), and $-1$ (burnt). We will denote this process by the earlier-mentioned abbreviation FFWoR. It is clear from the description above that, for each $\zeta > 0$, the probability that a given vertex is eventually in state $-1$ (i.e. burnt) is equal to $1$. The resulting configuration is depicted in Figure \ref{fig:final_config}.

\begin{figure}[t]
\begin{center}

\vspace{-1.2cm}
\includegraphics[width=.85\textwidth]{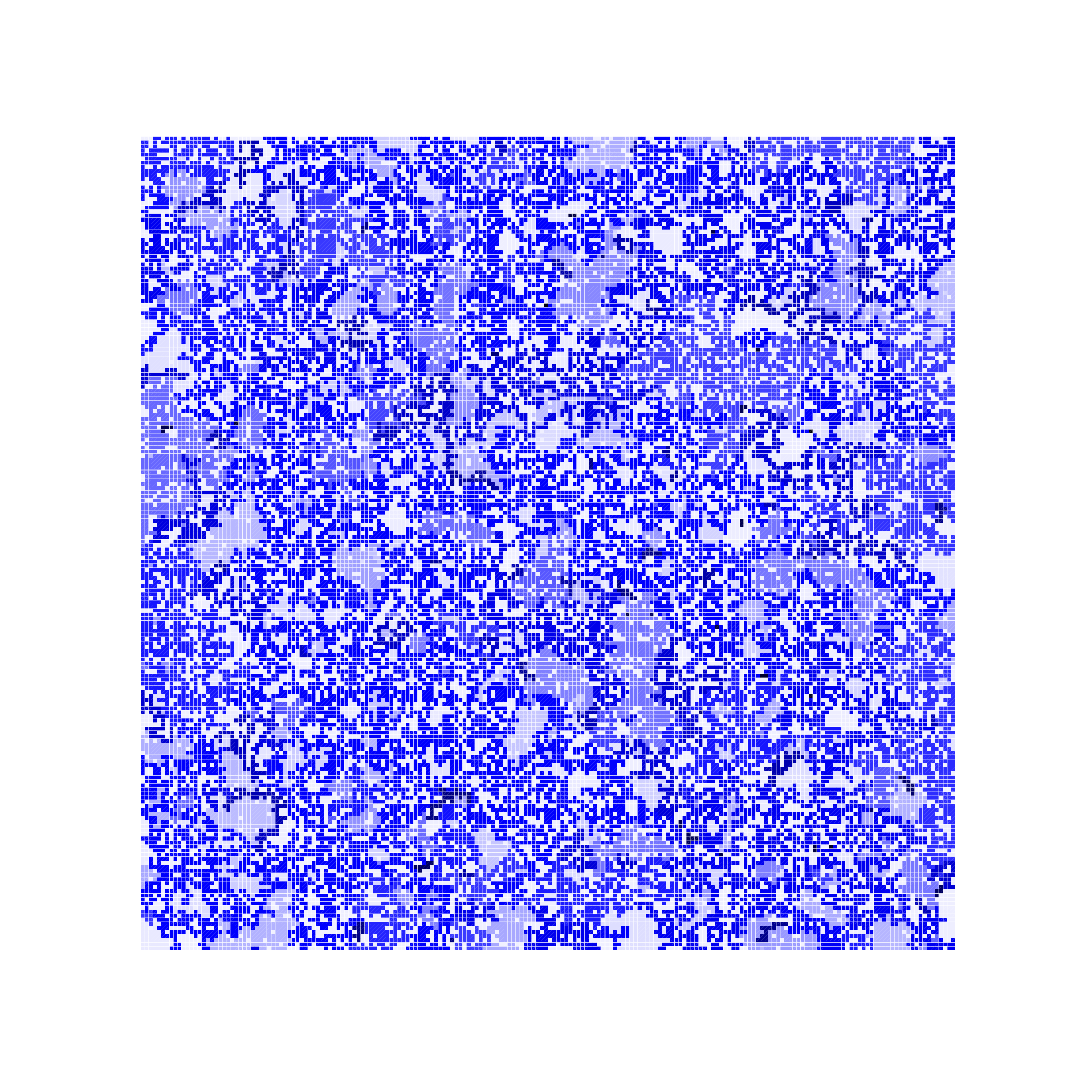}
\vspace{-1.8cm}
\caption{\label{fig:final_config} Final configuration (i.e. at time $t = +\infty$) for the forest fires without recovery process on $\ZZ^2$ with rate $\zeta = 0.01$, in a box with side length $200$. All sites are burnt, a lighter shade of blue corresponding to a later time of burning.}

\end{center}
\end{figure}

Analogs of (some of) the earlier questions are the following. Does, for each $t > 0$, the probability that a given vertex burns before time $t$ go to $0$ as $\zeta \searrow 0$? Or are there values of $t$ for which this probability is bounded away from $0$ as $\zeta \searrow 0$? At first sight, one might expect that this model can be analyzed in the same way as the ``parameter-$N$ model'', with (roughly) $N$ replaced by $\frac{1}{\zeta}$. Apart from the fact that this replacement is too naive, the arguments become considerably more complicated, due to quite delicate problems concerning what we call ``near-critical percolation with impurities'', as we heuristically indicate now.

\subsection{Heuristic derivation of exceptional scales} \label{sec:intro_heuristics}

We will compare the parameter-$N$ model in a box with side length $f(N)$ with the FFWoR model with parameter $\zeta$ in a box with side length $g(\zeta)$. The heuristic arguments (made rigorous in \cite{BN2015}) for the parameter-$N$ model on the square lattice are roughly as follows. If $f(N) = C \sqrt{N}$ with $C > 1$, then, clearly (since the total number of vertices is $> N$) at least one burning / freezing event will take place. Hence, a positive fraction ($\geq \frac{1}{C^2}$) of the vertices will freeze, which suggests (and this can be quite easily proved) that the probability that $0$ eventually freezes / burns is bounded away from $0$ (in fact, has limit $\frac{1}{C^2}$) as $N \to \infty$. Now, we try to find a function $f(N) \gg \sqrt{N}$ where this also holds, i.e. where the probability that $0$ burns / freezes is bounded away from $0$ as $N \to \infty$. To do this, let $\tau$ denote the first time that a giant cluster arises (recall that the time line in this model is the interval $[0,1]$). The biggest cluster at time $\tau$ has size roughly $\theta(\tau) f(N)^2$, where $\theta(\tau)$ is the probability (for Bernoulli site percolation on the whole lattice) that $0$ lies in an infinite occupied cluster at time $\tau$. We thus want
\begin{equation} \label{eq:intro_heuristics1}
\theta(\tau) f(N)^2 \asymp N.
\end{equation}
The freezing of this cluster disconnects the box into ``islands'' of diameter roughly of order $L(\tau)$ (the characteristic length for percolation with parameter $\tau$: see Section \ref{sec:percolation} for precise definitions of this and other notions). So if $\tau$ is such that $L(\tau)$ is of order $\sqrt{N}$, then $0$ will (after this freezing event) typically be in the interior of an island with diameter of order $\sqrt{N}$, and hence be in a similar situation as the previous case (i.e. the case where the box has side length $C \sqrt{N}$), so that the probability that $0$ freezes is bounded away from $0$. So we may choose $f(N)$ such that besides \eqref{eq:intro_heuristics1}, also the following equation holds:
\begin{equation} \label{eq:intro_heuristics2}
L(\tau) \asymp \sqrt{N}.
\end{equation}
A celebrated and classical result by Kesten \cite{Ke1987} says that $\theta(\tau) \asymp \pi_1(L(\tau))$, where $\pi_1(.)$ is the one-arm probability at $p_c$ (see \eqref{eq:def_pi} below). Combining \eqref{eq:intro_heuristics1} and \eqref{eq:intro_heuristics2} with this result gives $\pi_1(\sqrt{N}) f(N)^2 \asymp N$, hence
\begin{equation} \label{eq:intro_heuristics3}
f(N) \asymp \sqrt{\frac{N}{\pi_1(\sqrt{N})}}.
\end{equation}
As a conclusion, if $f(N)$ is indeed of this order, then, for the parameter-$N$ model in a box with side length $f(N)$, the probability that $0$ freezes is bounded away from $0$ as $N \to \infty$. We say that $\sqrt{N}$ is the first exceptional scale, and \eqref{eq:intro_heuristics3} above is the second. Iterating this procedure produces a sequence of exceptional scales.

We now turn to the FFWoR model with parameter $\zeta > 0$, and we will see that already the ``construction'' of the second exceptional scale involves new and delicate technical difficulties. First of all, similarly as in the parameter-$N$ model, it is not hard to see that for the process in a box with side length $\asymp \frac{1}{\sqrt{\zeta}}$, for each $t > t_c$, the probability that $0$ burns before time $t$ is bounded away from $0$ as $\zeta \searrow 0$ (here, time $t$ is related to the percolation parameter $p$ by $p = 1 - e^{-t}$; in particular, $p_c = 1 - e^{-t_c}$). The heuristic argument to find the next scale (call it $g(\zeta)$ for the moment) for which this happens is now as follows. Let $\tau$ be the first time that a big burning takes place, after which $0$ is separated from the boundary of the box. Analogously to the beginning of the argument for the parameter-$N$ model, the size of the biggest cluster at time $\tau > t_c$ is
\begin{equation} \label{eq:intro_heuristics4}
\theta(\tau) g(\zeta)^2.
\end{equation}
At time $t_c$ it is much smaller, but at time $\frac{t_c + \tau}{2}$ it has already a size of order \eqref{eq:intro_heuristics4}. So, to have a reasonable chance that the cluster burns ``near'' time $\tau$, we need
\begin{equation} \label{eq:intro_heuristics5}
\zeta (\tau - t_c) \theta(\tau) g(\zeta)^2 \asymp 1.
\end{equation}
We apply again the earlier-mentioned relation by Kesten, which in the current notation is 
\begin{equation} \label{eq:intro_heuristics6}
\theta(\tau) \asymp \pi_1(L(\tau)),
\end{equation}
as well as the following relation, also established by Kesten:
\begin{equation} \label{eq:intro_heuristics7}
(\tau - t_c) \pi_4(L(\tau)) L(\tau)^2 \asymp 1
\end{equation}
($\pi_4(.)$ is the probability at $p_c$ of observing four arms with alternating types from a given vertex, see \eqref{eq:def_pi}). Combining \eqref{eq:intro_heuristics5}, \eqref{eq:intro_heuristics6} and \eqref{eq:intro_heuristics7} gives
\begin{equation} \label{eq:intro_heuristics8}
\zeta \pi_1(L(\tau)) g(\zeta)^2 \asymp \pi_4(L(\tau)) L(\tau)^2.
\end{equation}
Analogously as for the parameter-$N$ case, we want to take $g(\zeta)$ such that $L(\tau) \asymp \frac{1}{\sqrt{\zeta}}$ (so that after the burning, $0$ finds itself roughly in the same situation as before, i.e. the probability that $0$ burns before time $t$ does not vanish as $\zeta \searrow 0$). Plugging this requirement into \eqref{eq:intro_heuristics8}, we get, as an analog of \eqref{eq:intro_heuristics3},
\begin{equation} \label{eq:intro_heuristics9}
g(\zeta) \asymp \frac{1}{\zeta} \sqrt{\frac{\pi_4 \big( \frac{1}{\sqrt{\zeta}} \big)}{\pi_1 \big( \frac{1}{\sqrt{\zeta}} \big)}}
\end{equation}
(and the next exceptional scales can be derived in a similar way).

\begin{figure}[t]
\begin{center}

\vspace{-1.2cm}
\includegraphics[width=.85\textwidth]{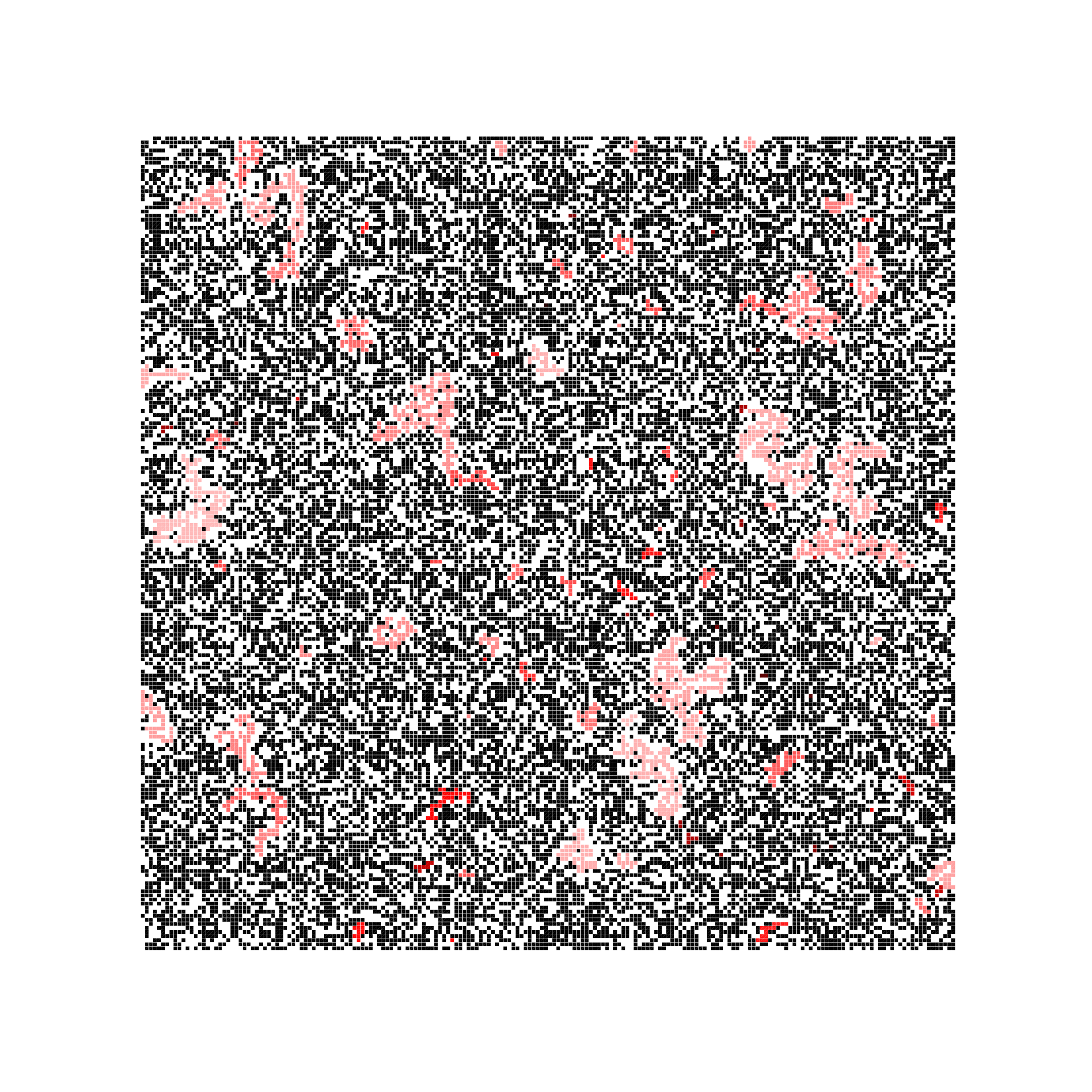}
\vspace{-1.8cm}
\caption{\label{fig:final_config2} In order to analyze the behavior around time $t_c$ of the FFWoR process, we first consider the process where ignitions stop at a slightly earlier time, $t_c - \ve$. The clusters burnt before that time are depicted in red, and may be viewed as ``impurities''. Here, $\ve = 0.1$.}

\end{center}
\end{figure}

Note that the heuristics (and the formula: \eqref{eq:intro_heuristics9} involves not only $\pi_1$, but also $\pi_4$) for the FFWoR process is more ``tricky'' than that for the parameter-$N$ model. Moreover, there is a much more serious complication. Although the reasoning leading to \eqref{eq:intro_heuristics9} might look sound, there is a delicate issue which was ``swept under the rug'' and which has no analog in the parameter-$N$ model. Indeed, we ignored the smaller burnings which took place already before time $\tau$ and created ``impurities'' in the lattice (see Figure \ref{fig:final_config2}, produced by using the same realizations of the birth and the ignition processes as for Figure \ref{fig:final_config}). For instance, the estimate \eqref{eq:intro_heuristics4} comes from ordinary percolation, but how do we know that in a model with impurities, this formula is still (more or less) correct? In fact, as we will see, the impurities are far from microscopic: we can consider them as ``heavy-tailed'', and we have to understand their cumulative effect. This effect turns out to be much more complicated (and interesting) than we anticipated in the short, speculative, last section (Section 8) of \cite{BKN2015}.

\subsection{Percolation with impurities and statement of results}


We will present and study a quite general form of near-critical percolation with impurities, which happens to be crucial for a thorough analysis of the behavior of forest fire processes near (and beyond) the critical time, and in particular for handling the delicate issue mentioned above. Roughly speaking, we have to show that, for a certain class of ``impurities'', the ``global'' connectivity properties of the percolation model are not (too) much worse than in the model without impurities (i.e. ordinary percolation). Figure \ref{fig:random_environment} gives an illustration of the type of environments that we have to analyze.

The models of impurities that we are led to study are parametrized by a positive integer denoted by $m$, and they can be described as follows. Each vertex $v \in V$, independently of the other vertices, is the center of an impurity (a square box with a random side length) with a probability that we denote by $\pi^{(m)}$. If there is an impurity centered at $v$, the probability that it has a side length $\geq r$ is written as $\rho^{(m)}([r, +\infty))$ (for all $r \geq 0$). After removing all the impurities from the lattice, we perform Bernoulli percolation with parameter $p$ on the remaining graph.

\begin{figure}[t]
\begin{center}

\vspace{-1.2cm}
\includegraphics[width=.85\textwidth]{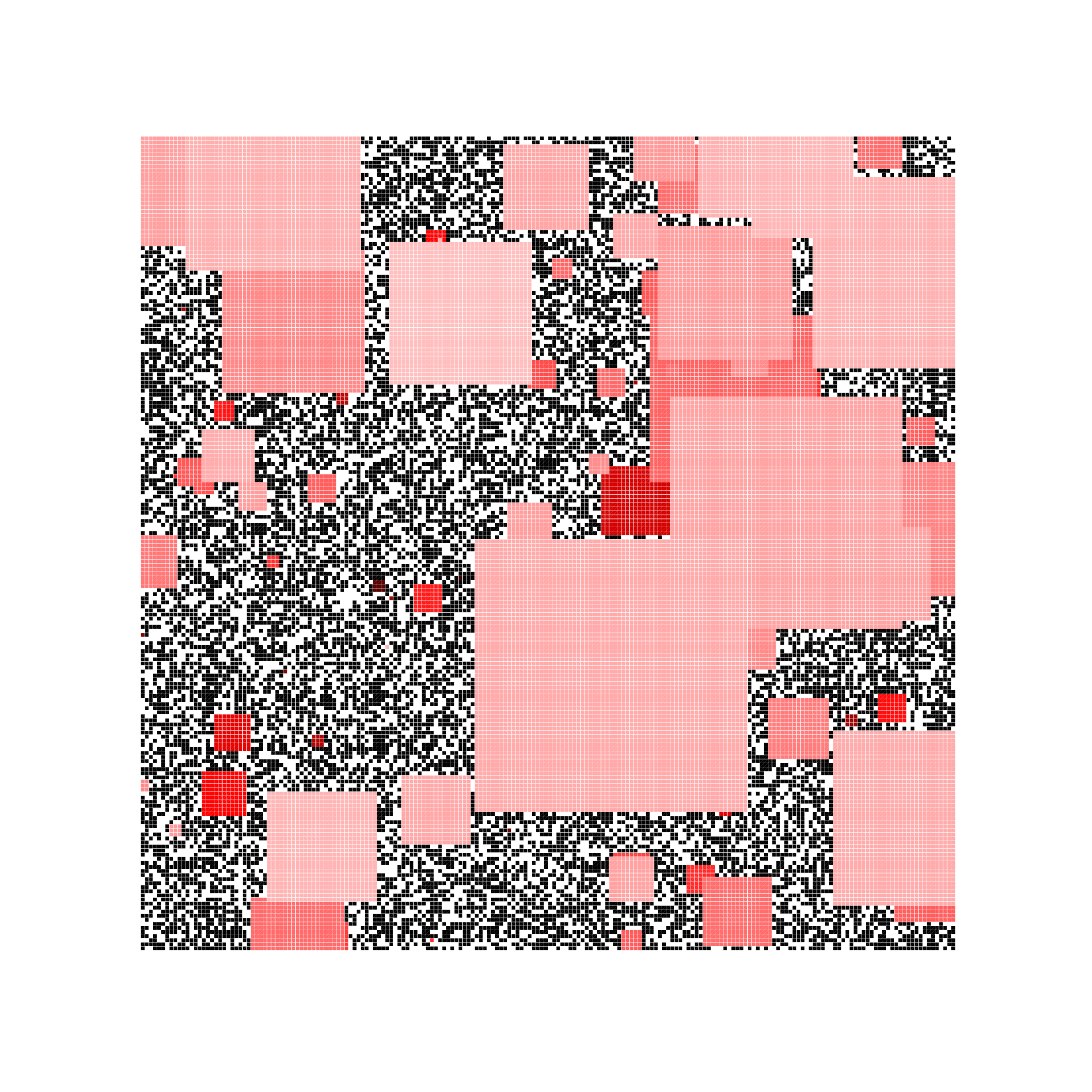}
\vspace{-1.8cm}
\caption{\label{fig:random_environment} A typical random environment produced by the fires up to time $t_c - \ve$ (here, $\ve = 0.1$), where each burnt cluster is replaced by an $L^{\infty}$ ball centered on the ignited vertex. Connections with the model with impurities are explained in Section \ref{sec:comparison_holes}.}

\end{center}
\end{figure}

Let us describe our choice of $\pi^{(m)}$ and $\rho^{(m)}$ more precisely. Let $c_1, c_2, c_3 > 0$ be constants, as well as $\alpha$ and $\beta$. Suppose that $\rho^{(m)}$ and $\pi^{(m)}$ are of the form
\begin{equation} \label{eq:form_rho_pi}
\rho^{(m)} \big( [r,+\infty) \big) = c_1 r^{\alpha-2} e^{-c_2 r / m} \:\: (r \geq 1) \quad \text{and} \quad \pi^{(m)} = c_3 m^{- \beta}.
\end{equation}
We then establish two kinds of results (which, so far, we can only prove for the triangular lattice, see Remark \ref{rem:triang_lattice} below).
\begin{itemize}
\item[1.] \emph{Stability properties for percolation with impurities.} Results of the first kind say that the resulting percolation model mentioned above (i.e. Bernoulli percolation with parameter $p$, on the graph obtained by removing impurities) satisfies, under certain conditions, connectivity properties comparable to these of the pure percolation model. In particular, under some hypotheses on the values of $\alpha$ and $\beta$, and on the relation between $m$ and $L(p)$, the four-arm probabilities remain comparable (\textbf{Theorem \ref{thm:four_arm_stability}}). We then use this to show that also one-arm probabilities, certain box-crossing probabilities, and, finally, the size of the largest cluster in a big box, remain comparable (see \textbf{Propositions \ref{prop:one_arm_stability}}, \textbf{\ref{prop:crossing}}, and \textbf{\ref{prop:largest_cluster}}).

\item[2.] \emph{Exceptional scales for forest fires with Poisson ignitions.} These results are then used to derive the second kind of results, involving applications to the model of forest fires without recovery. In particular, these results give a rigorous verification of the existence of exceptional scales mentioned (and heuristically derived) in Section \ref{sec:intro_heuristics}. This is done in Section \ref{sec:application_FF} (where the relation with the general model with impurities is proved), and Section \ref{sec:existence_excep_scales} (see \textbf{Theorems \ref{thm:case1}} and \textbf{\ref{thm:case2}}).
\end{itemize}

The four-arm stability result (Theorem \ref{thm:four_arm_stability}) turns out to be rather subtle. Indeed, we have to understand the effect of (possibly ``mesoscopic'') impurities on ``pivotal'' events, which relies on a delicate balance between ``helping'' vacant arms with the impurities, but ``hindering'' occupied arms. Our proof uses the inequality $\alpha_2 \geq \alpha_4 + 1$ between the two- and four-arm exponents for critical percolation, which can be checked (it is even an equality) from the actual values of these exponents. See Remark \ref{rem:Ke_GPS} and Section \ref{sec:stability_other_arm} for more background and details.

\begin{remark} ~
\begin{itemize}
\item As said earlier, we focus in this paper on the FFWoR model. So, when a vertex $v$ is ignited, its occupied cluster burns, but not the vacant sites along its boundary: these vertices will thus become occupied (and then burn) at later times. However, let us mention that our proofs of Theorems \ref{thm:case1} and \ref{thm:case2} also apply in the case when the occupied cluster is burnt together with its outer boundary, i.e. the seeds on the boundary ``die'' and never become a tree. In addition, we believe that, with extra work, our results can be extended to forest fires \emph{with} recovery, see the discussion in Section \ref{sec:forest_fires}.

\item The results in \cite{BN2015} were an important ingredient in our earlier-mentioned joint paper \cite{BKN2015} with Kiss, where it was proved that the parameter-$N$ model in the full plane exhibits a deconcentration property for the size of the final cluster of the origin. We believe that the results in our current paper should be instrumental to obtain similar deconcentration results for the full-plane FFWoR process.
\end{itemize}
\end{remark}

\subsection{Informal discussion about the process with impurities} \label{sec:holes_phase_diagram}

We comment a bit further on the percolation process with impurities, still assuming that $\rho^{(m)}$ and $\pi^{(m)}$ are of the form \eqref{eq:form_rho_pi}. Different behaviors arise according to the values of $\alpha$ and $\beta$, and we obtain the ``phase diagram'' depicted in Figure \ref{fig:phase_diagram}. After the present section, we will focus on Domain~I, i.e. $\alpha \in \big( \frac{3}{4},2 \big)$ and $\beta > \alpha$, which contains the relevant values for forest fires (this is Assumption~\ref{ass:alpha_beta} in Section \ref{sec:holes_def}): typically, $\alpha = \frac{55}{48} + \upsilon$ and $\beta = \alpha + \upsilon'$, for some arbitrarily small $\upsilon, \upsilon' > 0$. We want to emphasize that the somewhat informal discussion in this section is mostly about other domains, rather than the one we concentrate on, and it is not required for the understanding of the rest of the paper.

\begin{figure}
\begin{center}

\includegraphics[width=.6\textwidth]{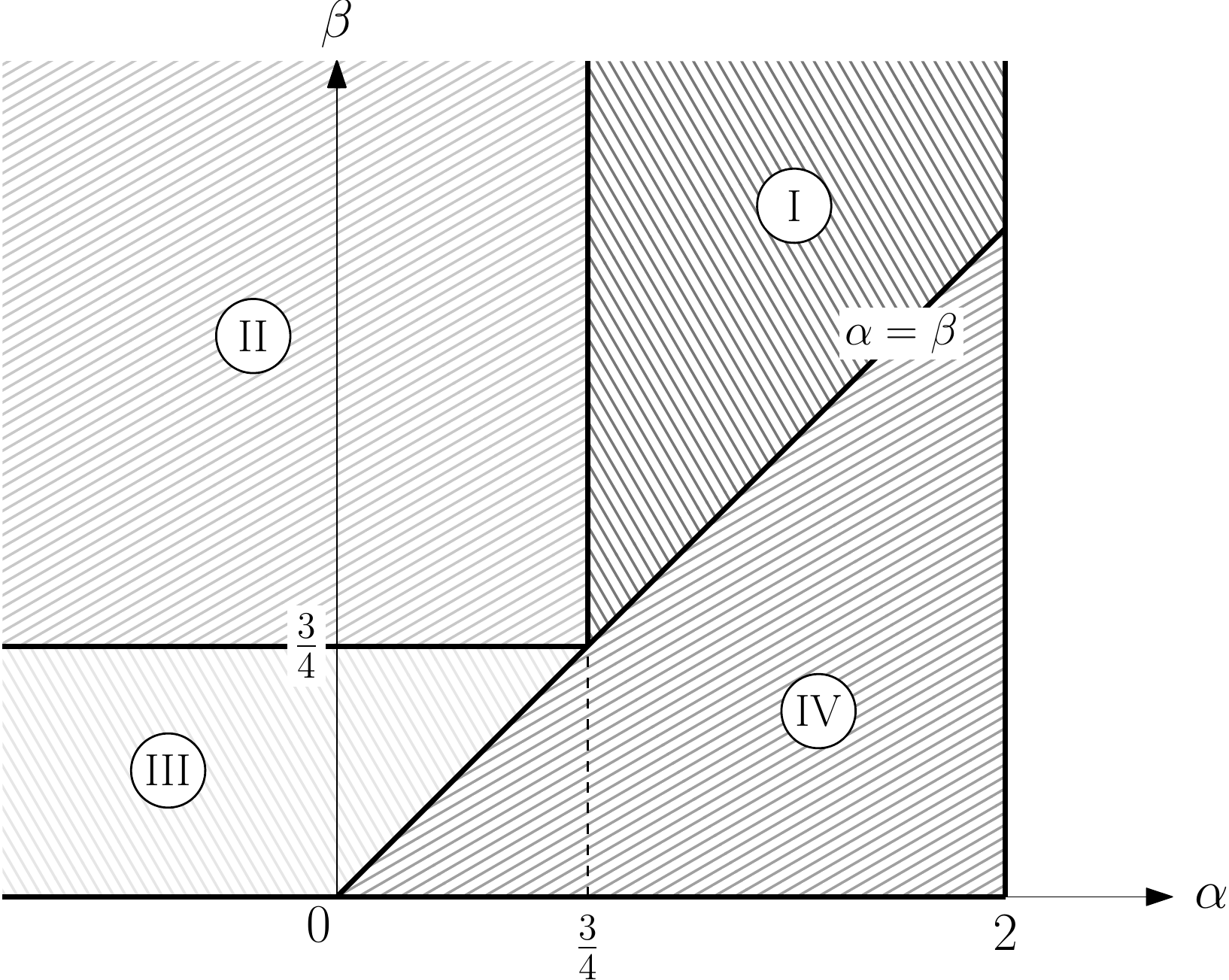}
\caption{\label{fig:phase_diagram} Near-critical percolation with impurities displays different behaviors according to the values of the exponents $\alpha$ and $\beta$. Here, $\frac{3}{4} = \frac{1}{\nu}$, where $\nu$ is the critical exponent for the characteristic length $L$.}

\end{center}
\end{figure}

First, note that as a special case, our framework contains classical near-critical percolation, studied in \cite{Ke1987, CLN2006, NW2009, GPS2013a}. Indeed, near-critical percolation with parameter $p < p_c$ can be constructed from the critical regime by performing single-site updates, i.e. letting the sites independently switch from occupied to vacant. It is obtained by taking $\rho^{(m)} = \delta_0$ (i.e. the Dirac mass at $0$, so that only impurities of radius $0$ are created) and $\pi^{(m)} \asymp \frac{1}{m^2 \pi_4(m)} = m^{- \frac{1}{\nu} + o(1)}$, where $\nu = \frac{4}{3}$ is the critical exponent associated with $L$. This means that if we start from the critical regime and update the sites with a probability $\pi^{(m)} \asymp m^{-\beta}$, the resulting configuration is subcritical for $\beta < \frac{1}{\nu} = \frac{3}{4}$, and it stays near-critical for $\beta > \frac{1}{\nu}$.

We now briefly discuss the various domains in Figure \ref{fig:phase_diagram}. Observe that a short computation (similar to the one in Lemma \ref{lem:no_crossing} below) shows that the ``density'' of impurities, i.e. the probability for each vertex to be contained in at least one impurity, is of order $m^{\alpha_+ - \beta}$.

\begin{itemize}
\item \underline{Domain I:} As mentioned in the previous section, we show that under appropriate hypotheses, percolation in the complement of the impurities stays comparable, in terms of connectedness, to ordinary percolation near criticality. We want to highlight that this behavior holds even when $\beta$ is very close to $\alpha$, which means that the exponent $\alpha-\beta$ for the density of impurities can be made arbitrarily close to $0$. This stands in contrast with the usual case of single-site updates, where this density has to stay below $m^{- \frac{3}{4} + o(1)}$. Roughly speaking, the behavior exhibited in Domain I comes from the particular way in which updates of sites are ``arranged'' spatially. Since they are grouped into balls, each of them has individually less effect. It emerges from explicit computations that the contribution of pivotal impurities is mainly produced by large impurities, with a radius of order $m$.

\item \underline{Domain II:} Similar properties as in Domain I hold in this case, which is essentially covered by our proofs (see Section \ref{sec:rem_domain2} below). However, the phenomenology in this domain is rather different since the main contribution of pivotal impurities is produced by microscopic impurities. Hence, the fact that the percolation configuration stays near-critical comes essentially from the same reasons as for single-site updates (this classical case corresponds to $\alpha = - \infty$ formally). Furthermore, for a given $\alpha < \frac{3}{4}$, the exponent $\alpha_+ - \beta$ in the density of impurities stays smaller than $\alpha_+ - \frac{3}{4}$, and so cannot be made arbitrarily close to $0$, contrary to Domain~I.

\item \underline{Domain III:} In this case, the configuration with impurities is clearly dominated by a configuration of Bernoulli percolation, obtained by using the same $\pi^{(m)}$, but with single-site updates (i.e. $\rho^{(m)} = \delta_0$). We know that in this case, the resulting configuration is subcritical for $\beta < \frac{3}{4}$.

\item \underline{Domain IV:} When $\alpha > \beta$, the process is completely ``degenerate''. For example, it is easy to see that for all $K > 0$, with high probability (as $m \to \infty$), there exists an impurity centered on some $v \in \Ball_{Km}$ that covers entirely $\Ball_{Km}$.
\end{itemize}

We conclude this discussion by mentioning some works with a somewhat similar flavor, although the techniques and questions that we are studying in this paper are quite different in nature. Percolation on fractal-like graphs has been studied in e.g. \cite{Sh1996, Ku1997, Sh2002, Sh2003, HW2008, KT2014} (see also the discussion in Section 2.1 of \cite{MSW2017}). There is also an extensive literature about a random walk / Brownian motion among randomly distributed obstacles: see for example the classical reference \cite{Sz1998}, the recent review \cite{ADS2017}, and the references therein.

\subsection{Organization of the paper}

Section \ref{sec:percolation} contains preliminaries about usual Bernoulli percolation. We first set notations, and then we collect classical results on the behavior of two-dimensional percolation through its phase transition, i.e. at and near its critical point.

In Sections \ref{sec:process_holes} to \ref{sec:other_stability}, we analyze the percolation process with heavy-tailed impurities. We introduce it and present some of its properties in Section \ref{sec:process_holes}. Section \ref{sec:four_arm_stability} is devoted to stating and proving a stability result for four-arm events in the near-critical regime. This property is instrumental to derive further stability results, which we do in Section \ref{sec:other_stability}, culminating with a volume estimate for the largest connected component in a box.

We then make the connection with forest fire processes in Section \ref{sec:application_FF}: we introduce the exceptional scales $(m_k(\zeta))_{k \geq 1}$, and we collect some of their properties. We also explain how forest fires can be coupled to the process with impurities. In Section \ref{sec:existence_excep_scales}, we present some applications of the results developed earlier: we show that the scales $m_k$ are indeed exceptional for forest fire processes without recovery. Finally, in Section \ref{sec:forest_fires}, we briefly discuss forest fire processes with recovery.

\section{Phase transition of two-dimensional percolation} \label{sec:percolation}

Our results rely heavily on a precise understanding of 2D percolation at and near criticality. We start by setting notations in Section \ref{sec:setting}. We then list in Section \ref{sec:near_critical_perc} all the classical properties which are needed later, before deriving some additional results in Section \ref{sec:add_results}.

\subsection{Setting and notations} \label{sec:setting}

In the present paper, we work with the triangular lattice $\TT = (V, E)$, with vertex set
$$V := \big\{ x + y e^{i \pi / 3} \in \CC \: : \: x, y \in \ZZ \big\}$$
and edge set $E := \{ \{v, v'\} \: : \: v, v' \in V \text{ with } |v - v'| = 1 \}$ (using the standard identification $\RR^2 \simeq \CC$). Two vertices $v, v' \in V$ are said to be neighbors if they are connected by an edge, and we denote it by $v \sim v'$. From now on, we always use the $L^{\infty}$ norm $\|.\| = \|.\|_{\infty}$. The inner (resp. outer) boundary of a subset $A \subseteq V$ is defined as $\din A := \{v \in A \: : \: v \sim v'$ for some $v' \in A^c\}$ (resp. $\dout A := \din (A^c)$), and its volume, denoted by $|A|$, is simply the number of vertices that it contains.

Recall that Bernoulli site percolation on $\TT$ with parameter $p \in [0,1]$ is obtained by declaring each vertex $v \in V$ either occupied or vacant, with respective probabilities $p$ and $1-p$, independently of the other vertices. We denote by $\PP_p$ the corresponding product probability measure on configurations of sites $(\omega_v)_{v \in V} \in \{0,1\}^V =: \Omega$.

A path of length $k$ ($k \geq 1$) is a sequence of vertices $v_0 \sim v_1 \sim \ldots \sim v_k$. Two vertices $v, v' \in V$ are connected (denoted by $v \lra v'$) if there exists a path of length $k$ from $v$ to $v'$, for some $k \geq 1$, containing only occupied sites (in particular, $v$ and $v'$ have to be occupied). More generally, two subsets $A, A' \subseteq V$ are connected if there exist $v \in A$ and $v' \in A'$ such that $v \lra v'$, which we denote by $A \lra A'$. Occupied vertices can be grouped into maximal connected components, or clusters. For a vertex $v \in V$, we denote by $\cluster(v)$ the occupied cluster of $v$, setting $\cluster(v) = \emptyset$ when $v$ is vacant. We write $v \lra \infty$ for the event that $|\cluster(v)| = \infty$, i.e. $v$ lies in an infinite occupied cluster, and we introduce $\theta(p) := \PP_p(0 \lra \infty)$. Site percolation on $\TT$ displays a phase transition at the percolation threshold $p_c = p_c^{\textrm{site}}(\TT)$, and it is now a classical result \cite{Ke1980} that $p_c = \frac{1}{2}$. Moreover, it is also known that $\theta(p_c) = 0$. Hence, for each $p \leq p_c = \frac{1}{2}$, there is almost surely no infinite cluster, while for $p > \frac{1}{2}$, there is almost surely a unique such cluster. The reader can consult the classical references \cite{Ke1982, Gr1999} for more background on percolation theory.

For a rectangle of the form $R = [x_1,x_2] \times [y_1,y_2]$ ($x_1 < x_2$, $y_1 < y_2$), an occupied path in $R$ ``connecting the left and right (resp. top and bottom) sides'' is called a horizontal (resp. vertical) crossing. Here, we use quotation marks because $R$ does not exactly ``fit'' the triangular lattice $\TT$, so the definition needs to be made more accurate: this can be done easily, see for instance Definition 1 in Section 3.3 of \cite{Ke1982} (the same remark applies to arm events, defined below). The event that such a crossing exists is denoted by $\Ch(R)$ (resp. $\Cv(R)$). We also write $\Ch^*(R)$ and $\Cv^*(R)$ for the corresponding events with paths of vacant vertices.

Let $\Ball_n := [-n,n]^2$ be the ball of radius $n \geq 0$ around $0$ for $\|.\|_{\infty}$. For $0 \leq n_1 < n_2$, we denote by $\Ann_{n_1,n_2} := \Ball_{n_2} \setminus \Ball_{n_1}$ the annulus with radii $n_1$ and $n_2$ centered at $0$. For $z \in \CC$, we write $\Ball_n(z) := z + \Ball_n$, and $\Ann_{n_1,n_2}(z) := z + \Ann_{n_1,n_2}$. Finally, we denote $\Ann_{n_1,\infty}(z) := (\Ball_{n_1}(z))^c$. For an annulus $A = \Ann_{n_1,n_2}(z)$ ($0 \leq n_1 < n_2 < \infty$, $z \in \CC$), we denote by $\circuitevent(A)$ (resp. $\circuitevent^*(A)$) the existence of an occupied (resp. vacant) circuit in $A$. For $k \geq 1$ and $\sigma \in \colorseq_k := \{o,v\}^k$ (where $o$ and $v$ stand for ``occupied'' and ``vacant'', resp.), we introduce the arm event $\arm_{\sigma}(A)$ that there exist $k$ disjoint paths $(\gamma_i)_{1 \leq i \leq k}$ in $A$, in counter-clockwise order, each with type prescribed by $\sigma_i$ (i.e. occupied or vacant path) and connecting $\dout \Ball_{n_1}(z)$ to $\din \Ball_{n_2}(z)$. We use the notation
\begin{equation} \label{eq:def_pi}
\pi_{\sigma}(n_1,n_2) := \PP_{p_c}\big( \arm_{\sigma}(\Ann_{n_1,n_2}) \big),
\end{equation}
and we write $\pi_{\sigma}(n) := \pi_{\sigma}(1,n)$. For $k \geq 1$, we use the shorthand notations $\arm_k$ and $\pi_k$ in the particular case when $\sigma =(ovo\ldots) \in \colorseq_k$ is alternating.

\begin{remark} \label{rem:triang_lattice}
Note that even if we expect our methods to work for any lattice with enough symmetries, such as $\ZZ^2$, as well as for analogous processes defined in terms of bond percolation, we have to focus on site percolation on $\TT$. Indeed, it is the case for which the most precise results are known (especially \eqref{eq:arm_exponent} below), thanks to the SLE (Schramm-Loewner Evolution) technology. Our proofs require a good control on arm events, as explained in the beginning of Section \ref{sec:stability_other_arm}, and at the moment, the bounds available for other lattices are not sufficiently accurate. It was also the case in \cite{BKN2015} that the main results could be established for the triangular lattice only. However, the construction of the scaling limit of near-critical percolation \cite{GPS2013a} was a crucial ingredient in \cite{BKN2015}, while it is not needed here.
\end{remark}

\subsection{2D percolation at and near criticality} \label{sec:near_critical_perc}

The usual characteristic length $L$ is defined by:
\begin{equation} \label{eq:def_L}
\text{for $p < p_c = \frac{1}{2}$,} \quad L(p) := \min \big\{ n \geq 1 \: : \: \PP_p \big( \Cv( [0,2n] \times [0,n] ) \big) \leq 0.001 \big\},
\end{equation}
and $L(p) = L(1-p)$ for $p > p_c$. It follows from the Russo-Seymour-Welsh (RSW) bounds that at $p = p_c$, the probability in the right-hand side of \eqref{eq:def_L} is $> 0.001$ for all $n \geq 1$, so $L(p) \to \infty$ as $p \to p_c$, and we define $L(p_c) := \infty$. In the present paper, we consider a regularized version $\tilde L$, defined as follows. First, we set $\tilde L(p) = L(p)$ at each point of discontinuity $p \in(0,p_c) \cup (p_c,1)$ of $L$, $L(0) = L(1) = 0$, and then we extend linearly $\tilde L$ to $[0,1] \setminus \{p_c\}$. The function $\tilde L$ has the additional property of being continuous and strictly increasing (resp. strictly decreasing) on $[0,p_c)$ (resp. $(p_c,1]$). In particular, it is a bijection from $[0,p_c)$ (resp. $(p_c,1]$) to $[0,\infty)$. In the following, we simply write $L$ instead of $\tilde L$.

Throughout the paper, we make use of the following classical properties of Bernoulli percolation, at and near the critical point $p_c$.

\begin{enumerate}[(i)]
\item \emph{RSW-type bounds.} For all $K \geq 1$, there exists a constant $\delta_4 = \delta_4(K) > 0$ such that: for all $p \in (0,1)$ and $n \leq K L(p)$,
\begin{equation} \label{eq:RSW}
\PP_p \big( \Ch( [0,4n] \times [0,n] ) \big) \geq \delta_4 \quad \text{and} \quad \PP_p \big( \Ch^*( [0,4n] \times [0,n] ) \big) \geq \delta_4.
\end{equation}
Note that since $L(p_c) = \infty$, the first inequality actually holds for all $p \geq p_c$ and $n \geq 1$.

\item \emph{Exponential decay property.} There exist universal constants $C_1, C_2 > 0$ such that: for all $p > p_c$ and $n \geq 1$,
\begin{equation} \label{eq:exp_decay}
\PP_p \big( \Ch( [0, 4n] \times [0,n] ) \big) \geq 1 - C_1 e^{- C_2 \frac{n}{L(p)}}
\end{equation}
(see Lemma 39 in \cite{No2008}).

\item \emph{Extendability of arm events.} For all $k \geq 1$ and $\sigma \in \colorseq_k$, there exists a constant $C > 0$ (that depends on $\sigma$ only) such that: for all $0 \leq n_1 < n_2$,
\begin{equation} \label{eq:extendability}
\pi_{\sigma} \Big( \frac{n_1}{2}, n_2 \Big), \: \pi_{\sigma}(n_1, 2 n_2) \geq C \pi_\sigma(n_1, n_2)
\end{equation}
(see Proposition 16 in \cite{No2008}).

\item \emph{Quasi-multiplicativity of arm events.} For all $k \geq 1$ and $\sigma \in \colorseq_k$, there exist $C_1, C_2 > 0$ (depending only on $\sigma$) such that: for all $0 \leq n_1 < n_2 < n_3$,
\begin{equation} \label{eq:quasi_mult}
C_1 \pi_{\sigma}(n_1, n_3) \leq \pi_{\sigma}(n_1, n_2) \pi_{\sigma}(n_2, n_3) \leq C_2 \pi_{\sigma}(n_1, n_3)
\end{equation}
(see Proposition 17 in \cite{No2008}).

\item \emph{Arm exponents at criticality.} For all $k \geq 1$, and $\sigma \in \colorseq_k$, there exists $\alpha_{\sigma} > 0$ such that
\begin{equation} \label{eq:arm_exponent}
\pi_{\sigma}(k,n) = n^{- \alpha_{\sigma} + o(1)} \quad \text{as $n \to \infty$.}
\end{equation}
Moreover, the value of $\alpha_{\sigma}$ is known, except in the monochromatic case (for $k \geq 2$ arms of the same type).
\begin{itemize}
\item For $k=1$, $\alpha_{\sigma} = \frac{5}{48}$.

\item For all $k \geq 2$, and $\sigma \in \colorseq_k$ containing both types, $\alpha_{\sigma} = \frac{k^2-1}{12}$.
\end{itemize}
These arm exponents were derived in \cite{LSW2002, SW2001}, based on the conformal invariance property of critical percolation \cite{Sm2001} and properties of the Schramm-Loewner Evolution (SLE) processes (with parameter $6$, here) \cite{LSW_2001a, LSW_2001b}.

\item \emph{Upper bound on monochromatic arm events.} For all $k \geq 2$, let $\sigma = (o \ldots o) \in \colorseq_k$ be the monochromatic sequence of length $k$. There exist $C_k, \beta_k > 0$ such that: for all $0 \leq n_1 < n_2$,
\begin{equation} \label{eq:monochromatic}
\pi_{\sigma}(n_1, n_2) \leq C_k \bigg( \frac{n_1}{n_2} \bigg)^{\beta_k} \pi_k(n_1, n_2).
\end{equation}
This follows from the proof of Theorem 5 in \cite{BN2011} (see in particular Step 1).

\item \emph{Stability for arm events near criticality.} For all $k \geq 1$, $\sigma \in \colorseq_k$, and $K \geq 1$, there exist constants $C_1, C_2 > 0$ (depending on $\sigma$ and $K$) such that: for all $p \in (0,1)$, and all $0 \leq n_1 < n_2 \leq K L(p)$,
\begin{equation} \label{eq:near_critical_arm}
C_1 \pi_{\sigma}(n_1, n_2) \leq \PP_p \big( \arm_{\sigma}(\Ann_{n_1, n_2}) \big) \leq C_2 \pi_{\sigma}(n_1, n_2)
\end{equation}
 (see Theorem 27 in \cite{No2008}).

\item \emph{Asymptotic equivalences for $\theta$ and $L$.} We have
\begin{equation} \label{eq:equiv_theta}
\theta(p) \asymp \pi_1(L(p)) \quad \text{as $p \searrow p_c$}
\end{equation}
(see Theorem 2 in \cite{Ke1987}, or (7.25) in \cite{No2008}), and
\begin{equation} \label{eq:equiv_L}
\big| p - p_c \big| L(p)^2 \pi_4 \big( L(p) \big) \asymp 1 \quad \text{as $p \to p_c$}
\end{equation}
(see (4.5) in \cite{Ke1987}, or Proposition 34 in \cite{No2008}).

\item \emph{A-priori bounds on arm events.} There exist universal constants $C_i > 0$ ($1 \leq i \leq 4$) and $\beta_j > 0$ ($1 \leq j \leq 3$) such that the following inequalities hold. For all $p \in (0,1)$ and $0 \leq n_1 < n_2 \leq L(p)$,
\begin{equation} \label{eq:1arm}
C_1 \bigg( \frac{n_1}{n_2} \bigg)^{1/2} \leq \PP_p \big( \arm_1(\Ann_{n_1, n_2}) \big) \leq C_2 \bigg( \frac{n_1}{n_2} \bigg)^{\beta_1}
\end{equation}
(the upper bound is an immediate consequence of \eqref{eq:RSW}, while the lower bound follows from the van den Berg-Kesten inequality), and
\begin{equation} \label{eq:4arms}
C_3 \bigg( \frac{n_1}{n_2} \bigg)^{2 - \beta_2} \leq \PP_p \big( \arm_4(\Ann_{n_1, n_2}) \big) \leq C_4 \bigg( \frac{n_1}{n_2} \bigg)^{1 + \beta_3}.
\end{equation}
The left-hand inequality in \eqref{eq:4arms} follows from the ``universal'' arm exponent for $\arm_5$ which is equal to $2$ (see Theorem 24 (3) in \cite{No2008}) together with \eqref{eq:1arm}, and the right-hand inequality follows from Corollary 2 in \cite{Ke1987} (more precisely, the lower bound on the critical exponent $\nu$ associated with $L$) combined with \eqref{eq:equiv_L}.

\item \emph{Volume estimates.} Let $(n_k)_{k \geq 1}$ be a sequence of integers, with $n_k \to \infty$ as $k \to \infty$, and $(p_k)_{k \geq 1}$ satisfying $p_c < p_k < 1$. If $L(p_k) \ll n_k$ as $k \to \infty$, then
\begin{equation} \label{eq:largest_cluster}
\text{for all $\ve > 0$,} \quad \PP_{p_k} \left( \frac{|\lclus_{\Ball_{n_k}}|}{\theta(p_k) |\Ball_{n_k}|} \notin (1 - \ve, 1 + \ve) \right) \stackrel[k \to \infty]{}{\longrightarrow} 0,
\end{equation}
where we denote by $|\lclus_{\Ball_{n_k}}|$ the volume of the largest occupied cluster in $\Ball_{n_k}$ (see Theorem 3.2 in \cite{BCKS2001}).
\end{enumerate}

\subsection{Additional results} \label{sec:add_results}

The following geometric construction is used repeatedly in our proofs.

\begin{definition} \label{def:net}
For $\kappa, n \geq 1$, let $\net_p(n,\kappa)$ be the event that there exists a $p$-occupied crossing in the long direction in each of the (horizontal and vertical) rectangles of the form
$$\frac{\kappa}{2} \big( [-4, 4] \times [-1, 1] + (6i, 6j-3) \big) \quad \text{and} \quad \frac{\kappa}{2} \big( [-1, 1] \times [-4, 4] + (6i-3, 6j) \big)$$
($i$, $j$ integers) that intersect the box $\Ball_n$ (see Figure \ref{fig:net}).
\end{definition}

\begin{figure}
\begin{center}

\includegraphics[width=.5\textwidth]{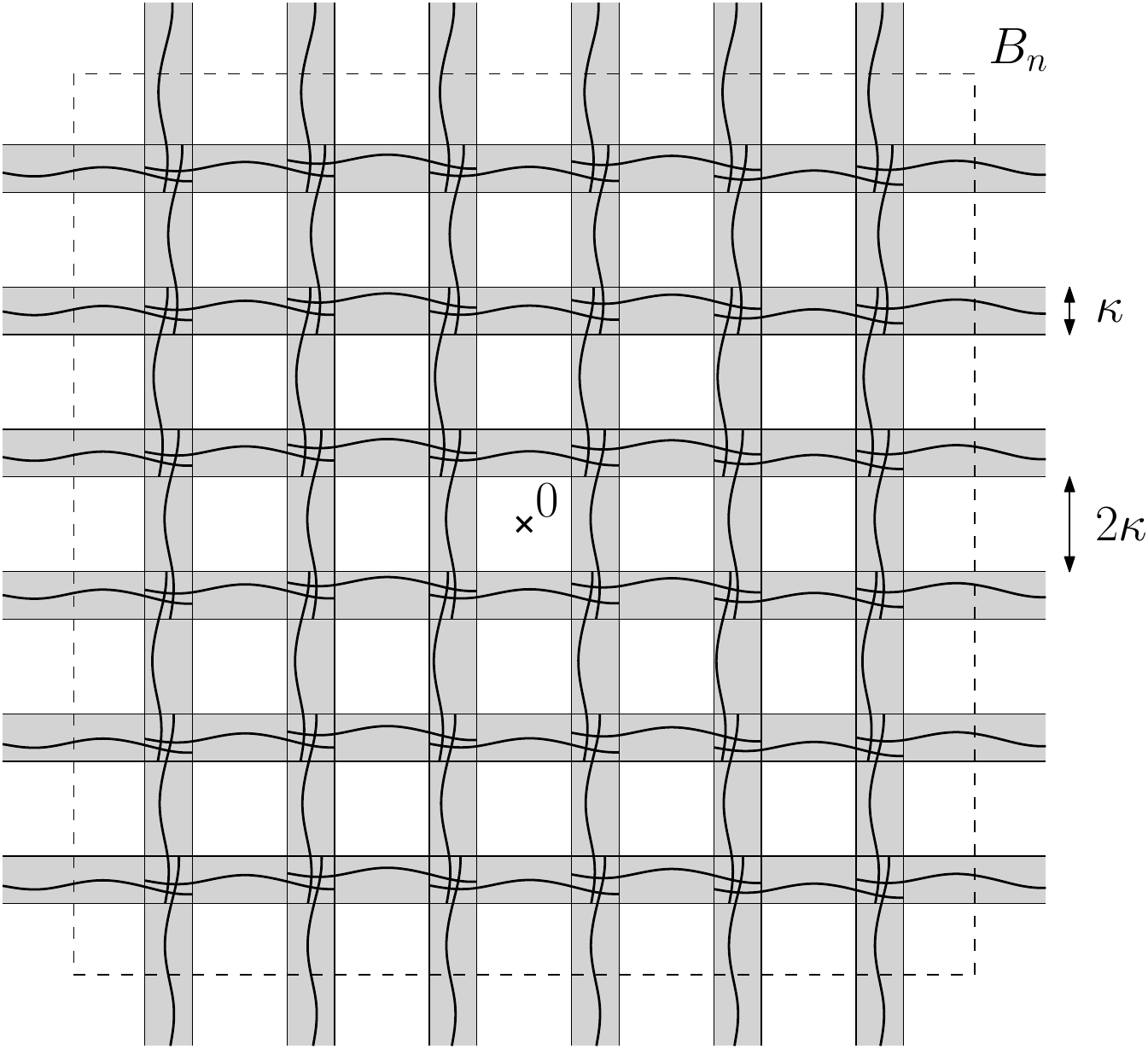}
\caption{\label{fig:net} This figure depicts the event $\net_p(n,\kappa)$ that there exists a net ``spanning'' the box $\Ball_n$. The definition of this event only involves the sites in the gray areas: in particular, it does not depend on the sites in $\Ball_{\kappa}$.}

\end{center}
\end{figure}

For the percolation configuration inside $\Ball_n$, the event $\net_p(n, \kappa)$ implies the existence of a $p$-occupied connected set $\net$ such that all the connected components of its complement (so in particular, all the $p$-occupied and $p$-vacant connected components other than the cluster $\cluster_{\net}$ of $\net$ in $\Ball_n$) have a diameter at most $4 \kappa$. Such a set $\net$ is called \emph{net with mesh $\kappa$}. Note also that $\net_p(n,\kappa)$ does not depend on the sites in $\Ball_\kappa$.

\begin{lemma} \label{lem:net}
There exist universal constants $C_1, C_2 > 0$ such that: for all $n \geq \kappa \geq 1$ and $p > p_c$,
\begin{equation} \label{eq:net}
\PP \big( \net_p(n,\kappa) \big) \geq 1 - C_1 \Big( \frac{n}{\kappa} \Big)^2 e^{-C_2 \frac{\kappa}{L(p)}}.
\end{equation}
\end{lemma}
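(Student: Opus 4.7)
The plan is to apply the exponential decay estimate \eqref{eq:exp_decay} rectangle by rectangle and then take a union bound. Each rectangle appearing in Definition \ref{def:net} is (up to relabelling sides) a translate of $\frac{\kappa}{2}[-4,4] \times \frac{\kappa}{2}[-1,1]$, so it has dimensions $4\kappa \times \kappa$, and a crossing in the long direction is precisely a horizontal crossing of a $4m \times m$ rectangle with $m = \kappa$. Hence by \eqref{eq:exp_decay}, for each such rectangle $R$ and each $p > p_c$,
\begin{equation*}
\PP_p\big(\Ch(R)\big) \;\geq\; 1 - C_1 e^{-C_2 \kappa/L(p)},
\end{equation*}
with universal constants $C_1, C_2 > 0$ (up to relabelling for the vertical rectangles, which is allowed by the symmetries of $\TT$ used in Remark \ref{rem:triang_lattice}).

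Next I would count the rectangles. The horizontal rectangles are indexed by pairs $(i,j) \in \ZZ^2$ and are centred at $\frac{\kappa}{2}(6i,6j-3)$; the vertical ones are centred at $\frac{\kappa}{2}(6i-3,6j)$. Thus centres are spaced by $3\kappa$ in each direction, and each rectangle has diameter $O(\kappa)$. Consequently the number of rectangles of either orientation that intersect $\Ball_n$ is at most $C\,(n/\kappa + 1)^2 \leq C'\,(n/\kappa)^2$ (using $n \geq \kappa$).

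A union bound over these $O((n/\kappa)^2)$ rectangles then yields
\begin{equation*}
\PP_p\big(\net_p(n,\kappa)\big) \;\geq\; 1 - C'\,\Big(\tfrac{n}{\kappa}\Big)^2 \cdot C_1 e^{-C_2 \kappa/L(p)},
\end{equation*}
which is exactly \eqref{eq:net} after relabelling constants. The one mildly delicate point is that \eqref{eq:exp_decay} is stated for axis-aligned rectangles of integer dimensions $4n \times n$, while the rectangles in Definition \ref{def:net} come with factors of $\kappa/2$; this is handled by the standard discretisation comment in Section 3.3 of \cite{Ke1982} (already invoked in the definition of crossings) and by absorbing the constant factor $1/2$ into $C_2$. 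No other step is substantive; in particular, since the bound is trivial when $\kappa/L(p) = O(1)$ (the right-hand side can then be made negative or meaningless by enlarging $C_1$), we only need the estimate to be useful in the regime $\kappa \gg L(p)$, where the exponential factor dominates the polynomial loss from the union bound.
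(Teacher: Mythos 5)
Your proof is correct and follows essentially the same argument the paper uses: count the $O((n/\kappa)^2)$ rectangles of dimensions $4\kappa \times \kappa$ appearing in Definition \ref{def:net}, apply the exponential decay estimate \eqref{eq:exp_decay} to each, and take a union bound. (One small note: since $\tfrac{\kappa}{2}[-4,4]\times\tfrac{\kappa}{2}[-1,1]$ already has side lengths exactly $4\kappa$ and $\kappa$, no rescaling of $C_2$ is actually needed there.)
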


\begin{proof}[Proof of Lemma \ref{lem:net}]
Observe that the definition of $\net_p(n,\kappa)$ involves of order $\big( \frac{n}{\kappa} \big)^2$ rectangles, each with side lengths $\kappa$ and $4\kappa$. Hence, \eqref{eq:net} is an immediate consequence of the exponential decay property \eqref{eq:exp_decay}.
\end{proof}

Recall the following exponential upper bound for the probability of observing abnormally large clusters (see Lemma 4.4 in \cite{BKN2015}).
\begin{lemma} \label{lem:BCKS}
There exist universal constants $C_1, C_2, X > 0$ such that: for all $p > p_c$, $n \geq L(p)$, and $x \geq X$,
\begin{equation} \label{eq:BCKS}
\PP_p \left( \big| \lclus_{\Ball_n} \big| \geq x n^2 \theta(p) \right) \leq C_1 e^{- C_2 x \frac{n^2}{L(p)^2}}.
\end{equation}
\end{lemma}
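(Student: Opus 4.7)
The plan is to exploit coarse graining at the characteristic scale $L := L(p)$, so that the concentration rate $n^2/L^2$ emerges naturally as the effective number of independent ``cells'' at that scale, analogously to Lemma 4.4 in \cite{BKN2015}.

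I would first establish a \emph{single-cell estimate}: for any box $B$ of side $\asymp L$,
\[
\PP_p\!\left(|\lclus_B| \geq y L^2 \theta(p)\right) \leq C e^{-c y} \qquad (y \geq y_0).
\]
In this regime we are inside the near-critical window, so by \eqref{eq:near_critical_arm} connection probabilities in $B$ are comparable to those at $p_c$, and typical cluster sizes at the full scale $L$ are of order $L^2 \pi_1(L) \asymp L^2 \theta(p)$ via \eqref{eq:equiv_theta}. Exponential control of the tail follows from RSW-based exponential decay of vacant dual circuits (from \eqref{eq:RSW} iterated over dyadic annuli), which would have to be broken by any cluster abnormally larger than $L^2\theta(p)$.

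Next, extend to general $n \geq L$ by tiling $\Ball_n$ into $M \asymp (n/L)^2$ sub-boxes of side $L$, and applying Lemma~\ref{lem:net} at scale $\kappa \asymp L$: with probability $\geq 1 - C(n/L)^2 e^{-c}$, there is a net spanning $\Ball_n$, so any cluster in $\Ball_n$ is either the net cluster or has diameter $\leq 4L$, hence volume at most $C L^2 \ll x n^2 \theta(p)$ once $n \geq L$. The contribution of $\lclus_{\Ball_n}$ to each sub-box $B_i$ is therefore bounded by the largest cluster in a slight enlargement of $B_i$, for which the single-cell tail applies.

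To upgrade the single-cell bound into the concentration rate $e^{-C_2 x n^2 / L^2}$, I would use a Chernoff-type exponential moment argument: the rescaled per-cell contributions $V_i / (L^2 \theta(p))$ have uniformly bounded exponential moments (by the single-cell estimate), and contributions from well-separated sub-boxes are approximately independent, using the exponential decay \eqref{eq:exp_decay} of dual connections at distances $\geq L$. The main obstacle is handling correlations between neighboring cells: this is dealt with by restricting the sum to a sparse sub-lattice of sub-boxes at mutual distance $\gtrsim L$ (a standard ``chessboard'' decomposition), so that the selected contributions are nearly independent, while the remaining cells contribute only lower-order terms that can be absorbed into the constants $C_1, C_2$ and the threshold $X$.
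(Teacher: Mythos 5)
Note first that the paper does not actually give a proof of this lemma: it is quoted verbatim from Lemma~4.4 of \cite{BKN2015}, so there is no ``paper's argument'' to compare against beyond that citation. Your high-level plan -- coarse-grain $\Ball_n$ at the characteristic scale $L = L(p)$ into $\asymp (n/L)^2$ cells, use a net to dispose of small clusters, and run a chessboard Chernoff argument on the per-cell volumes -- is indeed the right shape, and it is essentially the shape of the BCKS/BKN argument.

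The genuine gap is the \emph{single-cell estimate}, which you state as $\PP_p(|\lclus_B| \geq y L^2\theta(p)) \leq C e^{-c y}$ for boxes $B$ of side $\asymp L$. Two problems. First, this estimate is just the lemma itself specialized to $n = L$, so treating it as an available input is circular: it is exactly the hard core of what has to be proved. Second, and more importantly, your proposed justification for it -- ``RSW-based exponential decay of vacant dual circuits iterated over dyadic annuli'' -- does not apply at scales $\leq L(p)$. Within the near-critical window, vacant circuits in annuli $\Ann_{2^h, 2^{h+1}}$ with $2^{h+1} \leq L(p)$ have probability uniformly bounded away from $0$ by \eqref{eq:RSW}; there is no exponential decay to iterate. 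The exponential decay \eqref{eq:exp_decay} only kicks in at scales \emph{above} $L(p)$, which is why it controls large-diameter clusters across many independent $L$-scale annuli (this is the net argument, Lemma~\ref{lem:net}), but says nothing about the \emph{volume} of a cluster confined to a single $L$-box. In a single cell, the largest cluster volume is a near-critical quantity with a priori only polynomial tail control (e.g.\ via arm events and \eqref{eq:arm_exponent}), and a diameter bound does not translate into a volume bound: a cluster can have modest diameter but still fill a macroscopic fraction of the box. So the single-cell exponential tail, while true, is not delivered by the mechanism you describe.

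This gap is not a removable formality. If you replace the single-cell exponential-moment bound by the only thing that is immediate -- the trivial bound that each cell contributes at most $|B_i| \asymp L^2$ vertices -- the Chernoff/Bernstein step only produces an exponent of order $x\, (n/L)^2\, \theta(p)$, i.e.\ weaker than the claimed $x\,(n/L)^2$ by the factor $\theta(p)$, which is vanishing in the regime of interest. In other words, you really do need the per-cell random variable $|\lclus_{B_i}|/(L^2\theta(p))$ to have a uniformly bounded exponential moment at a fixed positive parameter, and this is exactly where a nontrivial percolation estimate must go in. The remaining steps (the net argument to reduce to per-cell contributions, the chessboard decomposition for independence, the Chernoff bookkeeping, the use of the threshold $X$ to absorb constants) are all fine once that input is secured.
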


\begin{remark} \label{rem:BCKS}
Note that with high probability as $k \to \infty$, $\lclus_{\Ball_{n_k}}$ in \eqref{eq:largest_cluster} contains a net $\net$ with mesh $(n_k L(p_k))^{1/2}$. Indeed, it follows from Lemma \ref{lem:net} that $\net$ exists with high probability, and such an $\net$ then subdivides $\Ball_{n_k}$ into of order $\frac{n_k}{L(p_k)}$ ``cells'', each with a diameter at most $4 (n_k L(p_k))^{1/2}$. Hence, Lemma \ref{lem:BCKS} implies that the probability that one cluster, other than $\cluster_{\net}$, has a volume $\geq \frac{1}{2} \theta(p_k) |\Ball_{n_k}|$ is at most
\begin{equation}
C'_1 \frac{n_k}{L(p_k)} e^{- C_2 x \frac{(4 (n_k L(p_k))^{1/2})^2}{L(p_k)^2}}, 
\end{equation}
with $x = \frac{1}{2} |\Ball_{n_k}| / (4 (n_k L(p_k))^{1/2})^2 \asymp n_k / L(p_k) \to \infty$ as $k \to \infty$.
\end{remark}

We will also need a more uniform version of \eqref{eq:arm_exponent}.

\begin{lemma} \label{lem:ratio_limit}
For all $k \geq 1$, $\sigma \in \colorseq_k$, and $\ve > 0$, there exist $0 < C_1 < C_2$ (depending on $\sigma$ and $\ve$) such that: for all $0 \leq n_1 < n_2$,
\begin{equation}
C_1 \bigg( \frac{n_1}{n_2} \bigg)^{\alpha_{\sigma} + \ve} \leq \pi_{\sigma}(n_1, n_2) \leq C_2 \bigg( \frac{n_1}{n_2} \bigg)^{\alpha_{\sigma} - \ve}.
\end{equation}
\end{lemma}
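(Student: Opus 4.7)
The plan is to combine the asymptotic \eqref{eq:arm_exponent}, quasi-multiplicativity \eqref{eq:quasi_mult}, and extendability \eqref{eq:extendability}, and then iterate across scales while exploiting the $\varepsilon$ slack to absorb accumulated constants. The approach has three steps.

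First, I would upgrade \eqref{eq:arm_exponent} to uniform one-sided power-law bounds for $\pi_\sigma(1, n)$: for every $\eta > 0$ there are $c(\eta), C(\eta) > 0$ with $c(\eta) n^{-\alpha_\sigma - \eta} \leq \pi_\sigma(1,n) \leq C(\eta) n^{-\alpha_\sigma + \eta}$ for every $n \geq 1$. This follows from \eqref{eq:arm_exponent} for $n$ above some threshold $N_0(\eta)$, and from the trivial bound $\pi_\sigma \leq 1$ together with the monotonicity and positivity of $\pi_\sigma(1, \cdot)$ for $n < N_0$.

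The heart of the argument is the following scale-invariance claim: for $R = R(\varepsilon)$ large enough,
\begin{equation*}
R^{-\alpha_\sigma - \varepsilon/2} \;\leq\; \pi_\sigma(n, Rn) \;\leq\; R^{-\alpha_\sigma + \varepsilon/2} \qquad \text{uniformly in } n \geq 1.
\end{equation*}
Granted this, I finish by iteration: for any $n_1 < n_2$, set $k = \lfloor \log_R(n_2/n_1) \rfloor$, so $R^k n_1 \leq n_2 < R^{k+1} n_1$; monotonicity gives $\pi_\sigma(n_1, n_2) \leq \pi_\sigma(n_1, R^k n_1)$, and iterated application of \eqref{eq:quasi_mult} across the nested annuli $\Ann_{R^i n_1, R^{i+1} n_1}$ gives $\pi_\sigma(n_1, R^k n_1) \leq C_{qm}^{k-1} R^{-k(\alpha_\sigma - \varepsilon/2)}$. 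Enlarging $R$ so that additionally $C_{qm} \leq R^{\varepsilon/4}$ makes the accumulated constants at most $R^{k \varepsilon/4}$, so the full product is bounded by $R^{-k(\alpha_\sigma - \varepsilon)} \leq C (n_1/n_2)^{\alpha_\sigma - \varepsilon}$. The short-range case $n_2 < Rn_1$ is trivial since $(n_1/n_2)^{\alpha_\sigma - \varepsilon}$ is bounded below by $R^{-\alpha_\sigma + \varepsilon}$. The lower bound proceeds symmetrically using the matching lower bound of the scale-invariance claim.

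The main obstacle is proving the scale-invariance claim uniformly in $n$. The naive route -- writing $\pi_\sigma(n, Rn) \asymp \pi_\sigma(1, Rn)/\pi_\sigma(1, n)$ via \eqref{eq:quasi_mult} and plugging in the bounds of Step 1 -- introduces a spurious factor $n^{\pm 2\eta}$ that does not vanish for large $n$. To circumvent this I would use \eqref{eq:extendability} together with an RSW-type gluing construction (standard in 2D near-critical percolation) to directly compare $\pi_\sigma(n, Rn)$ with $\pi_\sigma(1, R)$ with constants uniform in $n$: arms in $\Ann_{n, Rn}$ are constructed from configurations in $\Ann_{1, R}$ (or subannuli thereof) by inserting $O(1)$ gluing crossings of rectangles at the appropriate scale. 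Once $\pi_\sigma(n, Rn) \asymp \pi_\sigma(1, R)$ is established, the scale claim reduces to a single application of the uniform bounds on $\pi_\sigma(1, R)$ at the scale $R$, and the iteration above completes the proof.
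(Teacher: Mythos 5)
Your overall iteration architecture (Step 3) is sound and is essentially the right way to finish. You also correctly diagnose the central difficulty in Step 2: quasi-multiplicativity plus one-sided power bounds on $\pi_\sigma(1,\cdot)$ from \eqref{eq:arm_exponent} produce a spurious $n_1^{\pm 2\eta}$ factor, which ruins uniformity in $n_1$. The problem is that your proposed fix --- an RSW gluing to establish $\pi_\sigma(n,Rn) \asymp \pi_\sigma(1,R)$ uniformly in $n$ --- is not valid. RSW and extendability compare arm events at \emph{comparable} scales (e.g.\ extending $\Ann_{n,Rn}$ to $\Ann_{n/2,2Rn}$); they give no way to compare events in $\Ann_{n,Rn}$ with events in $\Ann_{1,R}$, which live at scales differing by a factor of $n$. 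There is no scaling map on the lattice, and ``inserting $O(1)$ gluing crossings'' cannot bridge scales $1$ and $n$. In fact, by quasi-multiplicativity your claim $\pi_\sigma(n,Rn) \asymp \pi_\sigma(1,R)$ with $n$-uniform constants is equivalent to $\pi_\sigma(1,Rn) \asymp \pi_\sigma(1,n)\,\pi_\sigma(1,R)$, a near-exact multiplicativity that is strictly stronger than \eqref{eq:arm_exponent} and is not an RSW consequence.

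The missing ingredient is the \emph{ratio limit theorem} from \cite{SW2001}: for each fixed $\lambda>1$, $\pi_\sigma(n,\lambda n)$ \emph{converges} as $n\to\infty$ to a limit $L(\lambda)$, and moreover $L(\lambda)=\lambda^{-\alpha_\sigma+o(1)}$ as $\lambda\to\infty$. This is an SLE / conformal-invariance input, not an RSW one, and it is precisely what the paper cites alongside quasi-multiplicativity. With it, one picks $\lambda_0=\lambda_0(\varepsilon)$ so that $L(\lambda_0)\in(\lambda_0^{-\alpha_\sigma-\varepsilon/4},\lambda_0^{-\alpha_\sigma+\varepsilon/4})$, then $n_0$ so that $\pi_\sigma(n,\lambda_0 n)$ is within a factor $2$ of $L(\lambda_0)$ for all $n\geq n_0$; the finitely many $n<n_0$ are handled by RSW (giving constants depending only on $\lambda_0,n_0$, hence on $\varepsilon$), and your Step 3 iteration then goes through. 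So your framework is correct, but the central uniform-in-$n$ claim genuinely requires this extra, non-elementary input and cannot be recovered from \eqref{eq:arm_exponent}, \eqref{eq:quasi_mult} and \eqref{eq:extendability} alone.
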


\begin{proof}[Proof of Lemma \ref{lem:ratio_limit}]
This follows easily from \eqref{eq:quasi_mult}, and the property that: for all $k \geq 1$ and $\sigma \in \colorseq_k$,
\begin{equation}
\lim_{n \to \infty} \pi_{\sigma}(n, \lambda n) = \lambda^{- \alpha_{\sigma} + o(1)} \quad \text{as $\lambda \to \infty$}
\end{equation}
(see e.g. \cite{SW2001}).
\end{proof}

\section{Percolation process with heavy-tailed impurities} \label{sec:process_holes}

We now introduce the percolation process on a lattice with impurities (Section \ref{sec:holes_def}), and we establish an elementary upper bound on the probability of observing ``large'' impurities in an annulus (Section \ref{sec:holes_crossing}), which will be quite useful in the subsequent sections.

\subsection{Motivation and definition} \label{sec:holes_def}

Recall the forest fire processes described in the Introduction. Intuitively, as $t$ approaches $t_c$, larger and larger clusters appear, thus creating larger and larger vacant regions when they burn. We have to understand the cumulative effect of these burnings on the connectivity of the percolation configuration: in principle, the ``destroyed'' areas could, at some point, be so large that they hinder the creation of new large connected components. We study the interplay between these competing effects by introducing a percolation process on a ``randomly perforated'' lattice. Roughly speaking, as we will see later, this process provides a good picture of the forest slightly before time $t_c$, in particular whether it is sufficiently connected for large-scale fires to occur.

In the following, we consider a lattice with ``impurities'', that we call \emph{holes} from now on. Let $\pi$ be a parameter in $[0,1]$, and $\rho$ a distribution on $[0,+\infty)$ that describes the radii of the holes. We put holes on $V$ in the following fashion. For each vertex $v \in V$, independently of the other vertices, we draw a radius $r_v$ distributed according to $\rho$, and we put a hole centered on $v$ with a probability $\pi$ (typically $\ll 1$): in this case, we remove from the lattice all the vertices in the hole $H_v := \Ball_{r_v}(v)$, i.e. within a distance $r_v$ (for the norm $\|.\|_{\infty}$) from $v$ (see Figure \ref{fig:impurities}). If there is no hole centered on $v$, we set $H_v := \emptyset$.

For technical reasons, we also need to allow $\pi$ to depend on $v$, i.e. we consider inhomogeneous $\pi = (\pi_v)_{v \in V}$. Let $I_v$ be the indicator that there is a hole centered at $v$ (so that $I_v = 1$ with probability $\pi_v$). We always assume that the random variables $(I_v)_{v \in V}$ and $(r_v)_{v \in V}$ are independent.

\begin{figure}
\begin{center}

\includegraphics[width=.6\textwidth]{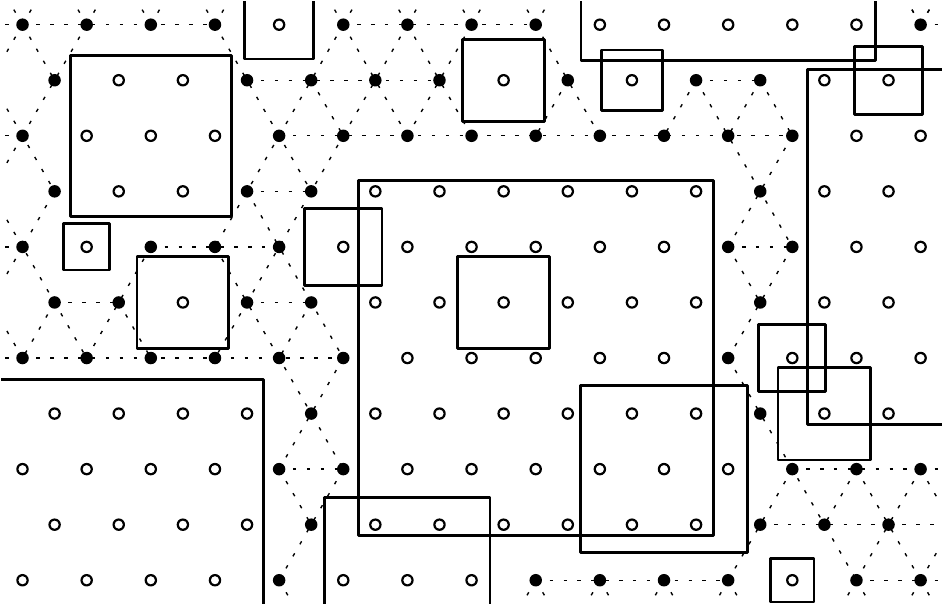}
\caption{\label{fig:impurities} The lattice $\TT$ ``perforated'' by holes of varying sizes.}

\end{center}
\end{figure}

We obtain in this way a random subgraph of $\TT$, and we are interested in its connectedness. In particular, we want to study independent site percolation with parameter $p \in [0,1]$ on this subgraph, i.e. on the complement of the holes. Note that a totally equivalent way of seeing it is by first considering an independent site percolation configuration on $\TT$, and then partially ``destroy'' it with holes (we think of the vertices in the holes as simply being vacant): are the large-scale connectivity properties of the percolation configuration significantly affected by the holes? We obtain a probability measure on configurations of holes and percolation configurations on the complement of the holes, that we denote by $\PPh_p^{\pi,\rho}$. Sometimes, we forget about the dependence on $\pi$ and $\rho$, when they are clear from the context, and we just write $\PPh_p$. For events regarding only the configuration of holes, we use the notation $\PPh^{\pi,\rho}$.

\begin{remark} \label{rem:FKG_holes}
For future use, observe that the FKG inequality holds for the $\PPh_p^{\pi,\rho}$ process with independent holes. Indeed, if we denote by $X_v$ ($v \in V$) the indicator of the event that $v$ is occupied in the underlying percolation process, and by $Y_v$ the indicator of the event that $v$ is occupied in the model with holes, then for each $v \in V$, $Y_v$ is increasing in the $X$-values, and decreasing in the $I$- and $r$-values. Moreover, the random variables $(X_v)_{v \in V}$, $(I_v)_{v \in V}$ and $(r_v)_{v \in V}$ are independent, so the collection $(Y_v)_{v \in V}$ is positively associated (this can be seen by following the proof of the Harris inequality).
\end{remark}

Of course, the macroscopic behavior of the $\PPh_p^{\pi,\rho}$ process depends on the particular choice of $\pi$ and $\rho$, and we focus on the following setting where they depend on a parameter $m \to \infty$. We assume that $\rho^{(m)}$ is ``heavy-tailed'', and that a uniform power-law upper bound on $\pi^{(m)}_v$ holds. More precisely:

\begin{assumption} \label{ass:rho_pi}
For some constants $c_1, c_2, c_3 \in (0,+\infty)$, and some exponents $\alpha < 2$ and $\beta > 0$, we have for all sufficiently large $m$:
\begin{equation} \label{eq:assump_holes}
\rho^{(m)} \big( [r,+\infty) \big) \leq c_1 r^{\alpha-2} e^{-c_2 r / m} \text{ for all } r \geq 1, \quad \text{and} \quad \pi^{(m)}_v \leq c_3 m^{- \beta} \text{ for all } v \in V.
\end{equation}
In this particular setting where $\pi$ and $\rho$ are parametrized by $m$, we write
\begin{equation} \label{eq:def_P_bar}
\PPh_p^{(m)} := \PPh_p^{\pi^{(m)},\rho^{(m)}}.
\end{equation}
\end{assumption}
We will be mostly interested in values of $p$ in near-critical windows around $p_c$ of the form $\{ p \: : \: L(p) \geq \kappa m\}$, for fixed $\kappa > 0$. As we explain in Section \ref{sec:comparison_holes}, the $\PPh_p^{(m)}$ processes arise naturally in the study of forest fires, at times close to the critical time $t_c$. In this case, the truncation parameter $m$ (the typical radius of the largest holes) plays the role of a characteristic scale for the holes created by fires up to some time slightly before $t_c$.

As we mentioned in Section \ref{sec:holes_phase_diagram}, the $\PPh_p^{(m)}$ process behaves asymptotically (as $m \to \infty$) in very different ways according to the values of $\alpha < 2$ and $\beta > 0$. For applications to forest fires, the relevant values turn out to belong to Domain I of Figure \ref{fig:phase_diagram}. We thus focus on this domain, i.e. we assume the following in the remainder of the paper.

\begin{assumption} \label{ass:alpha_beta}
The exponents $\alpha$ and $\beta$ satisfy
\begin{equation} \label{eq:assump_exp}
\alpha \in \bigg( \frac{3}{4},2 \bigg) \quad \text{and} \quad \beta > \alpha.
\end{equation}
\end{assumption}

As we will see, the most interesting behavior arises precisely in this domain. We prove that for any such $\alpha$ and $\beta$, the holes do not have a significant effect on the connectedness of the lattice, in the following sense. As $m \to \infty$, for values $p \in (p_c,1)$ satisfying $L(p) \asymp m$, percolation outside the holes stays ``near-critical'': it is comparable to critical percolation up to scales of order $m$, and to supercritical percolation on larger scales (see Sections \ref{sec:four_arm_stability} and \ref{sec:other_stability} for precise statements).

\subsection{Crossing holes} \label{sec:holes_crossing}

Recall that from now on (and until the end of Section \ref{sec:other_stability}), we consider a sequence of measures $\PPh_p^{(m)} = \PPh_p^{\pi^{(m)},\rho^{(m)}}$, where we assume that $\rho^{(m)}$ and $(\pi^{(m)}_v)_{v \in V}$ satisfy \eqref{eq:assump_holes} for some given $c_1, c_2, c_3 \in (0,+\infty)$, and $(\alpha,\beta)$ as in \eqref{eq:assump_exp}, i.e. in Domain I. All the asymptotic results stated below for $m \to \infty$ are uniform in such $\rho^{(m)}$ and $(\pi^{(m)}_v)_{v \in V}$, although we will not repeat it every time, for the sake of conciseness.

The following lemma turns out to be particularly handy, and we make repeated use of it in our proofs. For an annulus $A = \Ann_{n_1,n_2}(z)$ ($z \in \CC$, $1 \leq n_1 < n_2$), we introduce the event that it is ``crossed'' by a hole, i.e.
\begin{equation} \label{eq:def_H}
\calH(A) := \big\{ \exists v \in V \: : \: H_v \cap \partial \Ball_{n_1}(z) \neq \emptyset \text{ and } H_v \cap \partial \Ball_{n_2}(z) \neq \emptyset \big\}.
\end{equation}
Note that in this definition, we do not require the vertex $v$ to be in $A$: the crossing hole is allowed to be centered outside of $A$. Occasionally, we will use the straightforward generalization of \eqref{eq:def_H} when instead of $\Ball_{n_1}(z)$ and $\Ball_{n_2}(z)$, we have rectangles $R$ and $R'$ with $R \subseteq R'$.

\begin{lemma} \label{lem:no_crossing}
There exist $C$, $C'$ (depending on $c_1$, $c_2$, $c_3$, $\alpha$, $\beta$) such that the following holds. For all $m \geq 1$, for all annuli $A = \Ann_{n_1,n_2}(z)$ with $z \in V$ and $1 \leq n_1 \leq \frac{n_2}{2}$,
\begin{equation}
\PPh^{(m)} \big( \calH(A) \big) \leq \frac{C}{m^{\beta - \alpha}} e^{-C' n_1/m}.
\end{equation}
\end{lemma}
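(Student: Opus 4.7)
The plan is to use a first-moment bound over potential hole centers. For $v \in V$ with $d_v := \|v - z\|$, since we work in the $L^\infty$ metric one checks that the hole $H_v = \Ball_{r_v}(v)$ meets $\partial \Ball_{n_i}(z)$ iff $r_v \geq |d_v - n_i|$; hence $H_v$ crosses $A$ iff $r_v \geq f(d_v)$, where
\[
f(d) := \max\bigl(|d - n_1|,\, |d - n_2|\bigr) \; \geq \; \tfrac{n_2 - n_1}{2} =: k.
\]
By independence of $I_v$ and $r_v$, a union bound together with the uniform bound $\pi_v^{(m)} \leq c_3 m^{-\beta}$ will give
\[
\PPh^{(m)}\bigl(\calH(A)\bigr) \; \leq \; c_3\, m^{-\beta} \sum_{v \in V} \rho^{(m)}\bigl([f(d_v), \infty)\bigr).
\]

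Next I would exploit the hypothesis $n_1 \leq n_2/2$, which immediately yields $n_2 \leq 4k$ and $k \geq n_1/2$. The idea is to group vertices by $d = d_v$ (at distance $\asymp d$ from $z$ on $\TT$ there are $\Theta(d)$ vertices) and change variables to $u = f(d)$: since $f$ is piecewise linear with slope $\pm 1$, each $u \geq k$ has at most two preimages, $d = n_2 - u$ and $d = u + n_1$, so the sum is controlled by
\[
C \int_{k}^{\infty} (n_2 + u)\, \rho^{(m)}\bigl([u, \infty)\bigr)\, du.
\]
Using $n_2 + u \leq 5u$ (from $n_2 \leq 4k \leq 4u$) and inserting the tail bound $\rho^{(m)}([u, \infty)) \leq c_1 u^{\alpha - 2} e^{-c_2 u / m}$ of Assumption~\ref{ass:rho_pi} (any bounded contribution from $u \in [k, 1)$ in the edge case $k < 1$ is easily absorbed), the task reduces to estimating
\[
\int_{k}^{\infty} u^{\alpha - 1} e^{- c_2 u / m}\, du.
\]

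Finally, the substitution $s = c_2 u / m$ turns this into $(m/c_2)^{\alpha}\, \Gamma(\alpha,\, c_2 k / m)$, where $\Gamma(\alpha, \cdot)$ denotes the upper incomplete Gamma function. I would then apply the elementary inequality $\Gamma(\alpha, x) \leq 2^{\alpha} \Gamma(\alpha)\, e^{-x/2}$ for all $x \geq 0$ (proven by splitting $e^{-s} = e^{-s/2} e^{-s/2}$ and using that $\alpha > 0$) together with $k \geq n_1/2$ to produce a bound of order $m^{\alpha} e^{-c_2 n_1/(4m)}$; multiplying by $c_3 m^{-\beta}$ will then give the claim. The only real subtlety is geometric: the centers contributing most efficiently to a crossing are those with $d_v \approx (n_1+n_2)/2$, at which $f$ attains its minimum $k$, and the assumption $n_1 \leq n_2/2$ is precisely what guarantees $k \geq n_1/2$ --- this is the key feature that allows the single-scale Gamma-function estimate to produce the stated exponential rate $e^{-C' n_1 / m}$ uniformly in $n_1$ and $n_2$.
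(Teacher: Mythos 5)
Your proof is correct, but it takes a genuinely different route from the paper's. The paper first reduces \emph{wlog} to the case $n_2 = 2n_1$ (via $\calH(A) \subseteq \calH(\Ann_{n_1, 2n_1}(z))$), then dyadically decomposes the possible locations of the hole center $v$ into $\Ball_{2n_1}(z)$ and the annuli $\Ann_{2^i n_1, 2^{i+1} n_1}(z)$, using only the crude bound $r_v \gtrsim 2^i n_1$ for each shell, and finally splits into the two cases $n_1 \leq m$ and $n_1 > m$ to sum the resulting geometric series against the exponential cutoff. You instead keep $n_1, n_2$ general, identify the \emph{exact} crossing threshold $r_v \geq f(d_v) = \max(|d_v - n_1|, |d_v - n_2|)$, change variables from $d$ to $u = f(d)$ (exploiting that $f$ is piecewise linear with slope $\pm 1$, so at most two preimages per $u$), and absorb the vertex-count factor via $n_2 + u \leq 5u$ (using $n_2 \leq 4k$). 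This collapses everything to a single incomplete-Gamma estimate $(m/c_2)^{\alpha}\,\Gamma(\alpha, c_2 k/m)$, and the elementary bound $\Gamma(\alpha, x) \leq 2^{\alpha}\Gamma(\alpha)\,e^{-x/2}$ plus $k \geq n_1/2$ deliver both the $m^{\alpha-\beta}$ prefactor and the $e^{-C' n_1/m}$ decay in one stroke, with no case distinction. The exchange is a fair one: the paper's dyadic decomposition is more elementary and matches the style used again in Lemma~\ref{lem:Hbar}, while your change-of-variables argument is tighter (it tracks where the dominant hole centers actually live, near $d \approx (n_1+n_2)/2$) and avoids duplicating the computation across the $n_1 \lessgtr m$ regimes. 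Both are fully rigorous; just make sure, when you write it out, to keep the discretization corrections ($\Theta(d)$ vertices per shell, sum-to-integral comparison) explicit, and to note that the Gamma-function bound requires $\alpha > 0$, which holds under Assumption~\ref{ass:alpha_beta}.
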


\begin{figure}
\begin{center}

\includegraphics[width=.5\textwidth]{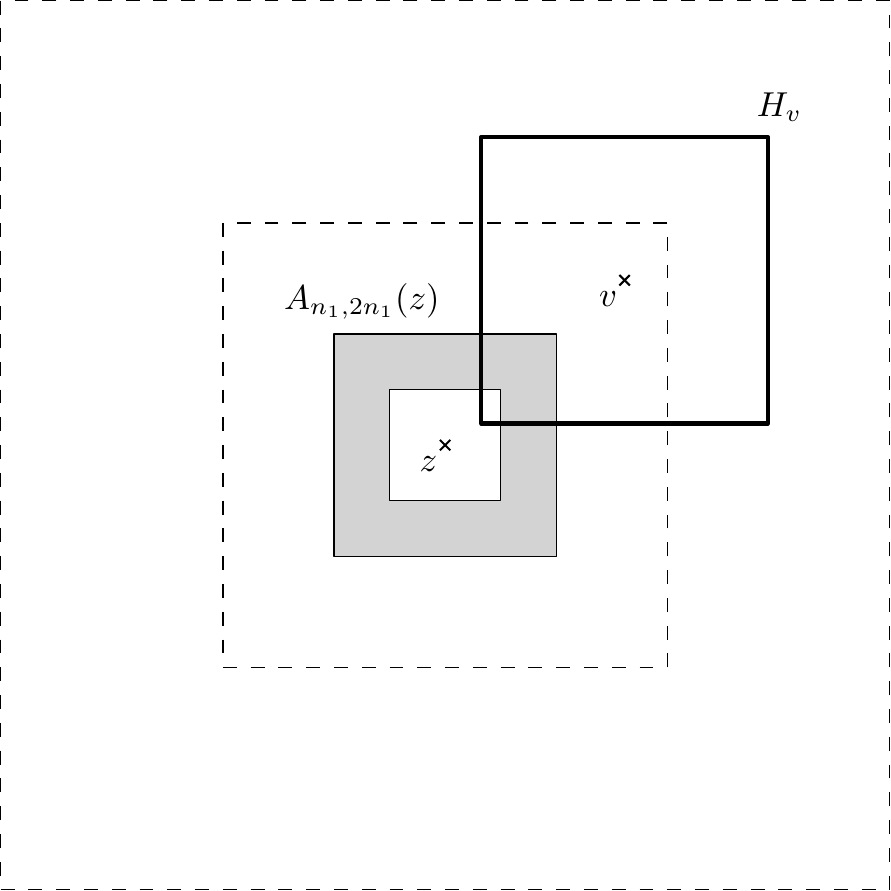}
\caption{\label{fig:crossing_hole} Example of a hole $H_v$ crossing the (gray) annulus $\Ann_{n_1,2 n_1}(z)$.}

\end{center}
\end{figure}

\begin{proof}[Proof of Lemma \ref{lem:no_crossing}]
In this proof, we use $C_1, C_2, \ldots$ to denote ``constants'' of which the precise value does not matter. They are allowed to depend on $c_1$, $c_2$, $c_3$, $\alpha$, and $\beta$, but not on $m$, $n_1$, $n_2$, or $z$. A similar remark holds for most of the proofs in the remainder of this paper.

Since obviously $\calH(A) \subseteq \calH( \Ann_{n_1,2 n_1}(z) )$, we may assume \emph{wlog} that $n_2 = 2 n_1$. We consider all possible locations for the center $v$ of a crossing hole. For that, we introduce the concentric annuli $\Ann_{2^i n_1, 2^{i+1} n_1}(z)$ ($i \geq 1$), as well as the ball $\Ball_{2 n_1}(z)$ (see Figure \ref{fig:crossing_hole}). We note that if $v \in \Ann_{2^i n_1, 2^{i+1} n_1}(z)$ for $i \geq 1$, then necessarily $r_v \geq \frac{1}{2} \cdot 2^i n_1 = 2^{i-1} n_1$, and the same holds true when $v \in \Ball_{2 n_1}(z)$, with $i = 0$. Hence,
\begin{align}
\PPh^{(m)} \big( \calH(A) \big) & \leq \sum_{v \in V} \PPh^{(m)} \big( H_v \cap \partial \Ball_{n_1}(z) \neq \emptyset \text{ and } H_v \cap \partial \Ball_{2 n_1}(z) \neq \emptyset \big) \nonumber \\
& \leq (c_3 m^{-\beta}) \cdot \sum_{i \geq 0} \Big( \big| \Ball_{2^{i+1} n_1}(z) \big| \cdot \rho^{(m)}\big( [2^{i-1} n_1, +\infty) \big) \Big) \nonumber \\
& \leq (c_3 m^{-\beta}) \cdot \sum_{i \geq 0} \Big( C_1 (2^{i+1} n_1)^2 \cdot c_1 (2^{i-1} n_1)^{\alpha-2} e^{-c_2 2^{i-1} n_1 / m} \Big) \label{eq:no_crossing_hole_pf1}
\end{align}
(using the assumption \eqref{eq:assump_holes} for $\pi^{(m)}$, and then for $\rho^{(m)}$).

We now distinguish the two cases $n_1 \leq m$ and $n_1 > m$. We first assume $n_1 \leq m$, and we let $I := \big\lfloor \log_2 \big( \frac{m}{n_1} \big) \big\rfloor \geq 0$ (so that $2^I n_1 \leq m \leq 2^{I+1} n_1$). We subdivide the sum in \eqref{eq:no_crossing_hole_pf1} into two sums, over $i \leq I+1$ and $i \geq I+2$. On the one hand,
\begin{equation} \label{eq:no_crossing_hole_pf2}
\sum_{i = 0}^{I+1} \Big( (2^{i+1} n_1)^2 \cdot (2^{i-1} n_1)^{\alpha-2} e^{-c_2 2^{i-1} n_1 / m} \Big) \leq C_2 n_1^{\alpha} \cdot \sum_{i = 0}^{I+1} (2^i)^{\alpha} \leq C_3 (2^I n_1)^{\alpha} \leq C_3 m^{\alpha}
\end{equation}
(we used that $\alpha > 0$). On the other hand,
\begin{equation} \label{eq:no_crossing_hole_pf3}
\sum_{i \geq I+2} \Big( (2^{i+1} n_1)^2 \cdot (2^{i-1} n_1)^{\alpha-2} e^{-c_2 2^{i-1} n_1 / m} \Big) \leq C_4 m^{\alpha} \cdot \sum_{i \geq 0} (2^i)^{\alpha} e^{-c_2 2^i} \leq C_5 m^{\alpha}.
\end{equation}
We can now obtain the desired upper bound by combining \eqref{eq:no_crossing_hole_pf1}, \eqref{eq:no_crossing_hole_pf2} and \eqref{eq:no_crossing_hole_pf3}.

In the case $n_1 > m$, we write
\begin{align}
\sum_{i \geq 0} \Big( (2^{i+1} n_1)^2 \cdot (2^{i-1} n_1)^{\alpha-2} e^{-c_2 2^{i-1} n_1 / m} \Big) & \leq C_6 n_1^{\alpha} e^{-c_2 n_1 / (4m)} \cdot \sum_{i \geq 0} (2^i)^{\alpha} e^{-c_2 (2^{i-1}-2^{-2}) n_1 / m} \nonumber \\
& \leq C_6 m^{\alpha} \Big( \frac{n_1}{m} \Big)^{\alpha} e^{-c_2 n_1 / (4m)} \cdot \sum_{i \geq 0} (2^i)^{\alpha} e^{-c_2 2^{i-2}} \nonumber \\
& \leq C_7 m^{\alpha} e^{-c_2 n_1 / (8m)} \label{eq:no_crossing_hole_pf4},
\end{align}
which completes the proof.
\end{proof}

For an annulus $A = \Ann_{n_1,n_2}(z)$ with $z \in V$ and $1 \leq n_1 \leq \frac{n_2}{2}$, we define a \emph{big hole} in $A$ as a hole $H_v$, $v \in V$, that crosses one of the sub-annuli $\Ann_{2^h, 2^{h+1}}(z) \subseteq A$, with $h$ a non-negative integer. Lemma \ref{lem:no_crossing} provides immediately an upper bound on the probability that such a hole exists: there exists $C$ such that for all $m \geq 1$,
\begin{equation} \label{eq:at_least_one_hole}
\PPh^{(m)} \bigg( \bigcup_{h \geq 0} \calH \big( \Ann_{2^h, 2^{h+1}} \big) \bigg) \leq C \frac{\log m}{m^{\beta - \alpha}}.
\end{equation}

For an annulus $A = \Ann_{n_1,n_2}(z)$, we introduce the following sub-event of $\calH(A)$:
$$\overline{\calH}(A) := \big\{ \exists v \in V \: : \: H_v \cap \partial \Ball_{n_1}(z) \neq \emptyset, \: H_v \cap \partial \Ball_{n_2}(z) \neq \emptyset, \: H_v \cap \partial \Ball_{\frac{n_1}{2}}(z) = \emptyset, \text{ and } H_v \cap \partial \Ball_{2 n_2}(z) = \emptyset \big\}$$
(i.e. $A$ is crossed by a hole $H_v$ which crosses neither $\Ann_{\frac{n_1}{2}, n_2}(z)$ nor $\Ann_{n_1, 2 n_2}(z)$, so that $\Ann_{n_1, n_2}(z)$ is approximately ``maximal''). For technical reasons, we also consider
$$\overline{\overline{\calH}}(A) := \big\{ \exists v \in V \: : \: H_v \cap \partial \Ball_{n_1}(z) \neq \emptyset, \: H_v \cap \partial \Ball_{n_2}(z) \neq \emptyset, \: H_v \nsupseteq \Ball_{n_1}(z), \text{ and } H_v \cap \partial \Ball_{2 n_2}(z) = \emptyset \big\}$$
(note that $\calH(A) \supseteq \overline{\overline{\calH}}(A) \supseteq \overline{\calH}(A)$).

\begin{lemma} \label{lem:Hbar}
There exist $C$, $C'$ (depending on $c_1$, $c_2$, $c_3$, $\alpha$, $\beta$) such that the following holds. For all $m \geq 1$, for all annuli $A = \Ann_{n_1,n_2}(z)$ with $z \in V$ and $1 \leq n_1 \leq \frac{n_2}{2}$,
\begin{equation} \label{eq:Hbar}
\PPh^{(m)} \big( \overline{\overline{\calH}}(A) \big) \leq C m^{-\beta} n_1 n_2^{\alpha-1} e^{-C' n_2 / m}.
\end{equation}
\end{lemma}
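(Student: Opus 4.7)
The plan is to union-bound over possible centers $v$ of the hole realizing $\overline{\overline{\calH}}(A)$, then exchange the order of the sum over $v$ with the integral over the radius $r = r_v$. The gain compared to Lemma~\ref{lem:no_crossing} comes from the observation that, once $r$ is fixed, the geometric constraints of $\overline{\overline{\calH}}(A)$ confine $v$ to a thin annulus of width $O(n_1)$ rather than to the whole disk of radius $O(n_2)$.

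I would first rewrite each of the four conditions defining $\overline{\overline{\calH}}(A)$ in terms of $r$ and $d := \|v - z\|_\infty$, using that $H_v = \Ball_r(v)$ is an $L^\infty$-ball. One checks that $H_v \cap \partial \Ball_\ell(z) \neq \emptyset$ is equivalent to $r \geq |d - \ell|$, and $H_v \supseteq \Ball_{n_1}(z)$ is equivalent to $r \geq d + n_1$; combined with the other three conditions, $H_v \cap \partial \Ball_{2 n_2}(z) = \emptyset$ is equivalent to $r + d < 2 n_2$. Hence, the event at a given $v$ reduces to the system
\begin{equation}
\max\bigl(|d - n_1|,\, |d - n_2|\bigr) \leq r < d + n_1, \qquad r + d < 2 n_2.
\end{equation}
Using $\pi^{(m)}_v \leq c_3 m^{-\beta}$ together with the independence of $(I_v)$ and $(r_v)$, a union bound combined with Fubini gives
\begin{equation}
\PPh^{(m)} \big( \overline{\overline{\calH}}(A) \big) \leq c_3 m^{-\beta} \int_0^{+\infty} \rho^{(m)}(dr) \cdot N(r),
\end{equation}
where $N(r)$ denotes the number of $v \in V$ whose distance $d = d_v$ makes $(r, d)$ satisfy the above system.

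The key geometric step is to show that $N(r) \leq C n_1 n_2$ uniformly in $r$. The pair $|d - n_1| \leq r < d + n_1$ alone forces $|r - d| < n_1$ when $d \geq n_1$, and a short case analysis using the remaining inequalities shows that, for each fixed $r$, the allowed values of $d$ form an interval of length at most $2 n_1$ contained in $[0, 2 n_2]$, non-empty only when $r \in [(n_2 - n_1)/2,\, n_2 + n_1/2]$. Since the corresponding annulus has outer radius $O(n_2)$ and width $O(n_1)$, it contains at most $C n_1 n_2$ sites of $V$.

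Finally, applying the tail bound $\rho^{(m)}([r,+\infty)) \leq c_1 r^{\alpha-2} e^{-c_2 r/m}$ at $r = (n_2 - n_1)/2 \geq n_2/4$ (using the hypothesis $n_1 \leq n_2/2$) yields $\int \rho^{(m)}(dr) \leq C n_2^{\alpha-2} e^{-C' n_2/m}$. Multiplying by $c_3 m^{-\beta} \cdot C n_1 n_2$ gives the desired bound $C m^{-\beta} n_1 n_2^{\alpha-1} e^{-C' n_2/m}$. The main subtle point is the interval-length argument: it is precisely the non-covering condition $r < d + n_1$ (absent in the proof of Lemma~\ref{lem:no_crossing}) that traps $r - d$ in a window of size $O(n_1)$; without it, the $d$-range would have width $\sim n_2$ and the resulting bound would be weaker by a factor $n_2 / n_1$.
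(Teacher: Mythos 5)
Your proposal is correct and follows essentially the same approach as the paper's proof: both argue via a union bound over the center $v$, both isolate the crucial geometric observation that the constraints of $\overline{\overline{\calH}}(A)$ (in particular the non-covering condition $r_v < \|v-z\| + n_1$) confine the pair $(\|v-z\|, r_v)$ to a region of ``width'' $O(n_1)$, and both arrive at the factor $n_1 n_2^{\alpha-1}$ by this route. The only difference is cosmetic — you fix the radius $r_v$ first and count admissible centers ($N(r) \leq C n_1 n_2$), whereas the paper fixes the distance $\|v-z\|$ first and bounds the $\rho^{(m)}$-mass of the resulting length-$2n_1$ radius interval, then effectively performs the same Fubini interchange when summing over $\|v-z\|$.
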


\begin{proof}[Proof of Lemma \ref{lem:Hbar}]
This follows from a similar computation as for Lemma \ref{lem:no_crossing}. If $\overline{\overline{\calH}}(\Ann_{n_1,n_2}(z))$ occurs, then the properties in its definition are satisfied by $H_v$ for some $v \in V$. Necessarily, $\| v - z \| \in \big( \frac{n_2 - n_1}{2}, \frac{2 n_2 + n_1}{2} \big]$, and for such a $v$, $r_v$ must satisfy the inequalities
$$\max \big( \|v - z\| - n_1, n_2 - \|v - z\| \big) \leq r_v \leq \min \big( \|v - z\| + n_1, 2 n_2 - \|v - z\| \big).$$
We deduce
\begin{align*}
\PPh^{(m)} \big( \overline{\overline{\calH}}(\Ann_{n_1, n_2}(z)) \big) & \leq \sum_{r = \frac{n_2 - n_1}{2}}^{\frac{n_2 + n_1}{2}} \sum_{v \text{ s.t. } \|v-z\|=r} \PPh^{(m)} \big( H_v \neq \emptyset, \: r_v \in [n_2 - r, r + n_1] \big)\\
& \hspace{2cm} + \sum_{r = \frac{n_2 + n_1}{2}}^{\frac{2 n_2 + n_1}{2}} \sum_{v \text{ s.t. } \|v-z\|=r} \PPh^{(m)} \big( H_v \neq \emptyset, \: r_v \in [r - n_1, r + n_1] \big).
\end{align*}
Note that in both sums, $r_v$ takes only values $\geq \frac{n_2 - n_1}{2}$, and that in each term, $r_v$ ranges over an interval of length at most $2 n_1$. Hence, also noting that the number of vertices $v$ with $\|v-z\|=r$ is at most $C_1 r$,
\begin{align*}
\PPh^{(m)} \big( \overline{\overline{\calH}}(\Ann_{n_1, n_2}(z)) \big) & \leq (c_3 m^{-\beta}) \cdot 2 \cdot C_1 \frac{2 n_2 + n_1}{2} \cdot \sum_{r \geq \frac{n_2 - n_1}{2}} \rho^{(m)} \big( [r, r + 2 n_1] \big)\\
& \leq C_2 m^{-\beta} n_2 (2 n_1 + 1) \rho^{(m)} \Big( \Big[ \frac{n_2}{4}, +\infty \Big) \Big)
\end{align*}
(since $\frac{n_2 - n_1}{2} \geq \frac{n_2}{4}$). We finally obtain
\begin{equation}
\PPh^{(m)} \big( \overline{\overline{\calH}}(\Ann_{n_1, n_2}(z)) \big) \leq C_3 m^{-\beta} n_2 n_1 (n_2)^{\alpha-2} e^{-C' n_2 / m},
\end{equation}
which gives \eqref{eq:Hbar}.
\end{proof}

\begin{remark} \label{rem:Hbarbar}
Note that in the previous proof, the center $v$ of a hole with the desired properties must satisfy $\| v - z \| > \frac{n_2 - n_1}{2} \geq \frac{n_1}{2}$. This shows that the event $\overline{\overline{\calH}}(\Ann_{n_1,n_2}(z))$ only depends on the holes centered in $\Ann_{\frac{n_1}{2},\infty}(z)$.
\end{remark}

For $n_1 \geq 1$ and $z \in V$, we also define
\begin{equation} \label{eq:def_Hbarbarstar}
\overline{\overline{\calH}}(\Ann_{n_1,*}(z)) := \bigcup_{i \geq 1} \overline{\overline{\calH}}(\Ann_{n_1, 2^i n_1}(z)).
\end{equation}

\section{Four-arm stability} \label{sec:four_arm_stability}

Recall that we are considering probability measures $\PPh_p^{(m)}$ satisfying Assumption~\ref{ass:rho_pi} and Assumption~\ref{ass:alpha_beta}. We would like to derive stability properties for percolation with impurities, i.e to show that under certain hypotheses, the holes do not affect too much the connectivity properties of the percolation configuration. As usual when studying near-critical percolation and related processes, it is crucial to obtain first a good control on the probability of four-arm events, which we do in this section, before deriving further stability results in Section \ref{sec:other_stability}.

\subsection{Notation and result}

Recall the notation introduced in Section \ref{sec:setting}, in particular the paragraph containing \eqref{eq:def_pi}. We will prove that $\PPh_p^{(m)} \big( \arm_4(n_1,n_2) \big)$ stays of order at most $C \pi_4(n_1,n_2)$, for some constant $C = C(c_1,c_2,c_3,\alpha,\beta)$, uniformly for $n_1 \leq \frac{n_2}{32} \leq n_2 \leq m$, and $p$ in the near-critical window $\{p' \: : \: L(p') \geq n_2\}$. In other words, our stability result for four arms, as well as our other stability results obtained later, are stated for scales up to $m \wedge L(p)$: the system remains near-critical on scales which are at the same time below $L(p)$ (which is not surprising), and below $m$, which can also be seen as a ``characteristic length'' (this will become more clear later, from the way $m$ arises in our applications).

We actually prove a stronger result, Theorem \ref{thm:four_arm_stability} below. Before stating it, we need to introduce some notation. The objects that we consider depend both on the configuration $\omega \in \Omega$ and on the collection of holes. However, to keep our notation short, we will only emphasize the dependence on $\omega$.

For $\omega \in \Omega$ and $U \subseteq V$, we denote by $(\omega^{(U)}) := (\omega^{(U)}(v))_{v \in V}$ the configuration obtained from $\omega$ by removing the holes centered in $U$, i.e.
$$\omega^{(U)}(v) := \omega(v) \ind_{v \notin \bigcup_{u \in U} H_u}$$
(recall that $H_u$ can be empty, in the case where there is no hole centered on $u$). For an annulus $A := \Ann_{n_1,n_2}(z)$, let
\begin{equation} \label{eq:def_W4}
\calW_4 ( A ) := \big\{ \exists U \subseteq V \: : \: \omega^{(U)} \text{ satisfies } \arm_4( A ) \big\}.
\end{equation}
In other words, $\calW_4 ( A )$ is the event that the configuration $\omega$ together with a subcollection of the holes satisfy $\arm_4( A )$.

\begin{theorem} \label{thm:four_arm_stability}
Let $K \geq 1$. There exists $C = C(c_1,c_2,c_3,\alpha,\beta,K) \in (0,+\infty)$ such that, for all $m$ large enough, the following holds. For all $p \in (0,1)$, and all $1 \leq n_1 < n_2 \leq K (m \wedge L(p))$,
\begin{equation} \label{eq:four_arm_stability}
\PPh_p^{(m)} \big( \calW_4( \Ann_{n_1,n_2} ) \big) \leq C \cdot \pi_4(n_1,n_2).
\end{equation}
\end{theorem}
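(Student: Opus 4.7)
The plan is to decompose $\calW_4(A)$ according to the ``largest helping impurity'' that participates in creating the four arms, and to sum the resulting contributions using the hypothesis $\beta > \alpha$ from Assumption~\ref{ass:alpha_beta}. A crucial structural observation is that any four arms present in $\omega^{(U)}$ automatically yield two occupied arms of $\omega$ itself (the holes in $U$ can only help vacant arms); only the two vacant arms may exploit impurities.

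To make the decomposition, introduce for each integer $k \geq 0$ with $2^k \leq n_2$ the event $\calH_k := \calH(\Ann_{2^k, 2^{k+1}}(z))$ that some impurity crosses the corresponding dyadic subannulus of $A$ (cf.~\eqref{eq:def_H}). Write
\[
\calW_4(A) \subseteq \Big( \calW_4(A) \cap \bigcap_{k} \calH_k^c \Big) \:\cup\: \bigcup_{k} \big( \calW_4(A) \cap \calH_k \big).
\]
On the first subevent, every impurity meeting $A$ fits strictly inside a single dyadic annular layer and is therefore locally small. Combining the RSW bounds \eqref{eq:RSW} with an arm-separation/bridging argument, one can re-route any vacant arm of $\omega^{(U)}$ around such small holes using genuine vacant segments of $\omega$, losing only a constant factor at each scale. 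Together with the control on the expected number of small impurities provided by Assumption~\ref{ass:alpha_beta}, this shows that $\omega$ itself already carries a four-arm event in $A$, so the contribution of the first subevent is bounded by $C \pi_4(n_1, n_2)$ via the near-critical stability \eqref{eq:near_critical_arm}.

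For each term in the second union, condition on the distinguished crossing impurity $H_{u^*}$ of radius $r^* \in [2^{k-1}, 2^{k+1}]$. By quasi-multiplicativity \eqref{eq:quasi_mult}, the four-arm event in $\omega^{(U)}$ factorizes, up to constants, into: a four-arm event in $\omega$ on the inner subannulus $\Ann_{n_1, c\, 2^k}(z)$, a four-arm event in $\omega$ on the outer subannulus $\Ann_{C\,2^k, n_2}(z)$, and a ``transition'' event at the scale of the hole in which the two occupied arms go around $H_{u^*}$ while the two vacant arms terminate at its boundary. The transition event is bounded by a constant via arm-separation. Combining this with the crossing-hole estimate from Lemma~\ref{lem:no_crossing} yields
\[
\PPh_p^{(m)}\big(\calW_4(A) \cap \calH_k\big) \leq C\,m^{\alpha-\beta}\,e^{-C' 2^k/m} \cdot \pi_4(n_1, n_2),
\]
and summing over $k \leq \log_2(K(m \wedge L(p)))$ produces a total of $O(m^{\alpha-\beta}\log m) \cdot \pi_4(n_1, n_2)$, which is $o(\pi_4(n_1, n_2))$ as $m \to \infty$ since $\beta > \alpha$.

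The main technical difficulty lies in the transition estimate at the scale of the crossing impurity. On each side of the hole, the configuration features a polychromatic three-arm pattern (two occupied and one vacant), which naively has exponent $\alpha_3 = 2/3 < \alpha_4 = 5/4$ and is therefore too probable to absorb simply into the hole cost. To close this gap, one must use the equality $\alpha_4 = \alpha_2 + 1$ (which holds exactly, since $5/4 = 1/4 + 1$) together with a careful arm-separation analysis \emph{\`a la} Kesten that forces the inner and outer three-arm patterns to compose quasi-multiplicatively against the local geometry around $H_{u^*}$. This is precisely the delicate balance between the ``helping'' vacant arms and the ``hindering'' occupied arms announced in the introduction, and constitutes the heart of the four-arm stability argument.
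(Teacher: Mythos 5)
Your high-level outline captures some of the right ingredients---the structural observation that the occupied arms of $\omega^{(U)}$ are automatically occupied arms of $\omega$, and a decomposition according to large crossing impurities---but the two main steps contain genuine gaps, and the overall architecture of the paper's proof (a double induction over scales) is missing entirely.

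\textbf{The ``no crossing hole'' term is not handled.} You claim that when no impurity crosses a dyadic sub-annulus, one can ``re-route any vacant arm of $\omega^{(U)}$ around such small holes using genuine vacant segments of $\omega$, losing only a constant factor,'' so that $\omega$ itself already carries four arms. This is not true: a small hole can be pivotal for the four-arm event, i.e.\ $\omega$ alone can fail $\arm_4$ while $\omega^{(U)}$ satisfies it, and there is no deterministic re-routing. Controlling this contribution is in fact the most delicate part of the paper's argument: one must add holes one at a time, isolate a pivotal hole $H_v$ at some scale $2^l$ with radius $\approx 2^h$, factor the four-arm event into inner/middle/outer pieces using the \emph{induction hypothesis} (the theorem itself at smaller scales), and then sum over $l$ and $h$. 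The sum over $h$ converges precisely because of the hypothesis $\alpha > \tfrac{3}{4}$ from Assumption~\ref{ass:alpha_beta}, via the exponent comparison $(2^h)^{\alpha-2}\pi_4(2^h,2^l) \lesssim (2^h)^{\alpha - \frac34 - \upsilon}(2^l)^{-\frac34+\upsilon}$. Your sketch never invokes $\alpha>\tfrac34$ and never sets up an induction; without the induction the factorization cannot even be stated.

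\textbf{The transition event at the crossing hole is mis-identified, and the wrong exponent inequality is invoked.} You describe the transition around $H_{u^*}$ as a polychromatic three-arm pattern (two occupied, one vacant) and then say the key is $\alpha_4 = \alpha_2 + 1$. The problem is that one cannot assert a \emph{genuine} vacant arm of $\omega$ across the range spanned by the crossing hole: the vacant arm of $\omega^{(U)}$ may use other, smaller impurities, so claiming a genuine $\arm_{(ovo)}$ there would be circular (it would require a three-arm stability statement, which is exactly the kind of thing being proved). The only arms guaranteed to be genuine in $\omega$ are the occupied ones, so the correct replacement is the \emph{monochromatic} two-arm event $\tilde{\arm}_{(oo)}$ across the annulus spanned by the hole, combined with the crossing-hole estimate of Lemma~\ref{lem:Hbar}. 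The computation then requires the \emph{strict} inequality $\alpha_{(oo)} > \alpha_4 - 1 = \alpha_2$ (monochromatic strictly exceeds polychromatic, from~\eqref{eq:monochromatic}), not the borderline equality $\alpha_4 = \alpha_2 + 1$, since without strictness there is no room to absorb the $o(1)$ losses in the exponents from Lemma~\ref{lem:ratio_limit}. Finally, your per-scale bound $C m^{\alpha-\beta}e^{-C'2^k/m}\pi_4(n_1,n_2)$ implicitly treats the distinguished impurity as having radius comparable to $2^k$; the paper must also handle impurities that span many dyadic layers (the $\overline{\overline{\calH}}(\Ann_{2^h,*})$ events), as well as the presence of several big holes simultaneously, which for general $\alpha\in(\tfrac34,1]$ requires the block/overlap combinatorics and a cap on the number of big holes.
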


Note that the reverse inequality (with a different $C$) follows immediately from the definition \eqref{eq:def_W4} and the classical stability result \eqref{eq:near_critical_arm}.

\begin{remark} \label{rem:Ke_GPS}
Stability results for arm events go back to the celebrated work by Kesten \cite{Ke1987} (where they played a crucial role to establish certain scaling relations). More recently, Garban, Pete and Schramm built further on these ideas \cite{GPS2013a} (where it was one of the many ingredients in their construction of the scaling limits of near-critical and dynamical percolation), and modified the arguments so as to incorporate more flexibility, see Lemma 8.4 in that paper (we follow some of their notation). Both of these works were in the context of single-site updates (impurities), and we expand the techniques further, into the situation of ``heavy-tailed'' impurities, where new subtle complications arise, and a more delicate analysis is required.
\end{remark}

\subsection{Proof of Theorem \ref{thm:four_arm_stability}}

\begin{proof}
First, we observe that the result holds for $n_2 \leq 1000 n_1$, since in this case, $\pi_4(n_1, n_2) \geq C_1$ for some universal constant $C_1$ (this follows easily from \eqref{eq:RSW}). Also, it is enough to prove the result for all $n_1$ and $n_2$ of the form $n_1 = 2^i$ and $n_2 = 2^j$, with $2^j \leq 2 K (m \wedge L(p))$. We prove it by induction over $j$ and $(j-i)$. From our previous observation, it holds for $j - i \leq 6$.

Now, let $i \geq 1$ and $j \geq i+7$ (with $2^j \leq 2 K (m \wedge L(p))$), and assume that the desired inequality \eqref{eq:four_arm_stability} holds true for all smaller values of $j$, and also for the same $j$ but all larger values of $i$. Here, we assume that \eqref{eq:four_arm_stability} is valid for some appropriate constant $C$: we explain later how to choose it. Let $\calD$ be the event that $\arm_4( \Ann_{2^{i+3},2^{j-3}} )^c$ holds without the holes.

\subsubsection{Case $\alpha > 1$} \label{sec:case_alpha_1}

We first consider $\alpha \in (1,2)$, since the combinatorics in the proof turns out to be somewhat simpler in this case. Moreover, as we explain in Section \ref{sec:comparison_holes}, our applications to forest fire processes involve only values of $\alpha$ in this interval. In Section \ref{sec:general_alpha}, we treat the general case $\alpha \in \big(\frac{3}{4}, 2 \big)$.

We introduce the following two events (recall the definition of a big hole above \eqref{eq:at_least_one_hole}).
\begin{itemize}
\item $\calE_1 := \{$there is no big hole in $\Ann_{2^i, 2^j} \}$.

\item $\calE_2 := \{$there is at least one big hole in $\Ann_{2^i, 2^j} \}$.
\end{itemize}
We start by writing
\begin{align}
\PPh_p^{(m)} \big( \calW_4 ( \Ann_{2^i, 2^j} ) \cap \calD \big) & \leq \PPh_p^{(m)} \big( \calW_4( \Ann_{2^i, 2^j} ) \cap \calD \cap \calE_1 \big) + \PPh_p^{(m)} \big( \calW_4( \Ann_{2^i, 2^j} ) \cap \calE_2 \big) \nonumber \\[1mm]
& =: (\text{Term } 1) + (\text{Term } 2). \label{eq:fourarm_term1and2}
\end{align}
We now handle these two terms separately, showing that each of them is a $o( \pi_4(2^i, 2^j) )$ as $m \to \infty$.

\underline{Term $1$:} Suppose that $\calW_4( \Ann_{2^i, 2^j} )$ and $\calD$ occur. Take $\omega$ and $U$ as in the definition of $\calW_4( \Ann_{2^i, 2^j} )$, and let $\omega' = \omega^{(U)}$. Hence,
\begin{itemize}
\item $\omega'$ satisfies $\arm_4( \Ann_{2^i, 2^j} )$,

\item while $\omega$ does not satisfy $\arm_4( \Ann_{2^{i+3},2^{j-3}} )$ (since $\omega \in \calD$).
\end{itemize}
We first ``add'' the holes with centers in $(A')^c$, where $A' := \Ann_{2^{i+2},2^{j-2}}$. More precisely, we consider the configuration $\omega^{(U \cap (A')^c)}$. From the event $\calE_1$ that there is no big hole in $\Ann_{2^i, 2^j}$, $\arm_4( \Ann_{2^{i+3},2^{j-3}} )$ is still not satisfied at this stage. Indeed, none of the holes centered in $(A')^c$ can intersect $\Ann_{2^{i+3},2^{j-3}}$ so they have no influence on the occurrence (or not) of $\arm_4( \Ann_{2^{i+3},2^{j-3}} )$.

We then add one by one the holes of $\omega'$ that are centered in the annulus $A'$, until $\arm_4( \Ann_{2^i, 2^j} )$ is satisfied. Let $\hat{\omega}$ denote the corresponding configuration, and let $H_v$ be the last added hole ($v \in A'$), which is thus ``pivotal''. Let $l \in \{i+2, \ldots, j-3\}$ be such that $v \in \Ann_{2^l, 2^{l+1}}$. From the event $\calE_1$, $H_v$ does not intersect $\Ball_{2^{l-1}}$ and $\din \Ball_{2^{l+2}}$. The configuration $\hat{\omega}$ satisfies $\arm_4( \Ann_{r_v, 2^{l-1}}(v) )$, which does not involve the regions $\Ball_{2^{l-1}}$ and $(\Ball_{2^{l+2}})^c$. In these two regions, $\hat{\omega}$ satisfies, respectively, $\arm_4( \Ann_{2^i, 2^{l-1}} )$ and $\arm_4( \Ann_{2^{l+2}, 2^j} )$. We thus obtain
\begin{equation}
(\text{Term } 1) \leq \sum_{l = i+2}^{j-3} \sum_{v \in \Ann_{2^l, 2^{l+1}}} \sum_{h=1}^l \PPh_p^{(m)} \big( H_v \neq \emptyset, \: r_v \in [2^h, 2^{h+1}) \big) \cdot \PPh_p^{(m)} \big( \calW_4^{(1)} \big) \cdot \PPh_p^{(m)} \big( \calW_4^{(2)} \big) \cdot \PPh_p^{(m)} \big( \calW_4^{(3)} \big),
\end{equation}
with the folowing events:
\begin{itemize}
\item $\calW_4^{(1)} := \big\{ \exists U^{(1)} \subseteq \Ann_{2^{l-2},2^{l+3}} \: : \: \omega^{(U^{(1)})}$ satisfies $\arm_4( \Ann_{2^{h+1}, 2^{l-1}}(v) ) \big\}$,

\item $\calW_4^{(2)} := \big\{ \exists U^{(2)} \subseteq \Ball_{2^{l-2}} \: : \: \omega^{(U^{(2)})}$ satisfies $\arm_4( \Ann_{2^i, 2^{l-3}} ) \big\}$,

\item $\calW_4^{(3)} := \big\{ \exists U^{(3)} \subseteq \Ann_{2^{l+3},\infty} \: : \: \omega^{(U^{(3)})}$ satisfies $\arm_4( \Ann_{2^{l+4}, 2^j} ) \big\}$.
\end{itemize}
So, informally speaking, $\calW_4^{(1)}$ is the event that $\calW_4( \Ann_{2^{h+1}, 2^{l-1}}(v) )$ occurs ``with only the holes centered in $\Ann_{2^{l-2},2^{l+3}}$'', and analogously for $\calW_4^{(2)}$ and $\calW_4^{(3)}$ (in the remainder of the paper, we will frequently use such informal terminology, with similar meaning). Note that these three events are independent (see Figure \ref{fig:four_arm_stability} for an illustration). We also note that, if $n_1 \geq n_2$, we consider $\arm_4( \Ann_{n_1, n_2}(z) )$ to be automatically satisfied, and $\pi_4(n_1, n_2)$ to be equal to $1$.

\begin{figure}
\begin{center}

\includegraphics[width=.95\textwidth]{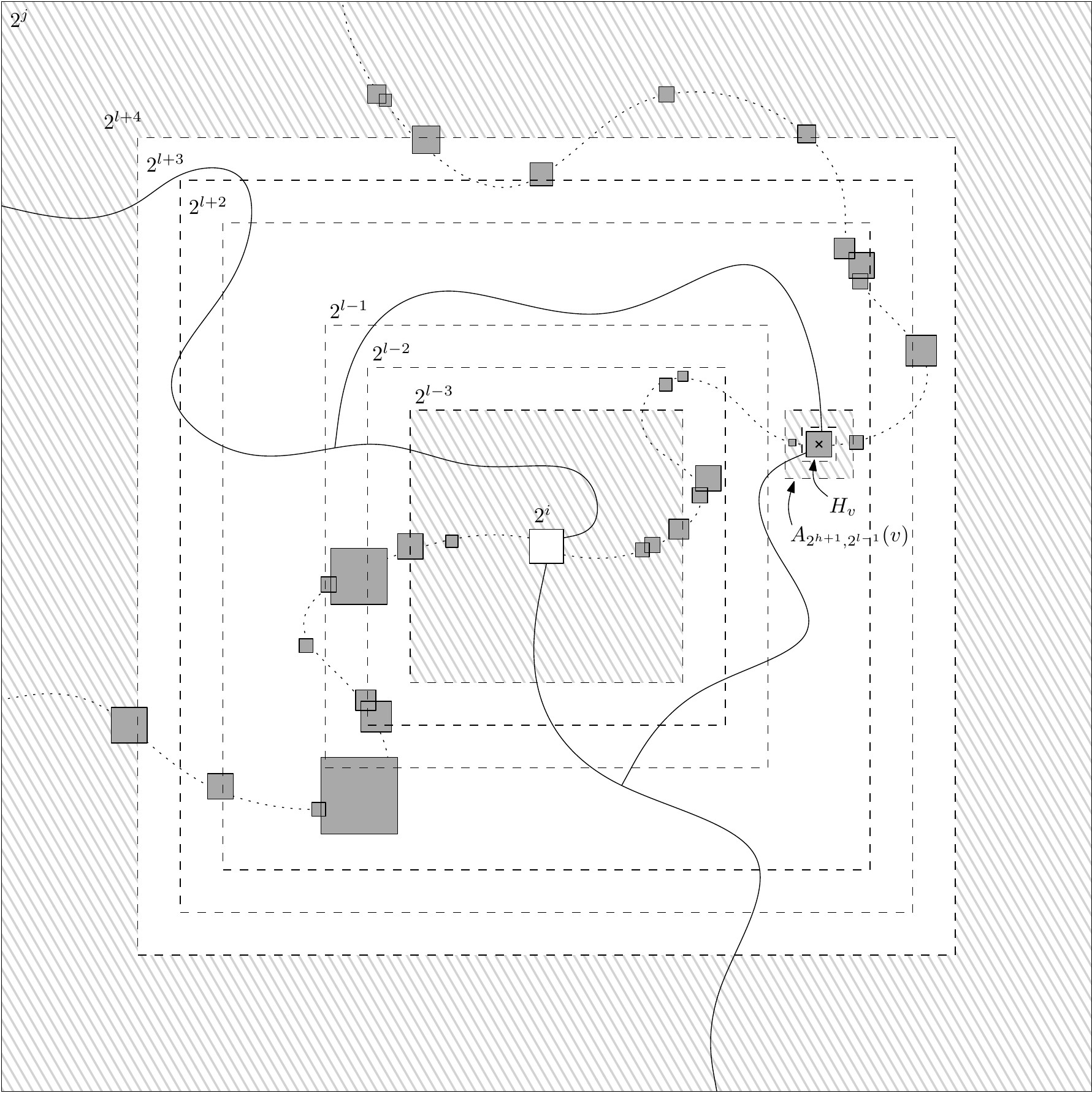}
\caption{\label{fig:four_arm_stability} The three shaded regions (in grey) are the annuli $\Ann_{2^{h+1}, 2^{l-1}}(v)$, $\Ann_{2^i, 2^{l-3}}$ and $\Ann_{2^{l+4}, 2^j}$, where we look for four arms with alternating types (in the events $\calW_4^{(1)}$, $\calW_4^{(2)}$ and $\calW_4^{(3)}$, respectively). The annuli $\Ann_{2^{l-3},2^{l-2}}$, $\Ann_{2^{l-2},2^{l-1}}$, $\Ann_{2^{l+2},2^{l+3}}$ and $\Ann_{2^{l+3},2^{l+4}}$ are ``safety areas'': we know from $\calE_1$ that none of them is crossed by a hole.}

\end{center}
\end{figure}

It then follows from the induction hypothesis that
\begin{align*}
(\text{Term } 1) & \leq \sum_{l = i+2}^{j-3} \sum_{v \in \Ann_{2^l, 2^{l+1}}} \sum_{h=1}^l c_3 m^{-\beta} \rho^{(m)} \big( [2^h, +\infty) \big) C^3 \pi_4(2^{h+1}, 2^{l-1}) \pi_4(2^i, 2^{l-3}) \pi_4(2^{l+4}, 2^j)\\
& \leq C_1 m^{-\beta} \sum_{l = i+2}^{j-3} \big| \Ball_{2^{l+1}} \big| \sum_{h=1}^l c_1 (2^h)^{\alpha-2} \pi_4(2^{h+1}, 2^{l-1}) \pi_4(2^i, 2^j),\\
& \leq C_2 m^{-\beta} \pi_4(2^i, 2^j) \sum_{l = i+2}^{j-3} 2^{2l} \sum_{h=1}^l (2^h)^{\alpha-2} \pi_4(2^{h+1}, 2^{l-1}),
\end{align*}
using the assumption \eqref{eq:assump_holes} for $\pi^{(m)}$ and $\rho^{(m)}$, and the properties \eqref{eq:extendability} and \eqref{eq:quasi_mult} for $\pi_4$. Lemma \ref{lem:ratio_limit} implies that for any $\upsilon > 0$ fixed,
\begin{align}
\sum_{l = i+2}^{j-3} 2^{2l} \sum_{h=1}^l (2^h)^{\alpha-2} \pi_4(2^{h+1}, 2^{l-1}) & \leq C_3 \sum_{l = i+2}^{j-3} 2^{2l} \sum_{h=1}^l (2^h)^{\alpha-2} \bigg( \frac{2^{h+1}}{2^{l-1}} \bigg)^{\frac{5}{4} - \upsilon} \nonumber \\
& \leq C_4 \sum_{l = i+2}^{j-3} (2^l)^{\frac{3}{4} + \upsilon} \sum_{h=1}^l (2^h)^{\alpha-\frac{3}{4} - \upsilon}. \label{eq:rem_domain2_1}
\end{align}
By assumption \eqref{eq:assump_exp}, $\alpha > \frac{3}{4}$ (actually, in this subsection we even assume $\alpha > 1$, but we do not use it at this point), so we can pick $\upsilon > 0$ sufficiently small so that $\alpha - \frac{3}{4} - \upsilon > 0$. We deduce
\begin{equation} \label{eq:rem_domain2_2}
\sum_{l = i+2}^{j-3} 2^{2l} \sum_{h=1}^l (2^h)^{\alpha-2} \pi_4(2^{h+1}, 2^{l-1}) \leq C_5 \sum_{l = i+2}^{j-3} (2^l)^{\frac{3}{4} + \upsilon} (2^l)^{\alpha-\frac{3}{4} - \upsilon} \leq C_6 (2^j)^{\alpha} \leq C_6 (2 K m)^{\alpha}.
\end{equation}
We thus obtain
\begin{equation} \label{eq:fourarm_term1}
(\text{Term } 1) \leq C_7 m^{\alpha-\beta} \pi_4(2^i, 2^j) = o( \pi_4(2^i, 2^j) )
\end{equation}
as $m \to \infty$, which is the desired upper bound for Term $1$.

\underline{Term $2$:} Assume that the event $\calW_4( \Ann_{2^i, 2^j} ) \cap \calE_2$ occurs. There exists $h \in \{i,\ldots,j-1\}$ for which $\Ann_{2^h,2^{h+1}}$ is crossed by a big hole, and we let $\bar{h}$ be the smallest such $h$. For $h \in \{i,\ldots,j-1\}$,
\begin{equation} \label{eq:four_arms_term2}
\PPh_p^{(m)} \big( \calW_4( \Ann_{2^i, 2^j} ) \cap \calE_2 \cap \{\bar{h} = h\} \big) \leq \PPh_p^{(m)} \big( \calW_4^{(1)} \cap \tilde{\arm}_{(oo)}(\Ann_{2^h,2^j}) \cap \overline{\overline{\calH}}(\Ann_{2^h,*}) \big),
\end{equation}
where $\tilde{\arm}_{(oo)}(\Ann_{2^h,2^j})$ denotes the event that the monochromatic two-arm event $\arm_{(oo)}(\Ann_{2^h,2^j})$ occurs without the holes, and $\calW_4^{(1)}$ the event that $\calW_4( \Ann_{2^i, 2^{h-2}} )$ occurs with only the holes centered in $\Ball_{2^{h-1}}$. Recall also the definition of $\overline{\overline{\calH}}(\Ann_{.,*})$ in \eqref{eq:def_Hbarbarstar}. Note that we use the event $\overline{\overline{\calH}}$, and not simply the event $\overline{\calH}$, in order to take into account the case $h = i$. Indeed, the big hole in this case may cross further annuli inside, and even cover the origin, but it is not allowed to cover the whole of $\Ball_{2^i}$ (since otherwise, no occupied arm in $\Ann_{2^i, 2^j}$ could exist).

It follows from Remark \ref{rem:Hbarbar} that $\overline{\overline{\calH}}(\Ann_{2^h,*})$ only depends on the holes centered in $\Ann_{2^{h-1},\infty}$, so that the three events in the right-hand side of \eqref{eq:four_arms_term2} are independent. Hence,
\begin{equation} \label{eq:term2_indep}
\PPh_p^{(m)} \big( \calW_4( \Ann_{2^i, 2^j} ) \cap \calE_2 \cap \{\bar{h} = h\} \big) \leq \PPh_p^{(m)} \big( \calW_4^{(1)} \big) \PP_p \big( \arm_{(oo)}(\Ann_{2^h,2^j}) \big) \PPh^{(m)} \big( \overline{\overline{\calH}}(\Ann_{2^h,*}) \big).
\end{equation}

We first claim that
\begin{equation} \label{eq:cross_and_2arm}
\PP_p \big( \arm_{(oo)}(\Ann_{2^h,2^j}) \big) \PPh^{(m)} \big( \overline{\overline{\calH}}(\Ann_{2^h,*}) \big) \leq C' m^{\alpha-\beta} \pi_4(2^h,2^j).
\end{equation}
Indeed, using the assumption $\alpha > 1$, we obtain from Lemma \ref{lem:Hbar} that
\begin{equation} \label{eq:Hbar_infty}
\PPh^{(m)} \big( \overline{\overline{\calH}}(\Ann_{n_1,*}(z)) \big) \leq \sum_{k \geq 1} \PPh^{(m)} \big( \overline{\overline{\calH}}(\Ann_{n_1, 2^k n_1}(z)) \big) \leq C'' m^{-\beta} n_1 m^{\alpha-1}.
\end{equation}
On the other hand, \eqref{eq:near_critical_arm} implies (since $2^j \leq 2 K L(p)$)
\begin{equation} \label{eq:near_critical_two_arms}
\PP_p \big( \arm_{(oo)}(\Ann_{2^h,2^j}) \big) \leq C_1 \pi_{(oo)}(2^h, 2^j).
\end{equation}
It follows from the inequality \eqref{eq:monochromatic} between the monochromatic and the polychromatic two-arm events, and Lemma \ref{lem:ratio_limit}, that: for some $\upsilon > 0$ small enough,
\begin{equation} \label{eq:2arm_ineq}
\PP_p \big( \arm_{(oo)}(\Ann_{2^h,2^j}) \big) \leq C_2 \bigg( \frac{2^h}{2^j} \bigg)^{\frac{1}{4} + \upsilon}.
\end{equation}
Hence, by combining \eqref{eq:Hbar_infty}, \eqref{eq:2arm_ineq}, and then $m^{-1} \leq 2 K (2^j)^{-1}$,
\begin{align*}
\PP_p \big( \arm_{(oo)}(\Ann_{2^h,2^j}) \big) \PPh^{(m)} \big( \overline{\overline{\calH}}(\Ann_{2^h,*}) \big) & \leq C_3 m^{-\beta} 2^h m^{\alpha-1} \bigg( \frac{2^h}{2^j} \bigg)^{\frac{1}{4} + \upsilon}\\
& \leq 2K C_3 m^{\alpha - \beta} \bigg( \frac{2^h}{2^j} \bigg)^{\frac{5}{4} + \upsilon}\\
& \leq C_4 m^{\alpha - \beta} \pi_4(2^h,2^j)
\end{align*}
(using Lemma \ref{lem:ratio_limit} for the last inequality), which establishes \eqref{eq:cross_and_2arm}.

By combining \eqref{eq:term2_indep} and \eqref{eq:cross_and_2arm}, and applying the induction hypothesis for $\PPh_p^{(m)} \big( \calW_4^{(1)} \big)$, we deduce (using also the properties \eqref{eq:extendability} and \eqref{eq:quasi_mult} for $\pi_4$)
\begin{equation}
\PPh_p^{(m)} \big( \calW_4( \Ann_{2^i, 2^j} ) \cap \calE_2 \cap \{\bar{h} = h\} \big) \leq C \pi_4(2^i, 2^{h-2}) \cdot C' m^{\alpha - \beta} \pi_4(2^h, 2^j) \leq C_5 m^{\alpha - \beta} \pi_4(2^i, 2^j).
\end{equation}
We then sum over the possible values of $\bar{h}$, producing an extra $\log m$ factor:
\begin{equation} \label{eq:fourarm_term2}
(\text{Term } 2) = \sum_{h=i}^{j-1} \PPh_p^{(m)} \big( \calW_4( \Ann_{2^i, 2^j} ) \cap \calE_2 \cap \{\bar{h} = h\} \big) \leq C_6 m^{\alpha - \beta} (\log m) \pi_4(2^i, 2^j).
\end{equation}
By combining \eqref{eq:fourarm_term1and2}, \eqref{eq:fourarm_term1}, and \eqref{eq:fourarm_term2}, and using that $\beta > \alpha$, we obtain (in the case $\alpha \in (1,2)$)
\begin{equation} \label{eq:fourarm_W4}
\PPh_p^{(m)} \big( \calW_4 ( \Ann_{2^i, 2^j} ) \cap \calD \big) = o( \pi_4(2^i, 2^j) ) \quad \text{as $m \to \infty$.}
\end{equation}
In Section \ref{sec:end_proof_four_arm}, we will show that this implies \eqref{eq:four_arm_stability}.

\subsubsection{General case $\alpha > \frac{3}{4}$} \label{sec:general_alpha}

We now prove \eqref{eq:fourarm_W4} for a general $\alpha \in \big(\frac{3}{4}, 2 \big)$. We need to introduce the following three events, for a well-chosen $M = M(\alpha,\beta)$.
\begin{itemize}
\item $\calE_1 := \{$there is no big hole in $\Ann_{2^i, 2^j} \}$.

\item $\calE_2 := \{$there are between $1$ and $M$ big holes in $\Ann_{2^i, 2^j} \}$.

\item $\calE_3 := \{$there are at least $M+1$ big holes in $\Ann_{2^i, 2^j} \}$.
\end{itemize}
We write
\begin{align}
\PPh_p^{(m)} \big( \calW_4 ( \Ann_{2^i, 2^j} ) \cap \calD \big) & \leq \PPh_p^{(m)} \big( \calW_4( \Ann_{2^i, 2^j} ) \cap \calD \cap \calE_1 \big) + \PPh_p^{(m)} \big( \calW_4( \Ann_{2^i, 2^j} ) \cap \calE_2 \big) + \PPh_p^{(m)} \big( \calE_3 \big) \nonumber \\[1mm]
& =: (\text{Term } 1) + (\text{Term } 2) + (\text{Term } 3). \label{eq:fourarm_term1and2and3}
\end{align}
Similarly to the case $\alpha \in (1,2)$, we need to show that each term is a $o( \pi_4(2^i, 2^j) )$ as $m \to \infty$.

\underline{Term $1$:} It can be handled in exactly the same way as before (as the reader can check, for that term we did not use the fact that $\alpha > 1$), and we obtain again \eqref{eq:fourarm_term1}.

\underline{Term $3$:} As we now explain, this term can be handled easily by choosing a sufficiently large value of $M$. For that, we derive an upper bound on the probability that there exists a large number of big holes, i.e. of $v_1, \ldots, v_{M+1} \in V$ distinct such that each $H_{v_i}$ ($1 \leq i \leq M+1$) is a big hole: we claim that there exists $M = M(\alpha,\beta)$ for which
\begin{equation} \label{eq:many_big_holes}
\PPh^{(m)} \big( \text{there are at least $M+1$ big holes in $\Ann_{2^i, 2^j}$} \big) = o( \pi_4(m) ) \quad \text{as $m \to \infty$.}
\end{equation}
Indeed, since the events $\{H_v$ is a big hole in $\Ann_{2^i, 2^j}\}$, $v \in V$, are independent, we have
\begin{align}
\PPh^{(m)} \big( \exists v_1, \ldots, v_{M+1} \in V \text{ distinct } & \text{s.t. all } (H_{v_i})_{1 \leq i \leq M+1} \text{ are big holes} \big) \nonumber \\
& \leq \Big( \PPh^{(m)} \big( \exists v \in V \text{ s.t. } H_v \text{ is a big hole} \big) \Big)^{M+1} \nonumber \\
& \leq (C_1)^{M+1} \frac{(\log m)^{M+1}}{m^{(\beta - \alpha) (M+1)}} \label{eq:proba_big_holes}
\end{align}
(where we used \eqref{eq:at_least_one_hole} for the last inequality). For any $\upsilon > 0$, $\pi_4(m) \geq C_2 m^{-\frac{5}{4} - \upsilon}$ for some $C_2 = C_2(\upsilon) > 0$ (from \eqref{eq:arm_exponent}). Hence, the right-hand side of \eqref{eq:proba_big_holes} is a $o( \pi_4(m) )$ for all $M$ large enough (so that $(\beta - \alpha) (M+1) > \frac{5}{4}$), which gives \eqref{eq:many_big_holes}. From now on, we assume $M$ to be chosen in that way, so that $(\text{Term } 3) = o( \pi_4(m) )$, and in particular
\begin{equation} \label{eq:fourarm_term3}
(\text{Term } 3) = o( \pi_4(2^i, 2^j) ) \quad \text{as $m \to \infty$.}
\end{equation}

\underline{Term $2$:} This term requires more care. Let us assume that the corresponding event holds, so that the number $b$ of big holes in $\Ann_{2^i,2^j}$ satisfies $1 \leq b \leq M$. We list which sub-annuli are crossed by such big holes: there are integers $1 \leq n \leq M$, $i \leq h_1 < h_2 < \ldots < h_n < j$ and $i < k_1 < k_2 < \ldots < k_n \leq j$, with $h_l < k_l$ ($1 \leq l \leq n$), such that the following subevent of $\calE_2$ holds, which we denote by $\tilde{\calE}_2 = \tilde{\calE}_2(n, h_1, \ldots, h_n, k_1, \ldots, k_n)$.
\begin{itemize}
\item For all $1 \leq l \leq n$, $\overline{\overline{\calH}}(\Ann_{2^{h_l},2^{k_l}})$ holds (unless $l = n$ and $k_n = j$, in which case we require $\overline{\overline{\calH}}(\Ann_{2^{h_n},*})$ instead),

\item \emph{and} no other sub-annuli are crossed by big holes: for all $i \leq h \leq j-1$ with $[h,h+1] \nsubseteq \bigcup_{1 \leq l \leq n} [h_l, k_l]$, $\calH(\Ann_{2^h,2^{h+1}})$ does not occur.
\end{itemize}
Note that it may be the case that $n < b$.

We now group the big holes as follows. We say that two successive intervals $[h_l,k_l]$ and $[h_{l+1},k_{l+1}]$ ``overlap'' if $h_{l+1} \leq k_l$. Consider a block of overlapping intervals $[h_{\underline{l}},k_{\underline{l}}], \ldots, [h_{\overline{l}},k_{\overline{l}}]$ ($1 \leq \underline{l} \leq \overline{l} \leq n$), i.e. such that any two successive intervals overlap. Later, we will ``label'' such a block simply by $\llbracket h_{\underline{l}},k_{\overline{l}} \rrbracket$. For simplicity, let us first assume that we are not in the case $\overline{l} = n$ and $k_n = j$. By ``independence'', and then using Lemma \ref{lem:Hbar}, we have
\begin{align}
\PPh^{(m)} \bigg( \bigcap_{l = \underline{l}}^{\overline{l}} \overline{\overline{\calH}}(\Ann_{2^{h_l},2^{k_l}}) \bigg) & \leq \prod_{l = \underline{l}}^{\overline{l}} \PPh^{(m)} \big( \overline{\overline{\calH}}(\Ann_{2^{h_l},2^{k_l}}) \big) \label{eq:prod_overlapping0} \\
& \leq \prod_{l = \underline{l}}^{\overline{l}} \big( C m^{-\beta} (2^{h_l}) (2^{k_l})^{\alpha-1} \big) \nonumber \\
& \leq \big( C m^{-\beta} \big)^{\overline{l} - \underline{l} + 1} 2^{h_{\underline{l}}} (2^{k_{\overline{l}}})^{\alpha-1} \prod_{l = \underline{l}}^{\overline{l}-1} \big( (2^{k_l})^{\alpha-1} (2^{h_{l+1}}) \big). \label{eq:prod_overlapping1}
\end{align}
For each term in the product, since $h_{l+1} \leq k_l$, we have
\begin{equation} \label{eq:prod_overlapping2}
(2^{k_l})^{\alpha-1} (2^{h_{l+1}}) \leq (2^{k_l})^{\alpha} \leq C_1 m^{\alpha},
\end{equation}
using also $2^{k_l} \leq 2^j \leq 2 K m$ (this is where we have to be careful that $\alpha$ might be $<1$, and use the ``overlapping'' assumption). On the other hand,
\begin{equation} \label{eq:prod_overlapping3}
(2^{k_{\overline{l}}})^{\alpha-1} = (2^{k_{\overline{l}}})^{\alpha} (2^{k_{\overline{l}}})^{-1} \leq C_1 m^{\alpha} (2^{k_{\overline{l}}})^{-1}.
\end{equation}
We deduce from \eqref{eq:prod_overlapping1}, \eqref{eq:prod_overlapping2}, and \eqref{eq:prod_overlapping3}, that
\begin{equation} \label{eq:block_holes}
\PPh^{(m)} \bigg( \bigcap_{l = \underline{l}}^{\overline{l}} \overline{\overline{\calH}}(\Ann_{2^{h_l},2^{k_l}}) \bigg) \leq \big( C_2 m^{\alpha - \beta} \big)^{\overline{l} - \underline{l} + 1} 2^{h_{\underline{l}}} (2^{k_{\overline{l}}})^{-1}.
\end{equation}

This implies that for some $\upsilon > 0$ small enough, using \eqref{eq:2arm_ineq} (recall the definition of $\tilde{\arm}_{(oo)}$ from the line below \eqref{eq:four_arms_term2}),
\begin{align}
\PPh_p^{(m)} \bigg( \tilde{\arm}_{(oo)}(\Ann_{2^{h_{\underline{l}}},2^{k_{\overline{l}}}}) \cap \bigcap_{l = \underline{l}}^{\overline{l}} \overline{\overline{\calH}}(\Ann_{2^{h_l},2^{k_l}}) \bigg) & \leq \PP_p \big( \arm_{(oo)}(\Ann_{2^{h_{\underline{l}}},2^{k_{\overline{l}}}}) \big) \big( C_2 m^{\alpha - \beta} \big)^{\overline{l} - \underline{l} + 1} 2^{h_{\underline{l}}} (2^{k_{\overline{l}}})^{-1} \label{eq:upper_bound_block_lhs}\\
& \leq C_3 \bigg( \frac{2^{h_{\underline{l}}}}{2^{k_{\overline{l}}}} \bigg)^{\frac{1}{4} + \upsilon} \big( C_2 m^{\alpha - \beta} \big)^{\overline{l} - \underline{l} + 1} 2^{h_{\underline{l}}} (2^{k_{\overline{l}}})^{-1} \nonumber \\
& = C_3 \bigg( \frac{2^{h_{\underline{l}}}}{2^{k_{\overline{l}}}} \bigg)^{\frac{5}{4} + \upsilon} \big( C_2 m^{\alpha - \beta} \big)^{\overline{l} - \underline{l} + 1} \nonumber \\
& \leq \big( C_4 m^{\alpha - \beta} \big)^{\overline{l} - \underline{l} + 1} \pi_4(2^{h_{\underline{l}}}, 2^{k_{\overline{l}}}) \label{eq:upper_bound_block}
\end{align}
(where the last inequality follows from Lemma \ref{lem:ratio_limit}).

In the case $\overline{l} = n$ and $k_n = j$, the same reasonings apply, except that in the product \eqref{eq:prod_overlapping0}, $\PPh^{(m)} \big( \overline{\overline{\calH}}(\Ann_{2^{h_{\overline{l}}},2^{k_{\overline{l}}}}) \big)$ has to be replaced by $\PPh^{(m)} \big( \overline{\overline{\calH}}(\Ann_{2^{h_{\overline{l}}},*}) \big)$. It follows from Lemma \ref{lem:Hbar} that
$$\PPh^{(m)} \big( \overline{\overline{\calH}}(\Ann_{2^{h_{\overline{l}}},*}) \big) \leq C'_1 m^{-\beta} (2^{h_{\overline{l}}}) (\log m) \cdot \max \big( (2^{k_{\overline{l}}})^{\alpha-1}, m^{\alpha-1} \big) \leq C'_2 m^{-\beta} (2^{h_{\overline{l}}}) (\log m) m^{\alpha} (2^{k_{\overline{l}}})^{-1}$$
(the extra $\log m$ is here for the case $\alpha=1$), and the rest of the calculations is identical to those that led to \eqref{eq:upper_bound_block}, now with an additional $\log m$ factor.

We now group the intervals $[h_l,k_l]$ ($1 \leq l \leq n$) into \emph{maximal} blocks of overlapping intervals $\llbracket \tilde{h}_l,\tilde{k}_l \rrbracket$, where $i \leq \tilde{h}_1 < \tilde{k}_1 < \tilde{h}_2 < \ldots < \tilde{k}_q \leq j$, and $q$ ($\leq n \leq M$) is the number of such blocks. We denote by $n_l$ the number of overlapping intervals that the $l$th block contains, so that $n_1 + \ldots + n_q = n$. For $h < k$, we denote $\tilde{\calW}_4( \Ann_{2^h, 2^k} ) := \{ \calW_4( \Ann_{2^h, 2^k} )$ occurs with only the holes centered in $\Ann_{2^{h-1}, 2^{k+1}} \}$. For notational convenience, we set, for $h \geq k$, $\tilde{\calW}_4( \Ann_{2^h, 2^k} ) := \Omega$ and $\pi_4(2^h, 2^k) := 1$.

Observe that, on the event $\calW_4( \Ann_{2^i, 2^j} ) \cap \tilde{\calE}_2$, for each block $\llbracket \tilde{h}_l,\tilde{k}_l \rrbracket$ the event in the left-hand side of \eqref{eq:upper_bound_block_lhs} holds (with $h_{\underline{l}}$ and $k_{\overline{l}}$ replaced by $\tilde{h}_l$ and $\tilde{k}_l$, respectively), and that (if $l \leq q-1$) for the annulus between this block and the next one (i.e. the block $\llbracket \tilde{h}_{l+1},\tilde{k}_{l+1} \rrbracket$), the event $\tilde{\calW}_4( \Ann_{2^{\tilde{k}_l+2}, 2^{\tilde{h}_{l+1}-2}} )$ holds. Such considerations, together with appropriate use of independence (and application of \eqref{eq:upper_bound_block}) gives
\begin{align*}
\PPh_p^{(m)} \big( \calW_4( \Ann_{2^i, 2^j} ) \cap \tilde{\calE}_2 \big) & \leq \PPh_p^{(m)} \big( \tilde{\calW}_4( \Ann_{2^i, 2^{\tilde{h}_1-2}} ) \big) \bigg( \prod_{l=1}^{q-1} \PPh_p^{(m)} \big( \tilde{\calW}_4( \Ann_{2^{\tilde{k}_l+2}, 2^{\tilde{h}_{l+1}-2}} ) \big) \bigg) \PPh_p^{(m)} \big( \tilde{\calW}_4( \Ann_{2^{\tilde{k}_q+2}, 2^j} ) \big)\\
& \hspace{2cm} \cdot \bigg( \prod_{l=1}^q \big( C_4 m^{\alpha - \beta} (\log m) \big)^{n_l} \pi_4(2^{\tilde{h}_l}, 2^{\tilde{k}_l}) \bigg).
\end{align*}
Then, by applying $q+1$ times the induction hypothesis, we obtain
\begin{align*}
\PPh_p^{(m)} \big( \calW_4( \Ann_{2^i, 2^j} ) \cap \tilde{\calE}_2 \big) & \leq ( C \pi_4(2^i, 2^{\tilde{h}_1-2}) ) \bigg( \prod_{l=1}^{q-1} C \pi_4(2^{\tilde{k}_l+2}, 2^{\tilde{h}_{l+1}-2}) \bigg) ( C \pi_4(2^{\tilde{k}_q+2}, 2^j) )\\
& \hspace{2cm} \cdot \bigg( \prod_{l=1}^q \pi_4(2^{\tilde{h}_l}, 2^{\tilde{k}_l}) \bigg) \big( C_4 m^{\alpha - \beta} (\log m) \big)^{n_1 + \ldots + n_q}.
\end{align*}
This yields, using \eqref{eq:extendability} and \eqref{eq:quasi_mult} (for $\pi_4$) repeatedly,
\begin{align*}
\PPh_p^{(m)} \big( \calW_4( \Ann_{2^i, 2^j} ) \cap \tilde{\calE}_2 \big) & \leq C^{q+1} (C_5)^{2q} \pi_4(2^i, 2^j) \big( C_4 m^{\alpha - \beta} (\log m) \big)^n.
\end{align*}

Hence,
\begin{align*}
(\text{Term } 2) & = \PPh_p^{(m)} \big( \calW_4( \Ann_{2^i, 2^j} ) \cap \calE_2 \big)\\
& \leq \sum_{1 \leq n \leq M} \sum_{\substack{i \leq h_1 < \ldots < h_n < j\\ i < k_1 < \ldots < k_n \leq j\\ h_l < k_l (1 \leq l \leq n)}} \PPh_p^{(m)} \big( \calW_4( \Ann_{2^i, 2^j} ) \cap \tilde{\calE}_2(n, h_1, \ldots, h_n, k_1, \ldots, k_n) \big)\\
& \leq M (C_6 \log m)^{2M} C^{M+1} \big( C_4 m^{\alpha - \beta} (\log m) \big) \pi_4(2^i, 2^j)
\end{align*}
(in the last inequality, we used the fact that $m^{\alpha - \beta} (\log m) \to 0$ as $m \to \infty$, since $\beta > \alpha$), so
\begin{equation} \label{eq:fourarm_term2b}
(\text{Term } 2) = o( \pi_4(2^i, 2^j) ) \quad \text{as $m \to \infty$.}
\end{equation}
By combining \eqref{eq:fourarm_term1and2and3}, \eqref{eq:fourarm_term1}, \eqref{eq:fourarm_term2b}, and \eqref{eq:fourarm_term3}, we obtain again \eqref{eq:fourarm_W4}, now for the general case $\alpha \in \big(\frac{3}{4}, 2 \big)$.

\subsubsection{End of the proof of Theorem \ref{thm:four_arm_stability}} \label{sec:end_proof_four_arm}

We are now in a position to conclude. We can write
\begin{equation}
\PPh_p^{(m)} \big( \calW_4 ( \Ann_{2^i, 2^j} ) \big) \leq \PPh_p^{(m)} \big( \calD^c \big) + \PPh_p^{(m)} \big( \calW_4 ( \Ann_{2^i, 2^j} ) \cap \calD \big).
\end{equation}
We also have $\PPh_p^{(m)} \big( \calD^c \big) = \PP_p \big( \arm_4( \Ann_{2^{i+3},2^{j-3}} ) \big) \leq \hat{C} \pi_4(2^i, 2^j)$ (using \eqref{eq:extendability}, \eqref{eq:near_critical_arm}, and $2^j \leq 2 K L(p)$ for the inequality). Note that $\hat{C}$ depends only on $K$. Combining this with \eqref{eq:fourarm_W4}, we get that
\begin{equation}
\PPh_p^{(m)} \big( \calW_4 ( \Ann_{2^i, 2^j} ) \big) \leq \hat{C} \pi_4(2^i, 2^j) + o( \pi_4(2^i, 2^j) ) \quad \text{as $m \to \infty$.}
\end{equation}
This implies that if we choose $C > \hat{C}$, the inequality in \eqref{eq:four_arm_stability} holds for all $m$ large enough (depending on $c_1$, $c_2$, $c_3$, $\alpha$, $\beta$, and $K$), uniformly in $i$ and $j$ satisfying the requirements in the statement of the theorem. This completes the proof of Theorem \ref{thm:four_arm_stability}.
\end{proof}

\subsubsection{Remark on Domain II} \label{sec:rem_domain2}


Note that, strictly speaking, by monotonicity Domain II is covered by Domain I (indeed, for every $(\alpha, \beta)$ in Domain II, we can find $(\alpha', \beta)$ in Domain I with $\alpha' > \alpha$). Still, it might be interesting to see ``what happens to our computations'' in the case of Domain II.

The main difference appears just below \eqref{eq:rem_domain2_1}: $\alpha < \frac{3}{4}$ so for any $\upsilon > 0$, $\alpha - \frac{3}{4} - \upsilon < 0$. Hence, $\sum_{h=1}^l (2^h)^{\alpha-\frac{3}{4} - \upsilon} \leq C' < \infty$, and \eqref{eq:rem_domain2_2} becomes
\begin{equation}
\sum_{l = i+2}^{j-3} 2^{2l} \sum_{h=1}^l (2^h)^{\alpha-2} \pi_4(2^{h+1}, 2^{l-1}) \leq C_5 \sum_{l = i+2}^{j-3} (2^l)^{\frac{3}{4} + \upsilon} C' \leq C_6 (2^j)^{\frac{3}{4} + \upsilon}.
\end{equation}
This implies the following analog of \eqref{eq:fourarm_term1}:
\begin{equation} \label{eq:fourarm_term1_domain2}
(\text{Term } 1) \leq C_7 m^{\frac{3}{4} + \upsilon - \beta} \pi_4(2^i, 2^j),
\end{equation}
which is a $o( \pi_4(2^i, 2^j) )$ as $m \to \infty$, if we choose $\upsilon$ small enough so that $\beta > \frac{3}{4} + \upsilon$.

This computation shows that the phenomenology in Domain II is different from Domain I, in the sense that the contribution of pivotal holes is mostly produced by microscopic holes. As the reader can check, exactly the same calculation would appear in the ``further stability results'' below: for one-arm events (see the reasonings after \eqref{eq:sum_pivotal_one_arm}), and for crossing probabilities (see below \eqref{eq:crossing_pf3}). In both \eqref{eq:sum_pivotal_one_arm2} and \eqref{eq:crossing_pf3}, the term $m^{\alpha-\beta}$ would become $m^{\frac{3}{4} + \upsilon - \beta}$, as in \eqref{eq:fourarm_term1_domain2}.

\subsection{General comments on the stability of arm events} \label{sec:stability_other_arm}

In this section, we use the notation $\alpha_j := \alpha_{\sigma}$ for the critical arm exponent in the case when $\sigma = (ovo\ldots) \in \colorseq_j$ is alternating ($j \geq 1$). The four-arm stability result, Theorem \ref{thm:four_arm_stability}, comes from a subtle balance between opposite effects of the holes on the occupied and vacant arms. At its core, the proof relies on the inequality $\alpha_{(oo)} > \alpha_4 - 1$: in the computations below \eqref{eq:near_critical_two_arms} (for the case $\alpha > 1$), and below \eqref{eq:block_holes} (for the general case). This inequality itself comes from $\alpha_{(oo)} > \alpha_2$ (from \eqref{eq:monochromatic}), and the numerical values $\alpha_2 = \frac{1}{4}$ and $\alpha_4 = \frac{5}{4}$ (see the paragraph below \eqref{eq:arm_exponent}). But there does not seem to be any conceptual reason why the four-arm event should be stable.

Also, note that the a-priori bounds available for other lattices, e.g. the square lattice $\ZZ^2$, do not seem to be accurate enough to make our proof of four-arm stability work there. Maybe a more detailed geometric analysis could still provide a proof for those lattices, but this is not clear at the moment (and beyond the main purpose of this paper).

Further stability results will be derived in Section \ref{sec:other_stability}, in particular for one occupied arm (i.e. $\arm_1 = \arm_{(o)}$), see Proposition \ref{prop:one_arm_stability}. In order to illustrate that arm stability is not obvious at all, we now point out that it does not hold for all types of arm events. To make things more concrete, let us assume (in this section only) that $\rho^{(m)} \big( [r,+\infty) \big) = c_1 r^{\alpha-2} e^{-c_2 r / m}$ (for $r \geq 1$) and $\pi^{(m)}_v = c_3 m^{- \beta}$ (for all $v \in V$), for some $\alpha$ and $\beta$ as in \eqref{eq:assump_exp}.


First, it is easy to see that the one-arm event for $\sigma = (v)$ (one vacant arm) is \emph{not stable} if $\beta - \alpha < \alpha_1$ ($= \frac{5}{48}$): indeed, we have that for some $\upsilon > 0$ small enough,
$$\PPh_{1/2}^{(m)} \big( \arm_{(v)}( \Ann_{n_1, m}(z) ) \big) \geq \PPh^{(m)} \big( \calH( \Ann_{n_1, m}(z) ) \big) \asymp m^{\alpha - \beta} \gg m^{-\alpha_1 + \upsilon} \gg \PP_{1/2} \big( \arm_{(v)}( \Ann_{n_1, m}(z) ) \big)$$
as $m \to \infty$, if $n_1$ is fixed (or $n_1$ grows at most like $m^{\upsilon'}$, for some sufficiently small $\upsilon' > 0$), using \eqref{eq:arm_exponent}. In fact, this argument shows that for every $\sigma = (v \ldots v) \in \colorseq_j$, the event $\arm_{\sigma}$ is not stable.

Now, we will point out that even sequences containing occupied arms are not necessarily stable. Indeed, let us consider the $j$-arm event with sequence $\sigma = (ov \ldots v) \in \colorseq_j$ (with one occupied arm, and $j-1$ vacant arms). A similar computation as for Lemma \ref{lem:Hbar} yields
\begin{equation} \label{eq:hole_quarter_plane}
\PPh^{(m)} \big( \overline{\calH}^{[1/4]}(\Ann_{n_1, m}) \big) \asymp m^{-\beta} n_1 m^{\alpha-1} \quad \text{as $m \to \infty$,}
\end{equation}
where (for an annulus $A$), $\overline{\calH}^{[1/4]}(A)$ denotes the event that $\overline{\calH}(A)$ is realized by a hole that furthermore stays in the quarter-plane $(0,+\infty)^2$. Using the notation $\arm^{[3/4]}_{(o)}$ for the one-arm event in the complementary three-quarter plane $\RR^2 \setminus (0,+\infty)^2$, we can write
\begin{align}
\PPh^{(m)}_{1/2} \big( \arm_{\sigma}(\Ann_{n_1, m}) \big) & \geq \PPh^{(m)}_{1/2} \big( \arm^{[3/4]}_{(o)}(\Ann_{n_1, m}) \cap \overline{\calH}^{[1/4]}(\Ann_{n_1/2, m}) \big) \nonumber \\
& = \PP_{1/2} \big( \arm^{[3/4]}_{(o)}(\Ann_{n_1, m}) \big) \PPh^{(m)} \big( \overline{\calH}^{[1/4]}(\Ann_{n_1/2, m}) \big). \label{eq:hole_and_arm}
\end{align}
The arm exponent $\alpha^{[3/4]}_1$ corresponding to $\arm^{[3/4]}_{(o)}$ can be obtained from the half-plane one-arm exponent $\alpha^{[1/2]}_1 = \frac{1}{3}$, as $\alpha^{[3/4]}_1= \frac{2}{3} \cdot \alpha^{[1/2]}_1 = \frac{2}{9}$ (by ``conformal invariance''). Hence, combined with \eqref{eq:hole_quarter_plane} and \eqref{eq:hole_and_arm}, we obtain that for any $\upsilon > 0$,
\begin{equation}
\PPh^{(m)}_{1/2} \big( \arm_{\sigma}(\Ann_{n_1, m}) \big) \geq C \bigg( \frac{n_1}{m} \bigg)^{\frac{2}{9} + \upsilon} m^{-\beta} n_1 m^{\alpha-1} = C m^{\alpha - \beta} \bigg( \frac{n_1}{m} \bigg)^{\frac{11}{9} + \upsilon}.
\end{equation}
For all $j \geq 4$, $\alpha_j > \frac{11}{9}$, so for $\beta - \alpha$ small enough, we can find $\upsilon > 0$ so that
$$\PPh^{(m)}_{1/2} \big( \arm_{\sigma}(\Ann_{n_1, m}) \big) \gg \bigg( \frac{n_1}{m} \bigg)^{\alpha_j - \upsilon} \gg \pi_j(n_1,m)$$
as $m \to \infty$ (using \eqref{eq:arm_exponent}), where again $n_1$ is fixed or grows as a small power of $m$. Hence, the $j$-arm event with sequence $\sigma$ is not stable as soon as $j \geq 4$. Note that a similar construction can be made for sequences $\sigma$ containing more than one occupied arm, as long as there are enough vacant arms.

Let us also mention that we expect the six-arm event with sequence $(ovvovv)$ to be of particular importance. This event is a classical a-priori estimate for near-critical percolation, which plays in particular a central role in \cite{KMS2015}. It should be relevant for extending our results in Section \ref{sec:existence_excep_scales} to forest fires \emph{with recovery} (see the discussion in Section \ref{sec:forest_fires}). This event turns out to be stable as well, but proving it requires more careful combinatorics than for Theorem \ref{thm:four_arm_stability}, and we plan to write it out in detail in a separate paper.

\section{Further stability results} \label{sec:other_stability}

In this section, we still suppose that the probability measures $\PPh_p^{(m)}$ (see \eqref{eq:def_P_bar}) satisfy Assumption \ref{ass:rho_pi} and Assumption \ref{ass:alpha_beta}. Recall that $\alpha$, $\beta$, $c_1$, $c_2$ and $c_3$ are parameters appearing in the definition of these measures. In our setting of percolation with holes, we prove several results which extend classical properties of usual Bernoulli percolation. We first use the four-arm stability result Theorem \ref{thm:four_arm_stability} to prove the stability of one-arm events (Section \ref{sec:one_arm_stability}), and of crossing events in rectangles (Section \ref{sec:crossing}). The stability of crossing probabilities is then used in Section \ref{sec:exp_decay} to establish an exponential decay property for these probabilities, similar to \eqref{eq:exp_decay}. Finally, in Section \ref{sec:BCKS} we combine the one-arm stability result and the exponential decay property to obtain estimates for the volume of the largest cluster in a box, analogous to \eqref{eq:largest_cluster}.

\subsection{One-arm event} \label{sec:one_arm_stability}

In this section, we prove stability for the existence of one occupied arm.

\begin{proposition} \label{prop:one_arm_stability}
Let $K \geq 1$. We have
\begin{equation}
\PPh_p^{(m)} \big( \arm_1( \Ann_{n_1,n_2} ) \big) = \PP_p \big( \arm_1( \Ann_{n_1,n_2} ) \big) \cdot (1 + o(1)) \quad \text{as } m \to \infty,
\end{equation}
uniformly in $p \in (0,1)$, and $1 \leq n_1 \leq \frac{n_2}{32} \leq n_2 \leq K (m \wedge L(p))$ (i.e. the $o(1)$ depends on $c_1$, $c_2$, $c_3$, $\alpha$, $\beta$ and $K$, but not on $p$, $n_1$ and $n_2$ satisfying the conditions stated).
\end{proposition}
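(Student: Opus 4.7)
The upper bound $\PPh_p^{(m)}(\arm_1(\Ann_{n_1,n_2})) \leq \PP_p(\arm_1(\Ann_{n_1,n_2}))$ is immediate by monotonicity, since any $\omega^{(V)}$-occupied path is also $\omega$-occupied. Combined with $\PP_p(\arm_1(\Ann_{n_1,n_2})) \asymp \pi_1(n_1,n_2)$ from \eqref{eq:near_critical_arm}, the claim reduces to showing $\PP_p(\arm_1) - \PPh_p^{(m)}(\arm_1) = o(\pi_1(n_1,n_2))$.

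The plan is to adapt the strategy of the proof of Theorem \ref{thm:four_arm_stability}, splitting according to whether big holes are present in $\Ann_{n_1,n_2}$. In the ``no big hole'' case, a sequential hole-addition argument (in the spirit of Russo's formula) identifies, up to combinatorial factors, a single hole $H_v$ whose addition ``breaks'' the arm. For such a hole with $v \in \Ann_{2^l, 2^{l+1}}$ and $r_v \in [2^h, 2^{h+1})$, the key geometric observation is that $\omega^{(V \setminus \{v\})}$ satisfies $\arm_4(\Ann_{r_v, 2^{l-1}}(v))$---and thereby witnesses $\calW_4(\Ann_{r_v, 2^{l-1}}(v))$ (via $U = V \setminus \{v\}$ in \eqref{eq:def_W4}). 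Indeed, two occupied arms emanate from $\partial H_v$, heading respectively to $\din \Ball_{n_1}$ and $\din \Ball_{n_2}$ (since the arm must pass through $H_v$), while planar duality applied to the ``no arm in $\omega^{(V)}$'' condition (which forces these two occupied clusters to be disconnected in $\omega^{(V)}$) produces two separating vacant arms in the same annulus; all four arms lie in $\Ann_{r_v, 2^{l-1}}(v)$, outside $H_v$, where $\omega^{(V \setminus \{v\})}$ and $\omega^{(V)}$ agree.

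Beyond $\Ball_{2^{l+2}}(v)$, the two occupied arms continue to $\din \Ball_{n_1}$ and $\din \Ball_{n_2}$, producing (by monotonicity, viewing them in $\omega$) independent one-arm events in the disjoint annuli $\Ann_{n_1, 2^{l-3}}$ and $\Ann_{2^{l+4}, n_2}$ around the origin. Combining Theorem \ref{thm:four_arm_stability} on the local event with \eqref{eq:near_critical_arm} on the outer ones (these three regions being pairwise disjoint, hence their events independent), one obtains
\[
\PPh_p^{(m)}\big(H_v \text{ pivotal}, \, r_v \in [2^h, 2^{h+1})\big) \leq C \, \pi^{(m)}_v \, \rho^{(m)}([2^h, 2^{h+1})) \, \pi_4(2^h, 2^{l-1}) \, \pi_1(2^i, 2^{l-3}) \, \pi_1(2^{l+4}, 2^j).
\]
Using $\pi_1(2^i, 2^{l-3}) \, \pi_1(2^{l+4}, 2^j) \leq C' \pi_1(2^i, 2^j)$ (from quasi-multiplicativity and $\pi_1(2^{l-3}, 2^{l+4}) \geq c > 0$), and summing over $v$, $h$, $l$ via Lemma \ref{lem:ratio_limit} with $\alpha_4 = \frac{5}{4}$, choosing $\upsilon > 0$ small enough that $\alpha - \frac{3}{4} - \upsilon > 0$ (possible since $\alpha > \frac{3}{4}$), the calculation is essentially identical to that for Term~1 in the proof of Theorem \ref{thm:four_arm_stability} and yields a bound of order $m^{\alpha - \beta} \pi_1(2^i, 2^j) = o(\pi_1(2^i, 2^j))$ since $\beta > \alpha$.

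The ``big hole'' case is handled analogously to Term~2 (and Term~3, when $\alpha \leq 1$) in the proof of Theorem \ref{thm:four_arm_stability}, with $\pi_4$ replaced by $\pi_1$ in the outer region, which only simplifies the estimates. The main technical subtlety is justifying the ``single responsible hole'' identification: strict single-hole pivotality (i.e., $\arm_1$ holding in $\omega^{(V \setminus \{v\})}$ but not in $\omega^{(V)}$) may fail when several holes conspire to break all arms, but a sequential hole-addition procedure (revealing holes one by one and stopping at the first to destroy $\arm_1$) circumvents this at the cost of controllable combinatorial factors, and the local four-arm structure established above still applies.
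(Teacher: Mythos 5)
Your core idea matches the paper's: bound $\PP_p(\arm_1) - \PPh_p^{(m)}(\arm_1)$ by a sequential hole-addition argument, recognize that a small pivotal hole $H_v$ creates a local $\calW_4$ event together with two independent outer one-arm events, control the local event by Theorem~\ref{thm:four_arm_stability}, and recombine via quasi-multiplicativity; the summation using $\alpha - 2 > -\alpha_4$ is exactly the paper's \eqref{eq:sum_pivotal_one_arm}--\eqref{eq:one_arm_stability_pf2e}. However, there are two differences worth noting. First, you propose to re-do the big-hole/no-big-hole case split from the proof of Theorem~\ref{thm:four_arm_stability}; this is unnecessary, because that dichotomy is already internal to Theorem~\ref{thm:four_arm_stability}, which you apply as a black box to the local $\calW_4$ event. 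The paper instead distinguishes only two cases for the pivotal hole itself: large radius ($r_v \gtrsim \|v\|$), which is handled by the trivial bound $\rho^{(m)}([\|v\|/8,\infty)) \cdot \PP_p(\arm_1)$, versus small radius, which gives the $\calW_4$ decomposition. No separate ``big hole elsewhere'' analysis is needed.

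The real gap is boundary effects. Your ``two occupied arms emanate from $\partial H_v$'' claim fails when $H_v$ is centered close to $\din\Ball_{n_1}$ or $\din\Ball_{n_2}$ (or intersects them): the inner (resp.\ outer) occupied arm can terminate at $\din\Ball_{n_1}$ (resp.\ $\din\Ball_{n_2}$) before crossing the annulus $\Ann_{r_v, 2^{l-1}}(v)$, so the four-arm event in that annulus does not hold, and the claimed factorization into $\pi_4 \cdot \pi_1 \cdot \pi_1$ breaks down. The paper devotes most of the proof to this: it works with nested annuli $A \subset A' \subset A''$ (with widths $\eta n_1$, $\eta n_2$), adds first only the holes centered in $A'$ (so they are at controlled distance from $\din A''$), then separately handles holes centered in $(A')^c$ using Lemma~\ref{lem:no_crossing} (such a hole must cross $\Ann_{(1-\eta)n_1, n_1}$ or $\Ann_{n_2,(1+\eta)n_2}$, events of probability $O(m^{\alpha-\beta})$), and finally compares $\PP_p(\arm_1(A''))$ to $\PP_p(\arm_1(A))$ via the half-plane three-arm estimate. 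In addition, the case $n_1 < m^{\beta/3}$ needs separate treatment (there are not enough dyadic scales near the inner boundary for the $\calW_4$ decomposition, and the paper handles it by showing that typically no hole is centered near the origin). Without these boundary arguments the proof is incomplete, so you should incorporate them explicitly.
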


\begin{proof}[Proof of Proposition \ref{prop:one_arm_stability}]

We first assume that $n_1 \geq m^{\beta/3}$. Let $\eta \in (0, \frac{1}{8})$. We consider the annuli $A := \Ann_{n_1,n_2}$, $A' := \Ann_{(1-\eta) n_1, (1+\eta) n_2}$, and $A'' := \Ann_{(1-2\eta) n_1, (1+2\eta) n_2}$. We prove that
\begin{equation} \label{eq:one_arm_stability_suff}
\PPh_p^{(m)} \big( \arm_1( A ) \big) \geq \PP_p \big( \arm_1( A'' ) \big) \cdot (1 + o(1)) \quad \text{as } m \to \infty,
\end{equation}
which is enough to establish Proposition \ref{prop:one_arm_stability}. Indeed, it follows from standard arguments for ordinary Bernoulli percolation (from the fact that the critical exponent for three arms in a half plane is equal to $2$, so in particular strictly larger than $1$, see for example Theorem 24 in \cite{No2008}) that the ratio of $\PP_p \big( \arm_1( A'' ) \big)$ and $\PP_p \big( \arm_1( A ) \big)$ can be made arbitrarily close to $1$ by choosing $\eta > 0$ small enough, uniformly in $p \in (0,1)$ and $m^{\beta/3} \leq n_1 \leq \frac{n_2}{32} \leq n_2 \leq K L(p)$ (for $m$ large enough).

Because of boundary effects, we first ``add'' (in a similar sense as in Section \ref{sec:case_alpha_1}) the holes with centers in $A'$ (i.e. at a sufficient distance from the boundary of $A''$), and then the remaining holes, with centers in $(A')^c$. For that, we introduce the intermediate families $\pi'^{(m)}_v := \pi^{(m)}_v \ind_{v \in A'}$ and $\pi''^{(m)}_v := \pi^{(m)}_v \ind_{v \in (A')^c}$ (so that $\pi^{(m)} = \pi'^{(m)} + \pi''^{(m)}$). If we denote by $\tilde{\arm}_1( A'' )$ the event that $\arm_1( A'' )$ holds without the holes, we have $\arm_1( A'' ) \subseteq \tilde{\arm}_1( A'' )$ so
\begin{align}
\PPh_p^{\pi'^{(m)},\rho^{(m)}} \big( \arm_1( A'' ) \big) & = \PPh_p^{\pi'^{(m)},\rho^{(m)}} \big( \tilde{\arm}_1( A'' ) \big) - \PPh_p^{\pi'^{(m)},\rho^{(m)}} \big( \tilde{\arm}_1( A'' ) \setminus \arm_1( A'' ) \big), \nonumber \\
& = \PP_p \big( \arm_1( A'' ) \big) - \PPh_p^{\pi'^{(m)},\rho^{(m)}} \big( \tilde{\arm}_1( A'' ) \setminus \arm_1( A'' ) \big). \label{eq:one_arm_stability_pf1}
\end{align}
We claim that
\begin{equation} \label{eq:one_arm_stability_pf2}
\PPh_p^{\pi'^{(m)},\rho^{(m)}} \big( \tilde{\arm}_1( A'' ) \setminus \arm_1( A'' ) \big) = \PP_p \big( \arm_1( A'' ) \big) \cdot o(1),
\end{equation}
which we now prove.

\begin{figure}
\begin{center}

\includegraphics[width=.9\textwidth]{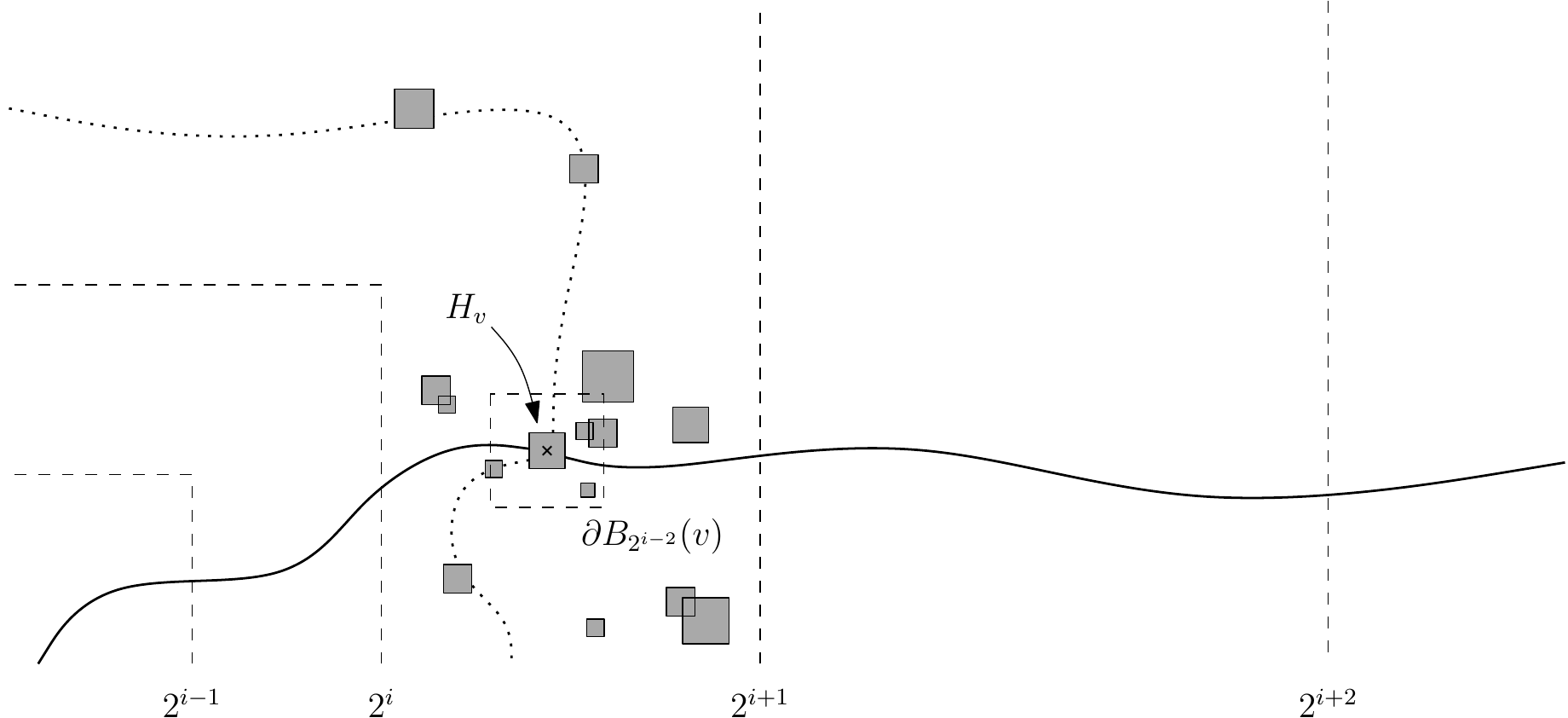}
\caption{\label{fig:one_arm_stability} If $H_v$ is a ``pivotal'' hole, with center $v \in \Ann_{2^i, 2^{i+1}}$ and radius $r_v \in [2^j, 2^{j+1})$ ($j \leq i-4$), we consider the three events $\calW_4( \Ann_{2^{j+1}, 2^{i-2}}(v) )$, $\tilde{\arm}_1( \Ann_{(1-2\eta) n_1, 2^{i-1}} )$ and $\tilde{\arm}_1( \Ann_{2^{i+2}, (1+2\eta) n_2} )$.}

\end{center}
\end{figure}

We follow a similar procedure as for Theorem \ref{thm:four_arm_stability}. By adding the holes with centers in $A'$ one by one, until the one-arm event fails, we see that there must exist a ``pivotal'' hole $H_v$, with $v \in A'$. Let $i \geq 0$ be such that $v \in \Ann_{2^i, 2^{i+1}}$. Clearly, either $r_v \geq 2^{i-3}$, or $2^j \leq r_v < 2^{j+1}$ for some $j \leq i-4$. In the latter case, the event $\calW_4( \Ann_{2^{j+1}, 2^{i-2}}(v) )$ occurs (see Figure \ref{fig:one_arm_stability}). We deduce, with $I := \lfloor \log_2 ( n_2 ) \rfloor$,
\begin{align}
& \PPh_p^{\pi'^{(m)},\rho^{(m)}} \big( \tilde{\arm}_1( A'' ) \setminus \arm_1( A'' ) \big) \leq c_3 m^{-\beta} \sum_{i \leq I} \big| \Ball_{2^{i+1}} \big| \cdot \bigg[ \rho^{(m)} \big( [2^{i-3}, +\infty) \big) \cdot \PPh_p^{\pi'^{(m)},\rho^{(m)}} \big( \tilde{\arm}_1( A'' ) \big) \nonumber \\
& \hspace{0.6cm} + \sum_{j=0}^{i-4} \Big[ \rho^{(m)} \big( [2^j, +\infty) \big) \cdot \PPh_p^{\pi'^{(m)},\rho^{(m)}} \big( \calW_4( \Ann_{2^{j+1}, 2^{i-2}}(v) ) \big) \cdot \PPh_p^{\pi'^{(m)},\rho^{(m)}} \big( \arm^{(1)} \big) \cdot \PPh_p^{\pi'^{(m)},\rho^{(m)}} \big( \arm^{(2)} \big) \Big] \bigg], \label{eq:one_arm_stability_pf2a}
\end{align}
where $\arm^{(1)} := \tilde{\arm}_1( \Ann_{(1-2\eta) n_1, 2^{i-1}} )$ and $\arm^{(2)} := \tilde{\arm}_1( \Ann_{2^{i+2}, (1+2\eta) n_2} )$ (note that the three events above $\calW_4( \Ann_{2^{j+1}, 2^{i-2}}(v) )$, $\arm^{(1)}$ and $\arm^{(2)}$ are independent, since they involve disjoint regions of the plane, and only $\calW_4( \Ann_{2^{j+1}, 2^{i-2}}(v) )$ involves the holes). We know from Theorem \ref{thm:four_arm_stability} that
\begin{equation} \label{eq:one_arm_stability_pf2b}
\PPh_p^{\pi'^{(m)},\rho^{(m)}} \big( \calW_4( \Ann_{2^{j+1}, 2^{i-2}}(v) ) \big) \leq C_1 \pi_4(2^{j+1}, 2^{i-2}) \leq C_2 \pi_4(2^j, 2^i)
\end{equation}
(the second inequality follows from \eqref{eq:extendability}). We also have $\PPh_p^{\pi'^{(m)},\rho^{(m)}} \big( \tilde{\arm}_1( A'' ) \big) = \PP_p \big( \arm_1( A'' ) \big)$, and
\begin{align}
\PPh_p^{\pi'^{(m)},\rho^{(m)}} \big( \arm^{(1)} \big) \cdot \PPh_p^{\pi'^{(m)},\rho^{(m)}} \big( \arm^{(2)} \big) & = \PP_p \big( \arm_1( \Ann_{(1-2\eta) n_1, 2^{i-1}} ) \big) \cdot \PP_p \big( \arm_1( \Ann_{2^{i+2}, (1+2\eta) n_2} ) \big) \nonumber \\
& \leq C_3 \PP_p \big( \arm_1( A'' ) \big). \label{eq:one_arm_stability_pf2c}
\end{align}
Hence, by combining \eqref{eq:one_arm_stability_pf2a}, \eqref{eq:one_arm_stability_pf2b} and \eqref{eq:one_arm_stability_pf2c}, and using \eqref{eq:assump_holes}, we obtain
\begin{equation} \label{eq:one_arm_stability_pf2d}
\PPh_p^{\pi'^{(m)},\rho^{(m)}} \big( \tilde{\arm}_1( A'' ) \setminus \arm_1( A'' ) \big) \leq C_4 m^{-\beta} \sum_{i \leq I} 2^{2i} \cdot \bigg[ (2^i)^{\alpha-2} + \sum_{j=0}^{i-4} \Big[ (2^j)^{\alpha-2} \cdot \pi_4(2^j, 2^i) \Big] \bigg] \cdot \PP_p \big( \arm_1( A'' ) \big).
\end{equation}
Lemma \ref{lem:ratio_limit} implies that for any $\upsilon > 0$ fixed,
\begin{equation} \label{eq:sum_pivotal_one_arm}
\sum_{j=0}^{i-4} \Big[ (2^j)^{\alpha-2} \cdot \pi_4(2^j, 2^i) \Big] \leq C_5 \sum_{j=0}^{i-4} (2^j)^{\alpha-2} \bigg( \frac{2^j}{2^i} \bigg)^{\frac{5}{4} - \upsilon} = C_5 (2^i)^{- \frac{5}{4} + \upsilon} \sum_{j=0}^{i-4} (2^j)^{\alpha-2 + \frac{5}{4} - \upsilon}.
\end{equation}
By Assumption \ref{ass:alpha_beta}, we have $\alpha-2 > - \frac{5}{4}$ (see \eqref{eq:assump_exp}), so we can choose $\upsilon$ small enough so that $\alpha-2 + \frac{5}{4} - \upsilon > 0$, and we deduce
\begin{equation} \label{eq:one_arm_stability_pf2e}
\sum_{j=0}^{i-4} \Big[ (2^j)^{\alpha-2} \cdot \pi_4(2^j, 2^i) \Big] \leq C_6 (2^i)^{- \frac{5}{4} + \upsilon} (2^i)^{\alpha-2 + \frac{5}{4} - \upsilon} = C_6 (2^i)^{\alpha-2}.
\end{equation}
It then follows from \eqref{eq:one_arm_stability_pf2d} and \eqref{eq:one_arm_stability_pf2e} that
\begin{equation} \label{eq:one_arm_stability_pf2f}
\PPh_p^{\pi'^{(m)},\rho^{(m)}} \big( \tilde{\arm}_1( A'' ) \setminus \arm_1( A'' ) \big) \leq C_4 \PP_p \big( \arm_1( A'' ) \big) \cdot m^{-\beta} \sum_{i \leq I} 2^{2i} \cdot (C_6+1) (2^i)^{\alpha-2}.
\end{equation}
Since
\begin{equation}
\sum_{i \leq I} 2^{2i} \cdot (2^i)^{\alpha-2} = \sum_{i \leq I} (2^i)^{\alpha} \leq C_7 (n_2)^{\alpha} \leq C_7 K^{\alpha} m^{\alpha}
\end{equation}
(where we used $\alpha > 0$), we finally obtain
\begin{equation} \label{eq:sum_pivotal_one_arm2}
\PPh_p^{\pi'^{(m)},\rho^{(m)}} \big( \tilde{\arm}_1( A'' ) \setminus \arm_1( A'' ) \big) \leq C_8 \PP_p \big( \arm_1( A'' ) \big) \cdot m^{\alpha - \beta},
\end{equation}
which (since $\beta > \alpha$) establishes \eqref{eq:one_arm_stability_pf2}.

By using monotonicity, and then combining \eqref{eq:one_arm_stability_pf1} and \eqref{eq:one_arm_stability_pf2}, we obtain
\begin{equation} \label{eq:one_arm_stability_pf3}
\PPh_p^{\pi'^{(m)},\rho^{(m)}} \big( \arm_1( A ) \big) \geq \PPh_p^{\pi'^{(m)},\rho^{(m)}} \big( \arm_1( A'' ) \big) = \PP_p \big( \arm_1( A'' ) \big) \cdot (1 + o(1)).
\end{equation}
We then add the holes with centers in $(A')^c$, and use Lemma \ref{lem:no_crossing}. We can write
\begin{align}
\PPh_p^{\pi^{(m)},\rho^{(m)}} \big( \arm_1( A ) \big) & = \PPh_p^{\pi^{(m)},\rho^{(m)}} \big( \tilde{\tilde{\arm}}_1( A ) \big) - \PPh_p^{\pi^{(m)},\rho^{(m)}} \big( \tilde{\tilde{\arm}}_1( A ) \setminus \arm_1( A ) \big) \nonumber \\
& = \PPh_p^{\pi'^{(m)},\rho^{(m)}} \big( \arm_1( A ) \big) - \PPh_p^{\pi^{(m)},\rho^{(m)}} \big( \tilde{\tilde{\arm}}_1( A ) \setminus \arm_1( A ) \big), \label{eq:one_arm_stability_pf4}
\end{align}
where we denote by $\tilde{\tilde{\arm}}_1( A )$ the event that $\arm_1( A )$ holds without the holes in $(A')^c$. Let $\calH := \calH( \Ann_{(1-\eta) n_1, n_1} ) \cup \calH( \Ann_{n_2, (1+\eta) n_2} )$. We have
\begin{equation} \label{eq:one_arm_stability_pf5}
\PPh_p^{\pi^{(m)},\rho^{(m)}} \big( \tilde{\tilde{\arm}}_1( A ) \setminus \arm_1( A ) \big) \leq \PPh^{\pi''^{(m)},\rho^{(m)}} \big( \calH \big) \cdot \PPh_p^{\pi'^{(m)},\rho^{(m)}} \big( \arm_1( A ) \big) \leq \frac{2 C}{m^{\beta-\alpha}} \cdot \PPh_p^{\pi'^{(m)},\rho^{(m)}} \big( \arm_1( A ) \big)
\end{equation}
(using Lemma \ref{lem:no_crossing}). By combining \eqref{eq:one_arm_stability_pf4} and \eqref{eq:one_arm_stability_pf5}, we obtain
\begin{equation} \label{eq:one_arm_stability_pf6}
\PPh_p^{\pi^{(m)},\rho^{(m)}} \big( \arm_1( A ) \big) = \PPh_p^{\pi'^{(m)},\rho^{(m)}} \big( \arm_1( A ) \big) \cdot (1 + o(1)).
\end{equation}
The desired result \eqref{eq:one_arm_stability_suff} then follows immediately from \eqref{eq:one_arm_stability_pf3} and \eqref{eq:one_arm_stability_pf6}.

In the case when $n_1 < m^{\beta/3}$, we proceed in a similar way, but we handle separately the holes with centers close to $\din \Ball_{n_1}$. For that, we start by adding the holes centered in $\Ball_{3 m^{\beta/3}}$: with probability $1 - O(m^{-\beta/3})$, there are no such holes (using \eqref{eq:assump_holes}). If $n_2 < 2 m^{\beta/3}$, we can conclude immediately by using Lemma \ref{lem:no_crossing} that with probability $1 - O(m^{-\beta/3}) - O(m^{\alpha - \beta})$, no hole intersects $\Ann_{n_1,n_2}$. Otherwise, the remainder of the proof is the same as in the case $n_1 \geq m^{\beta/3}$.
\end{proof}

\subsection{Box crossing probabilities} \label{sec:crossing}

In this section, we establish stability for certain box crossing events.

\begin{proposition} \label{prop:crossing}
Let $K \geq 1$. We have
\begin{equation}
\PPh_p^{(m)} \big( \Ch([0,2n] \times [0,n]) \big) = \PP_p \big( \Ch([0,2n] \times [0,n]) \big) + o(1) \quad \text{as } m \to \infty,
\end{equation}
uniformly in $p \in (0,1)$, and $1 \leq n \leq K (m \wedge L(p))$ (i.e. the $o(1)$ depends on $c_1$, $c_2$, $c_3$, $\alpha$, $\beta$ and $K$, but not on $p$ and $n$).
\end{proposition}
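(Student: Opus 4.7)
The converse inequality $\PPh_p^{(m)}(\Ch(R)) \le \PP_p(\Ch(R))$ is immediate by monotonicity (holes can only turn occupied sites into vacant ones, so any horizontal occupied crossing in $\omega^h$ is one in $\omega$), so the task is to show that $\PP_p(\Ch(R)) - \PPh_p^{(m)}(\Ch(R)) = o(1)$ uniformly in the stated range, with $R := [0,2n] \times [0,n]$. Coupling the hole-free percolation $\omega$ with the configuration $\omega^h$ obtained by adding the holes, this difference equals the probability that $\Ch(R)$ holds in $\omega$ but fails in $\omega^h$. On this event, a monotone ``hole removal'' argument (as in the proof of Proposition~\ref{prop:one_arm_stability}) shows that at least one ``final pivotal'' hole $H_v$ must exist: namely, $\Ch(R)$ holds in $\omega^{h \setminus H_v}$ (the configuration with all holes except $H_v$) but fails in $\omega^h$.

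The plan is to estimate $\sum_v \PPh_p^{(m)}(H_v \text{ is final pivotal})$, following the scheme of Proposition~\ref{prop:one_arm_stability}. First, holes centered far from $R$ are discarded: introduce $R' := R + [-\eta n, \eta n]^2$ for a small fixed $\eta > 0$; covering the strip $R' \setminus R$ by $O(\eta^{-1})$ squares of side $\eta n$ and applying Lemma~\ref{lem:no_crossing} to each gives that, with probability $1 - O(\eta^{-1} m^{\alpha - \beta}) = 1 - o(1)$, no hole centered outside $R'$ intersects $R$. For $v \in R'$, if $H_v$ is final pivotal then some $w \in H_v$ is four-arm pivotal for $\Ch(R)$ in the configuration $\omega^{h \setminus H_v}$; writing $d(v) := \textrm{dist}(v, \partial R)$ and assuming $r_v \le d(v)/4$, a direct geometric inclusion yields that the arms at $w$ restrict to the annulus $\Ann_{2r_v, d(v)/4}(v)$ centered at $v$, so Theorem~\ref{thm:four_arm_stability} gives
$$\PPh_p^{(m)}(H_v \text{ final pivotal}) \le C \pi_4(r_v, d(v)),$$
the regime $r_v > d(v)/4$ being treated separately via the exponential tail in \eqref{eq:assump_holes}.

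For bulk vertices $v \in R'$ (with $d(v) \asymp n$), integrating the above estimate against $\rho^{(m)}$ and summing over $v$ is essentially the computation performed in \eqref{eq:sum_pivotal_one_arm}-\eqref{eq:sum_pivotal_one_arm2} of the proof of Proposition~\ref{prop:one_arm_stability}, and gives a contribution of order $m^{\alpha - \beta} = o(1)$.

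The main obstacle is the near-boundary contribution, where the bound $C\pi_4(r_v, d(v))$ is too wasteful. Indeed, a direct summation of that bound over $v$ at distance $d \ll n$ from $\partial R$ produces a term of order $m^{1-\beta}$, which fails to be $o(1)$ in the range $\alpha \in (3/4, 1)$, $\beta \in (\alpha, 1)$ that Assumption~\ref{ass:alpha_beta} allows. The cure is to use that, for $v$ close to (say) the top side of $R$ but far from the other three, pivotality actually forces three additional alternating arms to extend from scale $d$ to scale $n$ in a half-plane, supplying an extra factor of order $(d/n)^{2 + o(1)}$ corresponding to the universal half-plane three-arm exponent $\alpha^{[1/2]}_3 = 2$. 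With this correction, the near-boundary sum also becomes $O(m^{\alpha - \beta})$ and the proof is complete. Incorporating this refinement in the model with holes requires a half-plane analogue of Theorem~\ref{thm:four_arm_stability} for the three-arm event, which can be established by a variant of the inductive ``big hole / no big hole'' dichotomy of Section~\ref{sec:four_arm_stability}, the key inequality $\alpha_{(oo)} + 1 > \alpha_4$ used there being replaced by its half-plane counterpart $\alpha^{[1/2]}_{(oo)} + 1 > \alpha^{[1/2]}_3$, which is available thanks to the strict monochromatic improvement in the half-plane.
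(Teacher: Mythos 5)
Your diagnosis is sharp: you correctly identify the right-hand inequality as the nontrivial direction, run a pivotal argument for it, observe (by the same summation as in the one-arm proof) that the bulk contribution is $O(m^{\alpha-\beta})$, and — crucially — notice that the naive bound $C\pi_4(r_v,d(v))$ summed near $\partial R$ produces a term of order $m^{1-\beta}$ that fails for $\alpha<1$, $\beta<1$. This is exactly the delicate point. However, the cure you propose — a half-plane analogue of Theorem~\ref{thm:four_arm_stability} for the three-arm event — is a genuinely new, unproven stability result, and the paper does something entirely different to avoid having to prove it.

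The paper's proof of Proposition~\ref{prop:crossing} uses a monotonicity/nested-rectangle trick to ensure that \emph{every} pivotal hole it has to control lies in the bulk. It considers $R = [0,2n]\times[0,n]$, a slightly \emph{wider but shorter} rectangle $R' = [-2\eta n,(2+2\eta)n]\times[2\eta n,(1-2\eta)n]$, and $R'' = [-2\eta n,(2+2\eta)n]\times[0,n]$. Since $\Ch(R')\subseteq\Ch(R'')\subseteq\Ch(R)$, adding holes in three waves — first those far from $R'$ (negligible by Lemma~\ref{lem:no_crossing}), then those in the region $V_{II}$ at distance $\geq\eta n$ from $\partial R''$ (bulk pivotal estimate via Theorem~\ref{thm:four_arm_stability}, the calculation you do correctly), then a small leftover region $V_{III}$ outside $R$ (again Lemma~\ref{lem:no_crossing}) — one arrives at $\PPh_p^{(m)}(\Ch(R)) \geq \PP_p(\Ch(R')) + o(1)$, and then $\PP_p(\Ch(R'))$ is brought close to $\PP_p(\Ch(R))$ by choosing $\eta$ small, using the standard half-plane three-arm estimate for \emph{ordinary} percolation (which is all that is needed). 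The boundary region is never touched by the pivotal argument at all, so no half-plane or corner analogue of the four-arm stability is needed.

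Concretely, there are two gaps in your proposal. (1) You invoke a half-plane version of Theorem~\ref{thm:four_arm_stability}, whose proof would require running the whole ``big hole / no big hole'' induction in the half-plane, with the key inequality becoming $\alpha^{[1/2]}_{(oo)} + 1 > \alpha^{[1/2]}_3 = 2$. This requires the strict half-plane monochromatic improvement $\alpha^{[1/2]}_{(oo)} > 1$; the reference \cite{BN2011} gives the full-plane statement, and a half-plane version would need to be established. Beyond that, your argument also silently needs a \emph{corner} (quarter-plane) analogue for vertices near the corners of $R$, where three half-plane arms are not the right local structure — and quarter-plane arm exponents are not universal, so this is not routine. (2) A small logical slip: ``$H_v$ is final pivotal, i.e.\ $\Ch(R)$ holds in $\omega^{h\setminus H_v}$ but fails in $\omega^h$'' need not define an event that happens on $\{\omega\in\Ch(R), \omega^h\notin\Ch(R)\}$ — two holes can each singly fail to destroy the crossing but jointly do so. The correct object (used in the paper and in your own Proposition~\ref{prop:one_arm_stability}) is the last hole added in a fixed one-by-one ordering, which is pivotal in the \emph{intermediate} configuration rather than in $\omega^h$. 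Your estimates go through with this fix, but the statement as written is wrong.
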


\begin{proof}[Proof of Proposition \ref{prop:crossing}]

First, note that we can assume $n \geq m^{\beta/3}$: otherwise, it follows from \eqref{eq:assump_holes} and Lemma \ref{lem:no_crossing} that with probability $1 - O(m^{-\beta/3}) - O(m^{\alpha - \beta})$, no hole intersects $[0,2n] \times [0,n]$.

We are interested in horizontal crossings of the rectangle $R := [0,2n] \times [0,n]$. Let $\eta \in (0, \frac{1}{8})$, and consider the auxiliary rectangles $R' := [-2 \eta n, (2+2\eta)n] \times [2 \eta n, (1-2\eta)n]$ and $R'' := [-2 \eta n, (2+2\eta)n] \times [0, n]$ (see Figure \ref{fig:crossing_proba}). In order to take care of boundary effects, we add successively the holes centered in the following three regions, forming a partition of $V$:
\begin{itemize}
\item $V_I := \big( (-3 \eta n, (2 + 3 \eta) n) \times (\eta n, (1 - \eta) n) \big)^c \cap V$,

\item $V_{II} := \big( \big( [- \eta n, (2 + \eta) n] \times [\eta n, (1 - \eta) n] \big) \cap V \big) \setminus V_I$,

\item and $V_{III} := \big( \big( ([- 3 \eta n, - \eta n] \cup [(2 + \eta) n, (2 + 3 \eta) n]) \times [\eta n, (1 - \eta) n] \big) \cap V \big) \setminus (V_I \cup V_{II})$.
\end{itemize}

\begin{figure}[t]
\begin{center}

\includegraphics[width=.95\textwidth]{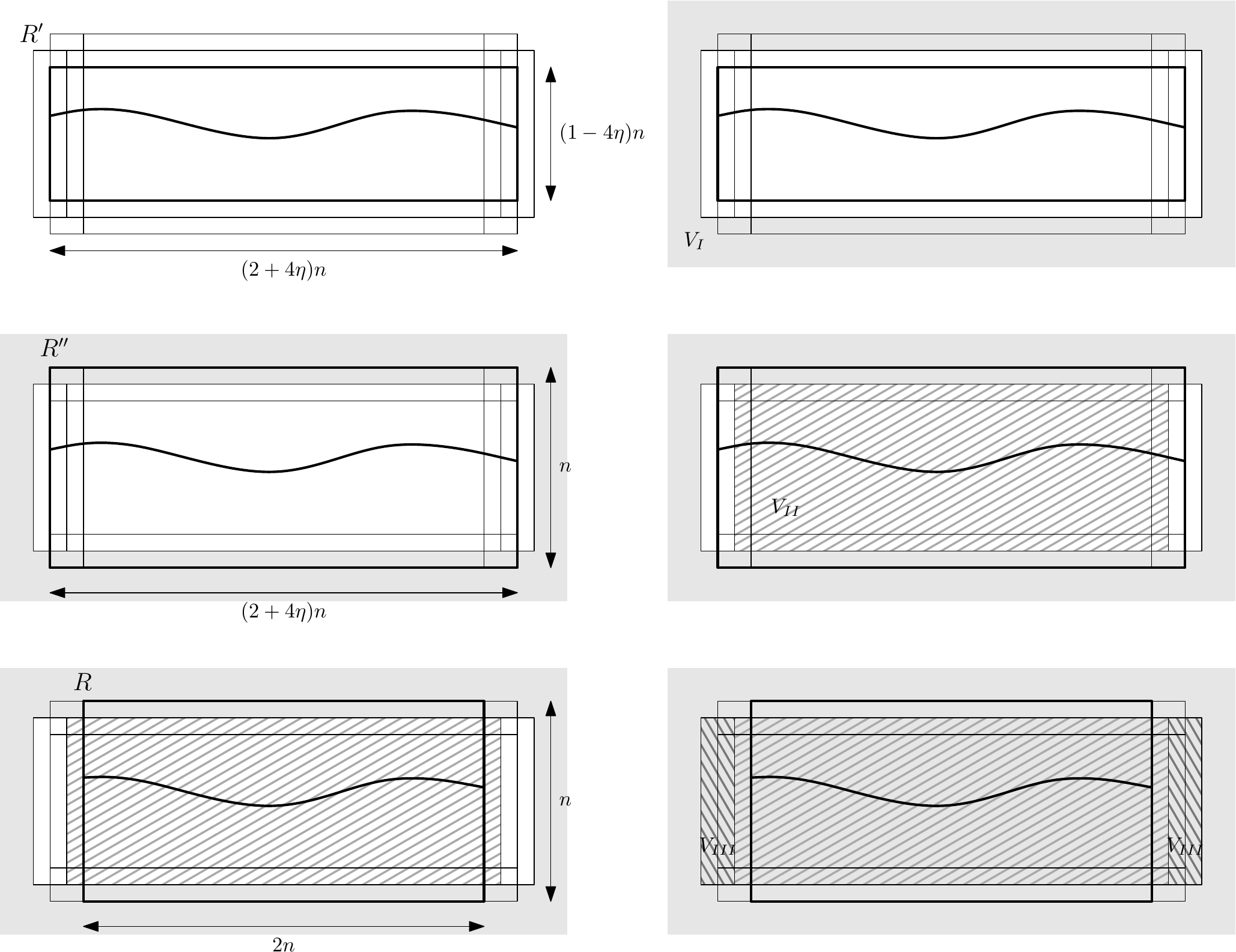}
\caption{\label{fig:crossing_proba} In order to take care of the boundary effects, we add the holes in three successive steps. We denote by $V_I$, $V_{II}$ and $V_{III}$ the corresponding subsets of vertices.}

\end{center}
\end{figure}

We thus introduce $\pi'^{(m)}$ and $\pi''^{(m)}$, defined by
$$\pi'^{(m)}_v := \pi^{(m)}_v \ind_{v \in V_I} \quad \text{and} \quad \pi''^{(m)}_v := \pi^{(m)}_v \ind_{v \in V_I \cup V_{II}} \quad (v \in V).$$
First, it follows from a similar computation as in the proof of Lemma \ref{lem:no_crossing} that
$$\PPh_p^{(m)} \big(  \exists v \in V_I \: : \: H_v \cap R' \neq \emptyset \big) = O(m^{\alpha - \beta}) \quad \text{as } m \to \infty,$$
uniformly in $n$ and $p$ with the required properties (for a fixed $\eta > 0$). Hence,
\begin{equation} \label{eq:crossing_pf1}
\PPh_p^{\pi'^{(m)},\rho^{(m)}} \big( \Ch(R') \big) = \PP_p \big( \Ch(R') \big) + O(m^{\alpha - \beta}).
\end{equation}
By monotonicity, we have
\begin{equation} \label{eq:crossing_pf2}
\PPh_p^{\pi'^{(m)},\rho^{(m)}} \big( \Ch(R'') \big) \geq \PPh_p^{\pi'^{(m)},\rho^{(m)}} \big( \Ch(R') \big).
\end{equation}
We now add the holes with centers in the ``middle'' of $R''$, i.e. at a distance at least $\eta n$ from the boundary of $R''$. This is the region that we denote by $V_{II}$, and we claim that
\begin{equation} \label{eq:crossing_pf3}
\PPh_p^{\pi''^{(m)},\rho^{(m)}} \big( \Ch(R'') \big) = \PPh_p^{\pi'^{(m)},\rho^{(m)}} \big( \Ch(R'') \big) + O(m^{\alpha - \beta}).
\end{equation}
Indeed, this follows from a similar reasoning as for Proposition \ref{prop:one_arm_stability}: by adding the holes with centers in $V_{II}$ one by one, until the crossing event fails, we see that there must be a ``pivotal'' hole $H_v$ ($v \in V_{II}$) from which four arms originate to the four sides of $R''$. For $J := \lfloor \log_2 ( \eta n ) \rfloor$, we obtain, by distinguishing whether $r_v \geq 2^{J-1}$, or $2^j \leq r_v < 2^{j+1}$ for some $j \leq J-2$,
\begin{align*}
\PPh_p^{\pi'^{(m)},\rho^{(m)}} & \big( \Ch(R'') \big) - \PPh_p^{\pi''^{(m)},\rho^{(m)}} \big( \Ch(R'') \big)\\
& \leq c_3 m^{-\beta} \sum_{v \in V_{II}} \bigg[ \rho^{(m)} \big( [2^{J-1}, +\infty) \big) + \sum_{j=0}^{J-2} \rho^{(m)} \big( [2^j, +\infty) \big) \cdot \PPh_p^{\pi''^{(m)},\rho^{(m)}} \big( \calW_4( \Ann_{2^{j+1}, 2^J}(v) ) \big) \bigg]
\end{align*}
(similarly to \eqref{eq:one_arm_stability_pf2a}). Using $\PPh_p^{\pi''^{(m)},\rho^{(m)}} \big( \calW_4( \Ann_{2^{j+1}, 2^J}(v) ) \big) \leq C_1 \pi_4(2^j, 2^J)$ (from Theorem \ref{thm:four_arm_stability} and \eqref{eq:extendability}) and \eqref{eq:assump_holes}, we obtain
\begin{align*}
\PPh_p^{\pi'^{(m)},\rho^{(m)}} \big( \Ch(R'') \big) & - \PPh_p^{\pi''^{(m)},\rho^{(m)}} \big( \Ch(R'') \big)\\
& \leq c_3 m^{-\beta} \big| V_{II} \big| \cdot \bigg[ c_1 (2^{J-1})^{\alpha-2} + \sum_{j=0}^{J-2} c_1 (2^j)^{\alpha-2} \cdot C_1 \pi_4(2^j, 2^J) \bigg]\\
& \leq C_2 m^{-\beta} n^2 \cdot n^{\alpha-2}
\end{align*}
(by a summation argument similar to \eqref{eq:sum_pivotal_one_arm}, \eqref{eq:one_arm_stability_pf2e}), which establishes the claim \eqref{eq:crossing_pf3} (since $n \leq Km$).

Using again monotonicity,
\begin{equation} \label{eq:crossing_pf4}
\PPh_p^{\pi''^{(m)},\rho^{(m)}} \big( \Ch(R) \big) \geq \PPh_p^{\pi''^{(m)},\rho^{(m)}} \big( \Ch(R'') \big).
\end{equation}
Finally, we add the holes with centers in $V_{III}$: a similar computation as for \eqref{eq:crossing_pf1} yields
\begin{equation} \label{eq:crossing_pf5}
\PPh_p^{\pi^{(m)},\rho^{(m)}} \big( \Ch(R) \big) = \PPh_p^{\pi''^{(m)},\rho^{(m)}} \big( \Ch(R) \big) + O(m^{\alpha - \beta}).
\end{equation}
Combining \eqref{eq:crossing_pf1}, \eqref{eq:crossing_pf2}, \eqref{eq:crossing_pf3}, \eqref{eq:crossing_pf4} and \eqref{eq:crossing_pf5}, we obtain
\begin{equation} \label{eq:crossing_pf6}
\PPh_p^{(m)} \big( \Ch(R) \big) \geq \PP_p \big( \Ch(R') \big) + o(1).
\end{equation}
This allows us to conclude, since we can make $\PP_p \big( \Ch(R') \big)$ as close as we want to $\PP_p \big( \Ch(R) \big)$ by choosing $\eta > 0$ small enough, uniformly in $p \in (0,1)$ and $m^{\beta/3} \leq n \leq K L(p)$, for $m$ large enough (using similar standard arguments as those mentioned below \eqref{eq:one_arm_stability_suff}, involving three-arm events in half planes).
\end{proof}

\subsection{Exponential decay property} \label{sec:exp_decay}

We now establish a (stretched) exponential convergence to $1$ for the probability under $\PPh_p^{(m)}$ of crossing a rectangle in the supercritical regime $p > p_c$, using the stability result, Proposition \ref{prop:crossing}, for these probabilities. Obviously, we can only hope for such a property on scales above $L(p)$ (so that the supercritical behavior emerges in the underlying Bernoulli percolation process). However, note that we also need the rectangles crossed to be of size at least $m$. Indeed, on scales below $m$, the probability to observe a crossing hole (which would block occupied crossings) is only polynomially small in $m$, so not decaying fast enough.

\begin{proposition} \label{prop:exp_decay}
Let $K \geq 1$ and $\gamma \in (0,1)$. There exist $\lambda_1, \lambda_2 > 0$ (depending on $c_1$, $c_2$, $c_3$, $\alpha$, $\beta$, $K$ and $\gamma$) such that for all $m$ sufficiently large, we have: for all $n \geq 1$, and all $p > p_c$ with $L(p) \leq K m$,
\begin{equation} \label{eq:stretched_exp_decay}
\PPh_p^{(m)} \big( \Ch([0,2n] \times [0,n]) \big) \geq 1 - \lambda_1 e^{- \lambda_2 ( n / m )^{\gamma}}.
\end{equation}
\end{proposition}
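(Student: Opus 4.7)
The approach is a renormalization (block) argument at a scale comparable to $m$, using Proposition~\ref{prop:crossing} and the ordinary Bernoulli exponential decay \eqref{eq:exp_decay} as the base-case inputs. At a scale $s$ comparable to $m \wedge L(p)$, these will together give that the impurity-model crossing probability is close to $1$; one then propagates this to the full rectangle $R = [0, 2n] \times [0, n]$ via a coarse-grained, finite-range-dependent Bernoulli percolation on the $s$-block lattice.

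More concretely, the key steps are as follows. First, fix $K_1 \geq 1$ large (to be chosen depending on $K$ and $\gamma$) and set the block scale $s := K_1 (m \wedge L(p))$. By construction $s \leq K_1 (m \wedge L(p))$, so Proposition~\ref{prop:crossing} (with $K_1$ in place of the ``$K$'' there) applies and gives
\[
\PPh_p^{(m)}\!\left( \Ch([0, 2s] \times [0, s]) \right) = \PP_p\!\left( \Ch([0, 2s] \times [0, s]) \right) + o(1) \quad (m \to \infty);
\]
combined with \eqref{eq:exp_decay} and the fact that $s/L(p) \geq K_1/K$ (using $L(p) \leq Km$), this yields
\[
\PPh_p^{(m)}\!\left( \Ch([0, 2s] \times [0, s]) \right) \geq 1 - C_1 e^{-C_2 K_1/K} - o(1) \geq 1 - \delta
\]
for any prescribed $\delta > 0$, once $K_1$ and then $m$ are chosen large. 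Second, partition $R$ into an $s$-block grid, and declare each block $B$ \emph{good} if (i) suitable impurity-model crossings of rectangles of size $O(s)$ in a bounded neighborhood of $B$ occur, chosen so that crossings of good neighboring blocks glue into a crossing across $R$, and (ii) no hole of radius $\geq s$ has its center within distance $O(s)$ of $B$. Requirement (i) fails with probability $O(\delta)$ by the first step together with positive association (Remark~\ref{rem:FKG_holes}); requirement (ii) fails with probability $\leq C m^{\alpha - \beta} = o(1)$ by a Lemma~\ref{lem:no_crossing}-type computation, using $\beta > \alpha$. So a block is good with probability at least $1 - 2\delta$, and requirement (ii) ensures the good event depends only on the percolation configuration and the hole centers within distance $O(s)$ of $B$; thus good blocks form a finite-range-dependent field on the block lattice $\ZZ^2$. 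By the Liggett--Schonmann--Stacey theorem, they stochastically dominate an i.i.d.\ Bernoulli site field at a parameter $p'$ as close to $1$ as desired.

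Finally, since a horizontal chain of good blocks across $R$ (at the block scale) yields a horizontal crossing of $R$ at the original scale, and since supercritical Bernoulli percolation at a parameter $p' > p_c^{\textrm{site}}(\ZZ^2)$ close to $1$ has no-crossing probability decaying exponentially in the linear size, we obtain
\[
\PPh_p^{(m)}\!\left( \text{no horizontal crossing of } R \right) \leq \lambda_1 e^{-\lambda_2 n/s} \leq \lambda_1 e^{-\lambda_2 n/(K_1 m)},
\]
which implies the stated stretched-exponential bound for any $\gamma \in (0,1)$. The main obstacle is the careful construction of the ``good block'' event balancing finite-range dependence against high marginal probability, in particular in the regime $L(p) \ll m$ where $s = K_1 L(p) \ll m$ and a single large hole may a priori straddle many $s$-blocks: requirement (ii), together with the $e^{-C' s/m}$-type decay in Lemma~\ref{lem:no_crossing}, is precisely the tool needed to rule out this scenario. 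The freedom in $\gamma < 1$ in the statement absorbs any polynomial or polylogarithmic losses that may occur in this multi-scale bookkeeping.
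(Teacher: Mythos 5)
Your proposal takes a genuinely different route from the paper: you replace the paper's one-scale ``$f(\lambda n) \leq (\text{additive error}) + C'' f(n)^2$'' recursion (with ``safety strips'') by a single renormalization at scale $s \asymp m\wedge L(p)$, followed by Liggett--Schonmann--Stacey domination and supercritical Bernoulli percolation on the block lattice. If that worked it would give a bound exponential in $n/m$, i.e.\ $\gamma = 1$, strictly stronger than what is stated. The paper remarks explicitly that they cannot obtain $\gamma=1$ precisely because ``large holes\ldots\ disturb the spatial independence''; your proposal glosses over this, and there is a real gap at exactly that point.

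The gap is in the claim that requirement (ii) makes the good-block event local. Requirement (ii) only controls holes whose \emph{centers} lie within distance $O(s)$ of $B$. But the occurrence of impurity-model crossings near $B$ in requirement (i) depends on whether a hole covers the relevant vertices, and such a hole can be centered arbitrarily far away: a hole with center at distance $10s$ from $B$ and radius $11s$ covers $B$ entirely yet is invisible to (ii). So $G_B = (\mathrm{i})\cap(\mathrm{ii})$ is not measurable with respect to the randomness within distance $O(s)$ of $B$, and the LSS theorem cannot be applied to the field $(G_B)_B$ as defined. The natural fix --- replace (i) by the crossing event in the ``nearby-holes-only'' configuration so the good-block event becomes local, then argue that the resulting good chain gives a crossing of $R$ in the true impurity configuration --- reintroduces exactly the additive error you are hoping to dodge: you must rule out far-away holes straddling $R$, and a Lemma~\ref{lem:no_crossing}-type union bound over the boundary of $R$ produces a term that grows with $n$ at fixed $m$, so no exponential bound in $n/m$ can come out of a single union bound. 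The paper's recursion absorbs this very term into the additive error $2Ce^{-C'n/m}$ at each scale of the iteration; doubling the scale by $\lambda = 2(1+\eta) > 2$ at each step is what turns the squaring $f(n) \mapsto f(n)^2$ into only a stretched exponential, and sending $\eta \to 0$ lets $\gamma$ approach $1$ without reaching it. Your closing sentence invoking ``polynomial or polylogarithmic losses\ldots\ absorbed by $\gamma<1$'' is where the missing argument would have to live, but it does not identify what those losses are or where in the LSS machinery they would arise; as written, the argument would output a false strengthening of the proposition, which signals that something essential has been dropped.
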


\begin{proof}[Proof of Proposition \ref{prop:exp_decay}]
In the proof, we adapt a standard block argument for the analogous result in Bernoulli percolation. Adaptations are needed to control the effect of large holes, disturbing the spatial independence (this is also the reason why we do not obtain \eqref{eq:stretched_exp_decay} for $\gamma=1$). We describe in detail which modifications are made, up to a point from which the proposition can be obtained from fairly straightforward computations.

For some given $p$ and $m$ as in the statement, let us denote $f(n) := \PPh_p^{(m)} \big( \Cv^*([0,2n] \times [0,n]) \big)$ ($n \geq 1$). We fix $\eta = \eta(\gamma) \in \big( 0, \frac{1}{4} \big]$ small enough so that
\begin{equation} \label{eq:choice_gamma}
\frac{\log 2}{\log 2 + \log(1+\eta)} \geq \gamma.
\end{equation}
Let us consider, for some $n \geq 1$, the construction depicted in Figure \ref{fig:exp_decay}: the two $5n$ by $n$ rectangles $R_n^+$ and $R_n^-$, and the ``fattened'' open rectangles $\overline{R}_n^+$ and $\overline{R}_n^-$, with side lengths $(5 + 2\eta) n$ and $(1 + 2 \eta) n$. We also denote by $\tilde{R}_n$ the rectangle obtained as the convex hull of $R_n^+$ and $R_n^-$, which has side lengths $5n$ and $2 (1 + \eta) n$.

\begin{figure}
\begin{center}

\includegraphics[width=.8\textwidth]{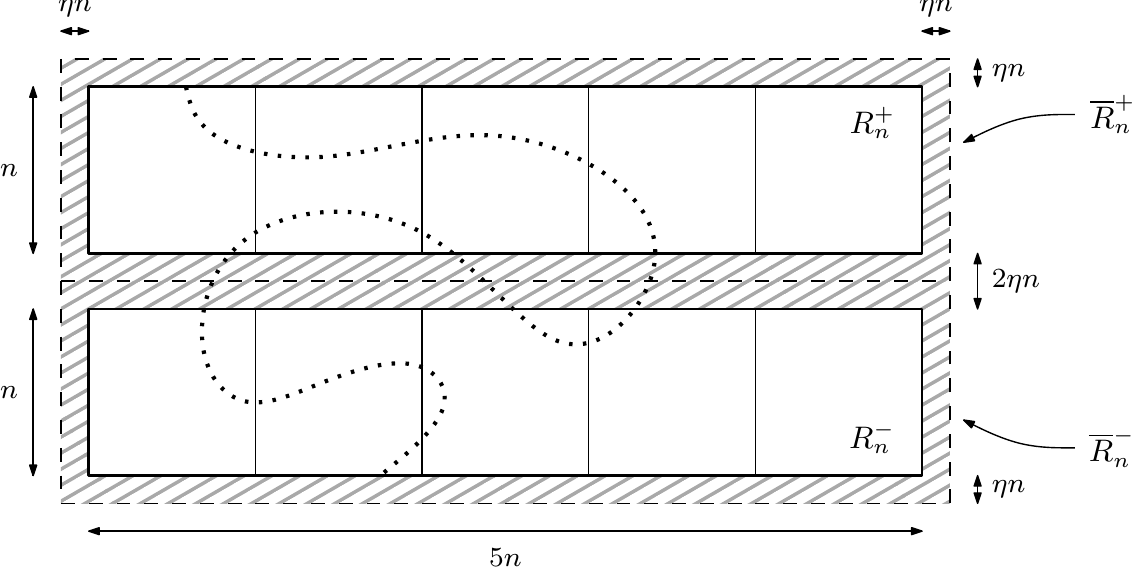}
\caption{\label{fig:exp_decay} The block argument on which the proof of Proposition \ref{prop:exp_decay} relies. We use ``safety strips'' of width $\eta n$ around the rectangles $R_n^+$ and $R_n^-$.}

\end{center}
\end{figure}

We now derive an upper bound on $f(2(1+\eta) n)$ in terms of $f(n)^2$. Here, extra care is needed (compared with Bernoulli percolation), due to the potential existence of holes overlapping both the upper and the lower rectangles $R_n^+$ and $R_n^-$ (and thus helping vacant crossings in both). For that, we use the ``safety strips'' around $R_n^+$ and $R_n^-$. Recall the definition \eqref{eq:def_H} of $\calH(.)$, and the notational remark a few lines below it. The same computation as for Lemma \ref{lem:no_crossing} yields that for some $C$, $C'$ (depending on $c_1$, $c_2$, $c_3$, $\alpha$, $\beta$, and also on $\eta$),
\begin{equation} \label{eq:upper_bound_H_R}
\PPh^{(m)} \big( \calH(\overline{R}_n^+ \setminus R_n^+) \big) \leq \frac{C}{m^{\beta - \alpha}} e^{-C' n/m},
\end{equation}
and similarly for $\PPh^{(m)} \big( \calH(\overline{R}_n^- \setminus R_n^-) \big)$. Let $\tilde{\Cv}^*(R_n^+)$ (resp. $\tilde{\Cv}^*(R_n^-)$) denote the event that $\Cv^*(R_n^+)$ (resp. $\Cv^*(R_n^-)$) occurs without the holes centered in $(\overline{R}_n^+)^c$ (resp. $(\overline{R}_n^-)^c$). Clearly, $\tilde{\Cv}^*(R_n^+)$ and $\tilde{\Cv}^*(R_n^-)$ are independent, and contained in $\Cv^*(R_n^+)$ and $\Cv^*(R_n^-)$ respectively. Hence, also using \eqref{eq:upper_bound_H_R} and $\beta > \alpha$, we obtain
\begin{align}
\PPh_p^{(m)} \big( \Cv^*(\tilde{R}_n) \big) & \leq \PPh^{(m)} \big( \calH(\overline{R}_n^+ \setminus R_n^+) \big) + \PPh^{(m)} \big( \calH(\overline{R}_n^- \setminus R_n^-) \big) + \PPh_p^{(m)} \big( \tilde{\Cv}^*(R_n^+) \big) \PPh_p^{(m)} \big( \tilde{\Cv}^*(R_n^-) \big) \nonumber \\
& \leq 2 C e^{-C' n/m} + \PPh_p^{(m)} \big( \Cv^*(R_n^+) \big) \PPh_p^{(m)} \big( \Cv^*(R_n^-) \big). \label{eq:block_argument1}
\end{align}
We then observe that if $\Cv^*(R_n^+)$ occurs, then at least one of four specified ``horizontal'' $2n$ by $n$ rectangles (see Figure \ref{fig:exp_decay}) has a vertical vacant crossing, or at least one of the three $n$ by $n$ squares located in the ``middle'' of $R_n^+$ has a horizontal vacant crossing. We deduce
\begin{equation} \label{eq:block_argument2}
\PPh_p^{(m)} \big( \Cv^*(R_n^+) \big) \leq 4 \PPh_p^{(m)} \big( \Cv^*([0,2n] \times [0,n]) \big) + 3 \PPh_p^{(m)} \big( \Ch^*([0,n]^2) \big) \leq 7 f(n),
\end{equation}
and similarly for $\PPh_p^{(m)} \big( \Cv^*(R_n^-) \big)$. Combined with \eqref{eq:block_argument1}, this implies (note that $4 (1+\eta) n \leq 5 n$, from our assumption $\eta \leq \frac{1}{4}$)
\begin{equation} \label{eq:block_argument3}
f(2 (1+\eta) n) \leq \PPh_p^{(m)} \big( \Cv^*(\tilde{R}_n) \big) \leq 2 C e^{-C' n/m} + C'' f(n)^2,
\end{equation}
where $C'' = 7^2$.

Note that the derivation above is not completely valid, since, strictly speaking, the crossing events are not translation invariant (the rectangles considered do not ``fit'' the lattice $\TT$). However, this issue can easily be solved by considering the maximum over all translated rectangles $z + [0,2n] \times [0,n]$, $z \in \CC$, in the definition of $f(n)$, and adapting the subsequent arguments accordingly.

We now use \eqref{eq:block_argument3} iteratively, starting from $n_0 = K_0 m$, where $K_0$ is chosen sufficiently large so that
\begin{equation} \label{eq:cond_crossing_K0}
f(K_0 m) \leq \frac{1}{4C''}
\end{equation}
for $m$ large enough, uniformly in $p$ as in the statement (i.e. for all $m \geq m_0 = m_0(c_1, c_2, c_3, \alpha, \beta, K)$, and all $p > p_c$ with $L(p) \leq K m$). Such a $K_0$ exists, from \eqref{eq:exp_decay} and Proposition \ref{prop:crossing}. We can also assume that $K_0$ is large enough so that
\begin{equation} \label{eq:cond_K0}
2 C 2^{-C' \eta K_0 / \log 2} \leq \frac{1}{4 C''} \quad \text{and} \quad \frac{C'}{2 \log 2} K_0 \geq 1
\end{equation}
(recall that $\eta$ is fixed, and depends only on the choice of $\gamma$). Let $\lambda := 2 (1+\eta) \in \big( 2, \frac{5}{2} \big]$. We claim that
\begin{equation} \label{exp_decay_induction}
\text{for all $k \geq 0$}, \quad f(\lambda^k n_0) \leq \frac{1}{2 C''} 2^{-2^k}.
\end{equation}
This can be proved by induction (note that the case $k = 0$ corresponds to \eqref{eq:cond_crossing_K0}), and we omit the details.

Hence, with $\lambda_2 = \frac{\log 2}{K_0^{\gamma}}$,
\begin{equation}
f(\lambda^k n_0) \leq \frac{1}{2 C''} 2^{-2^k} \leq e^{- (\lambda^k)^{\gamma} \log 2} = e^{- \lambda_2 (\lambda^k n_0 / m)^{\gamma}},
\end{equation}
since $2^k = \lambda^{k \log 2 / \log \lambda} \geq \lambda^{k \gamma}$, from \eqref{eq:choice_gamma}. We can then write, as for \eqref{eq:block_argument2},
\begin{equation}
\PPh_p^{(m)} \big( \Cv^*([0, 5 \lambda^k n_0] \times [0, \lambda^k n_0]) \big) \leq 7 f(\lambda^k n_0) \leq 7 e^{- \lambda_2 (\lambda^k n_0 / m)^{\gamma}}.
\end{equation}
From this, \eqref{eq:stretched_exp_decay} follows easily for a general $n \geq n_0$, while the case $n < n_0$ is an immediate consequence of $n_0 = K_0 m$. This completes the proof of Proposition \ref{prop:exp_decay}.
\end{proof}

\begin{corollary} \label{cor:exp_decay_theta}
Let $K \geq 1$.
\begin{itemize}
\item[(i)] There exist $\ul \lambda, \ol \lambda, m_0 > 0$ (depending on $c_1$, $c_2$, $c_3$, $\alpha$, $\beta$ and $K$) such that: for all $m \geq m_0$, all $p > p_c$ with $K^{-1} m \leq L(p) \leq K m$, and all $n \geq K^{-1} m$,
\begin{equation}
\ul \lambda \theta(p) \leq \PPh_p^{(m)} \big( 0 \lra \din \Ball_n \big) \leq \ol \lambda \theta(p).
\end{equation}
\item[(ii)] Moreover, for all $\ve > 0$, there exist $\kappa_0$ and $m_0$ (depending on $c_1$, $c_2$, $c_3$, $\alpha$, $\beta$, $K$ and $\ve$) such that: for all $m \geq m_0$, all $p > p_c$ with $K^{-1} m \leq L(p) \leq K m$, and all $n \geq \kappa_0 m$,
\begin{equation}
(1 - \ve) \theta(p) \leq \PPh_p^{(m)} \big( 0 \lra \din \Ball_n \big) \leq (1 + \ve) \theta(p).
\end{equation}
\end{itemize}
\end{corollary}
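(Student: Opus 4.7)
The plan is to prove both parts by combining Proposition \ref{prop:one_arm_stability} at an intermediate scale $\asymp m$ with the stretched-exponential crossing bound of Proposition \ref{prop:exp_decay} for larger scales, and FKG for the holed measure (Remark \ref{rem:FKG_holes}). Part (ii) is the quantitative refinement of part (i), and the two will be handled in parallel. Set $n_* := K^{-1}m$ in case (i) and $n_* := \kappa_0 m / 2$ in case (ii); in each case $n_* \leq K''(m \wedge L(p))$ for a suitable $K''$ depending only on $K$ (and on $\kappa_0$ in case (ii)), so Proposition \ref{prop:one_arm_stability} applies at scale $n_*$.

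\textbf{Upper bound.} By monotonicity, $\PPh_p^{(m)}(0 \lra \din \Ball_n) \leq \PPh_p^{(m)}(0 \lra \din \Ball_{n_*})$, and Proposition \ref{prop:one_arm_stability} gives $\PPh_p^{(m)}(0 \lra \din \Ball_{n_*}) = (1+o(1))\PP_p(0 \lra \din \Ball_{n_*})$ as $m \to \infty$. For (i) the right-hand side is $\asymp \pi_1(L(p)) \asymp \theta(p)$ by \eqref{eq:near_critical_arm}, \eqref{eq:quasi_mult}, and \eqref{eq:equiv_theta}, which yields $\ol{\lambda}$. For (ii) decompose
\[
\PP_p(0 \lra \din \Ball_{n_*}) = \theta(p) + \PP_p\big(0 \lra \din \Ball_{n_*},\, 0 \nleftrightarrow \infty\big);
\]
the second term is a Kesten-type tail for finite supercritical clusters and can be made $\leq \varepsilon \theta(p)/3$ by taking $\kappa_0/K$ sufficiently large. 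Extracting this bound uniformly in $p$ with $L(p) \asymp m$ is the most delicate point; it is a standard quantitative supercritical estimate obtainable from the ingredients of Section \ref{sec:near_critical_perc} (similar in spirit to Lemma \ref{lem:BCKS}).

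\textbf{Lower bound.} We extend $0$'s cluster from scale $\asymp m$ out to $\din \Ball_n$ via a chain of occupied circuits joined by radial bridges. Fix $\kappa \geq 1$ (bounded by $\kappa_0/8$ in case (ii)), set $r_i := 2^i \kappa m$, and let $I$ be the largest index with $r_I \leq n$. For $i = 0, \ldots, I-1$, let $B_i$ be the event that there is an occupied circuit in $\Ann_{r_i, r_{i+1}}$ joined, by an occupied radial bridge (i.e.\ a crossing of an overlapping sub-rectangle), to an occupied circuit in $\Ann_{r_{i+1}, r_{i+2}}$; each such $B_i$ is assembled from a bounded number of long-way crossings of $\sim 2r_i \times r_i/2$ rectangles. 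Set also $A := \{0 \lra \din \Ball_{r_1}\}$. A Jordan-curve argument on $A \cap B_0$ shows that $0$'s occupied arm to $\din \Ball_{r_1}$ must meet the inner circuit of $B_0$, placing $0$ in that circuit's cluster; the chain of bridges then propagates this cluster all the way to $\din \Ball_n$. All events involved are increasing in the holed configuration, so FKG gives
\[
\PPh_p^{(m)}(0 \lra \din \Ball_n) \geq \PPh_p^{(m)}(A) \prod_{i=0}^{I-1} \PPh_p^{(m)}(B_i).
\]
Now $\PPh_p^{(m)}(A) \geq (1-o(1))\theta(p)$ by Proposition \ref{prop:one_arm_stability} applied at scale $r_1 = 2\kappa m$ combined with the trivial bound $\PP_p(0 \lra \din \Ball_{r_1}) \geq \theta(p)$, while $\PPh_p^{(m)}(B_i) \geq 1 - \lambda_1 e^{-\lambda_2 (2^i \kappa)^\gamma}$ by Proposition \ref{prop:exp_decay}; the product is geometrically dominated and $\geq 1 - C e^{-c \kappa^\gamma}$ uniformly in $n$. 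This yields the lower bound in (i) for any fixed $\kappa$, and in (ii) by choosing $\kappa$ large enough that $C e^{-c \kappa^\gamma} \leq \varepsilon/3$. Unlike the upper bound, this construction is uniform in $n$ once the topological gluing of the circuits via the bridges is in place.
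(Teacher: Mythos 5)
Your proposal takes essentially the same route as the paper: compare the one-arm probability at a scale $\asymp m$ to the Bernoulli one via Proposition~\ref{prop:one_arm_stability}, and extend outward by a chain of occupied circuits built from long-rectangle crossings, controlled by Proposition~\ref{prop:exp_decay} and FKG (Remark~\ref{rem:FKG_holes}); for (ii) push the intermediate scale to $\kappa_0 m$ with $\kappa_0$ large so that the far-field factors are close to $1$.

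One step as written would fail: you assert the lower bound in (i) holds ``for any fixed $\kappa$,'' but the factor $1 - \lambda_1 e^{-\lambda_2 (2^i\kappa)^\gamma}$ from Proposition~\ref{prop:exp_decay} is not positive for small $i$ unless $\kappa$ is large. The constant $\lambda_1$ there is $> 1$ (indeed the bound \eqref{eq:stretched_exp_decay} is vacuous for $n < K_0 m$ by construction), so for, say, $\kappa = 1$ the first few factors in your product are negative and the bound is useless. The paper avoids this by also invoking Proposition~\ref{prop:crossing} together with \eqref{eq:RSW}, which gives a uniform positive lower bound on the $\PPh_p^{(m)}$-crossing probabilities of the first few (side length $O(m) = O(L(p))$) rectangles, where Proposition~\ref{prop:exp_decay} is of no use. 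Either add that citation, or take $\kappa$ once and for all large enough (depending on $\lambda_1,\lambda_2,\gamma$, hence on $c_1,\dots,K$) that every factor is $\geq 1/2$, and treat the remaining range $K^{-1}m \leq n < 2\kappa m$ separately by applying Proposition~\ref{prop:one_arm_stability} directly (as you already do for the upper bound). With that correction the argument matches the paper's.
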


\begin{proof}[Proof of Corollary \ref{cor:exp_decay_theta}]
As for Proposition \ref{prop:exp_decay}, the proof is a suitable adaptation of that for a similar result in Bernoulli percolation.

(i) Using a sequence of overlapping rectangles as in Figure \ref{fig:overlapping_rectangles} (first with $n_0 = n$, and then with $n_0 = K^{-1} m$), we deduce from Proposition \ref{prop:exp_decay} and the FKG inequality (for the process with holes, see Remark \ref{rem:FKG_holes}), combined with Proposition \ref{prop:crossing} and \eqref{eq:RSW}, that for all $m$ sufficiently large,
\begin{equation}
\PPh_p^{(m)} \big( 0 \lra \din \Ball_n \big) \asymp \PPh_p^{(m)} ( 0 \lra \infty ) \asymp \PPh_p^{(m)} \big( 0 \lra \din \Ball_{K^{-1} m} \big)
\end{equation}
uniformly in $n$ and $p$ with the required properties. Using Proposition \ref{prop:one_arm_stability}, we have
\begin{equation}
\PPh_p^{(m)} \big( 0 \lra \din \Ball_{K^{-1} m} \big) = \PP_p \big( 0 \lra \din \Ball_{K^{-1} m} \big) \cdot (1 + o(1))
\end{equation}
as $m \to \infty$. Finally, (i) now follows from the standard result for Bernoulli percolation that $\PP_p \big( 0 \lra \din \Ball_{K^{-1} m} \big) \asymp \theta(p)$ (which can easily be obtained from \eqref{eq:exp_decay} and \eqref{eq:RSW}).

\begin{figure}
\begin{center}

\includegraphics[width=.7\textwidth]{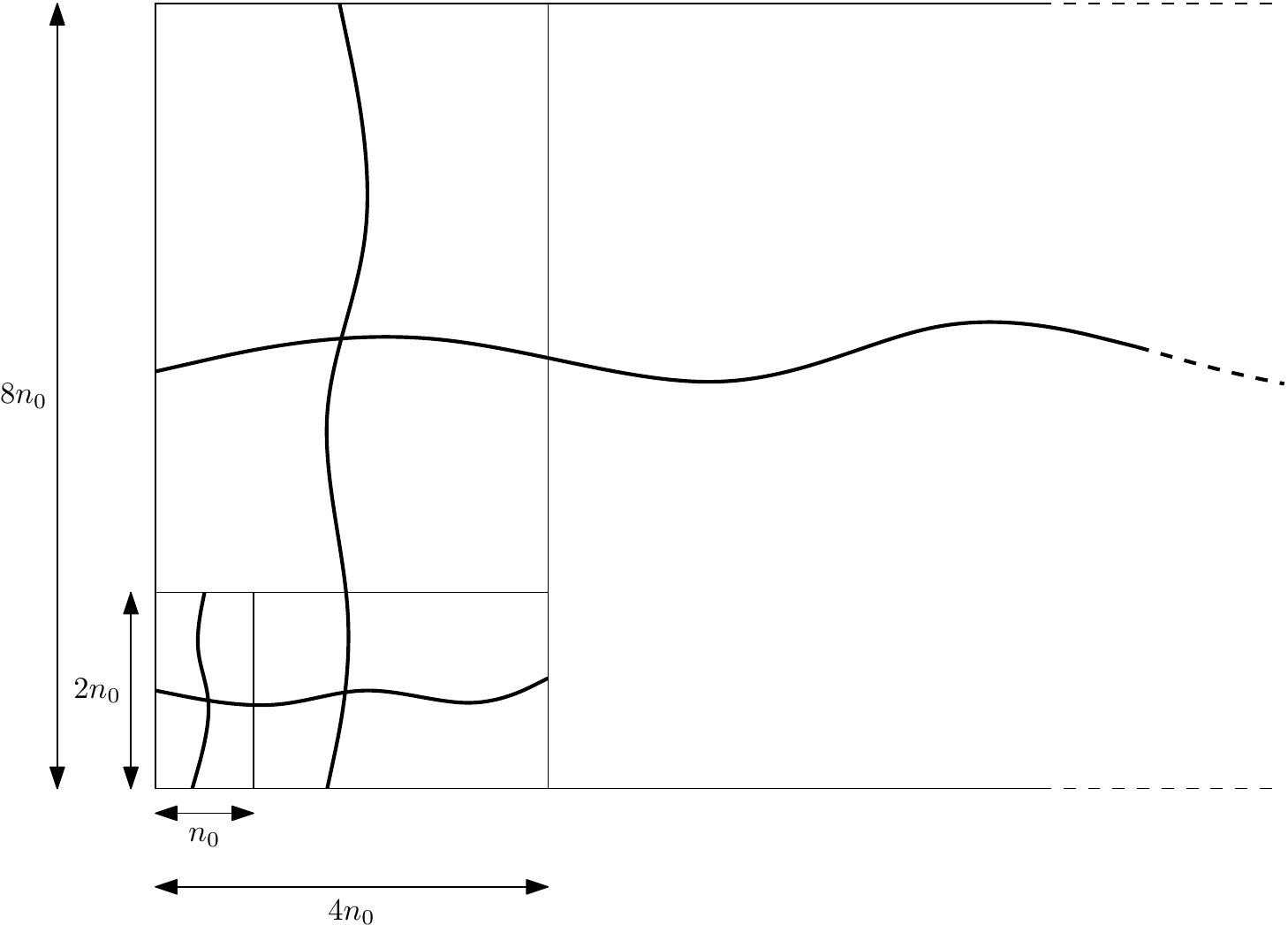}
\caption{\label{fig:overlapping_rectangles} We use a sequence of overlapping, ``horizontal'' and ``vertical'' rectangles, with side lengths $2^{i+1} n_0$ and $2^i n_0$ ($i \geq 0$), for some well-chosen $n_0 \geq 1$.}

\end{center}
\end{figure}

(ii) This follows from similar reasonings, noting that $\PPh_p^{(m)} \big( 0 \lra \din \Ball_{\kappa_0 m} \big)$ and $\PP_p \big( 0 \lra \din \Ball_{\kappa_0 m} \big)$ can be made arbitrarily close to, respectively, $\PPh_p^{(m)} ( 0 \lra \infty )$ and $\theta(p)$ (in ratio), by choosing $\kappa_0 \geq K$ large enough.
\end{proof}

\subsection{Largest cluster in a box} \label{sec:BCKS}

We now prove an analog of \eqref{eq:largest_cluster} in our setting, for boxes with side length $\gg m$. This result, Proposition \ref{prop:largest_cluster} below, is of key importance for our analysis of forest fire processes (FFWoR) in Section \ref{sec:existence_excep_scales}. Its proof follows similar ideas as for the analogous result for Bernoulli percolation in \cite{BCKS2001}, with extra care needed to handle the disturbing effect of large holes. We emphasize that, to do this, the four-arm stability in Section \ref{sec:four_arm_stability} is crucial, although this is not immediately visible in the proof of Proposition \ref{prop:largest_cluster}: it is used indirectly, via other stability results treated earlier in Section \ref{sec:other_stability}.

Recall the definition of a net in Definition \ref{def:net}.

\begin{proposition} \label{prop:largest_cluster}
Let $K \geq 1$, and $(p_m)_{m \geq 1}$ in $(p_c,1)$ satisfying $K^{-1} m \leq L(p_m) \leq K m$. If $(n_m)_{m \geq 1}$ is a sequence of integers such that $n_m \gg m (\log m)^2$ as $m \to \infty$, then for all $\ve > 0$: with high $\PPh_{p_m}^{(m)}$-probability as $m \to \infty$, there exists a net $\calN$ in $\Ball_{n_m}$ with mesh $\bar{n}_m := (n_m m)^{1/2}$, and the cluster $\cluster_{\calN}$ of this net (in $\Ball_{n_m}$) has a volume satisfying
\begin{equation} \label{eq:prop_largest_cluster}
\frac{|\cluster_{\calN}|}{|\Ball_{n_m}| \theta(p_m)} \in (1 - \ve, 1 + \ve).
\end{equation}
\end{proposition}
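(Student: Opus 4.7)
The plan is to adapt to the hole model the strategy sketched in Remark~\ref{rem:BCKS} for the analogous Bernoulli statement (property~(x)). The proof decomposes into three main steps.

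\emph{Step 1 (net existence).} I would apply Proposition~\ref{prop:exp_decay} with a fixed $\gamma\in(0,1)$ (say $\gamma=1/2$), using $L(p_m)\le Km$. The event $\net_{p_m}(n_m,\bar n_m)$ is the intersection of $O((n_m/\bar n_m)^{2})=O(n_m/m)$ crossings of rectangles at scale $\bar n_m$, each of probability at least $1-\lambda_1\exp(-\lambda_2(\bar n_m/m)^{\gamma})$. A union bound yields
\[
\PPh_{p_m}^{(m)}\bigl(\net_{p_m}(n_m,\bar n_m)^{c}\bigr)\le C\,\tfrac{n_m}{m}\exp\bigl(-\lambda_2(n_m/m)^{\gamma/2}\bigr)=o(1),
\]
since $n_m/m\to\infty$. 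On $\net_{p_m}(n_m,\bar n_m)$ a net $\calN$ of mesh $\bar n_m$ exists in $\Ball_{n_m}$, subdividing it into cells of diameter at most $4\bar n_m$, and $\cluster_{\calN}$ is well defined.

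\emph{Step 2 (reduction to $\lclus$).} Any cluster in $\Ball_{n_m}$ other than $\cluster_{\calN}$ is contained in a single cell (otherwise it would intersect $\calN$), hence has diameter at most $4\bar n_m$. I would establish a hole-model analog of Lemma~\ref{lem:BCKS}, of the form $\PPh_p^{(m)}(|\lclus_{\Ball_n}|\ge x n^{2}\theta(p))\le C_1\exp(-C_2 x(n/m)^{\gamma})$ for $n\ge m$ and some $\gamma<1$, by mimicking the block argument for the Bernoulli case but using Proposition~\ref{prop:exp_decay} in place of the usual exponential decay \eqref{eq:exp_decay}. Applied at the scale $\bar n_m$ of a single cell and unioned over the $O(n_m/m)$ cells, this shows that with high probability no non-net cluster has volume exceeding $C\bar n_m^{2}\theta(p_m)=Cn_m m\,\theta(p_m)=o(n_m^{2}\theta(p_m))$, using $m/n_m\to 0$. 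Hence $|\cluster_{\calN}|$ agrees with $|\lclus_{\Ball_{n_m}}|$ up to negligible terms.

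\emph{Step 3 (first moment and concentration).} For $v\in\Ball_{n_m}$ with $\|v\|\le n_m/2$, the sandwich
\[
\{v\leftrightarrow\din\Ball_{5\bar n_m}(v)\}\ \subseteq\ \{v\in\cluster_{\calN}\}\ \subseteq\ \{v\leftrightarrow\din\Ball_{n_m/4}(v)\}
\]
holds on the net event: the first inclusion because any cluster of diameter exceeding $4\bar n_m$ must touch $\calN$, the second because $\cluster_{\calN}\supseteq\calN$ contains points at distance of order $n_m$ from $v$. Since $5\bar n_m$ and $n_m/4$ both exceed $\kappa_0 m$ for large $m$, Corollary~\ref{cor:exp_decay_theta}(ii) pins both probabilities to $(1\pm\varepsilon)\theta(p_m)$, giving
\[
\EEh_{p_m}^{(m)}\bigl[|\cluster_{\calN}|;\,\net_{p_m}(n_m,\bar n_m)\bigr]=|\Ball_{n_m}|\,\theta(p_m)\,(1+o(1))
\]
after absorbing the $O(n_m\bar n_m)=o(n_m^{2})$ boundary contribution. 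The upper tail of $|\cluster_{\calN}|$ is then provided by Step~2 applied to the whole box. For the matching lower tail, I would run a second-moment estimate on $Y:=\sum_v\ind_{v\leftrightarrow\din\Ball_{5\bar n_m}(v)}$ (which satisfies $Y\le|\cluster_{\calN}|$ on the net event and has mean $(1+o(1))|\Ball_{n_m}|\theta(p_m)$): the variance is bounded by splitting the pair sum according to $\|u-v\|$, with close pairs ($\|u-v\|\lesssim m$) handled via FKG (Remark~\ref{rem:FKG_holes}) and the one-arm stability (Proposition~\ref{prop:one_arm_stability}), and the non-local dependencies introduced at larger separations by large impurities controlled by Lemma~\ref{lem:no_crossing}.

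\emph{Main obstacle.} The variance bound in Step~3 is the delicate point. The events $\{v\in\cluster_{\calN}\}$ are long-range correlated through their common connection to the net, and the impurities add non-local dependencies that are absent in the Bernoulli setting. Assembling the stability results of Sections~\ref{sec:four_arm_stability}--\ref{sec:other_stability} into a scale-by-scale decomposition that closes the quantitative gap between Steps~2 and~3 is the crux of the argument; the hypothesis $n_m\gg m(\log m)^{2}$ enters precisely here, as the threshold at which the combined estimate succeeds.
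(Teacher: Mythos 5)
Your high-level plan lines up with the paper's (net existence via Proposition~\ref{prop:exp_decay}, then a second-moment argument for a local one-arm proxy), but the paper is considerably leaner, and two of your details as stated would not close. The paper entirely dispenses with your Step~2: it never compares $|\cluster_{\calN}|$ to $\lclus$ and never needs a hole-model analog of Lemma~\ref{lem:BCKS}. Instead, on the net event it sandwiches
\[
Y_m \le |\cluster_{\calN}| \le Y_m + \eta_m,\qquad
Y_m := \sum_{x\in\Ball_{n_m-4\bar{n}_m}}\ind_{x\lra\din\Ball_{4\bar{n}_m}(x)},\quad
\eta_m := \sum_{x\in\Ball_{n_m}\setminus\Ball_{n_m-4\bar{n}_m}}\ind_{x\lra\din\Ball_{4\bar{n}_m}(x)},
\]
where the upper bound is immediate (every $x\in\cluster_{\calN}$ is connected $4\bar{n}_m$ away), so concentration of $|\cluster_{\calN}|$ follows from concentration of $Y_m$ plus a Markov bound on $\EEh_{p_m}^{(m)}[\eta_m]$; no large-deviation estimate on the largest cluster is needed. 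Relatedly, your first-moment step restricts to $\|v\|\le n_m/2$ and invokes an $O(n_m\bar{n}_m)=o(n_m^2)$ boundary contribution, but that restriction discards a \emph{constant fraction} of the box, not an $O(\bar{n}_m)$-wide strip. The right cut is $\|v\|\le n_m-O(\bar{n}_m)$, which requires using a \emph{small} ball $\Ball_{O(\bar{n}_m)}(v)$ (as in $\eta_m$ above), not $\Ball_{n_m/4}(v)$, for the upper sandwich.

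The more serious gap is the covariance split at $\|u-v\|\lesssim m$. The events $\{v\lra\din\Ball_{5\bar{n}_m}(v)\}$ involve Bernoulli sites at range $5\bar{n}_m\gg m$, so for $m\ll\|u-v\|\lesssim\bar{n}_m$ the two indicator variables remain strongly correlated through the \emph{underlying Bernoulli configuration}, not merely through the holes; Lemma~\ref{lem:no_crossing} controls only the hole-induced dependence and cannot help here, and FKG gives positive correlation, which is the wrong direction for a variance upper bound. The paper splits instead at $16\bar{n}_m$: for $\|u-v\|\le 16\bar{n}_m$ it uses the domination of $\PPh_{p_m}^{(m)}$ by $\PP_{p_m}$ on increasing events and a standard dyadic summation to obtain $C(\bar{n}_m)^2\theta(p_m)^2$ per $x$; for $\|u-v\|>16\bar{n}_m$ it introduces auxiliary events $\tilde{E}_m^x$ depending only on holes centered in $\Ball_{8\bar{n}_m}(x)$ (hence independent of $\tilde{E}_m^y$) and bounds $\PPh_{p_m}^{(m)}(\ind_{\tilde{E}_m^x}\neq\ind_{E_m^x})$ via Lemma~\ref{lem:no_crossing}, yielding a term of order $e^{-c(n_m/m)^{1/2}}\theta(p_m)$. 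You correctly identify that $n_m\gg m(\log m)^2$ enters at exactly this point, to dominate that far-pair term by $m^{-1}\theta(p_m)$.
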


\begin{remark}
Though we will not use this fact, note that $\cluster_{\calN}$ then has to be the largest cluster in $\Ball_{n_m}$, with high probability (similarly to Remark \ref{rem:BCKS}). We also remark that the assumption $n_m \gg m (\log m)^2$ is not optimal, but it is enough for our purpose. Indeed, we will typically (in Section \ref{sec:existence_excep_scales}) apply Proposition \ref{prop:largest_cluster} to cases where $n_m/m$ is at least a small power of $m$.
\end{remark}

\begin{proof}[Proof of Proposition \ref{prop:largest_cluster}]
For similar reasons as for Lemma \ref{lem:net} (now using Proposition \ref{prop:exp_decay}, e.g. with $\gamma = \frac{1}{2}$, instead of \eqref{eq:exp_decay}), a net $\calN$ as stated in the proposition exists with high probability:
\begin{equation}
\PPh_{p_m}^{(m)}( \calN \text{ exists} ) \geq 1 - C_1 \bigg( \frac{n_m}{\bar{n}_m} \bigg)^2 e^{ - C_2 \big( \frac{\bar{n}_m}{K m} \big)^{1/2}} = 1 - C_1 \Big( \frac{n_m}{m} \Big) e^{ - \frac{C_2}{K^{1/2}} \big( \frac{n_m}{m} \big)^{1/4}} \stackrel[m \to \infty]{\longrightarrow}{} 1.
\end{equation}
On the event that $\calN$ exists, the volume of its cluster $\cluster_{\calN}$ satisfies
\begin{equation} \label{eq:comp_volume_BCKS}
Y_m \leq | \cluster_{\calN} | \leq Y_m + \eta_m,
\end{equation}
with
\begin{equation}
Y_m := \sum_{x \in \Ball_{n_m - 4 \bar{n}_m}} \ind_{x \lra \din \Ball_{4 \bar{n}_m}(x)} \quad \text{and} \quad \eta_m := \sum_{x \in \Ball_{n_m} \setminus \Ball_{n_m - 4 \bar{n}_m}} \ind_{x \lra \din \Ball_{4 \bar{n}_m}(x)}.
\end{equation}
We now use a second-moment argument for $Y_m$. We have, denoting by $\EEh_{p_m}^{(m)}$ the expectation with respect to $\PPh_{p_m}^{(m)}$,
\begin{equation}
\big( |\Ball_{n_m}| - C_3 \bar{n}_m n_m \big) \PPh_{p_m}^{(m)} \big( 0 \lra \din \Ball_{4 \bar{n}_m} \big) \leq \EEh_{p_m}^{(m)} \big[ Y_m \big] \leq |\Ball_{n_m}| \PPh_{p_m}^{(m)} \big( 0 \lra \din \Ball_{4 \bar{n}_m} \big).
\end{equation}
Since $4 \bar{n}_m \gg m$, we can apply (ii) of Corollary \ref{cor:exp_decay_theta}: for $m$ large enough,
\begin{equation} \label{eq:proof_BCKS_theta}
\Big( 1 - \frac{\ve}{4} \Big) \theta(p_m) \leq \PPh_{p_m}^{(m)} \big( 0 \lra \din \Ball_{4 \bar{n}_m} \big) \leq \Big( 1 + \frac{\ve}{4} \Big) \theta(p_m),
\end{equation}
and thus
\begin{equation} \label{eq:proof_BCKS_Y}
\Big( 1 - \frac{\ve}{2} \Big) |\Ball_{n_m}| \theta(p_m) \leq \EEh_{p_m}^{(m)} \big[ Y_m \big] \leq \Big( 1 + \frac{\ve}{4} \Big) |\Ball_{n_m}| \theta(p_m).
\end{equation}
We now estimate $\Var(Y_m)$, and for that we denote $E_m^x := \{x \lra \din \Ball_{4 \bar{n}_m}(x) \}$ ($x \in V$). Note that, contrary to the Bernoulli percolation case, even if $x$ and $y$ are far apart, the events $E_m^x$ and $E_m^y$ are not independent. This is due to the possible existence of large holes coming close to both $x$ and $y$. To control the effect of this, we introduce the auxiliary events $\tilde{E}_m^x := \{x \lra \din \Ball_{4 \bar{n}_m}(x)$ occurs without the holes centered in $(\Ball_{8 \bar{n}_m}(x))^c\}$. We have
\begin{equation}
\PPh_{p_m}^{(m)} \big( \ind_{\tilde{E}_m^x} \neq \ind_{E_m^x} \big) \leq \PPh_{p_m}^{(m)} \big( \calH \big( \Ann_{4 \bar{n}_m, 8 \bar{n}_m}(x) \big) \big) \cdot \PPh_{p_m}^{(m)} \big( \tilde{\tilde{E}}_m^x \big),
\end{equation}
where $\tilde{\tilde{E}}_m^x$ is the event that $x \lra \din \Ball_{4 \bar{n}_m}(x)$ occurs without the holes (and thus is independent of $\calH ( \Ann_{4 \bar{n}_m, 8 \bar{n}_m}(x) )$). We obtain from Lemma \ref{lem:no_crossing} that
\begin{equation} \label{eq:largest_cluster_no_crossing}
\PPh^{(m)}_{p_m} \big( \ind_{\tilde{E}_m^x} \neq \ind_{E_m^x} \big) \leq \frac{C}{m^{\beta - \alpha}} e^{-C' 4 \bar{n}_m/m} \cdot \PP_{p_m} \big( 0 \lra \din \Ball_{4 \bar{n}_m} \big) \leq C_4 e^{-C_5 (n_m/m)^{1/2}} \theta(p_m),
\end{equation}
using also that
\begin{equation} \label{eq:equiv_theta_BCKS}
\PP_{p_m} \big( 0 \lra \din \Ball_{4 \bar{n}_m} \big) \leq C'' \theta(p_m).
\end{equation}
(this follows easily from \eqref{eq:exp_decay} and \eqref{eq:RSW}). Moreover, for $x, y \in V$ with $\|x-y\|_{\infty} > 16 \bar{n}_m$, we have that $\tilde{E}_m^x$ and $\tilde{E}_m^y$ are independent, so $\Cov \big( \ind_{\tilde{E}_m^x}, \ind_{\tilde{E}_m^y} \big) = 0$. We deduce, for such $x$, $y$,
\begin{align*}
\Cov \big( \ind_{E_m^x}, \ind_{E_m^y} \big) & = \Cov \big( \ind_{E_m^x}, \ind_{E_m^y} \big) - \Cov \big( \ind_{\tilde{E}_m^x}, \ind_{\tilde{E}_m^y} \big)\\
& = \Cov \big( \ind_{E_m^x} - \ind_{\tilde{E}_m^x}, \ind_{E_m^y} \big) + \Cov \big( \ind_{\tilde{E}_m^x}, \ind_{E_m^y} - \ind_{\tilde{E}_m^y} \big)\\
& \leq \Big[ \PPh_{p_m}^{(m)} \big( \ind_{\tilde{E}_m^x} \neq \ind_{E_m^x} \big) \cdot \PPh_{p_m}^{(m)} \big( E_m^y \big) \Big]^{1/2} + \Big[ \PPh_{p_m}^{(m)} \big( \tilde{E}_m^x \big) \cdot \PPh_{p_m}^{(m)} \big( \ind_{\tilde{E}_m^y} \neq \ind_{E_m^y} \big) \Big]^{1/2}
\end{align*}
(applying the Cauchy-Schwarz inequality twice). Hence, \eqref{eq:largest_cluster_no_crossing} and \eqref{eq:equiv_theta_BCKS} imply (still for $x$, $y$ as mentioned above)
\begin{equation}
\Cov \big( \ind_{E_m^x}, \ind_{E_m^y} \big) \leq C_6 e^{- C_7 (n_m/m)^{1/2}} \theta(p_m).
\end{equation}
We now write
\begin{align}
\Var(Y_m) & = \sum_{x,y \in \Ball_{n_m - 4 \bar{n}_m}} \Cov \big( \ind_{E_m^x}, \ind_{E_m^y} \big) \nonumber \\
& \leq \sum_{\substack{x \in \Ball_{n_m - 4 \bar{n}_m}\\ y \: : \: \|x-y\|_{\infty} \leq 16 \bar{n}_m}} \Cov \big( \ind_{E_m^x}, \ind_{E_m^y} \big) + |\Ball_{n_m}|^2 \cdot C_6 e^{-C_7 (n_m/m)^{1/2}} \theta(p_m). \label{eq:end_proof_BCKS1}
\end{align}
For any $x \in \Ball_{n_m - 4 \bar{n}_m}$,
\begin{align}
\sum_{\substack{y \: : \: \|x-y\|_{\infty} \leq 16 \bar{n}_m}} \Cov \big( \ind_{E_m^x}, \ind_{E_m^y} \big) & \leq \sum_{y \: : \: \|x-y\|_{\infty} \leq 16 \bar{n}_m} \PPh_{p_m}^{(m)} \big( E_m^x \cap E_m^y \big) \nonumber \\
& \leq \sum_{y \: : \: \|x-y\|_{\infty} \leq 16 \bar{n}_m} \PP_{p_m} \big( \{ x \lra \din \Ball_{4 \bar{n}_m}(x) \} \cap \{ y \lra \din \Ball_{4 \bar{n}_m}(y) \} \big) \nonumber \\
& \leq C_8 |\Ball_{16 \bar{n}_m}| \PP_{p_m} \big( 0 \lra \din \Ball_{4 \bar{n}_m} \big)^2 \nonumber \\
& \leq C_9 (\bar{n}_m)^2 \theta(p_m)^2 \label{eq:end_proof_BCKS2}
\end{align}
where the third inequality follows from a standard summation argument (over $y \in \Ann_{2^i,2^{i+1}}(x)$, $0 \leq i \leq \lfloor \log_2 ( 16 \bar{n}_m ) \rfloor$), and the fourth inequality uses \eqref{eq:equiv_theta_BCKS}. By combining \eqref{eq:end_proof_BCKS1} and \eqref{eq:end_proof_BCKS2}, we obtain
\begin{align*}
\Var(Y_m) & \leq C_9 |\Ball_{n_m}| \cdot (\bar{n}_m)^2 \theta(p_m)^2 + |\Ball_{n_m}|^2 \cdot C_6 e^{-C_7 (n_m/m)^{1/2}} \theta(p_m)\\
& \leq C_{10} (n_m)^3 m \theta(p_m)^2 + C_{11} (n_m)^4 m^{-1} \theta(p_m)^2.
\end{align*}
For the last inequality, we used that $\theta(p_m) \geq m^{-\upsilon}$ for some $\upsilon > 0$ (from the assumption $L(p_m) \leq K m$, \eqref{eq:equiv_theta} and \eqref{eq:1arm}), so $e^{-C_7 (n_m/m)^{1/2}} \leq m^{-1} \theta(p_m)$ for $m$ large enough (since $n_m / m \gg (\log m)^2$). Hence,
$$\Var(Y_m) \ll (n_m)^4 \theta(p_m)^2 \asymp \big( \EEh_{p_m}^{(m)} \big[ Y_m \big] \big)^2$$
(using \eqref{eq:proof_BCKS_Y}), so
\begin{equation} \label{eq:end_proof_BCKS3}
\frac{Y_m}{|\Ball_{n_m}| \theta(p_m)} \in \Big( 1 - \frac{3 \ve}{4}, 1 + \frac{3 \ve}{4} \Big) \quad \text{w.h.p. as $m \to \infty$.}
\end{equation}
Finally,
\begin{equation}
\EEh_{p_m}^{(m)} \big[ \eta_m \big] \leq C_{12} n_m \bar{n}_m \PPh_{p_m}^{(m)} \big( 0 \lra \din \Ball_{4 \bar{n}_m} \big) \ll |\Ball_{n_m}| \theta(p_m)
\end{equation}
as $m \to \infty$ (using \eqref{eq:proof_BCKS_theta}), so we obtain from Markov's inequality that
\begin{equation} \label{eq:end_proof_BCKS4}
\frac{\eta_m}{|\Ball_{n_m}| \theta(p_m)} \leq \frac{\ve}{4} \quad \text{w.h.p. as $m \to \infty$.}
\end{equation}
This allows us to conclude, by combining \eqref{eq:comp_volume_BCKS}, \eqref{eq:end_proof_BCKS3} and \eqref{eq:end_proof_BCKS4}.
\end{proof}

\begin{remark} \label{rem:BCKS_annulus}
In Section \ref{sec:existence_excep_scales}, we also need a version of Proposition \ref{prop:largest_cluster} in annuli $\Ann_{\frac{1}{2} n_m, n_m}$ (instead of balls $\Ball_{n_m}$). It is easy to see that the same proof applies in this setting, so that an analogous result holds true, with $|\Ball_{n_m}|$ replaced by $\big|\Ann_{\frac{1}{2} n_m, n_m}\big|$.
\end{remark}

\section{Application: forest fires} \label{sec:application_FF}

We now turn to the forest fire processes, with or without recovery. After giving precise definitions in Section \ref{sec:def_FF}, we explain in Section \ref{sec:coupling} how to couple these processes with a process where ``cluster-distributed'' holes are independently ``removed'' at the ignition times. This coupling provides in particular a lower bound for the forest fire processes at a time $t_c - \ve$ slightly before $t_c$, and we estimate quantitatively this lower bound in Section \ref{sec:comparison_holes}. More precisely, we explain how it fits into the framework of percolation with holes, studied in Sections \ref{sec:process_holes} to \ref{sec:other_stability}, for some $\pi$ and $\rho$ that we compute. Before that, we need to introduce the exceptional scales for the forest fire processes, which we do in Section \ref{sec:def_exceptional_scales}. Even if this section seems to pertain only to usual near-critical Bernoulli percolation, it contains some computations required for Section \ref{sec:comparison_holes}, and it is central to Section \ref{sec:existence_excep_scales}.

\subsection{Definition of the processes} \label{sec:def_FF}

We now define precisely the various processes under consideration, that were already mentioned in the Introduction. Let $G$ be a finite subgraph of the full lattice $\TT$, with set of vertices $V_G$.

The first process that we consider is well-known, and it has a simple dynamics: we call it the \emph{pure birth} (or \emph{pure growth}) process. Initially, each vertex in $V_G$ is vacant (state $0$). Vacant vertices become occupied (state $1$), independently of each other, at rate $1$, and then remain occupied forever. Let $X_t(v)$ denote the state of vertex $v \in V_G$ at time $t$. Clearly, at each given time $t$, the random variables $X_t(v)$, $v \in V_G$, are i.i.d., equal to $0$ or $1$ with respective probabilities $e^{-t}$ and $1-e^{-t}$. We can thus see $(X_t(v))_{v \in V_G}$ as a percolation configuration with parameter $1 - e^{-t}$. We denote by $\cluster_t(v)$ the occupied cluster of $v$ at time $t$.

We also introduce \emph{forest fires} (or ``epidemics'') \emph{without recovery}. Again, each vertex is initially vacant (state $0$), and it becomes occupied (state $1$) at rate $1$. However, there is now an additional mechanism: vertices are hit by lightning (``spontaneously infected from outside'') at rate $\zeta$, the parameter of this model. If an occupied vertex is hit by lightning, then its entire occupied cluster is burnt immediately: all vertices in the cluster become vacant, and remain vacant forever; we call these vertices \emph{burnt} (state $-1$). The configuration at time $t$ is denoted by $(\sigma_t(v))_{v \in V_G}$.

Occasionally, we mention other processes, in particular \emph{forest fires with recovery}. This process corresponds to the classical Drossel-Schwabl model \cite{DrSc1992}, and we use the notation $\bar{\sigma}$ for it. The difference with the previous model is that now, burnt vertices behave the same as ``ordinary'' vacant vertices: they become occupied at rate $1$ (so this process has just two states: $0$ and $1$).

\begin{remark}
The processes above were defined for finite subgraphs of $\TT$. Obviously, the $X$ process can be defined for the full lattice $\TT$ as well. This is not clear at all for the $\sigma$ and $\bar{\sigma}$ processes, but it can be / has been done, by using clever arguments by D\"urre \cite{Du2006} (this stands in contrast with parameter-$N$ volume-frozen percolation, which can be represented as a finite-range interacting particle system, so that the general theory of such systems can be applied). However, in this paper we restrict to finite graphs, for which existence is clear. Also, we focus on the $\sigma$ process. Several of the results that we prove for $\sigma$ can be proved (in a very similar way) for $\bar{\sigma}$ as well. Unfortunately, we cannot (yet) prove analogs for $\bar{\sigma}$ of our main results in Section \ref{sec:existence_excep_scales}: see the comments in Section \ref{sec:forest_fires}.
\end{remark}

\subsection{Coupling with independently removed clusters} \label{sec:coupling}

The description of the processes $(X_t)$ and $(\sigma_t)$ shows immediately (by using the obvious coupling) that (with the natural order $-1 < 0 < 1$), the former dominates the latter. It is important for our purposes to also have at our disposal a domination relation in the other direction: for each $t \geq 0$, $\sigma_t$ dominates an auxiliary process obtained from $X_t$ by removing, at each ``ignition event'' $(\tau,v)$, with $\tau < t$, an ``independent copy'' of $\overline{\cluster_{\tau}(v)} := \cluster_{\tau}(v) \cup \big( \dout \cluster_{\tau}(v) \big)$. This additional process, that we denote by $Y$, will provide a connection with the general theory of percolation with impurities from Sections \ref{sec:process_holes}--\ref{sec:other_stability} (this connection is established more explicitly in Section \ref{sec:comparison_holes}, and we then apply it to obtain our main results for $\sigma$ in Section \ref{sec:existence_excep_scales}).

More formally, for each $v \in V_G$, let $\calT_v$ be the (random) set of ignition times at $v$, and for $t \geq 0$, $\calT^t_v := \{ \tau \in \calT_v \: : \: \tau < t \}$. For $v \in V_G$ and $t \geq 0$, we denote by $\mu_{v, t}$ the distribution of the occupied cluster $\cluster_t(v)$ of $v$ in the configuration $X_t$. We then introduce the marked Poisson point process obtained from the Poisson process of ignitions, by assigning, for each $v \in V_G$ and each $\tau \in \calT_v$, a random ``mark'' $\cluster_{v, \tau}$ drawn independently, according to the distribution $\mu_{v, \tau}$. Finally, we define $Y_t$ obtained from $X_t$ by ``removing'' the subsets $\overline{\cluster_{v, \tau}} = \cluster_{v, \tau} \cup \big( \dout \cluster_{v, \tau} \big)$ ($v \in V_G$, $\tau \in \calT_v$, $\tau < t$), i.e.
$$Y_t(v) := X_t(v) \ind_{v \notin \bigcup_{v' \in V, \tau \in \calT^t_{v'}} \overline{\cluster_{v', \tau}}} \quad (v \in V_G).$$

As said above, we claim that $\sigma$ dominates, in some sense, $Y$. Some nuance is needed here, because $\sigma$ has states $-1$, $0$ and $1$, while $Y$ has only states $0$ and $1$. More precisely, our claim is the following.

\begin{lemma} \label{lem:stoch_domination}
For all $t \geq 0$, $(\ind_{\sigma_t(v) = 1})_{v \in V_G}$ stochastically dominates $(Y_t(v))_{v \in V_G}$.
\end{lemma}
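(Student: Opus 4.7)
The plan is to construct a joint coupling of $(X, \sigma, Y)$ on a common probability space, using the same realization of birth times $(\tau_v)_{v \in V_G}$ and ignition Poisson processes $(\calT_v)_{v \in V_G}$ for both $X$ and $\sigma$, and to compare $Y$ to $\sigma$ on this space. The key structural observation is that $\sigma$-occupied vertices are always a subset of $X$-occupied vertices at every time, so at each ignition event $(v',\tau)$,
\[
\cluster_{\sigma}(v',\tau^{-}) \;\subseteq\; \cluster_{\tau}(v').
\]

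The first step is to introduce an auxiliary process $Z$, defined exactly like $Y$ but with the marks $\cluster_{v',\tau}$ replaced by the \emph{actual} $X$-clusters $\cluster_{\tau}(v')$ (these are correlated through $X$). Under the natural coupling, the inclusion above immediately yields the pointwise bound $Z_{t}(v) \leq \ind_{\sigma_{t}(v) = 1}$ for every $v$: if $Z_{t}(v) = 1$, then $v$ is born by time $t$ and avoids every $\overline{\cluster_{\tau}(v')}$, hence avoids every $\cluster_{\sigma}(v', \tau^{-})$, so $v$ was never burnt in $\sigma$.

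The second step is to show that the distribution of $Z_{t}$ stochastically dominates that of $Y_{t}$. The heuristic is that the marks of $Z$ are all functions of the same $X$ and therefore tend to overlap, while those of $Y$ are drawn independently and spread out more; consequently, the set of ``surviving'' vertices in $Z$ stochastically dominates the analogous set in $Y$. Combined with the first step, this yields the lemma.

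The main obstacle is making the second step rigorous. A direct FKG / positive-association argument is not immediately available because the events $\{v \notin \overline{\cluster_{\tau}(v')}\}$ are not monotone in $X$: the outer boundary $\dout \cluster_{\tau}(v')$ consists of vacant sites, producing mixed monotonicity in $X$. One viable route is to build the correlated marks step by step from a fresh independent copy of $X$ at each ignition, exploiting the fact that the marginal laws of $\cluster_{v_{k},\tau_{k}}$ in the two processes coincide, together with an induction on the ignition index that propagates stochastic domination forward in time via the conditional independence of each newly sampled mark from the past; preserving joint independence of the marks in $Y$ while enforcing the needed coverage of $\cluster_{\sigma}(v_k,\tau_k^-)$ is the delicate bookkeeping point.
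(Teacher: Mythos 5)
Your step 4, the pointwise bound $Z_t(v) \leq \ind_{\sigma_t(v) = 1}$, is correct and follows from the inclusion $\cluster_{\sigma}(v',\tau^-) \subseteq \cluster_{\tau}(v')$. But the whole proof then hinges on step 5, the stochastic domination of $Y_t$ by $Z_t$, which you explicitly leave open; this is not bookkeeping but the entire substance of the lemma, and it is in fact a \emph{strictly stronger} statement than the lemma itself (since $Z_t \leq \ind_{\sigma_t=1}$ pointwise, any proof of $Z_t$ dominating $Y_t$ would in particular prove the lemma). The paper's proof does not pass through anything like $Z$. After conditioning on the finitely many ignition events $(s_1,v_1),\dots,(s_n,v_n)$, it takes $n+1$ \emph{independent} copies $\calP^{(1)},\dots,\calP^{(n+1)}$ of the pure-birth process and builds a realization of the FFWoR iteratively: at the $i$-th ignition the burnt cluster is explored with the current (already thinned) copy $\tilde{\calP}^{(i)}$, after which the unexplored part of every timeline is re-randomized using the fresh copy $\calP^{(i+1)}$. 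The final configuration $X_t^{\tilde{\calP}^{(n+1)}}$ has the law of the FFWoR at time $t$ (with $-1$ read as $0$), and on the same probability space it coincides with $X_t^{\calP^{(n+1)}}$ outside the union $\bigcup_i\overline{\cluster_{s_i}^{\calP^{(i)}}(v_i)}$, which consists of genuinely independent marks — so it pointwise dominates the $Y$ process. Your closing sentence gestures at precisely this iterated re-sampling, but carrying it out yields a coupling of $\sigma$ with $Y$ directly; routing it through the auxiliary $Z$ only replaces the target by a harder intermediate claim that neither your sketch nor the paper's argument actually establishes.

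A secondary inaccuracy: the set $\overline{\cluster_\tau(v')} = \cluster_\tau(v')\cup\dout\cluster_\tau(v')$ is in fact \emph{increasing} in the underlying configuration $X$ — as sites become occupied, the cluster absorbs its former vacant boundary, so the union can only grow — hence $\{v\notin\overline{\cluster_\tau(v')}\}$ is a purely decreasing event, not one of mixed monotonicity. What genuinely blocks a bare FKG argument is that $Z_t(v)$ is a product of an increasing functional ($X_t(v)$) and a decreasing one ($\ind_{v\notin R_Z}$) of the \emph{same} $X$, whereas $Y_t$ decouples the mask from $X$ by independence; controlling the resulting cross-correlation is exactly what the paper's exploration-and-re-randomization scheme is designed to do, and it is the piece your argument is missing.
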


So, informally speaking, the $\sigma$ configuration at time $t$, with state $-1$ ``read'' as $0$, stochastically dominates the $Y$ configuration.

\begin{proof}[Proof of Lemma \ref{lem:stoch_domination}]
Let $t > 0$, and fix all the ignitions before time $t$, denoted by $(s_1, v_1), \ldots, (s_n, v_n)$ ($0 < s_1 < \ldots < s_n < t$, and $v_1, \ldots, v_n \in V_G$). Note that it is sufficient to prove the desired stochastic domination with fixed ignitions (and random births), which we do now.

Let $\calP$ denote the random process of births up to time $t$. We can visualize this process $\calP$ in the usual way: to each vertex $v$, we assign a half-line (corresponding to $[0,\infty)$), and for each of these half-lines, we consider a Poisson point process with intensity $1$. The $X$ process corresponding to $\calP$ is then, clearly, described as follows: for each $s \in (0, t)$,
$$X_s(v) = \ind_{\substack{\{\calP \text{ has a point before time } s\\ \text{on the half-line assigned to } v\} }} \quad (v \in V_G).$$

The proof is based on a coupling argument, and to do that, it is convenient (due to the notion of ``independent copy'' in the definition of the $Y$ process) to introduce $n+1$ independent copies of $\calP$, denoted by $(\calP^{(i)})_{1 \leq i \leq n+1}$, that we use to build a realization of the $\sigma$ process. The $X$ process corresponding to $\calP^{(i)}$ ($1 \leq i \leq n+1$) is then denoted by $X^{\calP^{(i)}}$ (and similarly for clusters), so for example, $X_s^{\calP^{(i)}}(v)$ is the indicator function of $\{\calP^{(i)}$ has a point before time $s$ on the half-line assigned to $v\}$.

\begin{figure}[t]
\begin{center}

\includegraphics[width=0.95\textwidth]{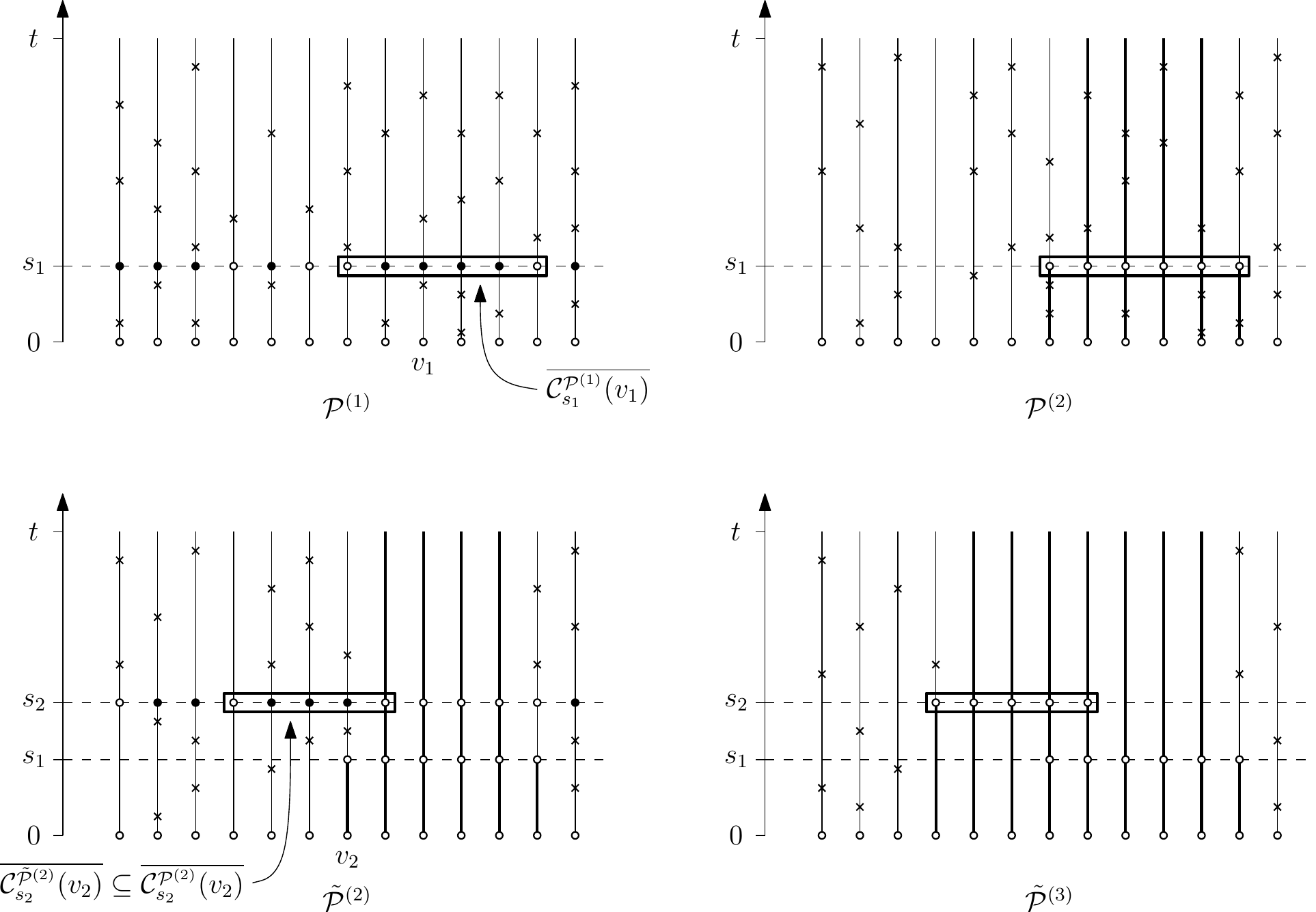}
\caption{\label{fig:coupling_holes} We construct the coupling in an iterative way: (1) the modified configuration $\tilde{\calP}^{(2)}$ is obtained from $\calP^{(2)}$ by erasing all births before time $s_1$ on $\dout \cluster_{s_1}^{\calP^{(1)}}(v_1)$, and all births on $\cluster_{s_1}^{\calP^{(1)}}(v_1)$, (2) the modified configuration $\tilde{\calP}^{(3)}$ is obtained from $\calP^{(3)}$ by erasing all births before time $s_2$ on $\dout \cluster_{s_2}^{\tilde{\calP}^{(2)}}(v_2)$, all births before time $s_1$ on $\dout \cluster_{s_1}^{\calP^{(1)}}(v_1)$, and all births on $\cluster_{s_1}^{\calP^{(1)}}(v_1) \cup \cluster_{s_2}^{\tilde{\calP}^{(2)}}(v_2)$, (3) we keep proceeding in the same way, producing a sequence of configurations $\tilde{\calP}^{(i)} \subseteq \calP^{(i)}$, $i = 4, \ldots, n+1$.}

\end{center}
\end{figure}

Here is, somewhat informally described, the construction (illustrated on Figure \ref{fig:coupling_holes}). Up to time $s_1$, we let the forest fire without recovery (FFWoR) process run, ``driven'' by $\calP^{(1)}$. Then, as we should, we ``burn'' the cluster at time $s_1$, i.e. all vertices of $\cluster_{s_1}^{\calP^{(1)}}(v_1)$ become vacant, and remain so forever. What information does this cluster give us about the states of the other vertices at time $s_1$ in the FFWoR process? Of course, the vertices on the outer boundary of this cluster are vacant at time $s_1$, but this is all information we have: all the vertices outside $\overline{\cluster_{s_1}^{\calP^{(1)}}(v_1)} = \cluster_{s_1}^{\calP^{(1)}}(v_1) \cup \big( \dout \cluster_{s_1}^{\calP^{(1)}}(v_1) \big)$ are still distributed as in $X^{\calP^{(1)}}$.

Hence, (the law of) the future evolution of the FFWoR process does not change if, as we will do, we replace the $\calP^{(1)}$ configurations on the entire timelines of these vertices by $\calP^{(2)}$ configurations. The same holds for the half-lines above time $s_1$ of the vertices in $\dout \cluster_{s_1}^{\calP^{(1)}}(v_1)$. Finally, since the vertices of $\cluster_{s_1}^{\calP^{(1)}}(v_1)$ burn at time $s_1$ and remain vacant forever, we may (without changing the distribution of the FFWoR process after time $s_1$) remove all the points from the entire timelines of these vertices. We denote the resulting point configuration by $\tilde{\calP}^{(2)}$. Note that, considered as sets of points, we clearly have that $\tilde{\calP}^{(2)} \subseteq \calP^{(2)}$. It is also clear from the above observations and definitions that, at each time $s \in [s_1, s_2)$, the FFWoR process at time $s$ (with state $-1$ ``read'' as $0$) has the same distribution as $X_s^{\tilde{\calP}^{(2)}}$.

Now, at time $s_2$, the occupied cluster of $v_2$, i.e. $\cluster_{s_2}^{\tilde{\calP}^{(2)}}(v_2)$, is burnt (note that it is contained in $\cluster_{s_2}^{\calP^{(2)}}(v_2)$). Using the same arguments as for the first burning event, i.e. $(s_1, v_1)$, we replace the point configurations of the entire timelines of the vertices outside $\overline{\cluster_{s_1}^{\calP^{(1)}}(v_1)} \cup \overline{\cluster_{s_2}^{\tilde{\calP}^{(2)}}(v_2)}$ by those of $\calP^{(3)}$. For the timelines of the vertices in $\dout \cluster_{s_2}^{\tilde{\calP}^{(2)}}(v_2) \setminus \cluster_{s_1}^{\calP^{(1)}}(v_1)$, we replace the point configurations after time $s_2$ by those of $\calP^{(3)}$. For the timelines of the vertices in $\dout \cluster_{s_1}^{\calP^{(1)}}(v_1) \setminus \overline{\cluster_{s_2}^{\tilde{\calP}^{(2)}}(v_2)}$, we replace the configurations after time $s_1$ by those of $\calP^{(3)}$. Finally, the points on the entire timelines of the vertices in $\cluster_{s_1}^{\calP^{(1)}}(v_1)$ are removed, and we do this also for $\cluster_{s_2}^{\tilde{\calP}^{(2)}}(v_2)$. We denote the resulting point configuration by $\tilde{\calP}^{(3)}$, which is clearly a subset of $\calP^{(3)}$.

Again, one can check that, for each time $s \in [s_2, s_3)$, the FFWoR process (with $-1$ read as $0$) has the same distribution as $X_s^{\tilde{\calP}^{(3)}}$. At time $s_3$, we burn $\cluster_{s_3}^{\tilde{\calP}^{(3)}}(v_3)$ (which is contained in $\cluster_{s_3}^{\calP^{(3)}}(v_3)$), and so on. Iterating this procedure, we obtain
$$\calP^{(1)}, \tilde{\calP}^{(2)} \subseteq \calP^{(2)}, \ldots, \tilde{\calP}^{(n+1)} \subseteq \calP^{(n+1)},$$
and conclude that the FFWoR process at time $t$ (with the state $-1$ read as $0$) has the same distribution as $X_t^{\tilde{\calP}^{(n+1)}}$. Moreover, it follows from the procedure that, outside of
\begin{equation} \label{eq:union_clusters}
\overline{\cluster_{s_1}^{\calP^{(1)}}(v_1)} \cup \overline{\cluster_{s_2}^{\tilde{\calP}^{(2)}}(v_2)} \cup \ldots \cup \overline{\cluster_{s_n}^{\tilde{\calP}^{(n)}}(v_n)},
\end{equation}
$X_t^{\tilde{\calP}^{(n+1)}}$ is equal to $X_t^{\calP^{(n+1)}}$.

Finally, since the set of vertices in \eqref{eq:union_clusters} is contained in the union of the sets $\overline{\cluster_{s_i}^{\calP^{(i)}}(v_i)}$, $1 \leq i \leq n$, and these $n$ sets are (clearly) independent of each other and of $X_t^{\calP^{(n+1)}}$, the result follows.
\end{proof}

\begin{remark} \label{rem:stoch_domination_inhomogeneous}
Note that Lemma \ref{lem:stoch_domination} still holds (with practically the same proof) in the case when the ignition and birth rates are vertex- and time-dependent, and also when the boundary of a burnt cluster stays vacant forever. Moreover, it is valid for forest fires with recovery as well, i.e. for the $\bar{\sigma}$ process, up to minor modifications: for instance, to produce $\tilde{\calP}^{(2)}$ from $\calP^{(2)}$, remove only the points below time $s_1$ (instead of all the points) from the timelines of the vertices in $\cluster_{s_1}^{\calP^{(1)}}(v_1)$ (since these vertices burn at time $s_1$, but they are allowed to recover at a later time).
\end{remark}

For a subset of vertices $\Lambda \subseteq V_G$, we can consider the FFWoR process on $\Lambda$, i.e. obtained when restricting the ``geographic universe'' to the graph $(\Lambda, E(\Lambda))$, where $E(\Lambda) := \{ e = \{x, y\} \in E \: : \: x,y \in \Lambda \}$. Note that this process on $\Lambda$ does not necessarily coincide with the restriction to $\Lambda$ of the FFWoR process on the whole of $V_G$. For all $t \geq 0$, we denote by $\sigma^{\Lambda}_t = (\sigma^{\Lambda}_t (v))_{v \in \Lambda}$ the configuration at time $t$ of the FFWoR process on $\Lambda$. We will actually make use of the following ``uniform'' version of Lemma \ref{lem:stoch_domination}.

\begin{remark} \label{rem:stoch_domination}
Let $V_i$ ($1 \leq i \leq I$) be subsets of $V_G$, and let $W := \bigcap_{1 \leq i \leq I} V_i$. Then, with $Y_t$ as before (``living'' on the entire graph $G$), it follows from the same coupling argument as for Lemma \ref{lem:stoch_domination} that $\big( \ind_{\min_{1 \leq i \leq I} \sigma^{V_i}_t(v) = 1} \big)_{v \in W}$ stochastically dominates $(Y_t(v))_{v \in W}$.
\end{remark}

\subsection{Exceptional scales} \label{sec:def_exceptional_scales}

In this section, we explicitly define certain length scales (as functions of the parameter $\zeta$), and call them ``exceptional''. In Section \ref{sec:existence_excep_scales}, we will prove that the FFWoR process on boxes with these length scales indeed exhibit an exceptional behavior, in the sense of Section \ref{sec:intro_heuristics} in the Introduction.

Let $p(t) := 1 - e^{-t}$ be the percolation parameter at time $t$, and $t_c := - \log(1 - p_c)$ be the unique value of $t$ for which $p(t) = p_c$. For the sake of convenience, we write, with a slight abuse of notation, $\theta(t) = \theta(p(t))$ and $L(t) = L(p(t))$.

We make repeated use of the correspondence, for $t > t_c$, between ``times'' and ``scales'', via $t \leftrightarrow L(t)$. Recall that we consider a regularized version of $L$, as explained in Section \ref{sec:near_critical_perc}, which is in particular (seen as a function of time $t$) a bijection from $(t_c,\infty)$ to $(0,\infty)$. We define the transformation $\next_{\zeta} : t \in (t_c, \infty) \mapsto \hat{t} = \hat{t}(t,\zeta) \in (t_c, \infty)$ satisfying
\begin{equation} \label{eq:def_t_hat}
L(t)^2 \theta(\hat{t}) \big| \hat{t} - t_c \big| = \zeta^{-1}
\end{equation}
(similar to (7.1) in \cite{BKN2015}). Note that it is well-defined since $\theta(.) \big| . - t_c \big|$ is continuous and strictly increasing on $[t_c, \infty)$, from $0$ to $\infty$. Moreover, $L$ is strictly decreasing on $(t_c, \infty)$ so $\next_{\zeta}$ is strictly increasing, and $\next_{\zeta}(t) \to \infty$ as $t \to \infty$ since $L(t) \to 0$.

It follows immediately from \eqref{eq:def_t_hat}, combined with \eqref{eq:equiv_theta} and \eqref{eq:equiv_L}, that
\begin{equation} \label{eq:rel_t_t_hat}
L(t)^2 \asymp \zeta^{-1} L(\hat{t})^2 \frac{\pi_4(L(\hat{t}))}{\pi_1(L(\hat{t}))}.
\end{equation}

\begin{lemma} \label{lem:fixed_pt}
For $t \in (t_c, \infty)$, let $\varphi(t) := L(t)^2 \theta(t) \big| t - t_c \big|$. We have
\begin{equation}
\varphi(t) \stackrel[t \searrow t_c]{\longrightarrow}{} \infty \quad \text{and} \quad \varphi(t) \stackrel[t \to \infty]{\longrightarrow}{} 0.
\end{equation}
\end{lemma}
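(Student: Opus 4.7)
The plan is to treat the two limits separately, since they lie in very different regimes: $t \searrow t_c$ is the near-critical regime where the scaling relations from \eqref{eq:equiv_theta} and \eqref{eq:equiv_L} do all the work, while $t \to \infty$ is deeply supercritical, where the behavior of the regularized $L$ near $p=1$ takes over.

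For the first limit, I would combine $\theta(t) \asymp \pi_1(L(t))$ from \eqref{eq:equiv_theta} with $|t-t_c| L(t)^2 \asymp 1/\pi_4(L(t))$ from \eqref{eq:equiv_L} to rewrite $\varphi$ purely in terms of arm probabilities:
\[
\varphi(t) \asymp \frac{\pi_1(L(t))}{\pi_4(L(t))} \quad \text{as } t \searrow t_c.
\]
Since $L$ is the continuous bijection from $(p_c,1]$ onto $[0,\infty)$ specified in Section~\ref{sec:near_critical_perc}, $L(t) \to \infty$ in this limit, and it remains only to show that $\pi_1(n)/\pi_4(n) \to \infty$ as $n \to \infty$. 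The a-priori bounds \eqref{eq:1arm} and \eqref{eq:4arms} supply this immediately, with a polynomial blow-up of order $n^{1/2+\beta_3}$; no input from the precise values of the arm exponents is needed.

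For the second limit, I would instead use the trivial bound $\theta \leq 1$ together with explicit control of $L$ near $p=1$. The point is that for $p$ sufficiently close to $1$, the crossing probability $\PP_p\big(\Cv([0,2]\times[0,1])\big)$ is already $\leq 0.001$, so the unregularized $L$ is identically $1$ on some interval $[p^*,1)$. Combined with the boundary convention $L(1):=0$ and the linear extension in the definition of the regularized $L$, this forces $L(p) = O(1-p)$ near $p=1$, hence $L(t) = O(e^{-t})$ as $t \to \infty$. Plugging into $\varphi(t) \leq L(t)^2 (t - t_c)$ gives a $O(te^{-2t})$ bound, which goes to $0$.

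There is no serious obstacle here; the only place that requires genuine percolation input is the $t \searrow t_c$ direction, and there the universal a-priori bounds suffice. If one wished a quantitative rate of divergence, one could alternatively plug in the exact critical exponents $\alpha_1 = 5/48$ and $\alpha_4 = 5/4$ via Lemma \ref{lem:ratio_limit}, yielding the stronger asymptotic $\varphi(t) = L(t)^{5/4 - 5/48 + o(1)}$ as $t \searrow t_c$.
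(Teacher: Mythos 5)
Your proof is correct, and the core of the argument is the same as the paper's: both proofs use \eqref{eq:equiv_theta} and \eqref{eq:equiv_L} to reduce $\varphi(t) \asymp \pi_1(L(t))/\pi_4(L(t))$ as $t \searrow t_c$, and both rely on exponential decay of $L$ for the $t \to \infty$ direction. Where you diverge is in how you show $\pi_1(n)/\pi_4(n) \to \infty$. The paper uses the Harris/FKG inequality to obtain $\pi_4(n) \leq \pi_1(n)^2$ (splitting the four-arm event into an increasing one-arm event and a decreasing one-arm event), which gives $\pi_1/\pi_4 \geq \pi_1^{-1} \to \infty$. You instead invoke the a-priori polynomial bounds \eqref{eq:1arm} and \eqref{eq:4arms} directly, which gives the slightly more quantitative lower bound $\pi_1(n)/\pi_4(n) \geq c\,n^{1/2+\beta_3}$. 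Both are elementary, universal arguments; yours makes the polynomial rate explicit, while the paper's is a bit cleaner in that it needs only $\pi_1(n) \to 0$. For the $t \to \infty$ direction, you supply more detail than the paper (which just asserts exponential decay of $L$); your explanation of the regularized $\tilde L$ near $p=1$ is correct, modulo one slip: by the convention $L(p) = L(1-p)$ for $p > p_c$, the crossing probability you should be bounding below $0.001$ is $\PP_{1-p}\big(\Cv([0,2]\times[0,1])\big)$ for $p$ near $1$ (i.e.\ parameter near $0$), not $\PP_p$ -- the latter is an increasing event and is close to $1$ for $p$ near $1$. The conclusion $L(p) = O(1-p)$, hence $L(t) = O(e^{-t})$, is nevertheless correct.
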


\begin{proof}[Proof of Lemma \ref{lem:fixed_pt}]
Since $L(t)$ tends to $0$ exponentially fast as $t \to \infty$, the same is true for $\varphi(t)$, which gives the second limit.

For the first limit, we use successively \eqref{eq:equiv_theta} and \eqref{eq:equiv_L} to obtain: as $t \searrow t_c$,
\begin{equation} \label{eq:fixed_point_pf1}
L(t)^2 \theta(t) \big| t - t_c \big| \asymp L(t)^2 \pi_1(L(t)) \big| t - t_c \big| \asymp \frac{\pi_1(L(t))}{\pi_4(L(t))}.
\end{equation}
We then deduce from the Harris inequality and the observation that $L(t) \to \infty$ as $t \to t_c$ (see the paragraph below \eqref{eq:def_L}) that
\begin{equation} \label{eq:fixed_point_pf2}
\frac{\pi_1(L(t))}{\pi_4(L(t))} \geq \pi_1(L(t))^{-1} \stackrel[t \searrow t_c]{\longrightarrow}{} \infty.
\end{equation}
We get the desired result by combining \eqref{eq:fixed_point_pf1} and \eqref{eq:fixed_point_pf2}.
\end{proof}

Since $\varphi$ is continuous on $(t_c, \infty)$, Lemma \ref{lem:fixed_pt} implies that for all $\zeta > 0$, there exists $t \in (t_c, \infty)$ such that
\begin{equation} \label{eq:fixed_pt}
L(t)^2 \theta(t) \big| t - t_c \big| = \zeta^{-1}.
\end{equation}
This leads us to introduce the ``fixed point'' $t_{\infty}$ of $\next_{\zeta}$. Since there may be several of them ($\varphi$ is not necessarily monotone), we adopt the following definition.
\begin{definition} \label{def:fixed_pt}
For $\zeta > 0$, we introduce
\begin{equation} \label{eq:def_fixed_pt}
t_{\infty} = t_{\infty}(\zeta) := \sup \{ t > t_c \: : \: \next_{\zeta}(t) = t \} > t_c.
\end{equation}
\end{definition}
It follows from the fact that $\varphi(t) \to 0$ as $t \to \infty$ that: $t_{\infty} < \infty$, and for all $t \geq t_{\infty}$, $\hat{t} \geq t$. Note also that $t_{\infty}(\zeta) \to t_c$ as $\zeta \searrow 0$.

Since $\next_{\zeta}(t) \to t_c$ as $t \searrow t_c$, $\next_{\zeta}$ is a bijection from $(t_c, \infty)$ onto itself. We define the \emph{exceptional times} $t_k = t_k(\zeta)$ ($k \geq 0$) by induction as follows. We take
\begin{equation}
t_0 := 2 t_c, \quad \text{and for all $k \geq 0$, } t_{k+1}(\zeta) := \next_{\zeta}^{-1}\big( t_k(\zeta) \big)
\end{equation}
(the choice of $t_0$ is completely arbitrary, and any fixed value $> t_c$ would work). In the following, we always assume that $\zeta$ is small enough so that $t_{\infty}(\zeta) < t_0$, which implies that $t_k(\zeta) > t_{\infty}(\zeta)$ for all $k \geq 0$, and that for a fixed value $\zeta$, $(t_k(\zeta))_{k \geq 0}$ is strictly decreasing.

We also consider the corresponding \emph{exceptional scales} $m_k(\zeta) := L(t_k(\zeta))$ ($k \geq 0$), which satisfy (from \eqref{eq:rel_t_t_hat})
\begin{equation} \label{eq:rel_except_scales}
m_{k+1}^2 \asymp \zeta^{-1} m_k^2 \frac{\pi_4(m_k)}{\pi_1(m_k)}.
\end{equation}
For future use, note that $m_0(\zeta) = L(t_0) = L(2 t_c)$ is a constant, and (from \eqref{eq:rel_except_scales}) $m_1(\zeta) \asymp \frac{1}{\sqrt{\zeta}}$.

By combining \eqref{eq:equiv_theta} and \eqref{eq:equiv_L} with the fact that $\pi_i(n) = n^{-\alpha_i + o(1)}$ (as $n \to \infty$) for $i = 1, 4$, with $\alpha_1 = \frac{5}{48}$ and $\alpha_4 = \frac{5}{4}$ (see \eqref{eq:arm_exponent} and the paragraph below), we can obtain $L(t) = |t - t_c|^{-\frac{4}{3} + o(1)}$ as $t \to t_c$, and $\theta(t) = (t - t_c)^{\frac{5}{36} + o(1)}$ as $t \searrow t_c$. Hence, $t_{\infty} = t_c + \zeta^{\delta_{\infty} + o(1)}$ and $t_k = t_c + \zeta^{\delta_k + o(1)}$ as $\zeta \searrow 0$, with
\begin{equation} \label{eq:exp_delta}
\delta_k = \frac{36}{55} \cdot \Big( 1 - \Big( \frac{41}{96}\Big)^k \Big) \stackrel[k \to \infty]{\longrightarrow}{} \frac{36}{55} = \delta_{\infty}.
\end{equation}
The corresponding exponents for $m_k$ and $m_{\infty}$ then follow readily:
\begin{equation} \label{eq:asymp_m}
m_k(\zeta) = \zeta^{-\frac{4}{3} \delta_k + o(1)} \quad \text{and} \quad m_{\infty}(\zeta) = \zeta^{-\frac{4}{3} \delta_{\infty} + o(1)} \quad \text{as $\zeta \searrow 0$}
\end{equation}
(in particular, for all $k \geq 0$, $m_{k+1}(\zeta) \gg m_k(\zeta)$).

The following lemma ensures that $L(\hat{t}) \ll L(t)$ for $t$ ``much larger'' than $t_{\infty}$ (more precisely, for $t$ such that $|t - t_c| \gg |t_{\infty} - t_c|$).

\begin{lemma} \label{lem:est_psi}
There exist universal constants $C, \beta > 0$ such that: for all $\zeta \leq 1$ and $t \geq t_{\infty}(\zeta)$,
\begin{equation}
\frac{L(\hat{t})}{L(t)} \leq C \bigg( \frac{L(t)}{L(t_{\infty})} \bigg)^{\beta}.
\end{equation}
\end{lemma}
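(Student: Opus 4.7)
The plan is to exploit the fact that $\hat t$ and $t_\infty$ satisfy the same equation $L(\cdot)^2\theta(\cdot)(\cdot - t_c) = \zeta^{-1}$, with the ``outer'' scale being $L(t)$ for $\hat t$ (by \eqref{eq:def_t_hat}) and $L(t_\infty)$ for $t_\infty$ (since $t_\infty$ is a fixed point of $\psi_\zeta$, cf.\ \eqref{eq:fixed_pt}). Equating the two and inserting the Kesten scaling relation $\theta(s)(s-t_c) \asymp \pi_1(L(s))/(L(s)^2\pi_4(L(s)))$ (obtained by combining \eqref{eq:equiv_theta} and \eqref{eq:equiv_L}) will give, after cancelling the factor $L(t_\infty)^2$ on one side and cross-multiplying,
\begin{equation*}
\bigg(\frac{L(t)}{L(\hat t)}\bigg)^{\!2} \asymp \frac{\pi_1(L(t_\infty))}{\pi_1(L(\hat t))} \cdot \frac{\pi_4(L(\hat t))}{\pi_4(L(t_\infty))}.
\end{equation*}

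The next step is to observe that $t\ge t_\infty$ together with the monotonicity of $\psi_\zeta$ (which fixes $t_\infty$) yields $\hat t \geq t_\infty$, hence $L(\hat t) \leq L(t_\infty)$. I then apply the quasi-multiplicativity \eqref{eq:quasi_mult} to rewrite the right-hand side as the single combined two-scale ratio $\pi_1(L(\hat t), L(t_\infty))/\pi_4(L(\hat t), L(t_\infty))$. The crucial idea is to apply Lemma~\ref{lem:ratio_limit} to this combined ratio \emph{directly}, not to each of the four individual factors: for any $\varepsilon>0$,
\begin{equation*}
\frac{\pi_1(L(\hat t), L(t_\infty))}{\pi_4(L(\hat t), L(t_\infty))} \;\geq\; c_\varepsilon \bigg(\frac{L(t_\infty)}{L(\hat t)}\bigg)^{\!\alpha_4 - \alpha_1 - 2\varepsilon} = c_\varepsilon \bigg(\frac{L(t_\infty)}{L(\hat t)}\bigg)^{\!55/48 - 2\varepsilon}.
\end{equation*}

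Substituting this into the previous display and rearranging gives $L(\hat t)^{41/48 + 2\varepsilon} \leq C_\varepsilon\, L(t)^2\, L(t_\infty)^{-(55/48 - 2\varepsilon)}$. Setting $\tilde a := 41/48 + 2\varepsilon$ and $\tilde b := 55/48 - 2\varepsilon$, the decisive observation is that $\tilde a + \tilde b = 2$ \emph{exactly}, so that $2/\tilde a - 1 = \tilde b/\tilde a$. Taking the $\tilde a$-th root therefore telescopes the inequality into
\begin{equation*}
\frac{L(\hat t)}{L(t)} \;\leq\; C_\varepsilon \bigg(\frac{L(t)}{L(t_\infty)}\bigg)^{\!\tilde b/\tilde a},
\end{equation*}
which is the desired bound with $\beta := \tilde b/\tilde a > 0$ for any $\varepsilon < 55/96$ (note that $\beta \to 55/41$ as $\varepsilon \to 0$).

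The main subtlety lies entirely in this last algebraic cancellation. If one were to bound the four probabilities $\pi_1(L(\hat t)), \pi_1(L(t_\infty)), \pi_4(L(\hat t)), \pi_4(L(t_\infty))$ individually via Lemma~\ref{lem:ratio_limit}, the $\varepsilon$-errors would accumulate so that the analogous exponents satisfy $\tilde a + \tilde b = 2 - 4\varepsilon$ rather than $2$, and one would be left with a residual factor of the form $L(t)^{O(\varepsilon)}$ or $L(t_\infty)^{O(\varepsilon)}$ that is \emph{not} uniformly bounded as $\zeta \searrow 0$ (since $L(t_\infty) \asymp \zeta^{-48/55+o(1)}$ diverges, by \eqref{eq:asymp_m}). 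Passing through the \emph{ratio} $\pi_1/\pi_4$ via quasi-multiplicativity, before invoking the power-law bounds, is precisely what produces the exact identity $\tilde a + \tilde b = 2$ and sidesteps this obstruction.
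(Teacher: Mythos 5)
Your proof is correct and follows the same overall strategy as the paper's: equate the two instances of the defining relation $L(\cdot)^2\theta(\cdot)|\cdot-t_c| = \zeta^{-1}$ (at $\hat t$ against $L(t)$, and at $t_\infty$ against $L(t_\infty)$), translate $\theta$ and $|\cdot - t_c|$ into arm probabilities at scales $L(\hat t)$ and $L(t_\infty)$ via \eqref{eq:equiv_theta} and \eqref{eq:equiv_L}, then combine quasi-multiplicativity \eqref{eq:quasi_mult} with arm estimates on the annulus between $L(\hat t)$ and $L(t_\infty)$ (using $\hat t \geq t_\infty$, hence $L(\hat t) \leq L(t_\infty)$), and finally exploit the fact that the resulting exponents on $L(\hat t)$ and $L(t_\infty)$ sum to $2$ to read off the bound after dividing by $L(t)$. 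The one substantial difference is the choice of arm bound in the last step. The paper uses the SLE-free a-priori bounds \eqref{eq:1arm} and \eqref{eq:4arms}: a lower bound $\pi_1(n_1,n_2) \geq C(n_1/n_2)^{1/2}$ and an upper bound $\pi_4(n_1,n_2) \leq C(n_1/n_2)$. These are intrinsically one-sided, so the $\varepsilon$-mismatch issue you worry about never arises, and the sum-to-$2$ identity is automatic; the paper gets $\beta = 1/3$. You instead pass through the near-critical exponents of Lemma~\ref{lem:ratio_limit}, and your observation that the lemma must be applied to the \emph{combined} ratio $\pi_1(L(\hat t),L(t_\infty))/\pi_4(L(\hat t),L(t_\infty))$ — so that the $\varepsilon$'s enter with the same sign and $\tilde a + \tilde b = 2$ exactly, rather than $2 - 4\varepsilon$ — is a genuine subtlety of the SLE-based route, and your diagnosis of the residual $L(t_\infty)^{O(\varepsilon)}$ factor that would otherwise appear is accurate. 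You obtain the sharper $\beta \nearrow 55/41$, but as the statement only asserts existence of some universal $\beta > 0$, the paper's cruder (and more elementary, lattice-robust) bounds already suffice. Two small points to tidy up: the paper first reduces to $t \leq 10 t_c$ before invoking the $\asymp$'s, since \eqref{eq:equiv_theta} and \eqref{eq:equiv_L} are asymptotic near $p_c$ — you should include such a reduction; and the reason $\psi_\zeta$ fixes $t_\infty$ is Definition~\ref{def:fixed_pt} together with continuity, not the monotonicity of $\psi_\zeta$ (which you then use, correctly, to deduce $\hat t = \psi_\zeta(t) \geq \psi_\zeta(t_\infty) = t_\infty$).
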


\begin{proof}[Proof of Lemma \ref{lem:est_psi}]
We can assume \emph{wlog} that $t \leq 10 t_c$. First, we have $L(t)^2 \theta(\hat{t}) \big| \hat{t} - t_c \big| = \zeta^{-1} = L(t_{\infty})^2 \theta(t_{\infty}) \big| t_{\infty} - t_c \big|$ (from \eqref{eq:def_t_hat} and \eqref{eq:def_fixed_pt}), so
\begin{equation} \label{eq:est_psi_pf1}
\frac{L(t)^2}{L(t_{\infty})^2} = \frac{\theta(t_{\infty})}{\theta(\hat{t})} \cdot \frac{\big| t_{\infty} - t_c \big|}{\big| \hat{t} - t_c \big|}.
\end{equation}
On the one hand,
\begin{equation} \label{eq:est_psi_pf2}
\frac{\theta(t_{\infty})}{\theta(\hat{t})} \geq C_1 \frac{\pi_1(L(t_{\infty}))}{\pi_1(L(\hat{t}))} \geq C_2 \pi_1(L(\hat{t}), L(t_{\infty})) \geq C_3 \bigg( \frac{L(\hat{t})}{L(t_{\infty})} \bigg)^{1/2}
\end{equation}
(using successively \eqref{eq:equiv_theta}, \eqref{eq:quasi_mult} and \eqref{eq:1arm}). On the other hand,
\begin{equation} \label{eq:est_psi_pf3}
\frac{\big| t_{\infty} - t_c \big|}{\big| \hat{t} - t_c \big|} \geq C'_1 \frac{L(\hat{t})^2 \pi_4(L(\hat{t}))}{L(t_{\infty})^2 \pi_4(L(t_{\infty}))} \geq C'_2 \frac{L(\hat{t})^2}{L(t_{\infty})^2} \cdot \pi_4(L(\hat{t}), L(t_{\infty}))^{-1} \geq C'_3 \frac{L(\hat{t})^2}{L(t_{\infty})^2} \cdot \bigg( \frac{L(\hat{t})}{L(t_{\infty})} \bigg)^{-1}
\end{equation}
(using \eqref{eq:equiv_L}, \eqref{eq:quasi_mult} and \eqref{eq:4arms}). The desired result then follows (with $\beta = \frac{1}{3}$) by combining \eqref{eq:est_psi_pf1}, \eqref{eq:est_psi_pf2} and \eqref{eq:est_psi_pf3}.
\end{proof}

\subsection{Comparison to percolation with holes} \label{sec:comparison_holes}

In this section, we consider the particular case of the forest fire processes on finite subsets of $\TT$, with homogeneous birth and ignition rates, respectively equal to $1$ and $\zeta$.

The main difficulty in analyzing the behavior of these processes as $t$ approaches $t_c$ is to get a good grip on how fast large connected components appear, and then ``disappear'' due to the fires. For that, in Section \ref{sec:existence_excep_scales}, we first consider the forest fire process where we stop ignition at time $t_c - \ve$ (for some $\ve = \ve(\zeta) \searrow 0$), in boxes with side length $n = n(\zeta) \gg L(t_c - \ve)$. At all later times $t \geq t_c - \ve$, a lower bound for this process is provided by the percolation process with holes (with parameter $p = p(t)$), for some well-chosen $\rho^{(m)}$ and $\pi^{(m)}_v \equiv \pi^{(m)}$ ($v \in V$) that we compute now. Here the parameter is $m = m(\zeta) = L(t_c - \ve(\zeta))$. Later, in Section \ref{sec:existence_excep_scales}, this is applied to the original FFWoR process. Let us also mention that in all our applications, we have $m(\zeta) \ll m_k(\zeta)$ for some $k \geq 2$.

For a subset $C \subseteq V$, let $\rad(C) := \inf \{ n \geq 0 \: : \: C \subseteq \Ball_n\}$ be the radius of $C$ (seen from $0$). Similarly to Section \ref{sec:coupling}, we consider a marked Poisson point process: for all $v \in V$ and $\tau \in \calT_v^{t_c - \ve}$, we assign a mark $\cluster_{v, \tau}$ with distribution $\mu_{v, \tau}$, i.e. the distribution of the cluster of $v$ at time $\tau$. We denote by $\pi^{(m)}$ the probability for a vertex $v \in V$ to be ignited at least once during the interval $[0, t_c - \ve]$, and by $\rho^{(m)}$ the distribution, conditionally on $v$ being ignited in $[0, t_c - \ve]$, of $\max_{\tau \in \calT_v^{t_c - \ve}} \rad (\overline{\cluster_{v, \tau}})$ (these quantities clearly do not depend on $v$).

\begin{lemma} \label{lem:quant_holes}
Let $k \geq 2$, and assume that $m(\zeta) \ll m_k(\zeta)$ as $\zeta \searrow 0$. For any $\upsilon > 0$, let
\begin{equation}
\alpha := \frac{3}{4} \cdot \frac{1}{\delta_{\infty}} + \upsilon \quad \text{and} \quad \beta := \frac{3}{4} \cdot \frac{1}{\delta_k} - \upsilon.
\end{equation}
Then there exist constants $c_1, c_2, c_3 \in (0,+\infty)$ such that for all $m$ sufficiently large, the conditional distribution $\rho^{(m)}$ and the probability $\pi^{(m)}$ satisfy \eqref{eq:assump_holes}:
\begin{equation}
\rho^{(m)} \big( [r,+\infty) \big) \leq c_1 r^{\alpha-2} e^{-c_2 r / m} \: (r \geq 1) \quad \text{and} \quad \pi^{(m)} \leq c_3 m^{- \beta}.
\end{equation}
\end{lemma}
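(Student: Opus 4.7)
The plan is to handle $\pi^{(m)}$ and $\rho^{(m)}$ separately: $\pi^{(m)}$ is nearly immediate, while $\rho^{(m)}$ requires a careful two-regime estimate that is the substance of the lemma.

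\emph{Bound on $\pi^{(m)}$.} Each vertex carries a rate-$\zeta$ Poisson ignition clock, so $\pi^{(m)} = 1 - e^{-\zeta(t_c - \ve)} \leq t_c\, \zeta$. The hypothesis $m(\zeta) \ll m_k(\zeta)$, together with the asymptotics \eqref{eq:asymp_m}, inverts to $\zeta \leq m^{-3/(4\delta_k) + o(1)}$ as $\zeta \searrow 0$; for any $\upsilon > 0$ this yields $\zeta \leq c_3 m^{-\beta}$ with $\beta = 3/(4\delta_k) - \upsilon$, for $m$ large enough.

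\emph{Reduction for $\rho^{(m)}$ and the main obstacle.} The ignition times at $v$ form a rate-$\zeta$ Poisson process on $[0, t_c - \ve]$, and each mark is an independent copy of a Bernoulli cluster at $p(\tau)$. By Campbell's formula,
$$\pi^{(m)}\, \rho^{(m)}([r, +\infty)) \leq \zeta \int_0^{t_c - \ve} \PP_{p(\tau)}\!\big( \rad(\cluster_\tau(v)) \geq r - 1 \big)\, d\tau,$$
so using $\pi^{(m)} \asymp \zeta$ reduces the task to estimating $I(r) := \int_0^{t_c - \ve} \PP_{p(\tau)}(\rad \geq r)\, d\tau$. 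I change variables to $\ell = L(p(\tau))$: by \eqref{eq:equiv_L} and Lemma \ref{lem:ratio_limit}, $|d\tau/d\ell| \asymp \ell^{-7/4 + o(1)}$, with upper limit $\ell = m$. The main obstacle is that the uniform bound $\PP_p(\rad \geq r) \leq C \pi_1(r)$ (valid for $p \leq p_c$) is far too weak: plugged in, it gives only $I(r) \lesssim r^{-5/48 + o(1)}$, and even the crude subcritical exponential decay $\PP_p(\rad \geq r) \leq C e^{-cr/L(p)}$ yields only $r^{-3/4 + o(1)}$ --- both are much larger than the target $r^{\alpha - 2} = r^{-41/48 + \upsilon}$.

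\emph{The refined pointwise bound and the final integral.} The needed ingredient is
$$\PP_p(\rad \geq r) \leq C\, \pi_1(r \wedge L(p))\, \exp\!\big(-c\, (r/L(p))\, \ind_{r > L(p)}\big) \quad (p \leq p_c).$$
For $r \leq L(p)$ this is \eqref{eq:near_critical_arm}. For $r > L(p)$ and $p < p_c$, decompose $\Ann_{0,r}$ into $\lfloor r/L(p) \rfloor$ disjoint concentric annuli of width $L(p)$; the event $\{0 \lra \din \Ball_r\}$ is contained in the intersection of the corresponding one-arm events, which are \emph{independent} since they involve disjoint vertex sets. Bound the innermost factor by $C\pi_1(L(p))$ via \eqref{eq:near_critical_arm}, and each subsequent one by a constant $1 - c < 1$ using the dual of \eqref{eq:exp_decay} to guarantee a vacant circuit with positive probability in each annulus; the product yields the claimed refined bound. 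Plugging back into $I(r)$ and splitting at $\ell = r \wedge m$, the near-critical piece is $\pi_1(r) \int_{r\wedge m}^m \ell^{-7/4 + o(1)}\, d\ell \asymp r^{-41/48 + o(1)}$ for $r \leq m$, and the subcritical piece $\int_0^{r \wedge m} \ell^{-89/48 + o(1)} e^{-cr/\ell}\, d\ell$, via the substitution $u = r/\ell$ and an incomplete-gamma estimate, gives the same order $r^{-41/48 + o(1)}$ for $r \leq m$ and picks up an extra factor $e^{-cr/m}$ for $r > m$. The arithmetic $-\tfrac{5}{48} - \tfrac{3}{4} = -\tfrac{41}{48}$ matches $\alpha - 2$ exactly, the factor $\tfrac{3}{4} = 1/\nu$ encoding the near-critical time window $\asymp (r^2 \pi_4(r))^{-1}$ around $t_c$. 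The residual $r^{o(1) - \upsilon}$ gap between my bound and the claimed $c_1 r^{\alpha-2} e^{-c_2 r/m}$ is closed by choosing any $c_2 < c$ and absorbing into $c_1$, for $m$ large enough.
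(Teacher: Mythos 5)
Your proposal follows essentially the same route as the paper's proof. The paper handles $\pi^{(m)}$ exactly as you do, then introduces a uniformly distributed random time $S$ on $[0, t_c - \ve]$ and estimates $\tilde{\rho}([r,\infty)) := \PP(\rad(\ol{\cluster_S(0)}) \geq r)$ by summing over dyadic time slices $\tau \in (t_c - 2^{j+1}\ve, t_c - 2^j\ve]$, concluding with a Poisson counting argument --- this is precisely your Campbell-formula integral $I(r)$ written in discrete form, with the change of variables $\ell = L(p(\tau))$ replaced by the observation that $L(t_c - 2^j\ve)/m \asymp (2^j)^{-\gamma_1'}$. The paper splits cases by $r \geq m$ vs.\ $r < m$ while you split the $\ell$-integral at $r \wedge m$; these are equivalent. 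Both arguments rest on the identical refined pointwise bound $\bar{\rho}_s(r) \leq C\pi_1(L(s))e^{-c r/L(s)}$ for $r \geq L(s)$ (the paper's \eqref{eq:upper_bound_1arm}) and the same exponent arithmetic $\alpha_1 + 1/\nu = \tfrac{5}{48} + \tfrac34 = \tfrac{41}{48} = 2 - \tfrac{55}{48}$; the paper records the bound in the form $\tfrac{\pi_1(r)}{r^2\pi_4(r)}e^{-C'r/m}$, which agrees with your $r^{-41/48+o(1)}e^{-cr/m}$ via \eqref{eq:equiv_L}.

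One technical remark on your sketched derivation of the refined bound: decomposing $\Ann_{0,r}$ into $\lfloor r/L(p)\rfloor$ disjoint concentric annuli of fixed width $L(p)$ and bounding each one-arm factor by a constant $1-c < 1$ does not work as stated. For a far-out annulus $\Ann_{jL(p),(j+1)L(p)}$ with $j$ large, the aspect ratio tends to $1$, so the probability of a vacant circuit in it is \emph{not} bounded away from $0$ uniformly in $j$ (building such a circuit by FKG from $O(j)$ short vacant crossings gives probability $\geq (1-\delta)^{O(j)} \to 0$). The standard fix is to use dyadic annuli $\Ann_{2^jL(p), 2^{j+1}L(p)}$ of fixed aspect ratio $2$: the dual of \eqref{eq:exp_decay} then gives a vacant circuit in the $j$-th one except with probability $\leq Ce^{-c2^j}$, and the product over $j \leq \log_2(r/L(p))$ of these failure probabilities yields the exponential factor $e^{-c'r/L(p)}$ up to a harmless polynomial prefactor. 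Alternatively, simply cite near-critical exponential decay directly, as the paper does in deriving \eqref{eq:upper_bound_1arm}. Either way, the bound you need is correct and standard, so this is a local imprecision rather than a gap in the overall argument.
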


\begin{remark}
Note that $\alpha = \frac{55}{48} + \upsilon \in \big( \frac{3}{4}, 2 \big)$. Moreover, since $\delta_k < \delta_{\infty}$ (the sequence $(\delta_i)_{i \geq 1}$ being strictly increasing), we also have $\beta > \alpha$ as soon as $\upsilon$ is chosen small enough (depending only on $k$).
\end{remark}

\begin{proof}[Proof of Lemma \ref{lem:quant_holes}]
First, note that since $m_k(\zeta) = \zeta^{-\frac{4}{3} \delta_k + o(1)}$ as $\zeta \searrow 0$, we have:
\begin{equation} \label{eq:ineq_N_beta}
\zeta \leq m^{- \frac{3}{4} \cdot \frac{1}{\delta_k} + \upsilon} \quad \text{for all $\zeta$ small enough.}
\end{equation}
Hence, the probability for $v \in V$ to be ignited at least once in $[0, t_c - \ve]$ satisfies
\begin{equation} \label{eq:ineq_pi_holes}
\pi^{(m)}_v = 1 - e^{- \zeta (t_c - \ve)} \asymp \zeta \leq c_3 m^{-\beta}
\end{equation}
for some $c_3 > 0$.

We now estimate the distribution of radii $\rho$. We write, for a time $s \geq 0$ and $r \geq 1$,
$$\bar{\rho}_s(r) := \PP \big( \rad (\overline{\cluster_s(0)}) \geq r \big)$$
(where $\cluster_s(0)$ denotes the occupied cluster of $0$ at time $s$). In the remainder of the proof, we forget about dependences on $m$ in the notations. We start by considering $S$ uniformly distributed in $[0, t_c - \ve]$, and computing
$$\tilde{\rho}([r,+\infty)) := \PP \big( \rad(\overline{\cluster_S(0)}) \geq r \big) \quad (r \geq 1),$$
by distinguishing the two cases $r \geq m$ and $r < m$. Let $J := \big \lfloor \log_2 \big( \frac{t_c}{2 \ve} \big) \big \rfloor \geq 0$, so that $\frac{1}{2} t_c \leq t_c - 2^J \ve < \frac{3}{4} t_c$. For later use, note also that: for all $s < t_c$ and $r \geq L(s)$,
\begin{equation} \label{eq:upper_bound_1arm}
\bar{\rho}_s(r) \leq C_1 \pi_1(L(s)) e^{-C_2 r / L(s)}
\end{equation}
for some universal constants $C_1, C_2 > 0$ (this follows from \eqref{eq:exp_decay} and \eqref{eq:near_critical_arm}).

\underline{Case $r \geq m$:} we have
\begin{align*}
\tilde{\rho}([r,+\infty)) & = \sum_{j=0}^{J-1} \PP \big( S \in (t_c - 2^{j+1} \ve, t_c - 2^j \ve], \: \rad (\overline{\cluster_S(0)}) \geq r \big)\\
& \hspace{1.5cm} + \PP \big( S \in [0, t_c - 2^J \ve], \: \rad (\overline{\cluster_S(0)}) \geq r \big)\\
& \leq \sum_{j=0}^{J-1} \frac{2^j \ve}{t_c - \ve} \cdot \bar{\rho}_{t_c - 2^j \ve}(r) + \frac{t_c - 2^J \ve}{t_c - \ve} \cdot \bar{\rho}_{t_c - 2^J \ve}(r)\\
& \leq C_3 \sum_{j=0}^J (2^j \ve) \pi_1(L(t_c - 2^j \ve)) e^{-C_2 r / L(t_c - 2^j \ve)},
\end{align*}
where we used \eqref{eq:upper_bound_1arm} for the last inequality (using that $r \geq m = L(t_c - \ve) \geq L(t_c - 2^j \ve)$). We have, for some universal constants $\gamma_1, \gamma'_1, \gamma_2, C_4, C_5, C_6 > 0$,
\begin{equation} \label{eq:ineq_rho_L}
C_4 (2^j)^{-\gamma_1} \leq \frac{L(t_c - 2^j \ve)}{m} = \frac{L(t_c - 2^j \ve)}{L(t_c - \ve)} \leq C_5 (2^j)^{-\gamma'_1}
\end{equation}
(using \eqref{eq:equiv_L}, \eqref{eq:quasi_mult} and \eqref{eq:4arms}), and
\begin{equation}
\frac{\pi_1(L(t_c - 2^j \ve))}{\pi_1(m)} \leq C_6 (2^j)^{\gamma_2}
\end{equation}
(combining \eqref{eq:ineq_rho_L} with \eqref{eq:quasi_mult} and \eqref{eq:1arm}). Hence, since $\ve m^2 \pi_4(m) \asymp 1$ (from \eqref{eq:equiv_L}),
\begin{align*}
\tilde{\rho}([r,+\infty)) & \leq C_7 \ve \sum_{j=0}^J (2^j) (2^j)^{\gamma_2} \pi_1(m) e^{-C_8 r (2^j)^{\gamma'_1} / m}\\
& \leq C_9 \frac{\pi_1(m)}{m^2 \pi_4(m)} e^{-C_8 r / m} \cdot \bigg( \sum_{j=0}^J (2^j)^{1+ \gamma_2} e^{-C_8 r ((2^j)^{\gamma'_1}-1) / m} \bigg).
\end{align*}
Since $\sum_{j=0}^J (2^j)^{1+ \gamma_2} e^{-C_8 r ((2^j)^{\gamma'_1}-1) / m} \leq \sum_{j=0}^{\infty} (2^j)^{1+ \gamma_2} e^{-C_8 ((2^j)^{\gamma'_1}-1)} < \infty$ (using $r \geq m$), we obtain: for some $\gamma_3 > 0$,
\begin{equation}
\tilde{\rho}([r,+\infty)) \leq C_{10} \frac{\pi_1(m)}{m^2 \pi_4(m)} e^{-C_8 r / m} \leq C_{11} \frac{\pi_1(r)}{r^2 \pi_4(r)} \bigg( \frac{r}{m} \bigg)^{\gamma_3} e^{-C_8 r / m}
\end{equation}
(using \eqref{eq:quasi_mult}, \eqref{eq:1arm} and \eqref{eq:4arms}), so
\begin{equation} \label{eq:comparison_case1}
\tilde{\rho}([r,+\infty)) \leq C_{12} \frac{\pi_1(r)}{r^2 \pi_4(r)} e^{-C_8 r / 2 m}.
\end{equation}

\underline{Case $r < m$:} let $i \geq 0$ be such that $L(t_c - 2^{i+1} \ve) \leq r < L(t_c - 2^i \ve)$, and assume first that $i \leq J-2$. We have
\begin{align*}
\tilde{\rho}([r,+\infty)) & = \PP \big( S \in (t_c - 2^{i+1} \ve, t_c - \ve], \: \rad (\overline{\cluster_S(0)}) \geq r \big)\\
& \hspace{1.5cm} + \sum_{j = i+1}^{J-1} \PP \big( S \in (t_c - 2^{j+1} \ve, t_c - 2^j \ve], \: \rad (\overline{\cluster_S(0)}) \geq r \big)\\
& \hspace{1.5cm} + \PP \big( S \in [0, t_c - 2^J \ve], \: \rad (\overline{\cluster_S(0)}) \geq r \big)\\
& \leq \frac{2^{i+1} \ve}{t_c - \ve} \cdot \pi_1(r) + \sum_{j = i+1}^{J-1} \frac{2^j \ve}{t_c - \ve} \cdot \bar{\rho}_{t_c - 2^j \ve}(r) + \frac{t_c - 2^J \ve}{t_c - \ve} \cdot \bar{\rho}_{t_c - 2^J \ve}(r)\\
& \leq C'_1 \frac{\pi_1(r)}{r^2 \pi_4(r)} + C'_2 \sum_{j = i+1}^J (2^j \ve) \pi_1(L(t_c - 2^j \ve)) e^{-C_2 r / L(t_c - 2^j \ve)}.
\end{align*}
Now, by a similar computation as before,
\begin{equation}
\sum_{j = i+1}^J (2^j \ve) \pi_1(L(t_c - 2^j \ve)) e^{-C_2 r / L(t_c - 2^j \ve)} \leq C'_3 (2^i \ve) \pi_1(L(t_c - 2^i \ve)) \leq C'_4 \frac{\pi_1(r)}{r^2 \pi_4(r)}
\end{equation}
(using $L(t_c - 2^{i+1} \ve) \leq r < L(t_c - 2^i \ve)$, as well as \eqref{eq:equiv_L}). Hence,
\begin{equation} \label{eq:comparison_case2}
\tilde{\rho}([r,+\infty)) \leq C'_5 \frac{\pi_1(r)}{r^2 \pi_4(r)}.
\end{equation}
Finally, if $i \geq J-1$, then $r \leq L \big( \frac{7}{8} t_c \big)$ and the same conclusion holds (after possibly increasing $C'_5$, if needed).

Hence, combining both cases \eqref{eq:comparison_case1} and \eqref{eq:comparison_case2}, we find that there exist constants $\bar{C}_1$, $\bar{C}_2$, $\bar{C}_3$ such that: for all $r \geq 1$,
\begin{equation} \label{eq:ineq_rho_holes}
\tilde{\rho}([r,+\infty)) \leq \bar{C}_1 \frac{\pi_1(r)}{r^2 \pi_4(r)} e^{-\bar{C}_2 r/m} \leq \bar{C}_3 r^{ - \frac{5}{48} - 2 + \frac{5}{4} + \upsilon} e^{-\bar{C}_2 r/m} = \bar{C}_3 r^{\alpha-2} e^{-\bar{C}_2 r / m},
\end{equation}
with $\alpha = \frac{55}{48} + \upsilon = \frac{3}{4} \cdot \frac{1}{\delta_{\infty}} + \upsilon$, as desired.

Now, for a vertex $v$ which gets ignited at least once before time $t_c - \ve$ (and possibly several times), we consider all the clusters of $v$ generated during the interval $[0, t_c - \ve]$, and denote by $\tilde{r}_v$ the maximum of their radii. We have
\begin{align*}
\PP \big( v \text{ is ignited in } [0, t_c - \ve], \: \tilde{r}_v \geq r \big) & \leq \sum_{k=1}^{\infty} \frac{1}{k!} \big( \zeta (t_c - \ve) \big)^k \cdot k \tilde{\rho}([r,+\infty))\\
& \leq \zeta \cdot t_c \tilde{\rho}([r,+\infty)) \cdot \sum_{k=0}^{\infty} \frac{1}{k!} \big( \zeta t_c \big)^k \leq \zeta \cdot 2 t_c \tilde{\rho}([r,+\infty)),
\end{align*}
which allows us to conclude (using \eqref{eq:ineq_pi_holes} and \eqref{eq:ineq_rho_holes}).
\end{proof}

\section{Existence of exceptional scales for forest fires without recovery} \label{sec:existence_excep_scales}

We now combine the results from previous sections in order to establish properties of the forest fire without recovery (FFWoR) process, run in finite boxes with side length $M = M(\zeta)$. After setting notations in Section \ref{sec:notations_FF}, we consider the two cases $M(\zeta) \asymp m_k(\zeta)$ (Section \ref{sec:case1}), and $m_k(\zeta) \ll M(\zeta) \ll m_{k+1}(\zeta)$ (Section \ref{sec:case2}), for an arbitrary $k \geq 1$. Using the results from Sections \ref{sec:four_arm_stability}, \ref{sec:other_stability} and \ref{sec:application_FF}, we prove the claim (mentioned in the Introduction) that, as $\zeta \searrow 0$, the ``impact'' of fires vanishes in the latter case, but \emph{does not vanish} in the former case. In fact, our results, Theorems \ref{thm:case1} and \ref{thm:case2}, are somewhat stronger and more general than this claim suggests: they not only hold for boxes, but also (and, in some sense, ``uniformly'') for domains whose boundary is a loop in an annulus between two boxes of comparable size. This is not just generalization for its own sake: it is needed to make the proof, which has an iterative flavor, work.

\subsection{Notations} \label{sec:notations_FF}

Recall that we focus on the FFWoR process, on subsets of $\TT = (V, E)$, with birth rate $\equiv 1$ and ignition rate $\equiv \zeta$ (in Section \ref{sec:forest_fires}, we discuss how the proofs might be adapted in the case of forest fire processes with recovery). For a given finite $V_G \subseteq V$, we denote by $\sigma_t = (\sigma_t (v))_{v \in V_G}$ the configuration at time $t \geq 0$ of the forest fire process on $V_G$. We also use the notation $\sigma^{\Lambda}_t$ for the process ``living'' on a subset $\Lambda \subseteq V_G$ (see the discussion above Remark \ref{rem:stoch_domination}). Finally, for $0 \leq s \leq t$, we will also need to consider the forest fire process on $\Lambda$ where ignitions occur only until time $s$ (i.e. nothing happens at the later ignition times $\tau > s$), and we write $\sigma^{\Lambda}_{s, t}$ for the configuration at time $t$ of this process.

For a circuit $\gamma$, we denote by $\calD(\gamma)$ the set of vertices in its interior. For all $k \geq 1$, we write $\ve_k(\zeta) := t_k(\zeta) - t_c$: for future reference, note that
\begin{equation} \label{eq:rel_eps_m}
\zeta \ve_{k-1}(\zeta) \big( m_k(\zeta) \big)^2 \theta \big( t_c + \ve_{k-1}(\zeta) \big) = 1
\end{equation}
(from \eqref{eq:def_t_hat}, since $t_{k-1}(\zeta) = \next_{\zeta}\big( t_k(\zeta) \big)$ and $L(t_k(\zeta)) = m_k(\zeta)$). In this section, we often drop the dependence on $\zeta$ for notational convenience, writing simply $m_k$, $t_k$, $\ve_k$, and so on.

We use later that: for any fixed $\eta \neq 0$,
\begin{equation} \label{eq:L_epsilon}
L( t_c + \eta \ve ) \asymp L( t_c + \ve )
\end{equation}
as $\ve \to 0$ (from \eqref{eq:equiv_L}, \eqref{eq:quasi_mult}, and \eqref{eq:4arms}). In particular, it follows from the definition of $m_k$ that
\begin{equation} \label{eq:L_epsilonk}
L( t_c + \eta \ve_k ) \asymp L( t_c + \ve_k ) = L( t_k ) = m_k \quad \text{as $\zeta \searrow 0$,}
\end{equation}
uniformly in $k$ (i.e. the constants in this asymptotic equivalence only depend on $\eta$).

The first key step of the proof strategy in Section \ref{sec:case1} can be informally described as follows. If we consider the process in a box with side length $M = M(\zeta) \gg \frac{1}{\sqrt{\zeta}}$, we introduce $\ve = \ve(\zeta)$ ($\searrow 0$ as $\zeta \searrow 0$) such that for the underlying percolation process (i.e. without any ignitions at all), we have: at time $t_c + \ve$, with high probability, the box $\Ball_M$ contains a net $\net$ whose cluster $\cluster_{\net}$ has a volume of order $\frac{1}{\ve \zeta}$. We then consider the forest fire process with ignitions ignored after time $t_c - \ve$, for which a lower bound is provided by the results of Sections \ref{sec:BCKS}, \ref{sec:coupling} and \ref{sec:comparison_holes}, so that for this process as well, at time $t_c + \ve$, there exists a net $\net'$ with $|\cluster_{\net'}| \asymp \frac{1}{\ve \zeta}$. Hence, there is a reasonable probability that no vertex of $\cluster_{\net}$ is ignited during $(t_c - \ve, t_c + \ve)$, but some vertex of $\cluster_{\net'}$ is ignited during $(t_c + \ve, t_c + 2 \ve)$. Moreover, we have sufficient control on the size of the island containing $0$ after this burning, which allows us to repeat this step iteratively.

Section \ref{sec:case2} uses similar ideas, but the situation is somewhat more complicated. In particular, it requires the use of time intervals of the form $(t_c - \eta \ve, t_c + \eta \ve)$ and $(t_c + \eta \ve, t_c + \lambda \ve)$, for some suitable $\eta, \lambda > 0$.

We want to stress that the proofs of Theorems \ref{thm:case1} and \ref{thm:case2} also yield some information about the size of the final cluster of the origin. This cluster has typically a diameter of order $1$ or $\frac{1}{\sqrt{\zeta}}$ in the first case ($M(\zeta) \asymp m_k(\zeta)$), and a diameter $\gg 1$ but $\ll \frac{1}{\sqrt{\zeta}}$ in the second case ($m_k(\zeta) \ll M(\zeta) \ll m_{k+1}(\zeta)$).

\subsection{Case $M(\zeta) \asymp m_k(\zeta)$} \label{sec:case1}

For $\zeta \leq 1$ and $0 \leq \ul t < \ol t \leq \infty$, we introduce the event $\Gamma_{\zeta, \ul t, \ol t}(n_1, n_2) := \{$for all circuits $\gamma$ in the annulus $\Ann_{n_1, n_2}$, in the forest fire process with ignition rate $\zeta$ in the domain $\calD(\gamma)$, $0$ burns during the time interval $[\ul t, \ol t] \}$ ($0 \leq n_1 < n_2$). The goal of this section is to establish the following result for forest fires in domains with ``size'' comparable to some exceptional scale $m_k$ ($k \geq 1$).

\begin{theorem} \label{thm:case1}
Let $t_c < \ul t < \ol t < \infty$. For all $k \geq 1$ and all $0 < C_1 < C_2$,
\begin{equation}
\liminf_{\zeta \searrow 0} \PP \big( \Gamma_{\zeta, \ul t, \ol t}(C_1 m_k(\zeta), C_2 m_k(\zeta)) \big) > 0.
\end{equation}
\end{theorem}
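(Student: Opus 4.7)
The plan is to proceed by induction on $k \geq 1$. For the base case $k = 1$, we have $m_1(\zeta) \asymp \zeta^{-1/2}$, so every domain $\calD(\gamma)$ with $\gamma$ a circuit in $\Ann_{C_1 m_1, C_2 m_1}$ contains $\Ball_{C_1 m_1}$ and has volume of order $\zeta^{-1}$. I would first apply Remark \ref{rem:stoch_domination} to the whole family $\{\sigma^{\calD(\gamma)}\}_\gamma$: on $\Ball_{C_1 m_1}$, all these processes simultaneously dominate a single auxiliary process $Y$ built as in Lemma \ref{lem:stoch_domination}, and by Lemma \ref{lem:quant_holes}, $Y$ at time $\ul t$ fits the percolation with heavy-tailed impurities of Assumption \ref{ass:alpha_beta}. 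Proposition \ref{prop:largest_cluster}, combined with Propositions \ref{prop:one_arm_stability} and \ref{prop:crossing}, then yields, with probability bounded below, a net inside $\Ball_{C_1 m_1}$ whose $Y$-cluster has volume of order $m_1^2 \theta(\ul t) \asymp \theta(\ul t)/\zeta$, contains $0$, and is occupied in \emph{every} $\sigma^{\calD(\gamma)}$. Since $\zeta(\ol t - \ul t) |\cluster_\net^Y|$ is of constant order, an ignition hits $\cluster_\net^Y$ in $(\ul t, \ol t)$ with positive probability, burning $0$ in every $\sigma^{\calD(\gamma)}$.

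For the inductive step, given $k \geq 2$ and the result at level $k-1$, I would implement the strategy sketched after \eqref{eq:L_epsilonk}: set $\ve := \ve_{k-1}(\zeta)$, so that by \eqref{eq:rel_eps_m} $\zeta\,\ve\,m_k^2\,\theta(t_c+\ve) \asymp 1$ and by \eqref{eq:L_epsilonk} $L(t_c+\ve) \asymp m_{k-1}$. Consider the truncated process of Section \ref{sec:comparison_holes}, namely the forest fire with ignitions ignored after $t_c-\ve$; by Lemmas \ref{lem:stoch_domination}--\ref{lem:quant_holes}, it dominates an auxiliary process $\tilde Y$ realizing percolation with impurities at $p=p(t_c+\ve)$ with impurity scale $m \asymp m_{k-1}$. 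Proposition \ref{prop:largest_cluster} (and its annulus version, Remark \ref{rem:BCKS_annulus}) then produces, with probability tending to $1$ and simultaneously for every $\gamma$, a net $\net'$ inside $\Ball_{C_1 m_k}$ of mesh of order $(m_k m_{k-1})^{1/2}$, whose cluster $\cluster_{\net'}$ has volume of order $1/(\zeta\ve)$ and is occupied in every $\sigma^{\calD(\gamma)}$. In parallel, \eqref{eq:largest_cluster} and Lemma \ref{lem:net} furnish an analogous net $\net$ in the pure-birth process at time $t_c+\ve$ with $|\cluster_\net| \asymp 1/(\zeta\ve)$.

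Conditionally on the birth process and the ignitions in $[0, t_c-\ve]$, the events $A := \{\text{no ignition in }\cluster_\net\text{ during }(t_c-\ve, t_c+\ve)\}$ and $B := \{\text{some ignition in }\cluster_{\net'}\text{ during }(t_c+\ve, t_c+2\ve)\}$ are driven by the Poisson ignition process on disjoint intervals, hence independent, and since $\zeta\ve|\cluster_\net|, \zeta\ve|\cluster_{\net'}|$ are both of order $1$, each has conditional probability bounded below (by roughly $e^{-2}$ and $1-e^{-1}$). On $A$, no ignition in $\cluster_\net$ occurs during $(t_c-\ve, t_c+\ve)$, so $\cluster_{\net'} \subseteq \cluster_\net$ is untouched and the true $\sigma$ and the truncated process carry the same structure on this set up to time $t_c+\ve$. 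On $A \cap B$, at some $\tau \in (t_c+\ve, t_c+2\ve)$ the true forest fire ignites a vertex of $\cluster_{\net'}$; because $\cluster_{\net'} \subseteq \Ball_{C_1 m_k}$ and is simultaneously occupied in each $\sigma^{\calD(\gamma)}$, the corresponding burning in each process contains $\cluster_{\net'}$ and in particular the origin.

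After this burning, in each $\sigma^{\calD(\gamma)}$ the origin sits in an island bounded by the burnt cluster; the mesh of $\net'$ forces this island to have diameter of order $L(t_c+\ve) \asymp m_{k-1}$, so it is a domain $\calD(\gamma'_\gamma)$ with $\gamma'_\gamma$ a circuit in an annulus of the form $\Ann_{c_1 m_{k-1}, c_2 m_{k-1}}$ around $0$. Since $\tau \leq t_c + 2\ve_{k-1} \searrow t_c < \ul t$, the interval $[\ul t,\ol t]$ lies strictly after $\tau$, and the induction hypothesis, applied (via the strong Markov property) to the restart of the dynamics in the post-burning island, delivers the lower bound on the probability that $0$ burns in $[\ul t,\ol t]$. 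The main obstacle is the universal quantifier "for all circuits $\gamma$", because the actual islands $\calD(\gamma'_\gamma)$ and the dynamics within them are $\gamma$-dependent; to cope with it I would, at each level of the induction, feed Remark \ref{rem:stoch_domination} to the whole family $\{\sigma^{\calD(\gamma)}\}_\gamma$, so that a single driving event (measurable with respect to the underlying Poisson data) simultaneously realizes the net, the first burning, and the induction-step island structure for every $\gamma$ at once, letting the uniform-in-$\gamma$ statement propagate through the iteration.
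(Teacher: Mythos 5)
Your high-level picture (build a net, control ignitions, trap the origin in an island of the next scale, iterate) matches the paper's strategy, and you correctly identify the uniform-in-$\gamma$ difficulty at the end. But the execution you sketch has two concrete gaps, and the way the paper circumvents them is structurally different from what you propose.

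First, the base case $k=1$. You want to push the impurity comparison (Lemma~\ref{lem:quant_holes} plus Propositions~\ref{prop:largest_cluster}, \ref{prop:one_arm_stability}, \ref{prop:crossing}) all the way to time $\ul t > t_c$. This is not available: Lemma~\ref{lem:quant_holes} controls the law of the holes created \emph{up to a time $t_c - \ve$ strictly before $t_c$}, yielding the heavy-tailed form of Assumption~\ref{ass:rho_pi} with truncation scale $m = L(t_c - \ve)$. Ignitions occurring in the supercritical window $(t_c, \ul t]$ burn clusters whose diameters are not governed by the near-critical tail at all (they can be of the order of the domain), so the resulting $\rho$ is outside the framework of Sections~\ref{sec:process_holes}--\ref{sec:other_stability}. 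The paper handles the $m_1$ scale quite differently: it rules out \emph{all} ignitions in $\Ball_{C_3 m_1}$ up to time $\ul t$ via the events $\NI^{(1)}$ and $\ol{\NI}^{(1)}$ (which have positive probability because $m_1 \asymp \zeta^{-1/2}$ and $\ul t$ is a constant), so that inside $\Ball_{\frac12 m_1}$ one is simply looking at ordinary Bernoulli percolation at time $\ul t$; the impurity machinery is not used at the last scale.

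Second, the inductive step by restart via the strong Markov property. The induction hypothesis $\Gamma_{\zeta,\ul t,\ol t}(\cdot,\cdot)$ concerns the forest fire process \emph{started at time $0$ with all vertices vacant}; at the burning time $\tau$ the island $\calD(\gamma'_\gamma)$ is not in that state (it already carries occupied vertices and scars from smaller fires), so the hypothesis does not apply to the restarted dynamics. The paper does not restart. It proves an inclusion of the form $\Gamma_{\zeta,\ul t,\ol t}(C_1 m_k, C_2 m_k) \supseteq \big(\text{scale-}m_k\text{ events}\big) \cap \Gamma_{\zeta,\ul t,\ol t}(m_{k-1}, C_3 m_{k-1})$ by direct inspection of the \emph{original} process: the scale-$m_k$ events force the island boundary into $\Ann_{m_{k-1}, C_3 m_{k-1}}$, and $\Gamma$ at the lower scale, which quantifies over all circuits there, then applies to the specific circuit produced. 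This is the real reason the statement is formulated uniformly in $\gamma$: the lower-scale circuit is determined by the configuration and cannot be fixed in advance. The paper then bounds the probability of the whole (multi-scale) intersection at once, after carefully rewriting the events to sever dependences between scales (splitting $\NI^{(j)}$ as in \eqref{eq:interference_NI}, replacing $\OCP^{(j)}$ by $\ol{\OCP}^{(j)}$ as in \eqref{eq:interference_OCP}); your final paragraph gestures at this but does not specify a construction that actually achieves it.
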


\begin{proof}[Proof of Theorem \ref{thm:case1}]
The constructions that we use turn out to be quite convoluted, due to dependences between successive scales, that need to be taken care of. We first give a proof for the case $k = 2$, after which we point out how to handle a general $k \geq 3$. We define the following six events (some of them depicted on Figure \ref{fig:case1}), for a well-chosen constant $C_3$ to be determined at the end of the proof (see \eqref{eq:case1_end3}). The superscript ``$(2)$'' in the notation of these events refers to the fact that we are considering the case $k=2$. For simplicity, we assume that $C_1 \geq 1$ (trivial adaptations of the argument are needed if $C_1 < 1)$.

\begin{figure}[t]
\begin{center}

\includegraphics[width=0.65\textwidth]{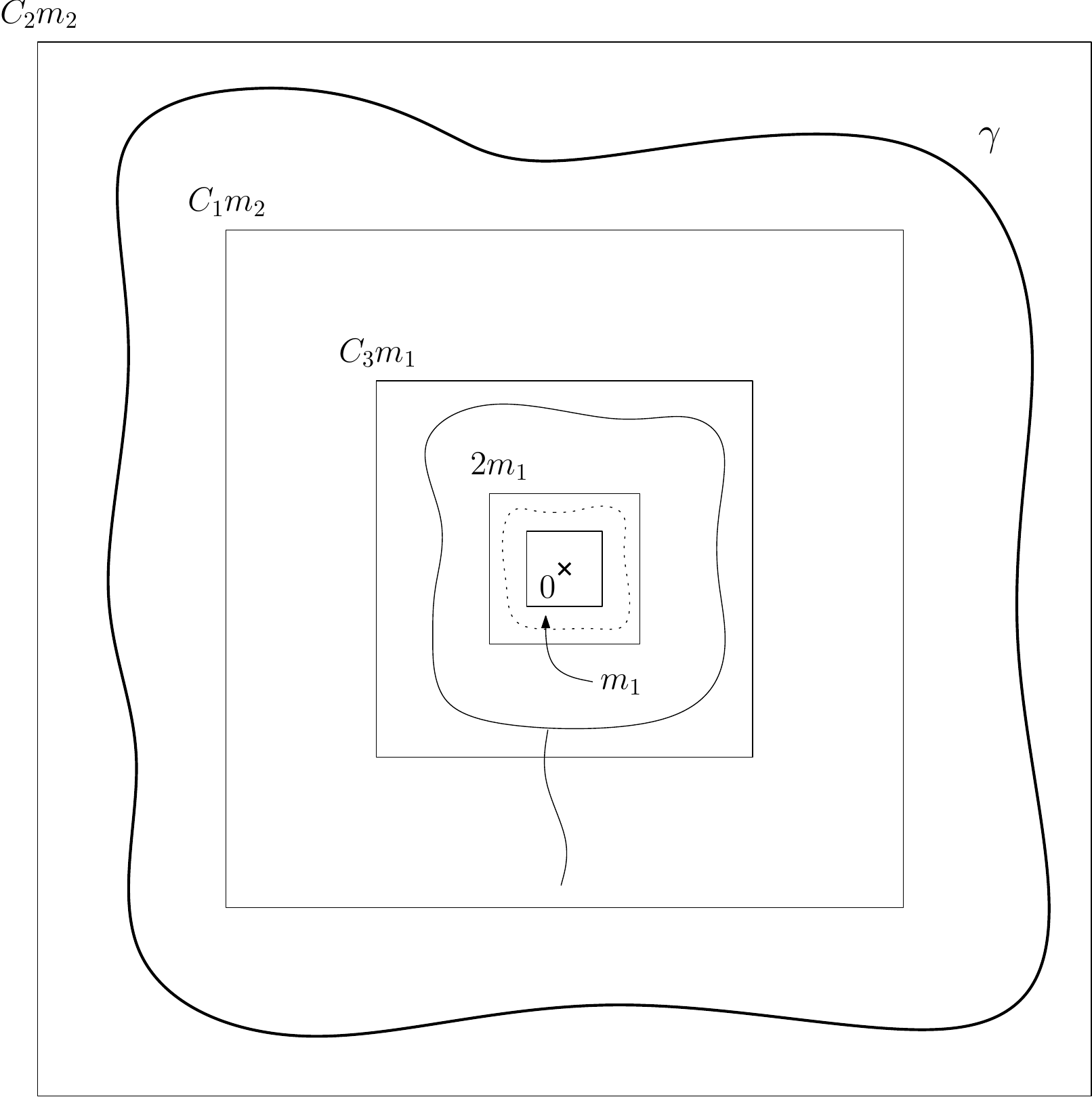}
\caption{\label{fig:case1} This figure depicts the events used to prove Theorem \ref{thm:case1} (in the case $k=2$). The dotted circuit is $(t_c + 2 \ve_1)$-vacant, while the circuit and the path in solid lines are occupied in the configuration $\min_{\gamma} \sigma^{\calD(\gamma)}_{t_c - \ve_1, t_c + \ve_1}$.}

\end{center}
\end{figure}

\begin{itemize}

\item[(i)] $\NET^{(2)} = \NET^{(2)}_{C_2 m_2} (C_1 m_2)$ is the event that the configuration $\big( \min_{\gamma} \sigma^{\calD(\gamma)}_{t_c - \ve_1, t_c + \ve_1} (v) \big)_{v \in \Ball_{C_1 m_2}}$, where the minimum is taken over all circuits $\gamma$ in the annulus $\Ann_{C_1 m_2, C_2 m_2}$,
\begin{itemize}
\item has a net $\net'$ with mesh $(C_1 m_2 \cdot m_1)^{1/2}$,
\item and $\big| \cluster_{\net'} \cap \Ann_{\frac{C_1}{2} m_2, C_1 m_2} \big| \geq \frac{1}{2} \cdot \big| \Ann_{\frac{C_1}{2} m_2, C_1 m_2} \big| \theta(t_c + \ve_1)$.
\end{itemize}
Using the comparison to percolation with holes provided by Lemmas \ref{lem:stoch_domination} and \ref{lem:quant_holes}, it follows from Proposition \ref{prop:largest_cluster} (and Remark \ref{rem:BCKS_annulus}), with $m = m_1$, $n_m = C_1 m_2$, and $p_m = p(t_c + \ve_1)$, that
\begin{equation} \label{eq:claim_NET2}
\PP \big( \NET^{(2)} \big) \stackrel[\zeta \searrow 0]{}{\longrightarrow} 1.
\end{equation}
Indeed, note that $L(p_m) = L(t_c + \ve_1) = m_1$, and also that, by \eqref{eq:exp_delta} and \eqref{eq:asymp_m}, $n_m = C_1 (m_1)^{\delta_2/\delta_1 + o(1)}$, where $\delta_2/\delta_1 > 1$. Finally, we want to emphasize that our application of Lemma \ref{lem:stoch_domination} above (and later in this section, though it will not be mentioned explicitly) involves a more general version of this lemma, pointed out in Remarks \ref{rem:stoch_domination_inhomogeneous} and \ref{rem:stoch_domination}.

\item[(ii)] $\NETB^{(2)} = \NETB^{(2)}(\ve_1; C_2 m_2)$ (where B stands for ``Bernoulli'') is the event that the largest $(t_c + \ve_1)$-occupied cluster (i.e. for the underlying percolation process) $\cluster^B$ in $\Ball_{C_2 m_2}$ has a volume $|\cluster^B| \leq 2 \cdot \big| \Ball_{C_2 m_2} \big| \theta(t_c + \ve_1)$, and contains a net $\net^B$ with mesh $(C_2 m_2 \cdot m_1)^{1/2}$. Note that the cluster $\cluster^B$ in this definition automatically contains the net $\net'$ in the definition of $\NET^{(2)}$. Since $L(t_c + \ve_1) = m_1 \ll C_2 m_2$, the standard volume estimates \eqref{eq:largest_cluster} for ordinary Bernoulli percolation (see also Remark \ref{rem:BCKS}) give
\begin{equation} \label{eq:claim_NETB2}
\PP \big( \NETB^{(2)} \big) \stackrel[\zeta \searrow 0]{}{\longrightarrow} 1.
\end{equation}

\item[(iii)] $\OCP^{(2)} = \OCP^{(2)}(2 m_1, C_3 m_1; m_2)$ (where the name stands for ``Occupied Circuit and Path'') is the event that the configuration $\big( \min_{\gamma} \sigma^{\calD(\gamma)}_{t_c - \ve_1, t_c + \ve_1} (v) \big)_{v \in \Ball_{C_1 m_2}}$, where the minimum is taken over all $\gamma$ in $\Ann_{C_1 m_2, C_2 m_2}$ (as in the definition of $\NET^{(2)}$),
\begin{itemize}
\item has an occupied circuit in $\Ann_{2 m_1, C_3 m_1}$,
\item which is connected by an occupied path to $\partial \Ball_{m_2}$.
\end{itemize}
Note that the occupied path in this definition has to intersect the net $\net'$ in the definition of $\NET^{(2)}$: indeed, $\net'$ has a mesh $\asymp (m_1 m_2)^{1/2}$, which is $\gg m_1$ and $\ll m_2$. We claim the following:
\begin{equation} \label{eq:claim_OCP2}
\text{for all $\delta > 0$, we have that for all $C_3$ large enough, } \liminf_{\zeta \searrow 0} \PP \big(\OCP^{(2)} \big) \geq 1 - \delta.
\end{equation}
Indeed, this follows from Proposition \ref{prop:crossing} combined with \eqref{eq:RSW}, and Proposition \ref{prop:exp_decay} (together with the construction of Figure \ref{fig:overlapping_rectangles}), using again $L(t_c + \ve_1) = m_1$, and the lower bound produced by Lemma \ref{lem:stoch_domination} and Lemma \ref{lem:quant_holes}.

\item[(iv)] $\VC^{(2)} = \VC^{(2)}(2 \ve_1; m_1, 2 m_1) := \big\{$there exists a $(t_c + 2 \ve_1)$-vacant circuit in $\Ann_{m_1, 2 m_1} \big\}$ (where the name stands for ``Vacant Circuit''). We claim that
\begin{equation} \label{eq:claim_VC2}
\PP \big( \VC^{(2)} \big) \geq C > 0,
\end{equation}
for some ``universal'' constant $C$, which does not depend on $C_1$, $C_2$ or $C_3$. Indeed, this is an immediate consequence of \eqref{eq:RSW}, and the fact that $L( t_c + 2 \ve_1 ) \asymp m_1$ (from \eqref{eq:L_epsilonk}).

\item[(v)] $\I^{(2)} = \I^{(2)} \big( (\ve_1, 2 \ve_1); \frac{C_1}{2} m_2, C_1 m_2 \big) := \big\{$some vertex in $\cluster_{\net'} \cap \Ann_{\frac{C_1}{2} m_2, C_1 m_2}$ is ignited during the time interval $(t_c + \ve_1, t_c + 2 \ve_1) \big\}$, where $\net'$ is as in the definition of $\NET^{(2)}$ (the name stands for ``Ignition''). Note that
\begin{equation} \label{eq:claim_I2}
\PP \big( \I^{(2)} \: | \: \NET^{(2)} \big) \geq 1 - e^{- \zeta \cdot \ve_1 \cdot \frac{1}{2} \big| \Ann_{\frac{C_1}{2} m_2, C_1 m_2} \big| \theta(t_c + \ve_1)} \geq C' > 0,
\end{equation}
for some constant $C' = C'(C_1)$ which depends only on $C_1$, using \eqref{eq:rel_eps_m}.

\item[(vi)] $\NI^{(2)} = \NI^{(2)}( (- \ve_1, \ve_1); C_2 m_2) := \big\{$no vertex of $\cluster^B$ gets ignited in the time interval $(t_c - \ve_1, t_c + \ve_1) \big\}$, where $\cluster^B$ is from the definition of $\NETB^{(2)}$ (the name stands for ``No Ignition''). We have
\begin{equation} \label{eq:claim_NI2}
\PP \big( \NI^{(2)} \: | \: \NETB^{(2)} \big) \geq e^{- \zeta \cdot 2 \ve_1 \cdot 2 \big| \Ball_{C_2 m_2} \big| \theta(t_c + \ve_1)} \geq C'' > 0,
\end{equation}
for some constant $C'' = C''(C_2)$ depending only on $C_2$ (using again \eqref{eq:rel_eps_m}).

\end{itemize}

Now, note that if all the six events (i)-(vi) above hold, then, no matter where $\gamma$ is located exactly, the forest fire process in $\calD(\gamma)$ has the property that $\cluster_{\net'}$ burns in the time interval $(t_c + \ve_1, t_c + 2 \ve_1)$, and leaves $0$ in an ``island'', whose boundary is some circuit in $\Ann_{m_1, C_3 m_1}$. Hence,
\begin{align}
\Gamma_{\zeta, \ul t, \ol t} & (C_1 m_2, C_2 m_2) \nonumber\\
& \supseteq \NET^{(2)}_{C_2 m_2}(C_1 m_2) \cap \NETB^{(2)}(\ve_1; C_2 m_2) \cap \OCP^{(2)}(2 m_1, C_3 m_1; m_2) \cap \VC^{(2)}(2 \ve_1; m_1, 2 m_1) \nonumber \\
& \hspace{1cm} \cap \NI^{(2)}( (- \ve_1, \ve_1); C_2 m_2) \cap \I^{(2)} \Big( (\ve_1, 2 \ve_1); \frac{C_1}{2} m_2, C_1 m_2 \Big) \cap \Gamma_{\zeta, \ul t, \ol t}(m_1, C_3 m_1). \label{eq:case1_lemma1}
\end{align}
In order to avoid ``interferences'' with events at level $m_1$ (i.e. a certain dependence between the two successive scales), we will later write
\begin{equation} \label{eq:interference_NI}
\NI^{(2)}( (- \ve_1, \ve_1); C_2 m_2) = \NI^{(2)}( (- \ve_1, \ve_1); C_3 m_1, C_2 m_2) \cap \NI^{(2)}( (- \ve_1, \ve_1); C_3 m_1),
\end{equation}
where the first event in the right-hand side involves only vertices in the annulus $\Ann_{C_3 m_1, C_2 m_2}$. For future use, note that the second event satisfies
\begin{equation}
\PP \big( \NI^{(2)}( (- \ve_1, \ve_1); C_3 m_1) \big) \stackrel[\zeta \searrow 0]{}{\longrightarrow} 1.
\end{equation}

We now investigate the event $\Gamma_{\zeta, \ul t, \ol t}(m_1, C_3 m_1)$, for which we again need to define several events, with similar names as before, but now with superscript ``$(1)$'' (some of them are illustrated in Figure \ref{fig:case1_2}).

\begin{figure}
\begin{center}

\includegraphics[width=0.45\textwidth]{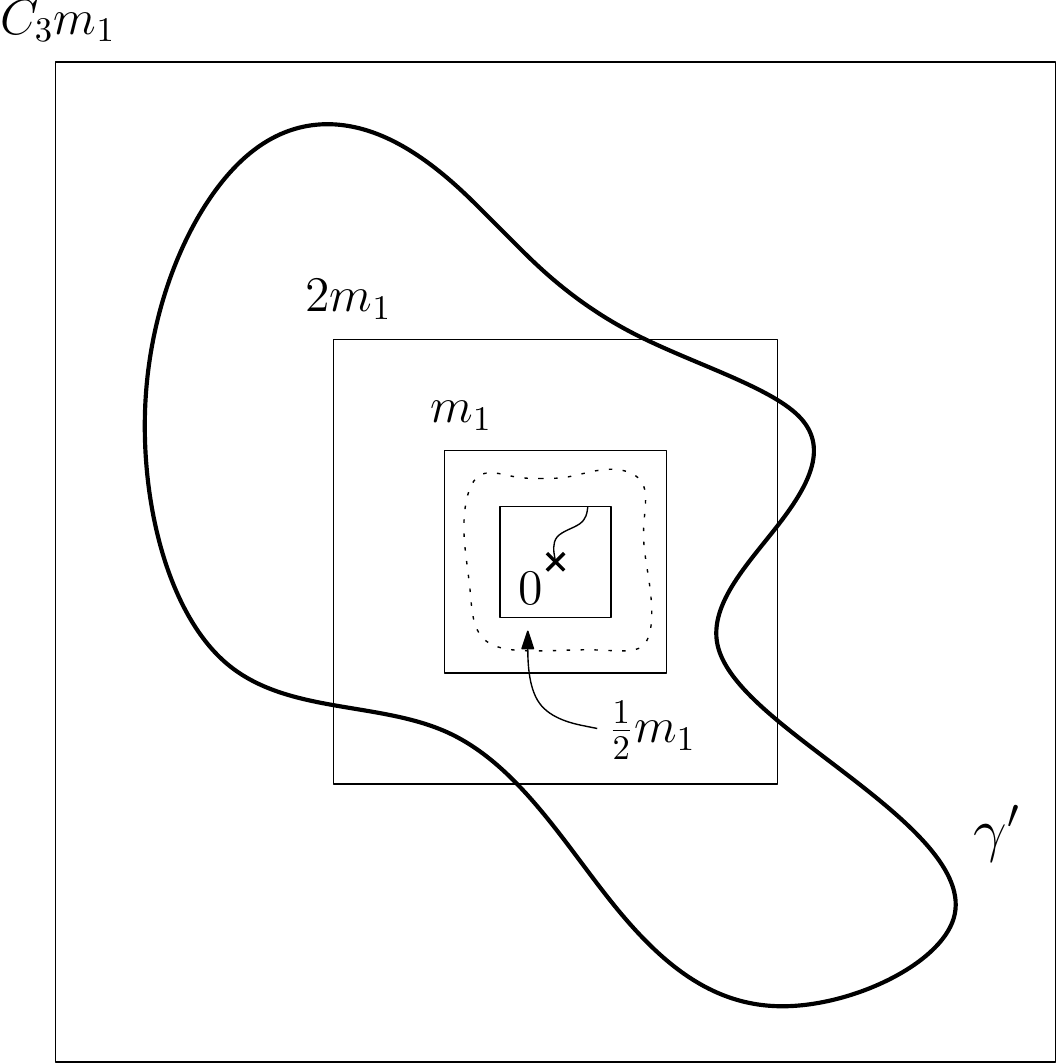}
\caption{\label{fig:case1_2} This figure depicts the events used to prove Theorem \ref{thm:case1} at scale $m_1$. The dotted circuit is $(t_c + 2 \ve_1)$-vacant, while the solid path starting from $0$ is $\ul t$-occupied.}

\end{center}
\end{figure}

\begin{itemize}

\item[(i)'] $\VC^{(1)} = \VC^{(1)} \big( 2 \ve_1; \frac{1}{2} m_1, m_1 \big) := \big\{$there exists a $(t_c + 2 \ve_1)$-vacant circuit in $\Ann_{\frac{1}{2} m_1, m_1} \big\}$. For the same reasons as for \eqref{eq:claim_VC2}, we have
\begin{equation} \label{eq:VC1}
\PP \big( \VC^{(1)} \big) \geq \tilde{C} > 0,
\end{equation}
where $\tilde{C}$ is a universal constant, which does not depend on $C_1$, $C_2$ or $C_3$.

\item[(ii)'] $\NI^{(1)} = \NI^{(1)}( 2 \ve_1; m_1) := \big\{$no vertex of $\Ball_{m_1}$ is hit by lightning before time $t_c + 2 \ve_1 \big\}$. Since $m_1 \asymp \frac{1}{\sqrt{\zeta}}$ as $\zeta \searrow 0$ (see below \eqref{eq:rel_except_scales}), and $\ve_1 \to 0$, we have
\begin{equation} \label{eq:claim_NI1}
\PP \big( \NI^{(1)} \big) \geq \tilde{C}' > 0,
\end{equation}
for some universal constant $\tilde{C}'$, which does not depend on $C_1$, $C_2$ or $C_3$.

\item[(iii)'] $\ol{\NI}^{(1)} = \ol{\NI}^{(1)} ( ( 2 \ve_1, \ul t - t_c ); C_3 m_1 ) := \big\{$no vertex of $\Ball_{C_3 m_1}$ is hit by lightning in the time interval $( t_c + 2 \ve_1, \ul t ) \big\}$. Note that, using again $m_1 \asymp \frac{1}{\sqrt{\zeta}}$,
\begin{equation} \label{eq:claim_barNI1}
\PP \big( \ol{\NI}^{(1)} \big) \geq e^{- \zeta C_3^2 m_1^2 \cdot \ul t} \geq e^{- \lambda C_3^2 \cdot \ul t}
\end{equation}
for some universal constant $\lambda > 0$ (this lower bound becomes very small when $C_3$ is large, but it is not a problem since we will later ``factorize it out'').

\end{itemize}
These first three events together, i.e. $\VC^{(1)} \cap \NI^{(1)} \cap \ol{\NI}^{(1)}$, ensure that the configuration in $\Ball_{\frac{1}{2} m_1}$ ``looks like'' ordinary Bernoulli percolation at time $\ul t$. We define further events.

\begin{itemize}

\item[(iv)'] $\NETB^{(1)} = \NETB^{(1)} \big( \ul t - t_c ; \frac{1}{2} m_1 \big) := \big\{$there exists a $\ul t$-occupied cluster in $\Ball_{\frac{1}{2} m_1}$ with volume $\geq \frac{1}{2} \cdot \big| \Ball_{\frac{1}{2} m_1} \big| \theta ( \ul t )$, and containing a net $\net^{(1)}$ with mesh $\asymp \sqrt{m_1} \big\}$. Again, it follows from the volume estimates \eqref{eq:largest_cluster}, combined with the fact that $m_1 \asymp \frac{1}{\sqrt{\zeta}} \to \infty$ as $\zeta \searrow 0$, that
\begin{equation}
\PP \big( \NETB^{(1)} \big) \stackrel[\zeta \searrow 0]{}{\longrightarrow} 1.
\end{equation}

\item[(v)'] $\OP^{(1)} = \OP^{(1)} \big( \ul t - t_c; \frac{1}{2} m_1 \big) := \big\{$there is a $\ul t$-occupied path from $0$ to $\partial \Ball_{\frac{1}{2} m_1} \big\}$. Clearly,
\begin{equation}
\PP \big( \OP^{(1)} \big) \geq \theta ( \ul t ).
\end{equation}

\item[(vi)'] $\I^{(1)} = \I^{(1)} \big( ( \ul t - t_c, \ol t - t_c ); \frac{1}{2} m_1 \big) := \big\{$some vertex in the cluster of $\net^{(1)}$ gets ignited in the time interval $( \ul t, \ol t ) \big\}$ (where $\net^{(1)}$ is from the definition of $\NETB^{(1)}$). Note that (for some universal constants $\lambda', \lambda'' > 0$)
\begin{equation} \label{eq:claim_I1}
\PP \big( \I^{(1)} \: | \: \NETB^{(1)} \big) \geq 1 - e^{- \zeta \cdot \lambda' m_1^2 \cdot \theta ( \ul t ) \cdot (\ol t - \ul t)} \geq 1 - e^{- \lambda'' \cdot \theta ( \ul t ) \cdot (\ol t - \ul t)} > 0,
\end{equation}
which does not depend on $C_1$, $C_2$ or $C_3$ (we used again $m_1 \asymp \frac{1}{\sqrt{\zeta}}$).

\end{itemize}

If these events (i)'-(vi)' hold, then, in the forest fire process in $\calD(\gamma')$, $0$ burns in the time interval $[\ul t, \ol t]$ (no matter where $\gamma'$ precisely is). We deduce that
\begin{align}
\Gamma_{\zeta, \ul t, \ol t} & (m_1, C_3 m_1) \nonumber\\
& \supseteq \VC^{(1)} \Big( 2 \ve_1; \frac{1}{2} m_1, m_1 \Big) \cap \NI^{(1)}( 2 \ve_1; m_1) \cap \ol{\NI}^{(1)} ( ( 2 \ve_1, \ul t - t_c ); C_3 m_1 ) \nonumber \\
& \hspace{1cm} \cap \NETB^{(1)} \Big( \ul t - t_c ; \frac{1}{2} m_1 \Big) \cap \OP^{(1)} \Big( \ul t - t_c; \frac{1}{2} m_1 \Big) \cap \I^{(1)} \Big( ( \ul t - t_c, \ol t - t_c ); \frac{1}{2} m_1 \Big). \label{eq:case1_lemma2}
\end{align}

Note that $\NI^{(1)}$ ``interferes'' with some of the events at scale $m_2$: with $\NET^{(2)}$, which does not matter in the computation below, since $\PP \big( \NET^{(2)} \big) \to 1$ as $\zeta \searrow 0$ (from \eqref{eq:claim_NET2}), but also with $\OCP^{(2)}$. We take care of this issue by writing
\begin{equation} \label{eq:interference_OCP}
\OCP^{(2)} \cap \NI^{(1)} = \ol{\OCP}^{(2)} \cap \NI^{(1)},
\end{equation}
for a modified event $\ol{\OCP}^{(2)}$ defined exactly as $\OCP^{(2)}$, but with respect to the forest fire process where no ignitions occur in the sub-region $\Ball_{m_1}$. Note that from Lemma \ref{lem:stoch_domination}, \eqref{eq:claim_OCP2} holds for this event $\ol{\OCP}^{(2)}$ as well.

We now combine the two inclusions \eqref{eq:case1_lemma1} and \eqref{eq:case1_lemma2}, and to take care of dependences between scales, we modify some of the events as explained (see \eqref{eq:interference_NI} and \eqref{eq:interference_OCP}). By using that the probabilities of several events tend to $1$ as $\zeta \searrow 0$, we obtain
\begin{align}
& \liminf_{\zeta \searrow 0} \PP \big( \Gamma_{\zeta, \ul t, \ol t} (C_1 m_2, C_2 m_2) \big) \nonumber \\
& \quad \geq \liminf_{\zeta \searrow 0} \PP \bigg[ \ol{\OCP}^{(2)}(2 m_1, C_3 m_1; m_2) \cap \VC^{(2)}(2 \ve_1; m_1, 2 m_1) \cap \NI^{(2)}( (- \ve_1, \ve_1); C_3 m_1, C_2 m_2) \nonumber \\
& \qquad \qquad \cap \I^{(2)} \Big( (\ve_1, 2 \ve_1); \frac{C_1}{2} m_2, C_1 m_2 \Big) \cap \VC^{(1)} \Big( 2 \ve_1; \frac{1}{2} m_1, m_1 \Big) \cap \NI^{(1)}( 2 \ve_1; m_1) \nonumber \\
& \qquad \qquad \cap \ol{\NI}^{(1)} ( ( 2 \ve_1, \ul t - t_c ); C_3 m_1 ) \cap \OP^{(1)} \Big( \ul t - t_c; \frac{1}{2} m_1 \Big) \cap \I^{(1)} \Big( ( \ul t - t_c, \ol t - t_c ); \frac{1}{2} m_1 \Big) \bigg]. \label{eq:liminf_case1}
\end{align}
Now, we use that although the $\I$ and $\NI$ events are, strictly speaking, not independent of each other (nor on the other events), their conditional probabilities, given other events, are bounded from below by some positive constants depending only on (at most) $C_1$, $C_2$ and $C_3$. This gives, by applying \eqref{eq:claim_NI2}, \eqref{eq:claim_I2}, \eqref{eq:claim_NI1}, \eqref{eq:claim_barNI1}, and \eqref{eq:claim_I1} to \eqref{eq:liminf_case1}, that for some $\bar{C} = \bar{C}(C_1, C_2, C_3) > 0$,
\begin{align}
\liminf_{\zeta \searrow 0} & \PP \big( \Gamma_{\zeta, \ul t, \ol t} (C_1 m_2, C_2 m_2) \big) \nonumber \\
& \geq \bar{C}(C_1, C_2, C_3) \cdot \liminf_{\zeta \searrow 0} \PP \bigg[ \ol{\OCP}^{(2)}(2 m_1, C_3 m_1; m_2) \cap \VC^{(2)}(2 \ve_1; m_1, 2 m_1) \nonumber \\
& \hspace{5cm} \cap \VC^{(1)} \Big( 2 \ve_1; \frac{1}{2} m_1, m_1 \Big) \cap \OP^{(1)} \Big( \ul t - t_c; \frac{1}{2} m_1 \Big) \bigg]. \label{eq:case1_end1}
\end{align}
Finally, we note that the events $\VC^{(2)}$, $\VC^{(1)}$, and $\OP^{(1)}$ are independent of each other, and that their probabilities do not depend on $C_3$. Hence,
\begin{equation} \label{eq:case1_end2}
\liminf_{\zeta \searrow 0} \PP \bigg[ \VC^{(2)}(2 \ve_1; m_1, 2 m_1) \cap \VC^{(1)} \Big( 2 \ve_1; \frac{1}{2} m_1, m_1 \Big) \cap \OP^{(1)} \Big( \ul t - t_c; \frac{1}{2} m_1 \Big) \bigg] \geq \bar{C}' > 0,
\end{equation}
for some constant $\bar{C}'$ that does not depend on $C_3$. From \eqref{eq:claim_OCP2} (and the remark following the definition of $\ol{\OCP}^{(2)}$, below \eqref{eq:interference_OCP}), we can take $C_3$ large enough so that
\begin{equation} \label{eq:case1_end3}
\liminf_{\zeta \searrow 0} \PP \big( \ol{\OCP}^{(2)}(2 m_1, C_3 m_1; m_2) \big) \geq 1 - \frac{\bar{C}'}{2}.
\end{equation}
By combining \eqref{eq:case1_end2} and \eqref{eq:case1_end3} with \eqref{eq:case1_end1}, we get
\begin{equation}
\liminf_{\zeta \searrow 0} \PP \big( \Gamma_{\zeta, \ul t, \ol t} (C_1 m_2, C_2 m_2) \big) \geq \bar{C}(C_1, C_2, C_3) \cdot \frac{\bar{C}'}{2}.
\end{equation}
This completes the proof of Theorem \ref{thm:case1} in the case $k = 2$.

We now give an outline of the proof for a general $k$. For $k \geq 3$, we define analogous events $\NET^{(k)}$, $\NETB^{(k)}$, $\OCP^{(k)}$, $\VC^{(k)}$, $\I^{(k)}$ and $\NI^{(k)}$. For the same reasons as in the case $k=2$, the corresponding claims \eqref{eq:claim_NET2}, \eqref{eq:claim_NETB2}, \eqref{eq:claim_OCP2}, \eqref{eq:claim_VC2}, \eqref{eq:claim_I2} and \eqref{eq:claim_NI2} are also satisfied by these events. Again by the same arguments as for $k=2$, an inclusion similar to \eqref{eq:case1_lemma1} holds:
\begin{align}
\Gamma_{\zeta, \ul t, \ol t} (C_1 m_k, C_2 m_k) 
& \supseteq \NET^{(k)}_{C_2 m_k}(C_1 m_k) \cap \NETB^{(k)}(\ve_{k-1}; C_2 m_k) \cap \OCP^{(k)}(2 m_{k-1}, C_3 m_{k-1}; m_k) \nonumber \\
& \hspace{1cm} \cap \VC^{(k)}(2 \ve_{k-1}; m_{k-1}, 2 m_{k-1}) \cap \NI^{(k)}( (- \ve_{k-1}, \ve_{k-1}); C_2 m_k) \nonumber \\
& \hspace{1cm} \cap \I^{(k)} \Big( (\ve_{k-1}, 2 \ve_{k-1}); \frac{C_1}{2} m_k, C_1 m_k \Big) \cap \Gamma_{\zeta, \ul t, \ol t}(m_{k-1}, C_3 m_{k-1}). \label{eq:case1_lemma1_bis}
\end{align}
We then iterate this, using \eqref{eq:case1_lemma2} for the case $k=1$ as before. We also rewrite the events $\NI^{(j)}$ ($2 \leq j \leq k$) similarly as in \eqref{eq:interference_NI}, to avoid dependences. Using that the events $\NET^{(j)}$ and $\NETB^{(j)}$ have probabilities tending to $1$, as $\zeta \searrow 0$, yields
\begin{align}
\liminf_{\zeta \searrow 0} \PP \big( \Gamma_{\zeta, \ul t, \ol t} & (C_1 m_k, C_2 m_k) \big) \geq \liminf_{\zeta \searrow 0} \PP \bigg[ \bigcap_{j=1}^k \big( \text{``} \I^{(j)} \text{ and (rewritten) } \NI^{(j)} \text{ events''} \big) \nonumber \\
& \cap \bigcap_{j=2}^k \big( \text{``} \OCP^{(j)} \text{ event''} \big) \cap \bigcap_{j=1}^k \big( \text{``} \VC^{(j)} \text{ event''} \big) \cap \OP^{(1)} \Big( \ul t - t_c; \frac{1}{2} m_1 \Big) \bigg]. \label{eq:liminf_case1_bis}
\end{align}
We now slightly modify the $\OCP^{(j)}$ events, i.e. we replace them by $\ol{\OCP}^{(j)}$ events as in \eqref{eq:interference_OCP}. This allows us, as before (see the explanation just after \eqref{eq:liminf_case1}), to ``split out'' the product of (the lower bounds for) the $\I^{(j)}$ and $\NI^{(j)}$ events. This product is again bounded from below by $\bar{C}$, for some $\bar{C} = \bar{C}(C_1, C_2, C_3) > 0$, so we get, similarly to \eqref{eq:case1_end1}:
\begin{align}
\liminf_{\zeta \searrow 0} \PP \big( \Gamma_{\zeta, \ul t, \ol t} (C_1 m_k, C_2 m_k) \big) \geq \bar{C}(C_1, & C_2, C_3) \cdot \liminf_{\zeta \searrow 0} \PP \bigg[ \bigcap_{j=2}^k \big( \text{``} \ol{\OCP}^{(j)} \text{ event''} \big) \nonumber \\
& \cap \bigcap_{j=1}^k \big( \text{``} \VC^{(j)} \text{ event''} \big) \cap \OP^{(1)} \Big( \ul t - t_c; \frac{1}{2} m_1 \Big) \bigg]. \label{eq:case1_end1_bis}
\end{align}
Since the $\VC^{(j)}$ events and the event $\OP^{(1)} \big( \ul t - t_c; \frac{1}{2} m_1 \big)$ are independent, and have probabilities independent of $C_3$, we can write
\begin{equation} \label{eq:case1_end2_bis}
\liminf_{\zeta \searrow 0} \PP \bigg[ \bigcap_{j=1}^k \big( \text{``} \VC^{(j)} \text{ event''} \big) \cap \OP^{(1)} \Big( \ul t - t_c; \frac{1}{2} m_1 \Big) \bigg] \geq \bar{C}' > 0,
\end{equation}
for some constant $\bar{C}'$ independent of $C_3$. Finally, we take $C_3$ so large that
\begin{equation} \label{eq:case1_end3_bis}
\liminf_{\zeta \searrow 0} \PP \bigg[ \bigcap_{j=2}^k \big( \text{``} \ol{\OCP}^{(j)} \text{ event''} \big) \bigg] \geq 1 - \frac{\bar{C}'}{2}.
\end{equation}
We thus obtain from \eqref{eq:case1_end1_bis}, \eqref{eq:case1_end2_bis}, and \eqref{eq:case1_end3_bis} that
\begin{equation}
\liminf_{\zeta \searrow 0} \PP \big( \Gamma_{\zeta, \ul t, \ol t} (C_1 m_k, C_2 m_k) \big) \geq \bar{C}(C_1, C_2, C_3) \cdot \frac{\bar{C}'}{2},
\end{equation}
which completes the proof of Theorem \ref{thm:case1}.
\end{proof}

\subsection{Case $m_k(\zeta) \ll M(\zeta) \ll m_{k+1}(\zeta)$} \label{sec:case2}

For $\zeta \leq 1$ and $0 < T < \infty$, we introduce the event $\tilde{\Gamma}_{\zeta, T}(n_1, n_2) := \{$there exists a circuit $\gamma$ in the annulus $\Ann_{n_1, n_2}$ such that in the forest fire process with ignition rate $\zeta$ in the domain $\calD(\gamma)$, $0$ burns before time $T \}$ ($0 \leq n_1 < n_2$). We now prove the result below, for the process in domains with ``size'' far away from the exceptional scales (more precisely, between two successive exceptional scales, but far away from both, asymptotically).

\begin{theorem} \label{thm:case2}
Let $k \geq 0$, $\delta > 0$, $0 < C_1 < C_2$, and $T > 0$. There exists $C = C(k, \delta, C_1, C_2, T) \geq 1$ such that: for every function $M(\zeta)$ satisfying
\begin{equation}
C m_k(\zeta) \leq C_1 M(\zeta) < C_2 M(\zeta) \leq C^{-1} m_{k+1}(\zeta)
\end{equation}
for all sufficiently small $\zeta$, we have
\begin{equation}
\limsup_{\zeta \searrow 0} \PP \big( \tilde{\Gamma}_{\zeta, T}(C_1 M(\zeta), C_2 M(\zeta)) \big) \leq \delta.
\end{equation}
\end{theorem}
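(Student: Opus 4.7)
The plan is to prove Theorem \ref{thm:case2} by induction on $k \geq 0$.

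For the base case $k=0$, the bound $M \leq C^{-1} m_1(\zeta) \asymp C^{-1} \zeta^{-1/2}$ implies that the expected number of lightning strikes in $\Ball_{C_2 M}$ during $[0, T]$ is at most $\zeta \cdot |\Ball_{C_2 M}| \cdot T \leq 4 C_2^2 T / C^2$, which we can make smaller than $\delta$ by taking $C$ large. Markov's inequality then gives that, with probability at least $1 - \delta$, no ignition occurs in $\Ball_{C_2 M}$ during $[0, T]$, and on this event $0$ cannot burn in $\sigma^{\calD(\gamma)}$ for any $\gamma \subseteq \Ann_{C_1 M, C_2 M}$.

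For the inductive step $k \geq 1$, assume the result for $k-1$. Following the heuristic of Section \ref{sec:intro_heuristics}, for $M$ far from both $m_k$ and $m_{k+1}$ I expect a single ``big burning'' at scale $M$ to take place at some random time $\tau^* \in (t_c + \ve_k, t_c + \ve_{k-1})$, after which $0$ lies in an island of diameter comparable to $M' := L(\tau^*) \in (m_{k-1}, m_k)$; by choosing $C$ sufficiently large, the ratios $M'/m_{k-1}$ and $m_k/M'$ can both be made arbitrarily large, so that $M'$ falls in the non-exceptional window at level $k-1$. Concretely, the argument splits into four steps: (1) use Lemma \ref{lem:stoch_domination}, Remark \ref{rem:stoch_domination}, and Lemma \ref{lem:quant_holes} to lower bound the FF configuration up to time $t_c + \lambda \ve_{k-1}$ by the impurity process (with truncation scale $m_{k-1}$), then apply Propositions \ref{prop:crossing}, \ref{prop:exp_decay}, and \ref{prop:largest_cluster} to produce, with probability close to $1$, a cluster $\cluster^*$ inside $\Ball_{C_1 M}$ at time $\tau^*$ of volume of order $M^2 \theta(\tau^*)$ containing a net at scale $M'$; a direct computation of the ignition intensity (as in the proof of Theorem \ref{thm:case1}) shows $\cluster^*$ is ignited during the relevant window with probability close to $1$; (2) since $\cluster^*$ sits inside $\Ball_{C_1 M}$, it is contained in $\calD(\gamma)$ for every admissible $\gamma$, and the FFWoR coupling realizes the burning of $\cluster^*$ simultaneously for all $\gamma$; (3) by Proposition \ref{prop:one_arm_stability} and Corollary \ref{cor:exp_decay_theta}, the event $\{0 \in \cluster^*\}$ has probability of order $\pi_1(m_{k-1}) \to 0$, so outside a vanishing event $0$ is enclosed after $\tau^*$ by a \emph{permanent} burnt circuit at scale $M'$; (4) apply the inductive hypothesis (at level $k-1$, scale $M'$) to the FF process on the island $I$ containing $0$ to conclude that $0$ does not burn in $[\tau^*, T]$ with probability at least $1 - \delta/2$. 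Adding the two error bounds gives $\PP(\tilde\Gamma_{\zeta, T}(C_1 M, C_2 M)) \leq \delta$.

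The main obstacle is step (4). The inductive statement assumes an FFWoR process starting at time $0$ from a \emph{clean} initial configuration, whereas here I need to apply it on $I$ starting from $\tau^*$ with the configuration inherited from $\sigma^{\Ball_{C_2 M}}$, and uniformly over all $\gamma$ (since the cluster of $0$ in $\sigma^{\calD(\gamma)}$ could differ from that in $\sigma^{\Ball_{C_2 M}}$). The resolution I would use is to strengthen the inductive hypothesis to allow an arbitrary initial condition whose occupied part is stochastically dominated by a near-critical Bernoulli percolation configuration on $I$; the entire argument above, and the base case, only use such stochastic domination together with upper bounds on ignitions and arm events that pass through unchanged. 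For the uniformity in $\gamma$, the key observation is that once $\cluster^*$ has burnt, its vertices are permanently vacant in every $\sigma^{\calD(\gamma)}$, and this burnt boundary isolates $I$ from the exterior; the post-$\tau^*$ dynamics inside $I$ therefore reduces to a single FFWoR process on $I$, independent of $\gamma$, to which a single instance of the strengthened inductive hypothesis applies.
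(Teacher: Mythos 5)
Your overall architecture matches the paper's: both proofs induct on $k$, the base case $k=0$ is the same elementary bound on the expected number of ignitions in a box of side $\ll \zeta^{-1/2}$, and the inductive step is organized around a macroscopic burning at an intermediate time that encloses $0$ in an island of scale roughly $\tilde M$, where $\tilde M$ lies strictly between $m_{k-1}$ and $m_k$ so that the inductive hypothesis can be invoked. So the high-level plan is sound, and steps (1) and (2) of your outline correspond closely to the paper's events $\NET$, $\NETB$, $\NI$, $\OCP$, $\I$.

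The genuine difficulty is in your step (4), and the fix you propose is not the one the paper uses. Strengthening the inductive statement to cover "arbitrary initial conditions whose occupied part is stochastically dominated by near-critical Bernoulli" is a substantial new piece of work and, as stated, probably insufficient: the configuration on the island at time $\tau^*$ contains not only occupied sites but also burnt patches left by the pre-$\tau^*$ fires, and the impurity comparison via Lemma \ref{lem:quant_holes} is calibrated to an FFWoR process started at time $0$ from the all-vacant state, not to one restarted mid-trajectory. The paper sidesteps this obstacle entirely through a sharper choice of target: $\tilde{\Gamma}_{\zeta,T}(n_1,n_2)$ is defined as an \emph{existential} over circuits, and the whole inductive step is a deterministic pathwise inclusion
$\tilde{\Gamma}_{\zeta,T}(C_1 M, C_2 M) \cap E \subseteq \tilde{\Gamma}_{\zeta,T}(\tilde M / K', K \tilde M)$,
whose right-hand side is still about fresh FFWoR processes started at time $0$. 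No modified inductive statement is needed.

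The ingredient that makes this pathwise reduction possible, and that your proposal omits, is the event $\VC$: a \emph{pure-birth}-vacant circuit $\gamma_1$ in $\Ann_{\tilde M / K', \tilde M}$ at time $t_c + \lambda\tilde\ve$. Pure-birth-vacancy of $\gamma_1$ up to that time forbids any occupied cluster from crossing $\gamma_1$ in any FFWoR process on any domain containing $\calD(\gamma_1)$; hence the evolution inside $\gamma_1$ is identical for $\sigma^{\calD(\gamma)}$ and $\sigma^{\calD(\gamma')}$ up to that time, for all relevant circuits $\gamma$, $\gamma'$. Combined with the permanently burnt $\OCP$ circuit in $\Ann_{\tilde M, K\tilde M}$, this is what lets the paper identify, for every realization in $E$, an island boundary circuit in $\Ann_{\tilde M / K', K \tilde M}$ witnessing the right-hand side event. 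Replacing $\VC$ with the observation "$\PP(0 \in \cluster^*)$ is small" only controls whether $0$ is inside the big burnt cluster; it gives no pathwise control on the state left inside the island at $\tau^*$, which is precisely where your gap opens up. (Incidentally, that probability is $\asymp \pi_1(\tilde M)$, not $\pi_1(m_{k-1})$, and it is made small by choosing $C$ large rather than by $\zeta \searrow 0$; for $k=1$, $\pi_1(m_0)$ does not tend to $0$.) A further, minor difference: the paper works with the deterministic scale $\tilde M$ defined via $\next_\zeta$ and $L^{-1}(M)$, rather than your random $M' = L(\tau^*)$, which lets the constants $\eta, \lambda, K, K'$ be fixed before taking $\zeta \searrow 0$.
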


In this section, we adopt the following notation. For a given $M \geq 1$, we define $t_c + \tilde{\ve}$ to be the ``typical'' time of the first macroscopic burning in $\Ball_{M/2}$, and $\tilde{M}$ to be the characteristic length at this time. More precisely, with the notations from Section \ref{sec:def_exceptional_scales} (seeing $L$ as a bijection on $(t_c,\infty)$),
\begin{equation} \label{eq:def_epsilon_tilde}
t_c + \tilde{\ve} := \next_{\zeta} ( L^{-1}(M) ) \quad \text{and} \quad \tilde{M} := L( t_c + \tilde{\ve} ).
\end{equation}
Similarly to \eqref{eq:rel_except_scales}, we deduce from \eqref{eq:rel_t_t_hat} that
\begin{equation} \label{eq:equiv_mtilde}
M^2 \asymp \zeta^{-1} \tilde{M}^2 \frac{\pi_4(\tilde{M})}{\pi_1(\tilde{M})}.
\end{equation}

\begin{proof}[Proof of Theorem \ref{thm:case2}]
We proceed by induction over $k$. We first consider the case $k=0$. Recall that $m_0 = L(t_0) = L(2 t_c)$ is a constant, and $m_1 = L(t_1) \asymp \frac{1}{\sqrt{\zeta}}$ (see the sentence below \eqref{eq:rel_except_scales}). Let $\delta$, $C_1$, $C_2$, and $T$ be given as in the statement. Let $M(\zeta)$ satisfying
\begin{equation}
C \leq C_1 M(\zeta) < C_2 M(\zeta) \leq C^{-1} \frac{1}{\sqrt{\zeta}}
\end{equation}
for some $C > 0$ and all sufficiently small $\zeta$. We have, clearly,
\begin{equation}
\tilde{\Gamma}_{\zeta, T}(C_1 M(\zeta), C_2 M(\zeta)) \subseteq \big\{ \text{some } v \in \Ball_{C^{-1} \frac{1}{\sqrt{\zeta}}} \text{ is hit by lightning before time } T \big\},
\end{equation}
which has a probability at most $C_0 \big( C^{-1} \frac{1}{\sqrt{\zeta}} \big)^2 \cdot \zeta \cdot T = C_0 \frac{T}{C^2}$ (for some universal constant $C_0$). This can be made $\leq \delta$ by taking $C$ large enough, which establishes the case $k=0$ (we can choose $C(0, \delta, C_1, C_2, T) = \max(1, \sqrt{C_0 T \delta^{-1}})$).

Now, we assume that the result holds for a certain $k \geq 0$, and we show that it also holds for $k+1$. So let $\delta$, $C_1$, $C_2$, and $T$ be given, and let $C = C(k+1, \delta, C_1, C_2, T) \geq 1$ be a constant that we will fix later. We consider $M(\zeta)$ such that, for all sufficiently small $\zeta$,
\begin{equation} \label{eq:case2_hyp_m}
C m_{k+1}(\zeta) \leq C_1 M(\zeta) < C_2 M(\zeta) \leq C^{-1} m_{k+2}(\zeta).
\end{equation}
To simplify notation, we just write $M$ instead of $M(\zeta)$. We now examine the event $\tilde{\Gamma}_{\zeta, T}(C_1 M, C_2 M)$. We take a small $\eta > 0$ (depending on $\delta$, $C_1$, $C_2$, and $T$, but not on $\zeta$), whose precise value will be specified later, and we define the following events (similar to, but a bit different from the events in the proof of Theorem \ref{thm:case1}).

\begin{itemize}

\item[(i)] $\NET := \big\{$the configuration $\big( \min_{\gamma} \sigma^{\calD(\gamma)}_{t_c - \eta \tilde{\ve}, t_c + \eta \tilde{\ve}} (v) \big)_{v \in \Ball_{C_1 M}}$ has a net $\net$ with mesh $\asymp (M \tilde{M})^{1/2}$, and $\big| \cluster_{\net} \cap \Ann_{\frac{C_1}{2} M, C_1 M} \big| \geq \frac{1}{2} \cdot \big| \Ann_{\frac{C_1}{2} M, C_1 M} \big| \theta(t_c + \eta \tilde{\ve}) \big\}$ (where the minimum is over circuits $\gamma$ in $\Ann_{C_1 M, C_2 M}$). Using again the comparison to percolation with holes (Lemma \ref{lem:stoch_domination} and Lemma \ref{lem:quant_holes}), we observe that, from Proposition \ref{prop:largest_cluster} (and Remark \ref{rem:BCKS_annulus}),
\begin{equation} \label{eq:case2_claim1}
\text{for all $\eta > 0$,} \quad \PP \big( \NET \big) \stackrel[\zeta \searrow 0]{}{\longrightarrow} 1.
\end{equation}
Indeed, we know from \eqref{eq:L_epsilon} and Lemma \ref{lem:est_psi} (combined with \eqref{eq:case2_hyp_m}) that
\begin{equation} \label{eq:L_t_c_eta}
L( t_c + \eta \tilde{\ve} ) \asymp L( t_c + \tilde{\ve} ) = \tilde{M} \ll M^{1 - \upsilon},
\end{equation}
for some $\upsilon > 0$.

\item[(ii)] $\NETB := \big\{$the largest $(t_c + \eta \tilde{\ve})$-occupied cluster in $\Ball_{C_2 M}$ has a volume $\leq 2 \cdot \big| \Ball_{C_2 M} \big| \theta(t_c + \eta \tilde{\ve})$, and contains a net $\net^B$ with mesh $\asymp (M \tilde{M})^{1/2} \big\}$. Observe that $\cluster_{\net}$ in the definition of $\NET$ is contained in the cluster of $\net^B$. It follows immediately from \eqref{eq:largest_cluster} (and \eqref{eq:L_t_c_eta}) that
\begin{equation} \label{eq:case2_claim2}
\text{for all $\eta > 0$,} \quad \PP \big( \NETB \big) \stackrel[\zeta \searrow 0]{}{\longrightarrow} 1.
\end{equation}

\item[(iii)] $\NI := \big\{$no vertex of $\cluster^B$ is ignited during the interval $(t_c - \eta \tilde{\ve}, t_c + \eta \tilde{\ve}) \big\}$, where $\cluster^B$ is the cluster of the net $\net^B$ in the definition of $\NETB$. It follows from \eqref{eq:def_epsilon_tilde} that for $\eta \in (0,1)$,
$$\zeta \cdot 2 \eta \tilde{\ve} \cdot M^2 \theta(t_c + \eta \tilde{\ve}) \leq 2 \eta \cdot \zeta \tilde{\ve} M^2 \theta(t_c + \tilde{\ve}) \leq C'_0 \eta$$
for some $C'_0 > 0$. Hence, $\eta > 0$ can be chosen so small that: for all sufficiently small $\zeta$,
\begin{equation} \label{eq:case2_claim3}
\PP \big( \NI \big) \geq 1 - \frac{\delta}{10}.
\end{equation}
We now fix such an $\eta$.

\item[(iv)] $\OCP := \big\{$in the configuration $\big( \min_{\gamma} \sigma^{\calD(\gamma)}_{t_c - \eta \tilde{\ve}, t_c + \eta \tilde{\ve}} (v) \big)_{v \in \Ball_{C_1 M}}$, the cluster $\cluster_{\net}$ contains a circuit in $\Ann_{\tilde{M}, K \tilde{M}} \big\}$, for some $K > 1$. We claim that we can choose $K$ large enough so that: for all sufficiently small $\zeta$,
\begin{equation} \label{eq:case2_claim4}
\PP \big( \OCP \big) \geq 1 - \frac{\delta}{10}.
\end{equation}
Indeed, this follows from similar reasons as \eqref{eq:claim_OCP2}, since $L(t_c + \eta \tilde{\ve}) \asymp \tilde{M}$ ($\tilde{M}$ and $M$ playing the roles of $m_1$ and $m_2$, respectively). We now fix $K$ such that the above is satisfied.

\item[(v)] $\I := \big\{$there exists a vertex in $\cluster_{\net}$ which gets ignited in the time interval $(t_c + \eta \tilde{\ve}, t_c + \lambda \tilde{\ve}) \big\}$, for some $\lambda > \eta$. We observe that for $\lambda \to \infty$ ($\eta$ being fixed),
$$\zeta \cdot (\lambda - \eta) \tilde{\ve} \cdot M^2 \theta(t_c + \eta \tilde{\ve}) \asymp \lambda$$
(using \eqref{eq:def_epsilon_tilde}). This implies that $\lambda$ can be taken so large that
\begin{equation} \label{eq:case2_claim5}
\PP \big( \I \big) \geq 1 - \frac{\delta}{10},
\end{equation}
and we fix such a $\lambda$.

\item[(vi)] $\VC := \big\{$there exists a $(t_c + \lambda \tilde{\ve})$-vacant circuit in $\Ann_{\frac{\tilde{M}}{K'}, \tilde{M}} \big\}$, for some $K' > 1$. Since $L( t_c + \lambda \tilde{\ve} ) \asymp L( t_c + \tilde{\ve} ) = \tilde{M}$ (using \eqref{eq:L_epsilon}), \eqref{eq:RSW} implies that we can take $K'$ so large that
\begin{equation} \label{eq:case2_claim6}
\PP \big( \VC \big) \geq 1 - \frac{\delta}{10}
\end{equation}
(and we fix such a $K'$).

\end{itemize}

Now, denote by $E$ the intersection of the six events (i)-(vi). If $E$ occurs, then, no matter which circuit $\gamma$ in $\Ann_{C_1 M, C_2 M}$ we choose, the forest fire process in $\calD(\gamma)$ has a burning event which leaves $0$ in an island, whose boundary is a circuit in $\Ann_{\frac{\tilde{M}}{K'}, K \tilde{M}}$. Hence, $\tilde{\Gamma}_{\zeta, T}(C_1 M, C_2 M) \cap E \subseteq \tilde{\Gamma}_{\zeta, T} \big( \frac{\tilde{M}}{K'}, K \tilde{M} \big)$, so
\begin{equation}
\PP \big( \tilde{\Gamma}_{\zeta, T}(C_1 M, C_2 M) \big) \leq \PP \Big( \tilde{\Gamma}_{\zeta, T} \Big( \frac{\tilde{M}}{K'}, K \tilde{M} \Big) \Big) + \PP \big( E^c \big).
\end{equation}
Using \eqref{eq:case2_claim1}, \eqref{eq:case2_claim2}, \eqref{eq:case2_claim3}, \eqref{eq:case2_claim4}, \eqref{eq:case2_claim5} and \eqref{eq:case2_claim6}, we obtain
\begin{equation} \label{eq:case2_induction_step}
\limsup_{\zeta \searrow 0} \PP \big( \tilde{\Gamma}_{\zeta, T}(C_1 M, C_2 M) \big) \leq \limsup_{\zeta \searrow 0} \PP \Big( \tilde{\Gamma}_{\zeta, T} \Big( \frac{\tilde{M}}{K'}, K \tilde{M} \Big) \Big) + 4 \cdot \frac{\delta}{10}.
\end{equation}

Finally, we explain how to choose the constant $C = C(k+1, \delta, C_1, C_2, T)$ mentioned in the beginning of the induction step. For that, we use the induction hypothesis. Note that all the ``auxiliary'' numbers introduced along the way ($\eta$, $\lambda$, $K$ and $K'$) do not depend on this constant $C$. First, we take $C' = C(k, \frac{\delta}{10}, \frac{1}{K'}, K, T)$ produced by the induction hypothesis. Then, we can take $C$ so large that, for all sufficiently small $\zeta$,
\begin{equation}
\bigg[ M \leq \frac{m_{k+2}}{C_2 C} \bigg] \Rightarrow \bigg[ \tilde{M} \leq \frac{m_{k+1}}{K C'} \bigg] \quad \text{and} \quad \bigg[ M \geq \frac{C m_{k+1}}{C_1} \bigg] \Rightarrow \bigg[ \tilde{M} \geq K' C' m_k \bigg].
\end{equation}
Indeed, such a $C$ exists since we have, from \eqref{eq:equiv_mtilde} and \eqref{eq:rel_except_scales} (for $k$ and $k+1$), combined with \eqref{eq:quasi_mult}, \eqref{eq:1arm}, and \eqref{eq:4arms}:
\begin{equation}
\frac{\tilde{M}}{m_{k+1}} \leq \bar{C}_1 \bigg( \frac{M}{m_{k+2}} \bigg)^{\beta_1} \quad \text{and} \quad \frac{\tilde{M}}{m_k} \geq \bar{C}_2 \bigg( \frac{M}{m_{k+1}} \bigg)^{\beta_2}
\end{equation}
for some universal constants $\bar{C}_1, \bar{C}_2, \beta_1, \beta_2 > 0$. We thus obtain that if $M(\zeta)$ satisfies \eqref{eq:case2_hyp_m}, by combining \eqref{eq:case2_induction_step} and the induction hypothesis,
\begin{equation}
\limsup_{\zeta \searrow 0} \PP \big( \tilde{\Gamma}_{\zeta, T}(C_1 M, C_2 M) \big) \leq \frac{\delta}{10} + 4 \cdot \frac{\delta}{10} = \frac{\delta}{2}.
\end{equation}
This completes the proof of Theorem \ref{thm:case2}.
\end{proof}

\section{Discussion: forest fires with recovery} \label{sec:forest_fires}

For the applications in Section \ref{sec:existence_excep_scales}, we focused on forest fires \emph{without} recovery: once a tree is burnt, the vertex where it is located remains vacant forever. However, we want to emphasize that several crucial intermediate results do hold for forest fires \emph{with} recovery as well, due to the quite general coupling result in Section \ref{sec:coupling} (Lemma \ref{lem:stoch_domination}). This raises the hope that the main results in Section \ref{sec:existence_excep_scales}, Theorems \ref{thm:case1} and \ref{thm:case2}, about exceptional scales could be extended, at least partially, to forest fires with recovery.

Let us discuss a natural strategy to prove such an extension, and the main difficulty that needs to be overcome in order to carry it out. To do this, let us return to the heuristic discussion in Section \ref{sec:intro_heuristics}, a few lines below \eqref{eq:intro_heuristics3}. There, we pointed out that the first exceptional scale is of order $\frac{1}{\sqrt{\zeta}}$, where $\zeta$ is the ignition rate. This can easily be extended to forest fires with recovery. However, already in the argument for the next exceptional scale, an obstacle occurs.

An important feature in that argument is that, after burning at time $\tau$, the problem is ``reduced'' to studying a forest fire process in a domain (``island'') with diameter of order $L(\tau)$. The difficulty that comes up in the model with recovery is that, at least theoretically, a significant part of the trees destroyed by the burning at time $\tau$ (and earlier burnings) may recover, and connect the above-mentioned ``island'' with other islands, thus producing a much bigger connected region, which would make the arguments invalid.

In the literature, there is a result, about a process called ``self-destructive percolation'', which suggests, at least ``morally'', that such a substantial recovery does not happen. A variant of this result was conjectured by van den Berg and Brouwer \cite{BB2004} in 2004 (see also \cite{BB2006}). Kiss, Manolescu and Sidoravicius \cite{KMS2015} established the following result around ten years later (it is the main result, Theorem 4, in \cite{KMS2015}).

Consider site percolation on the square lattice $\ZZ^2$. For $n \geq 1$, let $R_n$ be the box $[-2n, 2n] \times [0, n]$, and $S_n$ be the bigger box $[-3n, 3n] \times [0, n]$. First, we consider (ordinary) site percolation with parameter $p_c = p_c^{\textrm{site}}(\ZZ^2)$: this is the \emph{initial configuration} $\omega$. Now, let $\chi \subseteq S_n$ be the set of vertices connected in $S_n$ to both the left and right sides of $S_n$ (i.e. the union of the connected components of horizontal crossings in $S_n$). We denote by $\tilde{\omega}$ the configuration obtained from $\omega$ by setting
\begin{itemize}
\item $\tilde{\omega}_v = 0$ if $v \in \ol \chi = \chi \cup \dout \chi$,
\item and $\tilde{\omega}_v = 1$ otherwise, i.e. if $v \in S_n \setminus \ol \chi$.
\end{itemize}
In other words, if a horizontal crossing of $S_n$ occurs, we destroy (i.e we make vacant) its entire occupied cluster in $S_n$. We also keep vacant all the vertices along the outer boundary of the clusters of such crossings, and we set occupied all the other vertices in $S_n$. Lastly, each vertex vacant at this stage is ``enhanced'', i.e. it becomes, independently of the other ones, occupied with probability $\delta > 0$: this produces the \emph{final configuration} $\tilde{\omega}^{\sigma} := \tilde{\omega} \vee \sigma$, where $\sigma$ is independent of $\omega$ and has distribution $\PP_{\delta}$. We have:

\begin{theorem}[Theorem 4 of \cite{KMS2015}] \label{thm:KMS}
There exist constants $\delta, \lambda, C > 0$ such that: for all $n \geq 1$,
\begin{equation}
\PP \big( \omega \in \Ch(S_n) \text{ and } \tilde{\omega}^{\sigma} \in \Cv(R_n) \big) \leq C n^{-\lambda}.
\end{equation}
\end{theorem}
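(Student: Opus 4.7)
The plan rests on exhibiting a six-arm ``bottleneck'' at every vertex where the enhancement effectively breaches the barrier $\ol{\chi}$.

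First, condition on $\omega \in \Ch(S_n)$. Then $\chi$ is non-empty and contains a horizontal $\omega$-crossing of $S_n$, so by standard planar topology $\ol{\chi}$ separates the top and bottom sides of $R_n$ in $\tilde{\omega}$: all vertices in $\ol{\chi}$ are vacant there, while outside $\ol{\chi}$ every vertex is occupied (this is the distinctive feature of self-destructive percolation). Consequently any vertical $\tilde{\omega}^{\sigma}$-crossing $\eta$ of $R_n$ must cross $\ol{\chi}$ using only $\sigma$-enhanced vertices.

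Second, I would single out a distinguished $\sigma$-enhanced vertex $v \in \ol{\chi}$ along $\eta$ where a multi-arm event is forced---for concreteness, the vertex of maximal $y$-coordinate in $\eta \cap \dout\chi$. At every intermediate scale $r$, the annulus $\Ann_{r, r'}(v)$ would then simultaneously support: two $\omega$-occupied arms from a neighbor of $v$ witnessing the horizontal $\chi$-connection out to the left and right of $S_n$; two $\omega$-vacant arms running in the $\dout\chi$ layer that separate these occupied arms; and two further ``vertical'' $\omega$-occupied arms attached to $v$ outside $\ol{\chi}$, flanked by further $\omega$-vacant pieces of $\dout\chi$. Careful book-keeping yields a polychromatic six-arm configuration of type $\sigma = (o v v o v v)$ (the very sequence highlighted in Section \ref{sec:stability_other_arm}) witnessed in $\omega$ from $v$.

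Third, I would invoke the universal bound $\alpha_5 = 2$ together with a BK-type or explicit independent-arm construction to obtain $\alpha_{\sigma} > 2$ for this polychromatic six-arm sequence. Using quasi-multiplicativity and summing $\pi_{\sigma}(1, n)$ over the $O(n^2)$ candidate locations $v$, and using independence of $\sigma$ from $\omega$ to gain a factor $\delta$ at the designated breach vertex, one would conclude
\begin{equation*}
\PP \big( \omega \in \Ch(S_n),\, \tilde{\omega}^{\sigma} \in \Cv(R_n) \big) \;\leq\; C \delta \cdot n^{2} \pi_{\sigma}(1, n) \;\leq\; C' \delta \cdot n^{-(\alpha_{\sigma} - 2)},
\end{equation*}
which is the desired polynomial bound. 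The small-$\delta$ hypothesis should be used to absorb multi-breach scenarios via an iterated application of this estimate, coupled with a union bound over the number of breaches.

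The main obstacle I foresee is the second step: rigorously extracting the correct six-arm event at a breach vertex. Three different random environments are in play simultaneously ($\omega$, $\tilde{\omega}$, and $\tilde{\omega}^{\sigma}$), and the barrier $\ol{\chi}$ is itself a random fractal set whose geometry must be controlled by a careful exploration argument. The choice of distinguished $v$ along $\eta$, the book-keeping between arms in $\chi$ versus $\dout\chi$, and the lattice subtleties of site percolation on $\ZZ^2$---where exact conformal-invariance values are not available, so the strict inequality $\alpha_{\sigma} > 2$ must be deduced from $\alpha_5 = 2$ by a BK-type argument---combine to make this step technically delicate, and represent the real heart of the proof.
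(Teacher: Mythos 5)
The paper you are reading does not prove this statement: it is quoted verbatim (as Theorem 4 of [KMS2015]) and used as a black box in the discussion of forest fires with recovery, so there is no ``paper's own proof'' to compare against. That said, your proposal does latch onto the ingredient that the authors themselves flag as central in [KMS2015] --- the six-arm event with sequence $(ovvovv)$ --- so the high-level intuition is sound. The difficulties you list at the end are, however, not peripheral book-keeping; they are where the argument as written breaks.

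The main gap is in your second step, where you claim a six-arm event \emph{in $\omega$} at the breach vertex $v$. Two of your six arms are the ``vertical occupied arms attached to $v$ outside $\ol\chi$,'' but outside $\ol\chi$ the configuration $\tilde\omega$ is identically occupied, so the vertical crossing $\eta$ is free to wander there with no constraint whatever on $\omega$. Those two arms carry no probability cost in $\omega$, and you are left with at most four genuine $\omega$-arms at $v$ --- too few, since $\alpha_4 < 2$ and the union bound over $O(n^2)$ sites then diverges. Worse, the two vacant arms you attribute to $\dout\chi$ need not reach scale $n$: the crossing $\eta$ can choose to pierce $\ol\chi$ precisely at a pinch point where the barrier is one or two sites thick, in which case the vacant ``arms'' are single vertices and the arm event holds only at microscopic scale. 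Finally, the single-breach union bound does not trivially iterate: $\eta$ may enter and leave $\ol\chi$ many times, the breach locations are heavily correlated through the random geometry of $\chi$, and one cannot gain an independent factor $\delta$ at each. These issues are exactly why the proof in [KMS2015] is a substantial multi-scale exploration argument --- conditioning on the interface of $\chi$, classifying scales as good or bad, and controlling the number and quality of potential breach points at each scale --- rather than a one-shot union bound against the six-arm exponent. Your sketch correctly names the destination but not the route.
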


An exact analog holds on the triangular lattice $\TT$ (starting instead with the parameter $p_c = p_c^{\textrm{site}}(\TT)$). To extend the results of Section \ref{sec:existence_excep_scales} to forest fires with recovery, what we would need is a suitable analog of Theorem \ref{thm:KMS} where, roughly speaking, the initial configuration is replaced by a typical configuration at or near $t_c$ of the forest fire process with recovery. Such a result does not simply follow by using the kind of domination arguments introduced in Section \ref{sec:coupling}, and used in Section \ref{sec:existence_excep_scales} (e.g. for \eqref{eq:claim_NET2} and \eqref{eq:claim_OCP2}): the potential recoveries cause a delicate problem. This forms (part of) the work of a subsequent paper.

\bibliographystyle{plain}
\bibliography{FP_fires}

\end{document}